\newcommand{\idx}[1]{\index{idx}{#1}}
\newcommand{\nidx}[1]{\index{nidx}{#1}}
\let\SK@label\label\fi
 \let\your@thm\@thm
 \def\@thm#1#2#3{\gdef\currthmtype{#3}\your@thm{#1}{#2}{#3}}
 \def\mylabel#1{{\let\your@currentlabel\@currentlabel\def\@currentlabel
  {\currthmtype~\your@currentlabel}
 \SK@label{#1@}}\label{#1}}
 \def\myref#1{\ref{#1@}}
\newtheorem{theorem}{Theorem}[section]
\newtheorem{lemma}[theorem]{Lemma}
\newtheorem{prop}[theorem]{Proposition}
\newtheorem{corollary}[theorem]{Corollary}
\theoremstyle{definition}
\newtheorem{definition}[theorem]{Definition}
\newtheorem{ex}[theorem]{Example}
\theoremstyle{remark}
\newtheorem{rmk}[theorem]{Remark}
\newtheorem*{ack}{Acknowledgments}
\newcommand{\ev}{\mathrm{ev}}
\newcommand{\sou}{s}
\newcommand{\tar}{t}
\newcommand{\sect}{\sigma}
\newcommand{\pro}{p}
\newcommand{\calP}{\mathcal{P}}
\newcommand{\calC}{\mathcal{C}}
\newcommand{\secttwo}{\varsigma}
\newcommand{\symf}{\phi}
\newcommand{\basepath}{\zeta}
\newcommand{\nmalbdl}{\tau}
\newcommand{\fibm}{r}
\newcommand{\fibs}{N}
\newcommand{\tr}{\mathrm{tr}}
\newcommand{\notcalP}{P}
\newcommand{\sNV}{\scr{N_V}}
\newcommand{\sNU}{\scr{N_U}}
\newcommand{\sEV}{\scr{E_V}}
\newcommand{\sEU}{\scr{E_U}}
\newcommand{\sIV}{\scr{I_V}}
\newcommand{\rc}{\triangleright}
\newcommand{\lc}{\triangleleft}
\newcommand{\ob}{\mathrm{ob}}
\newcommand{\Mod}{{\sf Mod}}
\newcommand{\Ex}{{\sf Ex}}
\newcommand{\Top}{{\sf Top}}
\newcommand{\Ch}{{\sf Ch}}
\newcommand{\Ab}{{\sf Ab}}
\newcommand{\sh}[1]{{\ensuremath{\hspace{1mm}\makebox[-1mm]{$\langle$}\makebox[0mm]{$\langle$}\hspace{1mm}{#1}\makebox[1mm]{$\rangle$}\makebox[0mm]{$\rangle$}}}}
\begin{document}

\frontmatter

\title{Fixed Point Theory and Trace for Bicategories}
\author{Kate Ponto\\ \today}

\maketitle

\chapter*{Abstract}
The Lefschetz fixed point theorem follows easily from the identification
of the Lefschetz number with the fixed point index.  This identification
is a consequence of the functoriality of the trace in symmetric monoidal
categories.  

There are refinements of 
the Lefschetz number and the fixed point index that 
give a converse to the Lefschetz fixed point theorem.
An important part of this theorem is the identification of these 
different invariants.  

We define a generalization of the trace in symmetric monoidal
categories to a trace in bicategories with shadows.  We show the 
invariants used in the converse of the Lefschetz fixed point theorem 
are examples of this trace and that the functoriality of the trace provides
some of the necessary identifications.
The methods used here do not use simplicial techniques and so generalize
readily to other contexts.

\tableofcontents

\renewcommand{\thetheorem}{\Alph{theorem}}
\chapter*{Introduction}

There are many approaches to determining when a continuous endomorphism
 of a topological space has a fixed point.  One of the simplest
is given by the Lefschetz fixed point theorem.

\begin{theorem}[Lefschetz fixed point theorem]\mylabel{lefschetz1}
Let $M$ be a compact ENR and $f\colon M\rightarrow M$ a continuous map.
If $f$ has no fixed points then the Lefschetz number of $f$ is zero.
\end{theorem}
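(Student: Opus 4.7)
The plan is to reduce this statement to the sharper fact that the Lefschetz number $L(f)$ equals the fixed point index $i(f)$, and then to observe that $i(f)$ vanishes in the absence of fixed points. Since $M$ is a compact ENR, I can embed $M$ into some Euclidean space $\mathbb{R}^n$ as a retract of an open neighborhood $U$, with retraction $r\colon U\to M$; this geometric setup underlies everything that follows.

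First I would define the fixed point index geometrically. Consider the map $g\colon U\to\mathbb{R}^n$ given by $x\mapsto x - f(r(x))$. Its zero set is precisely the fixed point set of $f$, which is compact, and $i(f)$ is defined (up to sign conventions) as the degree obtained from the Pontryagin--Thom collapse associated to this zero set. When $f$ has no fixed points, $g$ is nonzero on $M$; by compactness $g$ is bounded away from zero on some neighborhood of $M$ in $U$, so the collapse map factors through a point and $i(f)=0$ trivially.

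The heart of the argument, and the main obstacle, is the identification $L(f) = i(f)$. The approach advertised in the abstract is to exhibit both numbers as the categorical trace of $f_*$ on a dualizable object in a symmetric monoidal category. The natural candidate is $\Sigma^\infty_+ M$ in the stable homotopy category: the embedding $M\hookrightarrow\mathbb{R}^n$ together with the retraction $r$ provide coevaluation and evaluation maps exhibiting $\Sigma^\infty_+ M$ as dualizable, and unwinding the resulting trace reproduces the Pontryagin--Thom construction above, so this trace equals $i(f)$. Applying the symmetric monoidal functor of rational homology, the trace is sent to the trace of $H_*(f;\mathbb{Q})$ on $\bigoplus H_n(M;\mathbb{Q})$, which by a direct linear-algebra calculation is $L(f)$. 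Functoriality of trace under symmetric monoidal functors then forces $L(f) = i(f)$.

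The principal technical hurdles are verifying dualizability of $\Sigma^\infty_+ M$ for a compact ENR (this is exactly where the ENR hypothesis is used, via the neighborhood retract structure) and checking that the two specializations of the categorical trace really coincide with the classical quantities $i(f)$ and $L(f)$. Once these identifications are in place, the vanishing argument of the previous paragraph combined with $L(f)=i(f)$ yields the theorem.
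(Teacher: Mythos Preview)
Your proposal is correct and follows essentially the same approach as the paper: both reduce the theorem to the identification $L(f)=i(f)$ via the Dold--Puppe trace formalism, exhibiting $M_+$ (equivalently $\Sigma^\infty_+ M$) as dualizable in the stable homotopy category, identifying the categorical trace with the index on the topological side, and then applying rational homology as a symmetric monoidal functor to obtain the Lefschetz number, with functoriality of trace forcing equality. The only cosmetic difference is that the paper phrases dualizability via $n$-duality and the explicit Thom-space dual rather than passing to spectra, and defines the index via Dold's fundamental-class formulation rather than your equivalent $x\mapsto x-f(r(x))$ description.
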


The Lefschetz number of a map is defined using rational homology
and so is relatively easy to compute.  Further, if $M$ is a simply
connected closed smooth manifold of dimension at least three then
a converse to the Lefschetz fixed point theorem also holds.

\begin{theorem}\mylabel{lefschetz2} Let $f\colon M\rightarrow M$ be a
continuous map of a simply connected closed smooth manifold of
dimension at least three. Then the Lefschetz number of $f$ is zero
if and only if $f$ is homotopic to a map with no fixed points.
\end{theorem}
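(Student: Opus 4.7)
The plan is to prove the two implications separately; one is essentially immediate from Theorem~\myref{lefschetz1}, and the other requires a more delicate geometric argument in the spirit of Wecken.

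For the easy direction, suppose $f$ is homotopic to a map $g\colon M\to M$ with no fixed points. The Lefschetz number is a homotopy invariant because it is defined from the induced maps on rational homology, so $L(f)=L(g)$. Since $g$ has no fixed points, Theorem~\myref{lefschetz1} (applied to $g$, noting that a closed smooth manifold is a compact ENR) gives $L(g)=0$, hence $L(f)=0$.

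For the converse, the plan is to homotope $f$ to a map in general position so that the fixed point set is finite, and then cancel the fixed points in pairs using the hypotheses on $M$. First I would replace $f$ by a smooth map transverse to the diagonal, so the fixed points form a finite set $\{x_1,\dots,x_n\}$ with well defined local indices $i(f,x_j)\in\{\pm 1\}$ summing to $L(f)$. Next I would use the simple connectivity of $M$ to show that any two fixed points lie in the same Nielsen class: for any two fixed points $x_j$ and $x_k$ there is a path $\gamma$ joining them, and because $\pi_1(M)$ is trivial the loop formed by $\gamma$ and $f\circ\gamma$ is null-homotopic, so $\gamma$ and $f\circ\gamma$ are homotopic rel endpoints. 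This reduces the assumption $L(f)=0$ to the statement that the single Nielsen class containing all fixed points has total index zero.

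The main obstacle, and the place the dimension hypothesis enters, is the fixed point cancellation step. Given two fixed points $x_j,x_k$ of opposite indices in the same Nielsen class, I would choose a path $\gamma$ in $M$ from $x_j$ to $x_k$ together with a homotopy from $\gamma$ to $f\circ\gamma$ rel endpoints. Using $\dim M\geq 3$, I can perturb $\gamma$ to be an embedded arc disjoint from all other fixed points, and then thicken a neighborhood of $\gamma$ to a smoothly embedded disk $D\subset M$ containing $x_j$ and $x_k$ in its interior. Inside this disk the restriction of $f$ looks, after a further homotopy, like a map whose two fixed points have opposite local indices and are Nielsen equivalent; a standard local cancellation (the Hopf construction on the disk) produces a homotopy of $f$, supported in $D$, that eliminates both fixed points without introducing new ones. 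Iterating this procedure pairs off all of the fixed points and yields a map homotopic to $f$ with empty fixed point set.

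The hardest part of this argument is the local cancellation and the verification that the embedded path $\gamma$ together with its thickening can be chosen disjoint from the remaining fixed points; both points rely critically on $\dim M\geq 3$, which ensures that generic paths are embedded and that the disk $D$ can be constructed without self-intersection. The hypothesis that $M$ is simply connected is what lets us combine arbitrary pairs of fixed points, rather than only cancelling within Nielsen classes, and so reduces the full obstruction (the Reidemeister trace, which later in this paper will be realized as a bicategorical trace) to the single integer $L(f)$.
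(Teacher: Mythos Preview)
Your proposal is correct and is essentially the classical Wecken argument: put $f$ in general position, observe that simple connectivity collapses all fixed points into a single Nielsen class, and then use the dimension hypothesis to cancel fixed points of opposite index along embedded arcs. This is a valid proof of the statement.

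The paper, however, does not prove \myref{lefschetz2} this way. It obtains \myref{lefschetz2} as the simply connected special case of \myref{reidemeister1}: when $\pi_1M$ is trivial the set of semiconjugacy classes $\sh{\pi_1M^\phi}$ is a single point, so the Reidemeister trace collapses to the Lefschetz number (cf.\ \myref{modKtrace}). The paper's proof of \myref{reidemeister1} is the point of the whole machinery: it identifies the geometric Reidemeister trace with the homotopy Reidemeister trace and then with the Klein--Williams invariant $R^{KW}(f)$ via the bicategorical trace formalism (Propositions \ref{geohtpycompare@} and Theorem \ref{KWclassicfinal@}), and finally invokes Klein and Williams' obstruction-theoretic result (\myref{converse1}) to conclude. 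Your argument is more direct and self-contained for this particular statement; the paper's route is deliberately less elementary because its goal is a framework that transports unchanged to the non-simply-connected and fiberwise settings (Theorems \ref{reidemeister1@} and \ref{reidemeister2@}), where the geometric cancellation argument either requires the full Nielsen theory input or fails outright.
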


Note that we have replaced `the map $f$ has no fixed points' with
`the map $f$ is homotopic to a map with no fixed points'.  This
change only reflects the fact that the Lefschetz number is defined
using homology and so cannot distinguish between homotopic maps.
In particular, the Lefschetz number cannot determine if a map has
no fixed points, it can only determine if it is homotopic to a map
with no fixed points.

Unfortunately, \myref{lefschetz2} does not hold if we remove
the hypothesis that the space is simply connected.  However, by
sacrificing some of the computability we can refine the Lefschetz
number to an invariant, called the Nielsen number, that detects if 
the map has fixed points.

\begin{theorem}\mylabel{nielsen1} Let $f\colon M\rightarrow M$ be a
continuous map of a closed smooth manifold of dimension at least
three. The Nielsen number of $f$, $N(f)$, is the minimum number of
fixed points of any map homotopic to $f$.  In particular, $N(f)$
is zero if and only if $f$ is homotopic to a map with no fixed
points.
\end{theorem}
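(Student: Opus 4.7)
The plan is to prove the theorem in two stages: first showing that $N(f)$ is a lower bound on the number of fixed points of any map homotopic to $f$, and then showing this bound is attained when $\dim M \geq 3$.

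First, recall the definition of the Nielsen number in terms of \emph{fixed point classes}. Two fixed points $x, y \in \mathrm{Fix}(f)$ are declared equivalent when there is a path $\gamma$ from $x$ to $y$ such that $\gamma$ is homotopic rel endpoints to $f \circ \gamma$. Each equivalence class (a Nielsen class) is an isolated subset of $\mathrm{Fix}(f)$ and so carries a local fixed point index; a class is called \emph{essential} when this index is nonzero, and $N(f)$ is defined as the number of essential classes.

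For the lower bound, a homotopy $H\colon M \times I \to M$ from $f$ to $g$ induces a correspondence between fixed point classes: $x \in \mathrm{Fix}(f)$ and $y \in \mathrm{Fix}(g)$ are $H$-related when there is a path $\gamma$ from $x$ to $y$ such that $\gamma$ is homotopic rel endpoints to the path $t \mapsto H(\gamma(t), t)$. By the homotopy invariance of the fixed point index, corresponding classes have equal index, so essential classes of $f$ biject with essential classes of $g$. Since each essential class of $g$ contains at least one fixed point, this yields $\#\mathrm{Fix}(g) \geq N(f)$ for every $g$ homotopic to $f$.

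The main obstacle, and the place where the hypothesis $\dim M \geq 3$ is indispensable, is the reverse inequality (the \emph{Wecken property}): producing a map homotopic to $f$ with exactly $N(f)$ fixed points. The strategy is first to reduce to finitely many fixed points by a small transverse perturbation, then to carry out two geometric modifications supported in tubular neighbourhoods of paths. Given two fixed points in the same Nielsen class joined by a path $\gamma$ with $\gamma \simeq f \circ \gamma$, a local isotopy along $\gamma$ coalesces them into a single fixed point whose index is the sum of the two; this move relies on general position for $\gamma$, which is available only when $\dim M \geq 3$ (paths then generically miss themselves and miss the remaining fixed points). Once each class is reduced to a single point, a class of index zero can be cancelled by a local homotopy in a coordinate ball. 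The resulting map is homotopic to $f$ and has exactly one fixed point per essential class, so $N(f)$ fixed points in total; the ``in particular'' statement is the special case $N(f) = 0$.
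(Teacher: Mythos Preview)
Your proposal is correct and follows the classical Wecken--Jiang line: homotopy invariance of indices of fixed point classes gives the lower bound, and in dimension at least three one coalesces Nielsen-equivalent fixed points along embedded arcs (general position) and then removes index-zero points by local homotopy. This is precisely what the paper calls ``the standard proof \ldots\ using simplicial techniques'' and cites to Brown \cite[VIII]{brown}.

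The paper, however, takes a genuinely different route and in fact only addresses the ``in particular'' clause ($N(f)=0$ iff $f$ is homotopic to a fixed-point-free map), not the full minimum-number statement. Its argument runs through the Reidemeister trace: one interprets $R^{geo}(f)$ as a trace in a bicategory with shadows, uses functoriality to identify it with the homotopy Reidemeister trace $R^{htpy}(f)$, and then identifies $R^{htpy}(f)$ with the Klein--Williams stable cohomotopy Euler class $R^{KW}(f)$. The vanishing of $R^{KW}(f)$ is then equivalent, by an obstruction-theoretic section argument (Propositions~\ref{fptosec} and~\ref{firstform}), to $f$ being homotopic to a fixed-point-free map. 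Since $N(f)=0$ iff $R^{geo}(f)=0$, the conclusion follows.

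Your approach is more elementary and proves the sharper statement about the exact minimum. The paper's approach sacrifices this sharpness but gains a framework that transports directly to the fiberwise setting (Chapters~\ref{fibfpsec}--\ref{fibfpsec2}), where the classical coalescing argument is unavailable and, as Scofield's example shows, the na\"{\i}ve fiberwise Nielsen number fails to give a converse.
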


The idea behind the Nielsen number is to incorporate information
about the fundamental group into the invariant itself.  This
additional information corresponds to recording which fixed points
can be eliminated by a homotopy of the original map.

The Nielsen number is not the most convenient description of this
information for defining generalizations of this invariant to
other categories and for proving results about relationships
between the Nielsen number and basic topological constructions
such as cofiber sequences or products.  The invariant that retains
the necessary information is called the Reidemeister trace. This
invariant was defined by Wecken and Reidemeister in \cite{Wecken2,
Reidemeister}.   It can be used to prove a theorem similar to
\myref{nielsen1}.

\begin{theorem}\mylabel{reidemeister1} Let $f\colon M\rightarrow M$ be a
continuous map of a closed smooth manifold of dimension at least
three. The Reidemeister trace of $f$ is zero if and only
if $f$ is homotopic to a map with no fixed points.
\end{theorem}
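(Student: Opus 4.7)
The plan is to deduce \myref{reidemeister1} from \myref{nielsen1} by showing that the Reidemeister trace $R(f)$ vanishes if and only if the Nielsen number $N(f)$ does. The Reidemeister trace is an element of the free abelian group on the set of Reidemeister classes of $\pi_1(M)$ twisted by $f_{*}$, and the essential input for this argument is a decomposition
\[
R(f) = \sum_{[\alpha]} \mathrm{ind}([\alpha]) \cdot [\alpha],
\]
where $[\alpha]$ ranges over Reidemeister classes and $\mathrm{ind}([\alpha])$ is the fixed point index of the fixed point class of $f$ indexed by $[\alpha]$. With this decomposition in hand, $R(f) = 0$ is equivalent to $\mathrm{ind}([\alpha]) = 0$ for every class, which is the statement that no fixed point class is essential, i.e., $N(f) = 0$.

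First I would pin down the definition of the Reidemeister trace and establish the decomposition above; this is essentially bookkeeping, since the twisted conjugation action on $\pi_1(M)$ partitions $\mathrm{Fix}(f)$ into fixed point classes in exactly the way needed. Second, I would verify homotopy invariance of $R(f)$: if $f$ and $g$ are homotopic, a homotopy induces a bijection on Reidemeister classes preserving fixed point indices, so $R(f) = R(g)$ in the appropriate group. Third, I would combine these to obtain $R(f) = 0 \iff N(f) = 0$, and then appeal directly to \myref{nielsen1} to conclude that $N(f) = 0 \iff f$ is homotopic to a fixed-point-free map.

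The forward direction then reads: if $R(f) = 0$ then $N(f) = 0$, so by \myref{nielsen1} there is a fixed-point-free map homotopic to $f$. The converse is immediate: if $f \simeq g$ with $\mathrm{Fix}(g) = \emptyset$, then $R(g) = 0$ tautologically (there is nothing to sum over) and homotopy invariance gives $R(f) = R(g) = 0$.

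The main obstacle will be the identification $R(f) = 0 \iff N(f) = 0$, because although it is intuitively clear from the decomposition, it requires that the coefficient attached to each Reidemeister class genuinely coincides with the fixed point index of the associated fixed point class, with the correct signs and the correct identification of Reidemeister classes with fixed point classes. Once that identification is made cleanly, the rest of the argument is a short reduction to the Nielsen-theoretic statement already recorded as \myref{nielsen1}. The broader point of the paper, however, is that this identification itself should be understood conceptually as functoriality of a bicategorical trace, which is what motivates the categorical framework developed in the sequel.
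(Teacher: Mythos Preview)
Your argument is correct, and in fact the paper itself records the equivalence $R^{geo}(f)=0 \iff N(f)=0$ immediately after defining the geometric Reidemeister trace (the decomposition you want is exactly \myref{geortracedef}). So reducing \myref{reidemeister1} to \myref{nielsen1} is valid and short.

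However, the paper deliberately takes a different route. Rather than invoking \myref{nielsen1}, which is Wecken's theorem and is proved classically by simplicial techniques, the paper chains together identifications $R^{alg}(f)=R^{geo}(f)$, $R^{geo}(f)=0 \iff R^{htpy}(f)=0$, and $R^{htpy}(f)=R^{KW}(f)$, and then appeals to Klein and Williams' \myref{converse1} for the converse statement. The first identifications are instances of functoriality of the bicategorical trace; the last step replaces the simplicial argument behind \myref{nielsen1} with Klein and Williams' obstruction-theoretic proof. What your reduction buys is brevity, at the cost of treating \myref{nielsen1} as a black box. What the paper's route buys is a proof that avoids simplicial methods and, crucially, generalizes verbatim to the fiberwise setting of \myref{reidemeister2}, where there is no analogue of \myref{nielsen1} to reduce to (Scofield's fiberwise Nielsen number does not give a converse). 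Your closing remark correctly anticipates this: the paper's point is precisely to replace the classical reduction with the bicategorical one.
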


Classically, all four of these results were proved using
simplicial techniques.  In \cite{Dold}, Dold and Puppe proposed
an alternative approach.   Their idea was to focus on the
identification of the Lefschetz number, which is a global
invariant, with a local invariant, the fixed point index. It is 
immediate from the definition  that the fixed point index is zero for
a map that has no fixed points or is homotopic to a map with no
fixed points. Using this observation, the Lefschetz fixed point
theorem is a consequence of the identification of the Lefschetz
number with the index.

Dold and Puppe approached this identification by defining a more
general construction that includes both of these invariants as
special cases. Their construction is a `trace' in any symmetric
monoidal category. In some cases the trace is functorial.  Dold
and Puppe showed that the identification of the Lefschetz number
with the index is an example of this functoriality.

In addition to giving an alternate proof of the Lefschetz fixed
point theorem, Dold and Puppe's definition of trace can be used to
describe generalizations of the fixed point index to other
categories.  If $f\colon X \rightarrow X$ and $\pro \colon X\rightarrow B$ are
continuous maps such that $\pro \circ f=\pro $ we say that $f$ is a
fiberwise map.  In \cite{Dold2}, Dold defined an index for
fiberwise maps and showed that the index is zero for a map that is 
fiberwise homotopic to a map with no fixed points.  The fiberwise
index is an example of the trace in symmetric monoidal categories.

It is possible to prove results for the trace in symmetric monoidal
categories that can be applied to the special cases of the
Lefschetz number and the index. For example, the Lefschetz number
and the index are both additive on cofiber sequences.  This
follows from the additivity of the trace in (some) symmetric monoidal
categories, see \cite{maytraces}.

Unfortunately, the trace in symmetric monoidal categories cannot
be used to describe the invariants of Theorems \ref{nielsen1} and
\ref{reidemeister1}. Invariants that include information about 
the fundamental group do not 
fit into a symmetric monoidal category. However, by replacing
symmetric monoidal categories by an appropriate bicategory and
similarly modifying the definition of the trace we can accommodate
these invariants.

Here we implement this philosophy.  First we show that the
Reidemeister trace is an example of a more general trace.  This
trace is defined here and is a trace in bicategories with some
additional structure; these bicategories are called symmetric
bicategories with shadows. Just as the Lefschetz number can be
identified with the fixed point index, there is more than one
description of the Reidemeister trace. There are generalizations
of the fixed point index, defined by Reidemeister and Wecken, and
of the Lefschetz number, defined by Husseini in \cite{Husseini}.
Both of these invariants are examples of the trace in symmetric
bicategories with shadows, and the functoriality of the trace can
be used to identify them. There is also an invariant defined by
Klein and Williams in \cite{KW} that can be identified with another 
example of the trace in a symmetric
bicategory with shadows.

Next we show that this change in perspective gives definitions and
proofs that generalize more easily than the classical approaches.
One element of the classical invariants that causes problems for
equivariant and fiberwise generalizations is the role played by a
base point. Both classical definitions of the Reidemeister trace
require that a base point be chosen, but a different choice of the
base point does not change the invariant.  Modified forms of the
Reidemeister trace can be defined without a base point.  We show
that these invariants are also examples of trace in bicategories,
and we use the formal structure of the trace to show that these
unbased invariants  can be identified with the classical
invariants.

The second source of problems for generalizations is only obvious
when trying to prove a converse to the Lefschetz fixed point
theorem like \myref{reidemeister1}. In \cite{scofield},
Scofield defined a generalization of the Nielsen number to
fiberwise maps and gave an example that showed this invariant does not give
a converse to the fiberwise Lefschetz fixed point theorem. More
recently, Klein and Williams have defined a fiberwise invariant
that does give a converse to the fiberwise Lefschetz fixed point
theorem.

\begin{theorem}\mylabel{reidemeister2}
Let $M\rightarrow B$ be a fiber bundle with compact manifold fibers
$F$ such that $\dim (F)-3\geq \dim (B)$.  Then a fiberwise map
$f\colon M\rightarrow M$ is fiberwise homotopic to a map with no fixed
points if and only if the fiberwise Reidemeister trace of $f$ is zero.
\end{theorem}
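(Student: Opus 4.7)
The plan is to split the biconditional in the usual way and then import the hard direction from the Klein--Williams converse of the fiberwise Lefschetz fixed point theorem by identifying their obstruction with the fiberwise Reidemeister trace.

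The easy direction (no fixed points implies vanishing trace) will follow from general properties of the bicategorical trace developed earlier in the paper. Specifically, the fiberwise Reidemeister trace is constructed as a trace in a symmetric bicategory with shadows whose $1$-cells encode fiberwise spaces over $B$; because such a trace is invariant under fiberwise homotopy of the endomorphism, and because a fiberwise map with no fixed points admits an explicit nullhomotopy of the coevaluation-evaluation composite that defines the trace, the trace of any such map must be zero. Composing with a chosen fiberwise homotopy then gives the implication for any map fiberwise homotopic to a fixed-point-free map.

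For the converse, I would not try to reprove a converse to the fiberwise Lefschetz fixed point theorem from scratch. Instead, the strategy is to invoke the theorem of Klein and Williams from \cite{KW}, which under the stated dimension hypothesis $\dim(F)-3\geq \dim(B)$ shows that their fiberwise fixed point obstruction vanishes exactly when $f$ is fiberwise homotopic to a fixed-point-free map. It therefore suffices to prove that the Klein--Williams invariant agrees with the fiberwise Reidemeister trace. As indicated in the introduction, the Klein--Williams invariant is itself an example of a trace in an appropriate symmetric bicategory with shadows; the identification will be obtained by exhibiting a shadow-preserving bicategorical functor (or a span of such functors) between the bicategory in which the fiberwise Reidemeister trace lives and the one in which the Klein--Williams invariant lives, and then appealing to the functoriality of the bicategorical trace proved earlier. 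Both sides are the trace of essentially the same dualizable $1$-cell equipped with essentially the same endomorphism, so once the functor is in place the identification is formal.

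The main obstacle is the middle step: constructing the comparison between the two bicategorical frameworks and checking that it preserves the structure (duality data, shadows, and the evaluation/coevaluation 2-cells) needed to transport traces. This requires carefully modeling the fiberwise Reidemeister trace on a parametrized suspension spectrum of a free loop-type construction and matching it with the Klein--Williams setup; verifying that the dualizability data agree up to coherent isomorphism, and that the chosen shadow functor intertwines with theirs, is the technical heart of the argument. Once this comparison is in place, the theorem follows by combining the easy direction above, the functoriality of the trace to identify the two invariants, and the Klein--Williams converse.
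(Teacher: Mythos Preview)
Your overall strategy matches the paper: reduce to the Klein--Williams converse by identifying their fiberwise obstruction $R^{KW}_B(f)$ with the fiberwise homotopy Reidemeister trace $R^{htpy}_B(f)$, and then quote the Klein--Williams result (here \myref{KWfibconverse9}) for the biconditional. In particular, your separate ``easy direction'' argument is unnecessary: the paper gets both directions at once from the if-and-only-if statement for $R^{KW}_B(f)$.

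Where you diverge is in the mechanism for the identification. You propose building a shadow-preserving functor of bicategories and invoking functoriality of trace, treating the Klein--Williams invariant as a bicategorical trace in its own right. The paper does not do this, and it is not clear your route would go through cleanly: $R^{KW}_B(f)$ is defined as a stable cohomotopy Euler class, namely the image of the section map $\sect_-\amalg\sect_+\colon S^0_M\to S_M\Gamma_f^*\fibs_B(i)$ under stabilization and Costenoble--Waner duality, not as a trace. Instead, the paper carries out a direct geometric identification (\myref{fibidentify} and \myref{fibidentify2}): it shows that the fiberwise suspension $S_{M\times_B M}\fibs_B(M\times_B M-\triangle)$ is weakly equivalent to $\triangle_!S^{\nmalbdl_B}\odot(\calP_B M,\tar\times\sou)_+$, pulls back along $\Gamma_f$, and then explicitly tracks the Euler class through the Costenoble--Waner duality isomorphism to see that it lands on the composite defining $R^{htpy}_B(f)$. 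So the ``technical heart'' is not matching two bicategories via a functor, but rather identifying the target space of the Euler class with the shadow $\Lambda^f_B M$ and checking by hand that the map $\sect_-\amalg\sect_+$ becomes the trace under this identification.
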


There is another invariant, defined by Crabb and James in
\cite{CrabbJames}, that can help to explain the discrepancy
between Scofield's invariant and Klein and Williams' invariant.
The invariant defined by Crabb and James is a derived form of the
Reidemeister trace and so in the transition from a classical
invariant to a fiberwise invariant it is sensitive to information
that the other forms of the Reidemeister trace, like Scofield's
invariant, miss.  Crabb and James' invariant can be identified
with the invariant defined by Klein and Williams.  Crabb and James'
invariant, in both its classical and fiberwise forms, is an example
of the trace in bicategories with shadows.

More concretely, our goal is to convert Dold and Puppe's outline
for proving \myref{lefschetz1} into an approach for proving
Theorems \ref{reidemeister1} and \ref{reidemeister2}. Dold and
Puppe's proof identified the Lefschetz number and the fixed point
index and then used the observation that the index is zero for
maps with no fixed points.  Our first step is the same. We start
by identifying the form of the Reidemeister trace defined by
Husseini with Reidemeister and Wecken's form of the Reidemeister trace.
Unfortunately, it is not obvious that Reidemeister and Wecken's form of the
Reidemeister trace is zero only when the map is homotopic to a map
with no fixed points. The next step in our proof is to identify
Reidemeister and Wecken's form of the Reidemeister trace with
Crabb and James' version.  This invariant can
then be identified with the invariant defined by Klein and Williams.
Klein and Williams' proof in \cite{KW} then completes the proof of
\myref{reidemeister1}. 

To implement this plan we need to make connections between four different
invariants.  The first three invariants are examples of the 
trace in bicategories 
with shadows.  Functoriality gives an identification of the 
Reidemeister trace defined by Husseini with the Reidemeister trace 
defined by Reidemeister and Wecken.  Functoriality also shows that the 
Reidemeister trace defined  by Reidemeister and Wecken is zero when 
the Reidemeister trace defined by Crabb and James is zero.  The
converse of this fact is not formal.  

In the fiberwise setting of \myref{reidemeister2} not all of
the steps in our proof of \myref{reidemeister1} make sense. 
Here we only have two invariants, the invariant defined
by Klein and Williams and the fiberwise version of the invariant
defined by Crabb and James.  Klein and Williams' proof has an
immediate fiberwise generalization and their fiberwise invariant
can be identified with the fiberwise version of Crabb and James'
invariant in complete analogy with the classical case.

We could interpret these proofs either as category theory with topological 
applications or as topological proofs that have a formal part.  Here
we will try to aim for the middle since, in reality, the category 
theory motivates the topology and the topology motivates the category 
theory.  This balance is reflected in the structure of this paper.  
We start with some motivation from fixed point theory and category theory.
Then we give reinterpretations of the fixed point theory that 
further suggests our definition of trace in a bicategory and the results
that are entirely category theory.  Motivated by these descriptions
we define shadows and traces in bicategories.  Using  these formal
results, we then give new proofs of some classical and fiberwise
fixed point theory
results.  The last chapter returns to category theory and consists
of further examples that are closely related to the topological 
examples given earlier.

In Chapters \ref{reviewfp} and \ref{reviewfp2} 
we recall the elements of topological
fixed point theory that will be the motivation for much of the
later chapters.  In Chapter \ref{reviewfp} we define the Lefschetz number
and the fixed point index.  We also summarize 
Dold and Puppe's results on duality and trace in symmetric
monoidal categories and its applications to fixed point theory.
In Chapter \ref{reviewfp2} we focus on the converse to the Lefschetz
fixed point theorem.  We define the Nielsen number and the two versions of the
Reidemeister trace defined by Husseini and Reidemeister and Wecken.  
We also describe Klein and Williams' proof of the converse to 
the Lefschetz fixed point theorem.

Chapter \ref{summary} serves as a transition between the classical
fixed point theory of Chapter \ref{reviewfp2} and the definition of
trace in a symmetric bicategory with shadows in Chapter
\ref{whybicat}.  Here we give alternate descriptions of the
versions of the Reidemeister trace defined by Wecken and
Reidemeister and Crabb and James that suggest the definitions of Chapter
\ref{whybicat}.  This chapter does not contain rigorous proofs,
which are delayed to Chapters \ref{classfpsec}, \ref{classfpsec2},
\ref{fibfpsec}, and \ref{fibfpsec2},
but instead makes it clear that the Reidemeister trace has many
features in common with trace in symmetric monoidal categories.

In Chapter \ref{whybicat} we define shadows in a bicategory and
trace in a bicategory with shadows.  We also prove some basic
results about the trace and give some algebraic examples.  In
Chapters \ref{classfpsec} and \ref{classfpsec2} 
we describe topological examples of
duality and trace and show that  the Reidemeister trace as defined by 
Reidemeister and Wecken and Crabb and James can be
described using the trace in a bicategory with shadows.  We also show
that functoriality gives identifications of some of the 
forms of the Reidemeister trace.  In Chapters
\ref{fibfpsec} and \ref{fibfpsec2}
we show that many of the results of Chapters \ref{classfpsec} and 
\ref{classfpsec2}
carry over to the category of fiberwise spaces.  

Chapter \ref{reviewbicat} consists of examples of bicategories with shadows 
that either motivate or are motivated by the topological examples 
in Chapters \ref{classfpsec}, \ref{classfpsec2}, \ref{fibfpsec}, and 
\ref{fibfpsec2}.   While the earlier chapters can be read without these
examples, some of the results
and constructions we use in Chapters \ref{classfpsec}, \ref{classfpsec2},
\ref{fibfpsec}, and
\ref{fibfpsec2} have more straightforward analogues in Chapter
\ref{reviewbicat}.  In the earlier chapters we will indicate when
there is a relevant section in Chapter \ref{reviewbicat}.  Chapter
\ref{reviewbicat} can be read after Chapter \ref{whybicat}.

\begin{ack}
I would like to thank my adviser Peter May for all of his help,
interest, and encouragement.  I also thank Mohammed Abouzaid and 
Mike Shulman for many helpful conversations.

I thank Vesta Coufal, Bj{\o}rn Jahren, John Klein, Andrew Nicas, and
Bruce Williams for sharing their work with me; Johann Leida and
Julia Weber for answering questions; Julie Bergner, Tom Fiore,
and Johann Sigurdsson for their comments on previous drafts of
this thesis; and Niles Johnson
for naming the shadows.

I am very grateful to the many people who have listened to me,
encouraged me, and shared their knowledge with me.  

This research was partially supported by Lucent Technologies
through
the Graduate Research Program for Women and Minorities.
\end{ack}

\mainmatter

\renewcommand{\thetheorem}{\thesection.\arabic{theorem}}
\renewcommand{\thesection}{\thechapter.\arabic{section}}

\chapter{A review of fixed point theory}\label{reviewfp}
This chapter and the next are primarily a review of the definitions and results
from classical fixed point theory that motivate the remaining
chapters. This chapter also contains an introduction to Dold and Puppe's
definitions of duality and trace in symmetric monoidal categories.

The two invariants described in this chapter, the Lefschetz
number and the fixed point index, are examples of trace in symmetric
monoidal categories.  Since the fixed point index is zero for maps that 
have no fixed points, the Lefschetz fixed point theorem follows from the 
identification of the Lefschetz number with the index.  This identification 
is a consequence of the functoriality of the trace in symmetric monoidal
categories.

\section{Classical fixed point theory}

The Lefschetz fixed point theorem is a familiar result that relates
a local, geometric invariant to a global, algebraic invariant.  The algebraic
invariant is the Lefschetz number.

\begin{definition}Let $K$ be a field and $C_*$ a finitely generated
chain complex of vector spaces over $K$.  If $f\colon C_*\rightarrow
C_*$ is a map of chain complexes, the \emph{Lefschetz number}
\idx{Lefschetz number} of
$f$, $L(f)$\nidx{Lf@$L(f)$}, is the alternating sum of the levelwise traces.
\end{definition}

\begin{theorem}[Lefschetz Fixed Point Theorem] Let $M$ be a closed 
smooth manifold
and $f\colon M\rightarrow M$ a continuous map.  If the Lefschetz number of
\[f_*\colon H_*(M;\mathbb{Q}) \rightarrow H_*(M;\mathbb{Q})\] is nonzero
then $f$ has a fixed point.\nidx{f@$f_*$}
\end{theorem}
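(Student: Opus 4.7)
The plan is to prove the contrapositive: assuming $f$ has no fixed points, show that $L(f_*)=0$. Following Dold and Puppe's program sketched in the introduction, I would realize both $L(f_*)$ and the geometric fixed point index $I(f)$ as categorical traces in two symmetric monoidal categories related by a symmetric monoidal functor, and then deduce their equality from functoriality of trace. Since $I(f)$ is manifestly zero when $f$ has no fixed points, this yields the theorem.

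First I would establish that a closed smooth manifold $M$ is dualizable in the stable homotopy category. Embedding $M$ in some $\mathbb{R}^n$ and choosing a tubular neighborhood, the Pontryagin--Thom collapse provides a coevaluation and the diagonal of $M$ (composed with the zero section) provides an evaluation, exhibiting the Thom spectrum $M^{-TM}$ as the Spanier--Whitehead dual of $\Sigma^{\infty} M_+$. With this duality in hand, the categorical trace of $\Sigma^{\infty} f_+$ is a stable self-map of the sphere spectrum, hence an integer.

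Next I would identify this stable trace with the fixed point index $I(f)$. Unpacking the coevaluation-then-evaluation composite using the Pontryagin--Thom construction reduces the trace to a signed local count around the fixed points of (a small perturbation of) $f$, which is precisely the definition of $I(f)$. In particular, if $f$ has no fixed points, one can arrange the graph of $f$ to be disjoint from the diagonal in $M\times M$, so the relevant composite is null-homotopic and the trace vanishes on the nose.

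On the algebraic side, rational homology $H_*(-;\mathbb{Q})$ is a symmetric monoidal functor from spectra to $\mathbb{Z}$-graded $\mathbb{Q}$-vector spaces. Symmetric monoidal functors preserve dualizable objects and send the trace of an endomorphism to the trace of the induced map, so the stable trace of $\Sigma^{\infty} f_+$ is carried to the trace of $f_*$ on $H_*(M;\mathbb{Q})$, which by a direct computation in graded vector spaces equals the Lefschetz number $L(f_*)$. Combining the two identifications gives $L(f_*)=I(f)$, so $L(f_*)=0$ as desired. The main obstacle is the technical verification of Atiyah duality and the geometric unpacking of the categorical trace into the signed fixed-point count; once those inputs are in place, the conclusion is a formal consequence of the symmetric monoidal machinery.
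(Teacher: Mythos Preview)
Your proposal is correct and follows essentially the same approach as the paper: both realize the index and the Lefschetz number as the trace of $f_+$ in the stable homotopy category and in graded $\mathbb{Q}$-vector spaces respectively, then use the functoriality of trace under the rational homology functor (the paper's \myref{Ftracesym}) together with the explicit dual pair $(M_+,T\nu)$ to identify them, and conclude via the contrapositive since the index vanishes when $f$ is fixed-point free. The only cosmetic difference is that you phrase Atiyah duality in terms of the Thom spectrum $M^{-TM}$ whereas the paper works with $n$-duality and the Thom space $T\nu$.
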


Note that the Lefschetz number of the identity map is the Euler
characteristic.

Since homology is a homotopy invariant, we could replace the
conclusion of this theorem with ``then all maps homotopic to $f$
have a fixed point.''  Additionally, we can use the integers
rather than the rational numbers as our coefficients.  The
Lefschetz number of $H_*(f;\mathbb{Z})$ is defined and is equal to
the Lefschetz number of $H_*(f;\mathbb{Q})$.

To refine the Lefschetz fixed point theorem as
described in the introduction we need another invariant, the fixed
point index. There are many ways to define the index. We will use
Dold's definition using homology and fundamental classes from
\cite{doldbook, Dold1}.

For a generator $[S^n]$\nidx{Sn@$[S^n]$} of $H_n(S^n;\mathbb{Z})
\cong\mathbb{Z}$
and any pair $K\subset V\subset \mathbb{R}^n$, $V$ open and $K$
compact, there is a \emph{fundamental class}\idx{fundamental class} 
\[[S^n]_K\in H_n(V,V-K; \mathbb{Z})\] around $K$.  This class $[S^n]_K$\nidx{Snk@$[S^n]_K$} 
is the
image of $[S^n]$ under the map \[H_n(S^n;\mathbb{Z}) \rightarrow
H_n(S^n,S^n-K;\mathbb{Z})\cong H_n(V,V-K; \mathbb{Z}).\]

\begin{definition}\cite[VII.5]{doldbook} Let $V\subset \mathbb{R}^n$ be open
and $f\colon V\rightarrow \mathbb{R}^n$ be continuous.  
Assume \[F=\{x\in V|f(x)=x\}\] is
compact and let $[S^n]_F$ be the fundamental class of $F$. Then
$I_f\in\mathbb{Z}$\nidx{If@$I_f$}, 
the \emph{fixed point index}\idx{fixed point index}\idx{index!fixed point}
of $f$, is
defined by $I_f[S^n]=(\id-f)_*[S^n]_F$ where
\[(\id-f)\colon (V,V-F)\rightarrow (\mathbb{R}^n,\mathbb{R}^n-0)\] is
defined by $(\id-f)(x)=x-f(x)$.
\end{definition}

The index is additive.  If there are open sets $V_i$ such that
$\bigcup V_i=V$ and $(F\cap V_i)\cap(F\cap V_j)=\varnothing$ for
$i\neq j$, then $\sum I_{f|V_i}=I_f$.  The index is local. If
$F\subseteq W\subseteq V$ for some open set $W$, then
$I_{f|W}=I_f$.  The index is commutative.  If $V\subset
\mathbb{R}^n$, $V'\subset \mathbb{R}^m$ are open sets and
$f\colon V\rightarrow \mathbb{R}^m$, $g\colon V'\rightarrow \mathbb{R}^n$ are
continuous maps then \[\xymatrix{U=f^{-1}(V')\ar[r]^-{gf}&
{\mathbb{R}^n}&{\mathrm{and}}&U'=g^{-1}(V)\ar[r]^-{fg}&{\mathbb{R}^m}
}\] have homeomorphic fixed point sets.  If these sets are compact
$I_{fg}=I_{gf}$.

If $Y$ is any topological space and $U\subset Y$ is an open set which
is also an ENR, then every map $f\colon U\rightarrow Y$ admits a factorization
$f=\beta\alpha$ where \[\xymatrix{U\ar[r]^{\alpha}&V\ar[r]^{\beta}&Y}\]
and $V$ is open in some $\mathbb{R}^n$.  If $F_f=\{y\in U|f(y)=y\}$ is compact
then the fixed point index $I_{\alpha\beta}$ of $\alpha\beta\colon \beta^{-1}U
\rightarrow V\subset \mathbb{R}^n$ is defined and is independent of the
factorization $f=\beta\alpha$.  This number is defined to be the index
of $f$.  In particular, the index of an endomorphism of an ENR is well defined.

\begin{rmk} The index is an invariant of homotopy classes of maps, so
homotopic maps have the same index.  Additivity, localization,
commutativity, homotopy invariance along with an additional axiom,
normalization, characterize the index, see \cite[IV]{brown}.  The
normalization axiom ensures that the index agrees with the
Lefschetz number. Alternatively, in \cite[5.1]{Dold2}, a
characterization of the index is given using a variation of the
homotopy invariance axiom and a normalization axiom.
\end{rmk}

\begin{theorem}[Lefschetz-Hopf]\mylabel{lhopf}If $M$ is a 
closed smooth manifold
and $f\colon M\rightarrow M$ is a continuous map
then the index of $f$, $I_f$, equals the Lefschetz number of
\[f_*\colon H_*(M;\mathbb{Z})\rightarrow H_*(M;\mathbb{Z}).\]\end{theorem}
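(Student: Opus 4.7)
The plan is to realize both $I_f$ and $L(f_*)$ as instances of a single categorical construction, the trace in a symmetric monoidal category, and then deduce their equality from the functoriality of that trace under passage to homology. This is the Dold--Puppe strategy that the introduction has flagged as the unifying principle behind the classical fixed point theorems, and the subsequent sections of this chapter will develop exactly the categorical apparatus required.

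First I would work in the stable homotopy category. Since $M$ is a closed smooth manifold, an embedding $M\hookrightarrow\mathbb{R}^n$ with tubular neighborhood gives a Pontryagin--Thom collapse that exhibits the suspension spectrum $\Sigma^\infty M_+$ as dualizable, with dual the Thom spectrum of the stable normal bundle; the evaluation and coevaluation are built from the collapse map and the diagonal of $M$. For a continuous self-map $f\colon M\rightarrow M$, this equips $\Sigma^\infty f_+$ with a well-defined trace, which is a self-map of the sphere spectrum and hence an integer. The core computation is to show that this integer agrees with Dold's definition of $I_f$: unwinding the trace as the composite of coevaluation, $f\wedge\id$, the symmetry, and evaluation and pushing it down to the local model $V\subset\mathbb{R}^n$, one recovers the map $(\id-f)\colon (V,V-F)\rightarrow(\mathbb{R}^n,\mathbb{R}^n-0)$ acting on the fundamental class $[S^n]_F$, which is precisely how $I_f$ is defined.

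Next I would use that singular homology with integral (or rational) coefficients is a symmetric monoidal functor from the stable homotopy category to chain complexes, and on dualizable objects it is strong monoidal, so it preserves duals and traces. The trace of the induced endomorphism $H_*(f;\mathbb{Z})$ of the finitely generated chain complex $C_*(M;\mathbb{Q})$ is by definition the alternating sum of the levelwise traces, that is, $L(f_*)$. Composing the two identifications gives
\[
I_f \;=\; \tr(\Sigma^\infty f_+) \;=\; \tr(H_*(f;\mathbb{Z})) \;=\; L(f_*),
\]
as desired.

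The main obstacle is the geometric identification in the second step: showing that the abstract categorical trace on $\Sigma^\infty M_+$ really does reduce, after local analysis via an embedding into Euclidean space, to Dold's formula $I_f[S^n]=(\id-f)_*[S^n]_F$. Once this is in hand, the remainder is formal---the algebraic trace side is essentially the definition of $L(f)$, and the preservation of trace by a strong monoidal functor is a standard consequence of the axioms of duality that will be established when the trace in symmetric monoidal categories is developed in detail.
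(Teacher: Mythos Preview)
Your proposal is correct and follows essentially the same Dold--Puppe strategy the paper uses: exhibit $(M_+,T\nu)$ as a dual pair via Pontryagin--Thom, identify the topological trace of $f_+$ with the index by unwinding the explicit coevaluation and evaluation, and then invoke functoriality of trace under rational homology (where K\"unneth makes the functor strong monoidal on this dual pair) to obtain the Lefschetz number. The paper phrases things via $n$-duality rather than the full stable homotopy category and works with $\mathbb{Q}$-coefficients explicitly, but the argument is the same; your diagnosis that the geometric identification of the trace with $I_f$ is the only non-formal step matches the paper's treatment exactly.
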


The Lefschetz Fixed Point Theorem is a consequence of this theorem
since if $f$ has no fixed points the index is zero.  There is a
familiar proof of this theorem that uses simplicial homology, see
\cite[9.6]{Armstrong} or \cite[2.C]{Hatcher}. We will describe an alternative,
conceptual proof using duality in symmetric monoidal categories in
the next two sections.

\section{Duality and trace in symmetric monoidal categories}
\mylabel{doldsec}
This section is a summary of the results of \cite{Dold} that we
will generalize.  Other references for this section include
\cite[III.1]{LMS} and \cite{may}. We will define trace and duality for any
symmetric monoidal category, but our focus will be on examples in
the category of modules over a commutative ring $R$ and the stable
homotopy category.

Let $\sC$\nidx{C@$\protect\sC$} be a symmetric monoidal category with product
$\otimes$,\nidx{$\otimes$} unit object $I$,\nidx{i@$I$} 
and symmetry isomorphism $\gamma$.\nidx{gamma@$\gamma$}\idx{symmetric monoidal
category}

\begin{definition}We say that $A\in\ob \sC$ is \emph{dualizable}\idx{dualizable} 
if there is a $B\in\ob\sC$ and morphisms  $\eta\colon I\rightarrow A\otimes B$, called
\emph{coevaluation}\idx{coevaluation},\nidx{eta@$\eta$} 
and $\epsilon\colon B\otimes A \rightarrow I$,\nidx{epsilon@$\epsilon$}
called \emph{evaluation}\idx{evaluation}, in $\sC$  such that the following
composites are the identity maps
\[\xymatrix@C=40pt{A\cong I\otimes A\ar[r]^-{\eta\otimes \id_A}&A\otimes
B\otimes A\ar[r]^-{\id_A\otimes \epsilon} &A\otimes
I\cong A}\]
\[\xymatrix@C=40pt{B\cong B\otimes I\ar[r]^-{\id_B\otimes \eta}&B\otimes
A\otimes B\ar[r]^-{\epsilon \otimes \id_B}&I\otimes
B\cong B.}\] We call $B$ the \emph{dual}\idx{dual} of $A$ and we say $(A,B)$ is
a \emph{dual pair}\idx{dual pair}.\end{definition}
Note that any two duals of a dualizable object are isomorphic.

Let $R$ be a commutative ring and $\Mod_R$ be the category
of $R$-modules.  Then $\Mod_R$\nidx{Mr@$\protect\Mod_R$} is a symmetric monoidal
category using the usual tensor product over $R$.
The ring $R$ thought of as a module over itself is the unit.
The dual of a finitely generated free $R$-module $M$ is
$\Hom_R(M,R)$.  This is also a finitely generated free
$R$-module.  If $M$ has basis
$\{m_1,m_2,\ldots, m_n\}$ and dual basis $\{m'_1,m'_2,\ldots,
m'_n\}$  the coevaluation and evaluation for the dual pair,
\[\xymatrix@C=18pt{\eta\colon R \ar[r]& M\otimes_R\Hom_R(M,R)&{\mathrm{ and}}
&\epsilon\colon \Hom_R(M,R)\otimes_RM \ar[r]& R,}\]  are $R$-module
homomorphisms given by
$\epsilon(\phi,m)=\phi(m)$ and by extending the map
$\eta(1)=\sum_im_i\otimes m'_i$. If $M$ is a finitely generated
projective module it is also dualizable with dual $\Hom_R(M,R)$.
The evaluation map is $\epsilon(\phi,m)=\phi(m)$. The dual basis
theorem implies that there is a `basis' $\{m_1,m_2,\ldots, m_n\}$
of $M$ and dual `basis' $\{m_1',m_2',\ldots, m_n'\}$ of
$\Hom_R(M,R)$.  The coevaluation map is given by extending
$\eta(1)=\sum_im_i\otimes m'_i$.

Let $\Ch_R$\nidx{chr@$\Ch_R$} be the symmetric monoidal category of chain complexes
of modules over a commutative ring $R$ and chain maps.  The dualizable chain
complexes are
the chain complexes that are projective in each
degree and finitely generated. The dual of a finitely generated
projective chain complex
$M$ is $\Hom_R(M,R)$. 

\begin{theorem}\mylabel{symotherchar} Let $A$ and $B$ be objects in $\sC$ and
$\epsilon\colon B\otimes A\rightarrow I$ be a morphism in $\sC$.
Then the following are equivalent.
\begin{enumerate}[(i)]
\item $B$ is the dual of $A$ with evaluation $\epsilon$.
\item The map
$\epsilon/(-)\colon \sC(C,D\otimes B)\rightarrow
\sC (C\otimes A, D)$ which sends $f\colon C\rightarrow D\otimes B$ to
\[\xymatrix{C\otimes A \ar[r]^-{f\otimes \id}&D\otimes B\otimes A
\ar[r]^-{\id\otimes\epsilon}&D\otimes I\cong D}\] is a bijection for all 
objects
$C, D\in \sC$.\nidx{epsilon@$\epsilon/(-)$}
\end{enumerate}\end{theorem}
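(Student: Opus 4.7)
The plan is to prove (i) $\Leftrightarrow$ (ii) by exhibiting explicit maps and verifying the triangle identities. The entire content is formal manipulation of the structure morphisms.

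For (i) $\Rightarrow$ (ii), given a coevaluation $\eta\colon I\to A\otimes B$ satisfying the two triangle identities, I would write down an explicit inverse to $\epsilon/(-)$. Namely, define $\eta\backslash(-)\colon \sC(C\otimes A, D)\to\sC(C,D\otimes B)$ by sending $g\colon C\otimes A\to D$ to the composite
\[\xymatrix@C=30pt{C\cong C\otimes I\ar[r]^-{\id_C\otimes\eta}&C\otimes A\otimes B\ar[r]^-{g\otimes \id_B}&D\otimes B.}\]
Verify that $\epsilon/(-)$ and $\eta\backslash(-)$ are mutually inverse: in each direction, the composite of the two constructions produces, on the middle of the resulting string, the composite $(\id_A\otimes\epsilon)\circ(\eta\otimes\id_A)$ or $(\epsilon\otimes\id_B)\circ(\id_B\otimes\eta)$, which by the two triangle identities collapses to the identity. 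All of this uses only functoriality of $\otimes$ and the coherence isomorphisms.

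For (ii) $\Rightarrow$ (i), I would produce $\eta$ from the hypothesized bijection. Specialize the bijection with $C=I$ and $D=A$ so that $\epsilon/(-)\colon \sC(I,A\otimes B)\to\sC(I\otimes A, A)\cong\sC(A,A)$, and define $\eta\colon I\to A\otimes B$ to be the preimage of $\id_A$. Unwinding this definition is precisely the statement that $(\id_A\otimes\epsilon)\circ(\eta\otimes\id_A)=\id_A$, the first triangle identity.

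The main obstacle — and essentially the only nontrivial step — is the second triangle identity, that $\rho_B=(\epsilon\otimes\id_B)\circ(\id_B\otimes\eta)\colon B\to B$ equals $\id_B$. The strategy is to show that $\rho_B$ and $\id_B$ have the same image under the bijection $\epsilon/(-)\colon \sC(B,I\otimes B)\to\sC(B\otimes A,I)$ (taking $C=B$, $D=I$) and then invoke injectivity. The image of $\id_B$ is plainly $\epsilon$. The image of $\rho_B$ is the composite of $\rho_B\otimes\id_A$ followed by $\id_I\otimes\epsilon$, which after expanding $\rho_B$ becomes a morphism $B\otimes A\to B\otimes A\otimes B\otimes A\to I$ that inserts $\eta$ in the middle and then contracts both adjacent $B\otimes A$ pairs by $\epsilon$. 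Using coherence to regroup the factors, this composite factors through $\id_B\otimes\bigl((\id_A\otimes\epsilon)\circ(\eta\otimes\id_A)\bigr)$, and by the already-established first triangle identity the parenthesized map is $\id_A$; what remains is exactly $\epsilon$. Hence $\epsilon/\rho_B=\epsilon=\epsilon/\id_B$, so $\rho_B=\id_B$. The delicate point is purely bookkeeping with the symmetric monoidal coherence, and it is where any careful write-up will spend most of its ink.
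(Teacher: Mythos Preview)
Your argument is correct and is the standard proof of this characterization. The paper itself does not prove this theorem: it appears in the review chapter on duality and trace in symmetric monoidal categories, is stated without proof, and is immediately followed by the remark that there is a similar characterization for a coevaluation map $\eta$. So there is nothing to compare against; your write-up supplies exactly the diagram chase the paper omits, with the only substantive step---deducing the second triangle identity from the first via injectivity of $\epsilon/(-)$ at $C=B$, $D=I$---handled correctly.
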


There is a similar characterization for a map $\eta\colon I\rightarrow A\otimes
B$.

We say that a symmetric monoidal category $\sC$ is closed\idx{symmetric 
monoidal category!closed}\idx{closed!symmetric 
monoidal category} if there
is a functor \[\Hom\colon \sC^{op}\times \sC\rightarrow \sC\] and natural
isomorphisms \[\sC(A, \Hom(B,C))\cong \sC(A\otimes B,C)\cong
\sC(B,\Hom(A,C)).\]
We have displayed two isomorphisms rather than
just one for later comparison with bicategories.  From these
adjunctions, for all objects $A$ and $B$ in $\sC$ there are
coevaluation and evaluation maps
\[\xymatrix{\eta\colon A\ar[r]&\Hom(B, A\otimes
B)&{\mathrm{ and}} &  \epsilon\colon \Hom(A,B)\otimes A\ar[r]&B.}\] There
is also a natural map \[\nu \colon C\otimes \Hom(A,B)
\rightarrow \Hom(A,C\otimes B)\]\nidx{nu@$\nu$} defined as the 
adjoint of \[\xymatrix{C\otimes \Hom(A,B)\otimes A
\ar[r]^-{\id \otimes \epsilon}
&C\otimes B.}\]

\begin{theorem}\mylabel{monoidclosed}
The following are equivalent for an object $A$ of $\sC$.
\begin{enumerate}[(i)]
\item $A$ is dualizable.
\item The map $\nu \colon A\otimes \Hom(A,I) \rightarrow \Hom(A,A)$ is an isomorphism.
\end{enumerate}\end{theorem}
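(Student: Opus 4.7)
The plan is to prove each implication separately. For (i) $\Rightarrow$ (ii), suppose $A$ is dualizable with dual $B$ and evaluation $\epsilon_B \colon B \otimes A \rightarrow I$. Chaining the bijection from \myref{symotherchar} with the closed-category adjunction yields natural isomorphisms $\sC(C, D \otimes B) \cong \sC(C \otimes A, D) \cong \sC(C, \Hom(A, D))$, and Yoneda produces a natural isomorphism $D \otimes B \xrightarrow{\cong} \Hom(A, D)$. Setting $D = I$ identifies $B$ with $\Hom(A, I)$ and $\epsilon_B$ with the counit $\epsilon$, while setting $D = A$ and unwinding definitions shows that the resulting isomorphism is precisely $\nu$.

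For (ii) $\Rightarrow$ (i), I would take $B = \Hom(A, I)$ with $\epsilon$ the counit as evaluation. Writing $u \colon I \rightarrow \Hom(A, A)$ for the map adjoint to $\id_A$ under the closed adjunction, which thus satisfies $\epsilon_A \circ (u \otimes \id_A) = \id_A$ where $\epsilon_A$ is the counit $\Hom(A, A) \otimes A \rightarrow A$, I would define the coevaluation to be $\eta = \nu^{-1} \circ u$. The first triangle identity then reduces to a formal manipulation: since $\nu$ is the adjoint of $\id_A \otimes \epsilon$, one has $\id_A \otimes \epsilon = \epsilon_A \circ (\nu \otimes \id_A)$, whence $(\id_A \otimes \epsilon) \circ (\eta \otimes \id_A) = \epsilon_A \circ ((\nu \circ \eta) \otimes \id_A) = \epsilon_A \circ (u \otimes \id_A) = \id_A$.

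The main obstacle is the second triangle identity, since the hypothesis provides $\nu$ as an isomorphism only at one specific pair and so cannot be inverted uniformly. I plan to resolve this using the universal property of $\Hom(A, I)$: under the adjunction $\sC(\Hom(A, I), \Hom(A, I)) \cong \sC(\Hom(A, I) \otimes A, I)$ induced by $\epsilon$, the adjoint of $(\epsilon \otimes \id) \circ (\id \otimes \eta)$ contains two occurrences of $\epsilon$ acting on disjoint tensor factors. Exchanging their order of application via the bifunctoriality of $\otimes$ exposes the first triangle identity as a subexpression and collapses the whole composite to $\epsilon$, which is also the adjoint of $\id_{\Hom(A, I)}$; this yields the second triangle identity.
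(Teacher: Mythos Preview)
The paper states this theorem without proof: Section~\ref{doldsec} is explicitly a summary of results from Dold--Puppe \cite{Dold}, \cite[III.1]{LMS}, and \cite{may}, so there is no in-paper argument to compare against.

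Your argument is correct and is essentially the standard one. For (i)$\Rightarrow$(ii) the chain through \myref{symotherchar} and the closed adjunction is clean; the only point worth making explicit is that the Yoneda representative at $D=A$ really is $\nu$ \emph{after} the identification $B\cong\Hom(A,I)$ obtained at $D=I$, and this holds precisely because that identification is the adjoint of $\epsilon_B$, so it carries $\epsilon_B$ to the counit $\epsilon$. For (ii)$\Rightarrow$(i), your verification of the first triangle is fine, and your strategy for the second triangle works as stated: taking adjoints, the composite becomes
\[
B\otimes A \xrightarrow{\id_B\otimes\eta\otimes\id_A} B\otimes A\otimes B\otimes A \xrightarrow{\epsilon\otimes\epsilon} I,
\]
and factoring $\epsilon\otimes\epsilon$ as $(\epsilon\otimes\id_I)\circ(\id_{B\otimes A}\otimes\epsilon)$ exposes $\id_B\otimes\bigl[(\id_A\otimes\epsilon)(\eta\otimes\id_A)\bigr]=\id_B\otimes\id_A$ by the first triangle, leaving $\epsilon$ as required. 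Your observation that the hypothesis gives $\nu$ invertible only at one pair, forcing the second triangle to be checked by hand rather than by inverting $\nu$ elsewhere, is exactly the point.
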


We will call $\Hom(A,I)$ the \emph{canonical dual}\idx{canonical dual} 
of $A$.  When we used the
dual basis theorem to describe the duals in $\Mod_R$ and $\Ch_R$
we used canonical duals.

\begin{definition}For a dualizable object $A$, the \emph{trace}\idx{trace} of
$f\colon A\rightarrow A$ is the composite
\[\xymatrix{I\ar[r]^-{\eta}&A\otimes B\ar[r]^-{f\otimes \id}&
A\otimes B \ar[r]^-{ \gamma}&B\otimes A\ar[r]^-\epsilon& I.}\]\end{definition}
The trace is independent of the choice of dual for $A$.

If $\sC$ is closed the trace of an endomorphism $f$ is any of the 
three endomorphisms of $I$ in the following commutative diagram.
\[\xymatrix{I\ar[r]& \Hom(A,A)\ar[d]^{f_*}\ar[r]^{\nu^{-1}}&A\otimes \Hom(A,I)
\ar[d]^{f\otimes 1}\ar[r]^\gamma&\Hom(A,I)\otimes A\ar[d]^{1\otimes f}\\
&\Hom(A,A)\ar[r]^{\nu^{-1}}& A\otimes \Hom(A,I)\ar[r]^\gamma
&\Hom(A,I)\otimes A\ar[r]^-{\ev}&I.}\]

Let $R$ be a commutative ring and $M$ a finitely generated
projective $R$-module with `basis' $\{m_1,\ldots, m_n\}$.
The trace of a map of $R$-modules
$f\colon M\rightarrow M$ is a map $R\rightarrow R$ and the image of 1 is
\[\sum_i m_i'f(m_i).\] If $R$ is a field the image of 1 is the usual
trace of $f$ regarded as a matrix.  For a map $f\colon M\rightarrow M$ of
chain complexes, the trace
is also a map $R\rightarrow R$ and the image of 1 is \[\sum_i
(-1)^{\mathrm{deg}(m_i)}m_i'(f(m_i)),\] the Lefschetz number of $f$.
The sign comes from the sign in the
symmetry isomorphism.

Let $\sC$ and $\sC'$ be symmetric monoidal categories.  A \emph{lax monoidal
functor}\idx{lax monoidal functor}
consists of a functor \[F\colon \sC\rightarrow \sC'\] and natural transformations
\[\xymatrix{{\symf}\colon FA\otimes
FB\ar[r]& F(A\otimes B)&{\mathrm{and}}& I'\ar[r]&FI}\] subject
to the standard coherence conditions.

A lax monoidal functor is \emph{symmetric}\idx{lax symmetric monoidal
functor} if the following diagram commutes.
\[\xymatrix{F(A)\otimes F(B)\ar[r]^-{\symf}\ar[d]_{\gamma'}&F(A\otimes B)
\ar[d]^{F(\gamma)}\\
F(B)\otimes F(A)\ar[r]^-{\symf}&F(B\otimes A)}\]

\begin{lemma}\mylabel{DPmonoidal} If $A$ is a dualizable object with dual
$DA$, $F\colon \sC\rightarrow \sC'$ is a lax symmetric monoidal functor, and
${\symf}\colon FA\otimes FDA\rightarrow F(A\otimes DA)$ and $I'
\rightarrow FI$ are isomorphisms, then $FA$ and $FDA$ are a dual
pair with evaluation
\[\xymatrix{FDA\otimes FA\ar[r]^-{{\symf}}&F(DA\otimes A)\ar[r]^-{F(\epsilon)}&
FI\cong I'}\] and coevaluation\[\xymatrix{I'\cong FI\ar[r]^-{F(\eta)}&
F(A\otimes DA)
\ar[r]^{{\symf}^{-1}}&FA\otimes FDA.}\]\end{lemma}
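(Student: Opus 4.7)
The plan is to verify the two triangle identities that characterize a dual pair. By a parallel argument, it suffices to establish that the composite
\[
FA \cong I' \otimes FA \xrightarrow{\eta' \otimes 1} FA \otimes FDA \otimes FA \xrightarrow{1 \otimes \epsilon'} FA \otimes I' \cong FA
\]
is $1_{FA}$, where $\eta'$ and $\epsilon'$ are the proposed coevaluation and evaluation.

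After unfolding the definitions of $\eta'$ and $\epsilon'$, this composite is a zig-zag built from $F(\eta)$, $F(\epsilon)$, the unit isomorphism $I' \cong FI$, the invertible structure map $\symf_{A,DA}^{-1}$, and $\symf_{DA,A}$. My strategy is to rewrite this zig-zag as $F$ applied to the triangle identity
\[
A \cong I \otimes A \xrightarrow{\eta \otimes 1} A \otimes DA \otimes A \xrightarrow{1 \otimes \epsilon} A \otimes I \cong A
\]
for the dual pair $(A,DA)$ in $\sC$, which is $1_A$ by hypothesis; functoriality of $F$ then finishes the job.

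The key tool that makes the rewriting possible is the associativity coherence for the lax monoidal structure,
\[
\symf_{A \otimes DA,\, A} \circ (\symf_{A,DA} \otimes 1_{FA}) \;=\; \symf_{A,\, DA \otimes A} \circ (1_{FA} \otimes \symf_{DA,A}),
\]
evaluated on $FA \otimes FDA \otimes FA$. Combined with the hypothesis that $\symf_{A,DA}$ is invertible, this identity intertwines $(1 \otimes \symf_{DA,A}) \circ (\symf_{A,DA}^{-1} \otimes 1)$ with the pair of three-fold structure maps $\symf_{A \otimes DA,\,A}$ and $\symf_{A,\,DA \otimes A}$, and lets me eliminate the detour through $FA \otimes FDA \otimes FA$. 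The naturality of $\symf$ then absorbs the remaining structure maps, converting $F(\eta) \otimes 1$ and $1 \otimes F(\epsilon)$ into $F(\eta \otimes 1)$ and $F(1 \otimes \epsilon)$ respectively. The unit coherence of the lax monoidal functor, together with the invertibility of $I' \cong FI$, collapses the outer unit isomorphisms to the image under $F$ of the unit isomorphisms in $\sC$.

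The second triangle identity is checked in exactly the same way, with the roles of $A$ and $DA$ interchanged (symmetry of $\symf$ is not actually needed for either identity, though it becomes essential later when comparing traces). The main obstacle is bookkeeping: the only structure maps we may invert are $\symf_{A,DA}$ and $I' \to FI$, while $\symf_{A \otimes DA,\,A}$, $\symf_{A,\,DA \otimes A}$, and $\symf_{DA,A}$ are \emph{a priori} not isomorphisms. The associativity coherence above is precisely what allows the inverse of $\symf_{A,DA}$ (which does exist) to substitute for the failure of these three-fold maps to be invertible, so that every manipulation remains legal.
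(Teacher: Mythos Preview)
Your argument is correct and is exactly the routine verification the paper has in mind when it says ``This lemma follows from the definition of a dual pair.'' The paper gives no further details, so your proposal is simply a careful unpacking of that one sentence: chase the triangle identities through the lax monoidal coherence axioms, using associativity of $\symf$ to trade $(\symf_{A,DA}^{-1}\otimes 1)$ followed by $(1\otimes\symf_{DA,A})$ for the three-fold structure maps, and unit coherence (together with the invertibility of $I'\to FI$) to handle the endpoints.
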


This lemma follows from the definition of a dual pair. As
an immediate consequence of this lemma we have the following
corollary.

\begin{corollary}\mylabel{Ftracesym}Let $F\colon \sC\rightarrow
\sC'$ be a lax symmetric monoidal functor and $A$ be a dualizable object of $\sC$
with dual $DA$ such that ${\symf}\colon FA\otimes FDA\rightarrow F(A\otimes
DA)$ and $I'\rightarrow FI$ are isomorphisms.  Then
\[\mathrm{trace}(Ff)=F(\mathrm{trace}(f))\] for any endomorphism
$f$ of $A$.\end{corollary}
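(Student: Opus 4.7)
The plan is to expand both sides of the claimed equation using the definition of trace and the explicit dual pair structure on $(FA, FDA)$ provided by Lemma \ref{DPmonoidal}. On the left, $\mathrm{trace}(Ff)$ is formed using the evaluation $F\epsilon \circ \symf$ and the coevaluation $\symf^{-1} \circ F\eta$ (up to the isomorphism $I' \cong FI$), together with the symmetry $\gamma'$ of $\sC'$ and the endomorphism $Ff$. On the right, $F(\mathrm{trace}(f))$ is obtained by applying $F$ to the analogous composite in $\sC$ built from $\eta$, $f \otimes \id$, $\gamma$, and $\epsilon$. The entire proof is then the verification that these two composites agree, which amounts to sliding the functor $F$ across three steps in the middle of the trace composite.

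Concretely, first I would use naturality of $\symf$ to rewrite $\symf \circ (Ff \otimes \id_{FDA}) = F(f \otimes \id_{DA}) \circ \symf$, which moves $Ff$ past $\symf$. Next, I would invoke the lax symmetric condition, i.e., the commutative square $\symf \circ \gamma' = F(\gamma) \circ \symf$, to pull the ambient symmetry inside $F$. At this point the adjacent $\symf$ and $\symf^{-1}$ in the middle cancel, and the composite collapses to
\[
(I' \cong FI) \xrightarrow{F\eta} F(A \otimes DA) \xrightarrow{F(f \otimes \id)} F(A \otimes DA) \xrightarrow{F\gamma} F(DA \otimes A) \xrightarrow{F\epsilon} FI \cong I',
\]
which is exactly $F$ applied to the trace composite for $f$ in $\sC$, modulo the hypothesis that the unit comparison $I' \to FI$ is an isomorphism. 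Finally, using that $F$ preserves composition finishes the identification with $F(\mathrm{trace}(f))$.

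There is no real obstacle: the result is essentially a formal consequence of Lemma \ref{DPmonoidal} together with the two defining properties of a lax symmetric monoidal functor (naturality of $\symf$ and compatibility with $\gamma$). The only care required is keeping track of the coherence isomorphisms involving $I'$, $FI$, and the associators suppressed in the statement of the trace, so that the cancellation $\symf \circ \symf^{-1} = \id$ can be applied in the correct place. A commutative diagram arranging the two composites as the outer boundaries of a rectangle, subdivided by one naturality square and one symmetry square, would make the argument transparent and would be the form in which I would write it up.
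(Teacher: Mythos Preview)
Your proposal is correct and is exactly the unpacking the paper has in mind: the paper gives no detailed proof, stating only that the corollary is ``an immediate consequence'' of Lemma~\ref{DPmonoidal}. Your argument---using the coevaluation and evaluation for $(FA,FDA)$ supplied by that lemma, then invoking naturality of $\symf$ and the symmetry compatibility $F(\gamma)\circ\symf=\symf\circ\gamma'$ to collapse the composite to $F$ applied to the trace---is precisely the diagram chase implicit in that remark.
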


The K\"unneth theorem implies that the homology
functor satisfies the conditions of \myref{Ftracesym}
if $C_*$ is a finitely generated  chain
complex of projective modules, the images of the boundary maps are
projective,  and each  $H_i(C_*)$ is projective.  In particular, the
rational singular or cellular chains on a compact manifold satisfy
these conditions.

\section{Duality and trace for topological spaces}
Duality for topological spaces is an example of duality in
symmetric monoidal categories using the stable homotopy category.  We
will describe an equivalent definition that is more intuitive.  For
the perspective using the stable homotopy category
see \cite{Dold}, \cite[III]{LMS}, or \cite[15]{MS}.

\begin{definition}\cite[III.3.5]{LMS}\mylabel{topdualclassic}
Let $X$ be a compact, based topological space.
A based space $Y$ is \emph{n-dual}\idx{n-dual} 
to $X$ if there are maps
$\eta\colon S^n\rightarrow X\wedge Y$, called \emph{coevaluation},\idx{coevaluation} 
 and
$\epsilon\colon Y\wedge X\rightarrow S^n$, called \emph{evaluation},\idx{evaluation}
such that the following diagrams commute stably up to
homotopy.
\[\xymatrix{S^n \wedge X \ar[d]_{\gamma}\ar[r]^-{\eta\wedge \id}
& (X\wedge Y)\wedge X \ar[d]^\cong\\
 X\wedge S^n & X\wedge (Y\wedge X)\ar[l]^-{\id\wedge \epsilon} }
\qquad
\xymatrix{ Y \wedge S^n \ar[d]_{(\sigma\wedge \id)\gamma}\ar[r]^-{\id\wedge \eta} 
& Y \wedge (X\wedge Y) \ar[d]^\cong \\
 S^n\wedge Y & (Y\wedge X)\wedge Y\ar[l]^-{\epsilon\wedge \id}}\]
Here $\sigma\colon S^n\rightarrow S^n$ is a map of degree $(-1)^n$.\end{definition}

For compact manifolds and compact ENR's we can explicitly describe
dual pairs.

\begin{theorem}\cite[3.1]{Dold}\cite[III.5.1]{LMS}\mylabel{topexdual}
\begin{enumerate}[(i)]
\item Let $K\subset \mathbb{R}^n$
be a compact ENR.  Then $K_+$ and the cone on the inclusion of
$\mathbb{R}^n\setminus K$ into $\mathbb{R}^n$ are n-dual.
\item Let $M$ be a closed smooth manifold embedded in $\mathbb{R}^n$.  Then
$M_+$ and the Thom space of the normal bundle $\nu$\nidx{nu@$\nu$} 
of $M\rightarrow \mathbb{R}^n$ are $n$-dual.
\end{enumerate}
\end{theorem}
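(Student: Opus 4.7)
The plan is to construct explicit coevaluation and evaluation maps in part (i), verify the two diagrams of \myref{topdualclassic} by straight-line homotopies in $\mathbb{R}^n$ controlled by the ENR retraction, and then deduce part (ii) from (i) using a tubular neighborhood.

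For (i), since $K$ is a compact ENR, I would first choose an open neighborhood $V \supset K$ in $\mathbb{R}^n$ together with a retraction $r \colon V \to K$. Let $C$ denote the mapping cone of the inclusion $\mathbb{R}^n \setminus K \hookrightarrow \mathbb{R}^n$, and write $j \colon \mathbb{R}^n \hookrightarrow C$ for the canonical inclusion; then $j(\mathbb{R}^n \setminus K)$ is contractible in $C$ via the cone coordinate. I would define the evaluation
\[\epsilon \colon C \wedge K_+ \longrightarrow S^n = \mathbb{R}^n_+ \qquad \text{by} \qquad (j(v), k) \longmapsto v - k,\]
using the cone coordinate together with a bump function supported in $V$ to slide $(j(v), k)$ with $v$ outside $V$ to the point at infinity. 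I would define the coevaluation
\[\eta \colon S^n \longrightarrow K_+ \wedge C \qquad \text{by} \qquad v \longmapsto (r(v), j(v))\]
for $v$ in a smaller neighborhood of $K$ where $r$ is defined, tapering the second factor to the cone tip via the cone coordinate as $v$ leaves this neighborhood.

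Next I would check the triangle diagrams of \myref{topdualclassic}. Tracing through the definitions, the first diagram reduces to showing that
\[(v, k) \longmapsto (r(v), v - k) \quad \text{is stably homotopic to} \quad \gamma(v, k) = (k, v)\]
as maps $S^n \wedge K_+ \to K_+ \wedge S^n$. The straight-line homotopy
\[h_t(v, k) = \bigl( r\bigl((1-t) r(v) + t k\bigr), \; v - (1-t) k \bigr),\]
performed on a neighborhood of $K$ small enough that the segment from $r(v)$ to $k$ remains inside $V$ (possibly after suspending once or twice to gain room), connects the two. The second triangle identity is analogous. For (ii), the tubular neighborhood theorem identifies an open neighborhood $V$ of $M$ in $\mathbb{R}^n$ with the total space of the normal bundle $\nu$, and the bundle projection $\nu \to M$ plays the role of the ENR retraction. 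By excision, the mapping cone of $\mathbb{R}^n \setminus M \hookrightarrow \mathbb{R}^n$ is homotopy equivalent to the mapping cone of $\nu \setminus M \hookrightarrow \nu$, which deformation retracts fiberwise to the Thom space $T\nu = D\nu / S\nu$. Transporting the maps of (i) across this equivalence produces the duality between $M_+$ and $T\nu$.

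The main obstacle is the triangle identities in (i): one must ensure the straight-line homotopies stay inside $V$ (which can fail when $K$ is disconnected and the segment from $r(v)$ to $k$ must cross several components of $V$), respect the basepoints of $S^n$ and the cone point of $C$, and use only the fact that commutativity is required \emph{stably}, so that suspension provides the extra room needed to smooth out any defect near the boundary of $V$ where the bump-function cutoff takes effect.
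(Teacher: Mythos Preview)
The paper does not actually prove this theorem; it is stated with citations to \cite[3.1]{Dold} and \cite[III.5.1]{LMS}, and the paper only \emph{describes} the coevaluation and evaluation maps for the manifold case~(ii) without verifying the triangle identities. Moreover, the paper's description of the maps in~(ii) is given directly rather than by reduction to~(i): the coevaluation is the Pontryagin--Thom collapse $S^n \to T\nu$ followed by the Thom diagonal $T\nu \to M_+ \wedge T\nu$, and the evaluation is the Pontryagin--Thom collapse for the embedding $M \xrightarrow{\triangle} M \times M \xrightarrow{\sect \times \id} \nu \times M$ followed by the projection $M_+ \wedge S^n \to S^n$.

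Your approach is essentially the classical one from the cited references, so there is no conflict --- you are reconstructing the proof the paper defers to the literature. A couple of small points worth tightening: first, your homotopy $h_t$ requires the straight segment from $r(v)$ to $k$ to lie in $V$, and for a general ENR this is not automatic; the standard fix is not suspension but rather to replace the linear interpolation by a local homotopy of the type in \myref{localcont} (which the paper invokes repeatedly for exactly this purpose). Second, in saying the second triangle is ``analogous'' you should note that the definition of $n$-duality in \myref{topdualclassic} inserts the degree-$(-1)^n$ map $\sigma$ on that side, so the analogous homotopy must produce that sign; it appears because the relevant linear homotopy on the sphere coordinate reverses orientation. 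Finally, your deduction of~(ii) from~(i) via the tubular neighborhood is correct, though the paper (and most later sections) prefer to write down the Pontryagin--Thom maps for~(ii) directly, since those explicit formulas are what get used downstream.
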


As usual, $M_+$\nidx{M@$M_+$} is $M$ with a disjoint base point added.
We will denote the Thom space of the embedding of $M$ in $\mathbb{R}^n$
by $T\nu$\nidx{Tnu@$T\nu$}.

Many of the explicit characterizations of dual pairs for
topological spaces can be stated in this form.  Compact ENR's are
the natural generality, but compact manifolds are the practical
generality. Since the duality maps for manifolds are easier to 
describe, we will state most results in that form but there are
also generalizations to compact ENR's.

The coevaluation map for the dual pair $(M_+,T\nu )$
\[S^n\rightarrow T\nu \rightarrow M_+\wedge T\nu ,\]  is the composite
of the Pontryagin-Thom map for the normal bundle of the embedding
$M\rightarrow
S^n$ and the Thom diagonal. If $\sect\colon M\rightarrow \nu $ is the zero section,
the evaluation map \[T\nu \wedge M_+
\rightarrow M_+\wedge S^n\rightarrow S^n\] is the composite of the
Pontryagin-Thom map associated to a tubular neighborhood of
\[M\stackrel{\triangle}{\rightarrow}M\times M \stackrel{
\sect\times \id}{\rightarrow}\nu \times M\] and the projection $M_+\wedge
S^n\rightarrow S^n$.

Let $\{-,-\}$\nidx{$\{-,-\}$} denote the stable maps of based spaces.
From the characterizations of dual pairs in \myref{symotherchar}
and \myref{topexdual} we have the following corollary.
\begin{corollary}If $M$ is a closed smooth manifold embedded in $\mathbb{R}^n$
and $Z$ and $W$ are based spaces then 
\[\{Z\wedge M_+,W\}\cong \{S^n\wedge Z,W\wedge T\nu\}.\]
\end{corollary}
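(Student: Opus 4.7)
The plan is to combine \myref{topexdual} and \myref{symotherchar} in the stable homotopy category, exploiting the fact that the spheres $S^n$ are invertible under $\wedge$. By \myref{topexdual}(ii), the pair $(M_+, T\nu)$ is $n$-dual, with evaluation $\epsilon\colon T\nu\wedge M_+\to S^n$ and coevaluation $\eta\colon S^n\to M_+\wedge T\nu$. In the stable homotopy category this $n$-duality is the same thing as exhibiting $\Sigma^{-n}T\nu$ as an honest dual of $M_+$, with evaluation $\Sigma^{-n}\epsilon\colon \Sigma^{-n}T\nu\wedge M_+\to S^0$ and coevaluation $\Sigma^{-n}\eta\colon S^0\to M_+\wedge \Sigma^{-n}T\nu$; the triangle identities of \myref{topdualclassic} become, after desuspending, the usual triangle identities for a dual pair in a symmetric monoidal category.

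With this reformulation, \myref{symotherchar} applied to the symmetric monoidal stable homotopy category with $A=M_+$, $B=\Sigma^{-n}T\nu$, $C=Z$, and $D=W$ produces a natural bijection
\[
\epsilon/(-)\colon \{Z,\,W\wedge \Sigma^{-n}T\nu\}\;\xrightarrow{\cong}\; \{Z\wedge M_+,\,W\}.
\]
The suspension isomorphism (i.e.\ the invertibility of $S^n$ under $\wedge$) rewrites the left-hand side as
\[
\{Z,\,W\wedge \Sigma^{-n}T\nu\}\cong \{Z,\,\Sigma^{-n}(W\wedge T\nu)\}\cong \{\Sigma^n Z,\,W\wedge T\nu\}\cong \{S^n\wedge Z,\,W\wedge T\nu\}.
\]
Composing with $\epsilon/(-)$ and inverting gives exactly the asserted isomorphism $\{Z\wedge M_+,W\}\cong \{S^n\wedge Z,\,W\wedge T\nu\}$.

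The only real obstacle is the bookkeeping needed to reconcile the $n$-shifted definition of duality in \myref{topdualclassic} with the unshifted symmetric monoidal definition used in \myref{symotherchar}. Once one observes that $\Sigma$ is an autoequivalence of the stable homotopy category, so that $S^n$ is $\wedge$-invertible and any $n$-dual pair $(X,Y)$ with evaluation landing in $S^n$ is the same data as a $0$-dual pair $(X,\Sigma^{-n}Y)$, the corollary reduces to a direct application of the two cited theorems and no further topological input is required.
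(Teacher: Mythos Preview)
Your proof is correct and follows exactly the approach the paper intends: the paper simply states that the corollary follows from the characterizations in \myref{symotherchar} and \myref{topexdual}, and your argument is a faithful unpacking of that, including the standard bookkeeping that passes from $n$-duality to genuine duality in the stable homotopy category via $\Sigma^{-n}$.
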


\begin{definition} Let $X$ be a space with $n$-dual $Y$ and coevaluation
and evaluation maps
$\eta\colon S^n\rightarrow X\wedge Y$ and $\epsilon\colon  Y\wedge X\rightarrow S^n$.  If
$f\colon X\rightarrow X$ is a continuous map, then the \emph{trace}
\idx{trace} of $f$ is
the stable homotopy class of the map
\[\xymatrix{S^n\ar[r]^-\eta&X\wedge Y\ar[r]^-{f\wedge\id}&X\wedge Y
\ar[r]^-{\gamma}&Y\wedge X\ar[r]^-{\epsilon}&S^n.}\]\end{definition}

By examining the explicit duality maps for a compact
manifold $M$ (or a compact ENR) we can see that for $f\colon M\rightarrow M$,
\[\mathrm{index}(f)=\tilde{H}_n(\mathrm{trace}(f_+);\mathbb{Q}).\]
This is described in detail in \cite{Dold, Dold1, Dold2}.

Applying rational homology to the coevaluation and evaluation maps of
the dual pair $(M_+,T\nu )$ we get a pair of maps
\[\eta\colon \mathbb{Q}\rightarrow\sum_{i}
\tilde{H}_i(M_+;\mathbb{Q})\otimes \tilde{H}_{n-i}(T\nu
;\mathbb{Q})\] and  \[\epsilon\colon \sum_{i} \tilde{H}_{n-i}(T\nu
;\mathbb{Q})\otimes \tilde{H}_i(M_+;\mathbb{Q}) \rightarrow
\mathbb{Q}.\]  Since these maps come from the dual pair $(M_+,T\nu
)$ they are coevaluation and evaluation maps that make
$(\tilde{H}_i(M_+;\mathbb{Q}), \tilde{H}_{n-i}(T\nu;\mathbb{Q}
) )$ a dual pair. The trace
 of a map does not depend on the choice of dual pair, so the
trace of $\tilde{H}_*(f_+;\mathbb{Q})$ with respect to this dual pair is
the Lefschetz number.  \myref{lhopf} follows from this observation:
\[\mathrm{index}(f)=\mathrm{Lefschetz\,\, number\,\, of \,}f.\]

\section{Duality and trace for fiberwise topological spaces}

We can also define trace and duality in the symmetric monoidal
category of ex-spaces \idx{ex-space}
over a fixed space $B$.  The objects in this
category are spaces $E$ with maps
$B\stackrel{\sect }{\rightarrow}E\stackrel{\pro }{\rightarrow} B$ such that
$\pro \circ \sect $\nidx{s@$\sect$}\nidx{p@$\pro$} 
is the identity map of $B$.  The map $\sect $ is called the
\emph{section}\idx{section} and $\pro $ is called the 
\emph{projection}\idx{projection}.  The morphisms are maps
$E\rightarrow E'$ that commute with the section and projection.
The product is the internal smash product, $\wedge_B$\nidx{$\bar{\wedge}$}.  
Given two
ex-spaces $X$ and $Y$ over $B$, we can form the pullback along the
maps to $B$, $X\times_BY$\nidx{$\times_B$}, and the pushout along the maps from
$B$, $X\vee_BY$\nidx{$\vee_B$}.  The \emph{internal smash product}\idx{internal
smash product} is the pushout
\[\xymatrix{X\vee_BY\ar[r]\ar[d]&X\times_BY\ar[d]\\
B\ar[r]&X\wedge_BY.}\]

\begin{rmk}\mylabel{exspaceconditions}
For all ex-spaces we require that the  
base space and total space are of the homotopy type of CW complexes,
the projection map is a Hurewicz fibration and the section is a
fiberwise cofibration.  The category of these spaces and the fiberwise homotopy 
classes of maps is equivalent to the model theoretic homotopy category
of ex-spaces defined in \cite{MS}.  See \cite[9.1.2]{MS}.  

If we need to consider an ex-space that does not satisfy these conditions
we will replace it with an equivalent space that does satisfy our requirements.
We will not indicate the replacement in the notation.
\end{rmk}

\begin{definition} Let $X$ be an ex-space over $B$.  An
ex-space $Y$ is \emph{$n$-dual}\idx{fiberwise n-dual}\idx{n-dual!fiberwise} 
to $X$ if there are fiberwise
stable maps
\[\eta\colon S^n\times B\rightarrow X\wedge_BY\hspace{1cm}
\mathrm{and}\hspace{1cm}
\epsilon\colon Y\wedge_BX \rightarrow S^n\times B\] 
such that 
\[\xymatrix{S^n \wedge X \ar[d]_{\gamma}\ar[r]^-{\eta\wedge \id}
& (X\wedge_B Y)\wedge_B X \ar[d]^\cong\\
 X\wedge S^n & X\wedge_B (Y\wedge_B X)\ar[l]^-{\id\wedge \epsilon} }
\qquad
\xymatrix{ Y \wedge S^n \ar[d]_{(\sigma\wedge \id)\gamma}\ar[r]^-{\id\wedge \eta} & Y \wedge_B (X\wedge_B Y) \ar[d]^\cong \\
 S^n\wedge_B Y & (Y\wedge_B X)\wedge_B Y\ar[l]^-{\epsilon\wedge \id}}\]
 commute up to stable fiberwise homotopy.
\end{definition}

This definition is very similar to the definition of $n$-duality in the
category of based topological spaces.  Using
parametrized spectra, this definition can be expressed as duality
in a symmetric monoidal category, see \cite{Dold, MS}.
We also have explicit characterizations of dual pairs similar to those
in \myref{topexdual}.

\begin{theorem}\cite[II.12.18]{CrabbJames}\cite[6.1]{Dold}\cite[15.1.1]{MS}\mylabel{fibdualclas}
\begin{enumerate}
\item Let $L$ be an $ENR_B$ over a paracompact space $B$
such that $\pro \colon L\rightarrow B$ is proper.
Then $L_+=L\amalg B$\nidx{L@$L_+$} 
and the cone of $(B\times \mathbb{R}^n)\setminus L
\rightarrow B\times \mathbb{R}^n$ are an $n$-dual pair.
\item  Let $N$ be an ex-space over $B$ that satisfies 
the conditions of \myref{exspaceconditions}.  Then $N$ is dualizable as an 
ex-space if and only if
$\pro ^{-1}(b)$ for $b\in B$ is dualizable in the sense of 
\myref{topdualclassic}.   
\end{enumerate}
\end{theorem}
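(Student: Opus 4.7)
The plan is to prove both parts by adapting the classical constructions in \myref{topexdual} to the parametrized setting. For part (i), I would embed $L$ as a closed subspace of $B\times \mathbb{R}^n$ and use the $ENR_B$ hypothesis to choose an open fiberwise neighborhood $U$ with a fiberwise retraction $\fibm\colon U\to L$; properness of $\pro\colon L\to B$ lets me arrange that $U$ has compact fibers locally over $B$. Writing $C$ for the cone of $(B\times \mathbb{R}^n)\setminus L\hookrightarrow B\times \mathbb{R}^n$, I would define
\[\eta\colon S^n\times B\longrightarrow L_+\wedge_B C\]
as the composite of a fiberwise Pontryagin--Thom collapse with the ``Thom diagonal'' $x\mapsto (\fibm(x),x)$, and
\[\epsilon\colon C\wedge_B L_+\longrightarrow S^n\times B\]
by pairing points of $L$ with nearby points of $B\times \mathbb{R}^n$ and crushing the rest. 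On each fiber these maps specialize to the classical Dold--Puppe construction recalled in the previous section, so the triangle identities hold fiberwise; paracompactness of $B$ should then let me patch the fiberwise stable homotopies into global ones.

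For the forward direction of part (ii), I would appeal to functoriality. The restriction $(-)_b\colon \Ex_B\to \Top_*$ sending an ex-space $X$ to $\pro^{-1}(b)/\sect(b)$ is symmetric monoidal with structure maps $(X\wedge_B Y)_b\cong X_b\wedge Y_b$ and $(S^n\times B)_b\cong S^n$ which are honest isomorphisms (using the Hurewicz fibration and fiberwise cofibration hypotheses of \myref{exspaceconditions}). Then \myref{DPmonoidal} immediately transports a dual of $N$ to a dual of $N_b$ in the based sense of \myref{topdualclassic}.

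The main obstacle is the converse: fiberwise dualizability forcing dualizability in $\Ex_B$. My plan is to use that each dualizable $N_b$ has the homotopy type of a finite based CW complex, so that the conditions of \myref{exspaceconditions} together with paracompactness of $B$ allow me to replace $N$, up to fiberwise equivalence, by a compact $ENR_B$ embedded in some $B\times \mathbb{R}^n$. Local fiberwise embeddings over a trivializing open cover of $B$ would be assembled using a partition of unity, with the Hurewicz fibration and fiberwise cofibration hypotheses ensuring that the assembly preserves sections and projections up to fiberwise homotopy. Once such an embedding is in hand, part (i) supplies the required dual and duality data. The delicate step is coherence of the gluing; it is here that the full strength of the ex-space conditions is needed.
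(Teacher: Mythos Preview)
The paper does not prove this theorem. It is stated as a cited result from \cite[II.12.18]{CrabbJames}, \cite[6.1]{Dold}, and \cite[15.1.1]{MS}, and is used as input for the subsequent discussion; no argument is given in the paper itself. So there is no ``paper's own proof'' to compare your proposal against.

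As for the sketch itself: your outline for part (i) and for the forward implication in part (ii) is essentially the standard argument and is fine. The converse in (ii) is the substantive part, and here your plan has a genuine gap. You propose to replace $N$ by a compact $ENR_B$ embedded in some $B\times\mathbb{R}^n$ by assembling local embeddings over a ``trivializing open cover'' via a partition of unity. But the hypotheses of \myref{exspaceconditions} give only that $\pro$ is a Hurewicz fibration with cofibrant section and CW homotopy types; there is no local triviality to trivialize over, and dualizability of each fiber (hence finite CW type of each $N_b$) does not by itself produce compatible local models that glue. The proof in \cite[15.1.1]{MS} does not go through an $ENR_B$ embedding at all: it works in the homotopy category of parametrized spectra and uses a cofiber-sequence/compact-generation argument to show that fiberwise invertibility of the canonical duality map forces global invertibility. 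If you want to stay on the space level, the Crabb--James argument is closer in spirit to what you describe, but it too requires more than a partition-of-unity patching of embeddings; you would need to justify that step carefully or switch to the stable-category argument.
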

In particular, if $E\rightarrow B$ is a fiber bundle with compact 
smooth manifold fibers the ex-space $E_+$ is dualizable.

The definition of trace for a fiberwise map is identical to the
definition of trace for a map of based spaces.

\begin{definition} Let $X$ be an $n$-dualizable ex-space over $B$ with dual $Y$ and
$f\colon X\rightarrow X$ be a fiberwise map over $B$.  The
\emph{trace}\idx{trace} 
of $f$ is the fiberwise stable homotopy class of the composite
\[\xymatrix{B\times S^n\ar[r]^\eta&X\wedge_BY\ar[r]^{f\wedge
\id}&X\wedge_BY \ar[r]^\gamma&Y\wedge_BX\ar[r]^{\epsilon}&B\times
S^n.}\]\end{definition}

When $X$ is a compact fiberwise ENR over a compactly generated
paracompact base space, this is the fiberwise Dold index of $f$ as
defined in \cite{Dold2}.

\begin{rmk} There is a category $\sF$\nidx{F@$\protect\sF$} 
with objects ex-spaces over $B$ as
before but whose morphisms are pairs of maps $f\colon E\rightarrow E$ and $
\bar{f}\colon B\rightarrow B$ such that
\[\xymatrix{B\ar[r]^{\bar{f}}\ar[d]^\sect &B\ar[d]^\sect \\
E\ar[r]^f\ar[d]^\pro &E\ar[d]^\pro \\
B\ar[r]_{\bar{f}}&B}\]
commutes.  The category of ex-spaces is the subcategory of $\sF$ with
the same objects whose morphisms are the pairs
$(f,\bar{f})$ where $\bar{f}$ is the identity.

Fixed point theory in $\sF$ and in the category of ex-spaces are
very different.  We will not discuss fixed point theory in  $\sF$;
some references for it include \cite{HKW, Lee}.
\end{rmk}

\chapter{The converse to the Lefschetz fixed point theorem}\label{reviewfp2}
The invariants described in the previous chapter only give a converse to the 
Lefschetz fixed point theorem under additional hypotheses.  The crucial
assumption is that the spaces are simply connected.  We can relax this 
assumption by 
changing the invariant.  The Nielsen number and various forms of the 
Reidemeister trace are choices for this refined invariant.

The Reidemeister trace, in any of its forms,  
is not an example of trace in symmetric
monoidal categories.  The reasons for this incompatibility will
become clear as we define these invariants and compare different
features of these invariants in this chapter and in the following
chapters.

Despite the many differences between the Reidemeister trace and
trace in symmetric monoidal categories it will also become clear
that the Reidemeister trace can be described using a trace very
much like the trace in a symmetric monoidal category. The structure
suggested in this chapter and the next chapter is explicitly
described in Chapters \ref{whybicat}, \ref{classfpsec}, 
and \ref{classfpsec2}.

\section{The Nielsen number}

To define the index we needed to assume that the fixed point set
\[F= \{x\in M|f(x)=x\}\] is compact; now we will also assume that it
is discrete. All invariants discussed here are invariants of
homotopy classes of maps so choosing a homotopic representative
doesn't change the invariant. If $M$ is a manifold, transversality
implies that any endomorphism of $M$ is homotopic to a map with a
discrete fixed point set.

\begin{definition} Two fixed points of $f\colon M\rightarrow M$, $x$ and $y$,
are in the same \emph{fixed point class}\idx{fixed point class} 
if there is a lift of $f$
to \[\tilde{f}\colon  \tilde{M}\rightarrow \tilde{M}\] on the universal
cover of $M$ and lifts of $x$ and $y$ to $\tilde{x}$ and
$\tilde{y}$ in $\tilde{M}$ such that $\tilde{f}(\tilde{x})=
\tilde{x}$ and $\tilde{f}(\tilde{y})=\tilde{y}$.\end{definition}

There is an equivalent definition of fixed point classes using
paths in $M$ rather than the universal cover.

\begin{lemma}Two fixed points $x$ and $y$  of $f\colon M\rightarrow M$ are in
the same fixed point class if and only if there is a path $\alpha$
from $x$ to $y$ such that $\alpha$ is homotopic to $f(\alpha)$
with endpoints fixed. \end{lemma}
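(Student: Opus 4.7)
The plan is to prove the two directions by passing freely between paths in $M$ and their lifts to $\tilde{M}$, using the standard unique path-lifting property of the covering $p\colon \tilde{M}\to M$ together with the fact that $\tilde{M}$ is simply connected.

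For the forward direction, suppose a lift $\tilde{f}\colon \tilde{M}\to\tilde{M}$ and lifts $\tilde{x},\tilde{y}$ with $\tilde{f}(\tilde{x})=\tilde{x}$ and $\tilde{f}(\tilde{y})=\tilde{y}$ are given. I would pick any path $\tilde{\alpha}$ in $\tilde{M}$ from $\tilde{x}$ to $\tilde{y}$ and set $\alpha=p\circ\tilde{\alpha}$, so $\alpha$ runs from $x$ to $y$. The path $\tilde{f}\circ\tilde{\alpha}$ also runs from $\tilde{x}$ to $\tilde{y}$, and since $\tilde{M}$ is simply connected it is homotopic to $\tilde{\alpha}$ rel endpoints. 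Projecting this homotopy down to $M$ gives a homotopy $\alpha\simeq f(\alpha)$ with endpoints fixed, as required.

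For the backward direction, suppose $\alpha$ is a path from $x$ to $y$ in $M$ with $\alpha\simeq f(\alpha)$ rel endpoints. Choose any lift $\tilde{x}\in p^{-1}(x)$. There is some lift $\tilde{f}_0$ of $f$; then $\tilde{f}_0(\tilde{x})$ lies in $p^{-1}(f(x))=p^{-1}(x)$, so it differs from $\tilde{x}$ by a unique deck transformation $\gamma$, and $\tilde{f}:=\gamma^{-1}\tilde{f}_0$ is a lift of $f$ with $\tilde{f}(\tilde{x})=\tilde{x}$. Now let $\tilde{\alpha}$ be the unique lift of $\alpha$ starting at $\tilde{x}$, and let $\tilde{y}$ denote its endpoint, which lies over $y$. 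The path $\tilde{f}\circ\tilde{\alpha}$ starts at $\tilde{f}(\tilde{x})=\tilde{x}$ and projects to $f(\alpha)$, so by uniqueness of path lifting it is the lift of $f(\alpha)$ starting at $\tilde{x}$. By the homotopy lifting property applied to the endpoint-fixing homotopy $\alpha\simeq f(\alpha)$, this lift must end at the same point as $\tilde{\alpha}$, namely $\tilde{y}$. Therefore $\tilde{f}(\tilde{y})=\tilde{y}$.

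The only step that requires care is constructing the lift $\tilde{f}$ that fixes $\tilde{x}$ in the backward direction; once the correct lift is in hand, the rest follows directly from the unique lifting of paths and homotopies. Otherwise, both directions are essentially a matter of projecting or lifting the appropriate path and invoking simple connectivity of $\tilde{M}$.
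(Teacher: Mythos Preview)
Your argument is correct and is the standard proof of this classical equivalence. The paper does not supply a proof of this lemma; it is stated as a known fact (with the usual references being Brown \cite{brown} or Jiang \cite{jiang}), so there is nothing to compare against. Your two directions are exactly how one proves it: project a path between the lifted fixed points and use simple connectivity of $\tilde{M}$ for one direction, and for the other lift the path, adjust the lift of $f$ by a deck transformation to fix $\tilde{x}$, and use uniqueness of path and homotopy lifting to conclude that $\tilde{y}$ is fixed as well.
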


We can give another interpretation of this lemma in terms of the
fundamental groupoid, $\Pi M$\nidx{pim@$\Pi M$}, of $M$.  Let $\mathrm{Fix}$
\nidx{Fix@$\protect\mathrm{Fix}$} be the
groupoid defined by the equalizer \[\xymatrix{{\mathrm{Fix}}
\ar[r]&\Pi M\ar@<.5ex>[r]^-{\id}\ar@<-.5ex>[r]_-f &\Pi M.}\] The
objects of $\mathrm{Fix}$ are the fixed points of $f$ and the
morphisms are the homotopy classes of paths with $[\alpha]=[f\circ
\alpha]$. If we think of the fixed points as a discrete category,
this category includes into the category $\mathrm{Fix}$. The lemma
shows that two fixed points are in the same fixed point class if
and only if their images are in the same connected component of
$\mathrm{Fix}$.

Since the fixed point set is assumed to be compact and discrete,
it must be finite.  Let $F_1,F_2,\ldots, F_k$\nidx{Fi@$F_i$} be the fixed point
classes of $f$.  This is also a finite set, $\cup F_i=F$, and
$F_i\cap F_j=\emptyset$ if $i\neq j$.

Let $f\colon M\rightarrow M$ be a continuous map with a compact and
discrete, and hence finite, 
fixed point set.  For a fixed point class $F_i$, let
$V_i$ be an open set in $M$ such that $F_i\subset V_i$ and
$V_i\cap F_j$ is empty if $i\neq j$.  Define the index of $F_i$\idx{index!of
a fixed point class},
$i(F_i)$\nidx{ifi@$i(F_i)$}, to be the index of $f|V_i$.  Since the index is local
this is well defined.

\begin{definition} The \emph{Nielsen number}\idx{Nielsen number} 
of $f$, $N(f)$\nidx{N@$N$},
is the number of fixed point classes with nonzero index.
\end{definition}

\begin{theorem}[\myref{nielsen1}]\mylabel{stdconverse} The Nielsen number
of a continuous endomorphism of a closed smooth manifold of dimension at
least three is zero if and only if the map is homotopic to a map
with no fixed points.\end{theorem}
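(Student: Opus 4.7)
The easy implication is homotopy invariance. If $f \simeq g$ with $g$ fixed-point-free, then $g$ has no fixed point classes and so $N(g) = 0$; since both the partition into fixed point classes and the index of each class depend only on the homotopy class of $f$, we get $N(f) = N(g) = 0$.

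For the hard direction, suppose $N(f) = 0$. The plan is a Wecken-style elimination argument, removing fixed points one class at a time. First, by transversality I homotope $f$ to a smooth map whose graph meets the diagonal of $M \times M$ transversely, so the fixed point set $F$ is finite. Partition $F$ into its fixed point classes $F_1, \dots, F_k$. Because $N(f) = 0$, each class has index $i(F_i) = 0$; so within each class I can pair up fixed points of opposite nonzero local index, and dispose of any leftover index-zero points by a purely local homotopy supported in a Euclidean ball (Hopf's observation that a map of $\mathbb{R}^n$ with a single fixed point of index zero is homotopic rel boundary to a fixed-point-free map).

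The main task is to cancel a pair $x, y \in F_i$ of opposite local index. By definition of fixed point class there is a path $\alpha$ from $x$ to $y$ in $M$ with $\alpha$ homotopic rel endpoints to $f \circ \alpha$. Using $\dim M \geq 3$, I perturb $\alpha$ to an embedded arc meeting $F$ only in $\{x, y\}$, and arrange that $\alpha$ and $f \circ \alpha$ cobound an embedded disk. A standard homotopy of $f$ supported in a tubular neighborhood of this disk (the Hopf--Wecken coalescence move) slides $y$ along $\alpha$ until, after the homotopy, $x$ and $y$ lie together inside a small Euclidean ball $B$, while no new fixed points are created outside $B$. Inside $B$ the modified $f$ now has two nearby fixed points of opposite local index, hence total index zero, and a further homotopy supported in $B$ removes both via Hopf's local cancellation lemma. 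Iterating this move pair by pair, class by class, produces the desired homotopy from $f$ to a fixed-point-free map.

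The hard part is controlling the coalescence move in the middle step: one must be sure that pushing $y$ along $\alpha$ does not produce new fixed points outside a small neighborhood of $\alpha$, and does not disturb fixed points in other classes. This is where the dimension hypothesis is essential. In dimension at least three there is enough room to put $\alpha$ and $f \circ \alpha$ in general position, to make them cobound an embedded disk, and to keep the support of the coalescence homotopy disjoint from $F \setminus \{x, y\}$. In dimension two these general position arguments fail, and indeed the analogous statement is known to be false there; so the geometric control on paths in $M$ provided by $\dim M \geq 3$ is the true heart of the argument.
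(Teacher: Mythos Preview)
Your proof is a correct outline of the classical Wecken argument, which the paper itself explicitly acknowledges as ``the standard proof'' using ``simplicial techniques'' (citing \cite[VIII]{brown}). However, the paper's own proof takes a genuinely different route. Rather than manipulating fixed points geometrically, the paper works through the equivalent Reidemeister trace formulation (noting that $N(f)=0$ iff $R^{geo}(f)=0$), interprets $R^{geo}(f)$ as a trace in a bicategory with shadows, and then identifies it with the Klein--Williams invariant $R^{KW}(f)$ via \myref{geohtpycompare} and \myref{KWclassicfinal}. The vanishing of $R^{KW}(f)$ is in turn equivalent to the existence of a section of a certain fibration (\myref{fptosec}, \myref{firstform}), and it is this obstruction-theoretic argument --- a Blakers--Massey/Freudenthal computation rather than a coalescence move --- that supplies the converse under the dimension hypothesis.

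What each approach buys: your Wecken argument is direct, elementary, and in fact proves the sharper statement that $N(f)$ equals the minimum number of fixed points in the homotopy class. The paper's route is more abstract but is designed to generalize: the whole point of the monograph is that the Klein--Williams/bicategorical framework extends to the fiberwise setting (\myref{reidemeister2}), where Wecken-style cancellation is not available and the na\"{\i}ve fiberwise Nielsen number fails to give a converse (Scofield's example). One small quibble with your sketch: the phrase ``arrange that $\alpha$ and $f\circ\alpha$ cobound an embedded disk'' overstates what is needed and what is actually done in the Wecken proof --- the coalescence homotopy is built in a tubular neighborhood of the embedded arc $\alpha$ using the homotopy $\alpha\simeq f\circ\alpha$, without requiring that homotopy to be realized by an embedded $2$-disk.
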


The standard proof of this result uses simplicial techniques, see
\cite[VIII]{brown}. That proof shows that the theorem holds for
simplicial complexes where the star of each vertex is connected.
We will give a more conceptual proof in Chapter \ref{classfpsec2}.

The dimension hypothesis is necessary. 
In \cite{jiang2}, Jiang constructed
an endomorphism for any two dimensional connected manifold with negative
Euler characteristic that is not homotopic to a fixed point free
map but whose Nielsen number is zero.

\section{The geometric Reidemeister trace}\label{geotracesec}

Using the fixed point classes and the index of a continuous map
$f\colon M\rightarrow M$ we can also define the geometric Reidemeister
trace.  This invariant contains all of the information in the
Nielsen number so it can also be used to prove a converse to the
Lefschetz Fixed Point Theorem.

Choose a base point $\ast$ in $M$, a path $\basepath$\nidx{zeta@$\protect\basepath$} 
from $\ast$ to
$f(\ast)$, and a path $\gamma_x$\nidx{gammax@$\gamma_x$} 
from $\ast $ to $x$ for each fixed
point $x$ of $f$.  The map that takes a  fixed point $x$ to the
homotopy class of $ \gamma_x^{-1}f(\gamma_x)\basepath$ defines a
function from the fixed points of $f$ to the fundamental group of
$M$.

\begin{definition}\mylabel{semiconj} Let $\pi$ be a group and  $\psi\colon \pi\rightarrow \pi$
a homomorphism.  The set  $\sh{\pi^\psi}$ \nidx{shad@$\protect\sh{-}$} 
of \emph{semiconjugacy
classes}\idx{semiconjugacy class} 
of $\pi$ is the set $\pi$ modulo the relation $\alpha\sim
\beta\alpha \psi(\beta^{-1})$ for $\alpha, \beta\in \pi$.
\end{definition}

Using the path $\basepath$ we can define a homomorphism $\phi\colon \pi_1M
\rightarrow \pi_1M$ by $\phi(\alpha)=\basepath^{-1} f(\alpha)\basepath$. The
map from the fixed points of $f$ to $\pi_1M$ descends to a
well-defined injection from the fixed point classes of $f$ to
$\sh{\pi_1M^\phi}$ that is independent of all choices of paths.

The paths $\basepath$ and $\gamma_x$ also define a map
\[\mathrm{Fix}\rightarrow \sh{\pi_1M^\phi}\] by $x\mapsto
\gamma_x^{-1} f(\gamma_x)\basepath$, and the diagram
\[\xymatrix{&{\mathrm{Fix}} \ar[dd]\\
{\mathrm{fixed\, points}}\ar[ur]\ar[dr]\\
&\sh{\pi_1M^{\phi}}}\] commutes.

We let $\mathbb{Z}\sh{\pi_1M^\phi}$\nidx{Zshadpim@$\protect\bZ\protect\sh{\pi_1M^\phi}$} 
denote the free
abelian group on the set $\sh{\pi_1M^\phi}$.
The injection above gives an identification of a
fixed point class $F_k$ with its image in
$\sh{\pi_1M^\phi}$.

\begin{definition}\mylabel{geortracedef}
The \emph{geometric Reidemeister trace}\idx{geometric Reidemeister trace}
\idx{Reidemeister trace!geometric} of $f$,
$R^{geo}(f)$,\nidx{Rgeo@$R^{geo}$} is \[\sum_{\mathrm{Fixed\, Point\, Classes}\,
F_k}i(F_k)\cdot F_k\in\mathbb{Z}\sh{\pi_1(M)^\phi}.\] 
\end{definition}

The index of $f$ is the sum of the coefficients in the
Reidemeister trace.  The Nielsen number is the number of elements
in $\sh{\pi_1(M)^\phi}$ with nonzero coefficients in the
Reidemeister trace.

Note that the geometric Reidemeister trace is zero if and only if the
Nielsen number is zero.

\begin{theorem}[\myref{reidemeister1}]\mylabel{stdconverse2}
If $M$ is a closed smooth manifold of dimension at least three, then the
geometric Reidemeister trace of an endomorphism $f$ of $M$
 is zero if and only if $f$ is homotopic to a map with no fixed points.
\end{theorem}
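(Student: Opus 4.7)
The plan is to reduce this theorem to Theorem \myref{stdconverse} by establishing the equivalence
\[
R^{geo}(f) = 0 \iff N(f) = 0.
\]
First I would unpack Definition \myref{geortracedef}: the geometric Reidemeister trace is the sum $\sum_k i(F_k) \cdot F_k$ over fixed point classes, where each $F_k$ is identified with its image in $\sh{\pi_1 M^\phi}$ via the well-defined injection noted after Definition \myref{semiconj}. Because that map is injective, distinct fixed point classes correspond to pairwise distinct basis vectors of the free abelian group $\mathbb{Z}\sh{\pi_1 M^\phi}$. Consequently the sum vanishes precisely when every coefficient $i(F_k)$ is zero. Since $N(f)$ is by definition the number of fixed point classes with nonzero index, $N(f) = 0$ if and only if $R^{geo}(f) = 0$.

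Given this equivalence, both directions of the theorem are immediate consequences of Theorem \myref{stdconverse}. If $f$ is homotopic to a map with no fixed points, then $N(f) = 0$ by the forward implication of \myref{stdconverse}, so $R^{geo}(f) = 0$. Conversely, if $R^{geo}(f) = 0$, then $N(f) = 0$, and the reverse implication of \myref{stdconverse} produces a fixed-point-free map homotopic to $f$.

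The main obstacle is really packaged inside the black-box statement \myref{stdconverse}, not in the present argument: the substantive input is the geometric claim that vanishing of all fixed-point-class indices on a closed smooth manifold of dimension at least three is enough to homotope away every fixed point. This is the usual Whitney-trick argument, which requires both the codimension bound and paths within $M$ witnessing that canceling fixed points lie in a common class. In the present theorem we gain nothing new beyond recognizing that the Reidemeister trace packages the same data as the Nielsen number, enriched by the labels in $\sh{\pi_1 M^\phi}$ that record which class each index contribution belongs to; the homotopy invariance of $R^{geo}(f)$ is not needed separately, since it is subsumed by the use of \myref{stdconverse}. The conceptual, non-simplicial proof of \myref{stdconverse} promised by the paper is carried out in Chapter \ref{classfpsec2}.
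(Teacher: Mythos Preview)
Your reduction is correct and is even noted verbatim in the paper right after Definition~\ref{geortracedef}: the geometric Reidemeister trace vanishes if and only if the Nielsen number does, because distinct fixed point classes inject into $\sh{\pi_1 M^\phi}$ and hence label linearly independent basis elements. So as a formal deduction of Theorem~\ref{stdconverse2} from Theorem~\ref{stdconverse}, there is nothing to object to.

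However, this is not the route the paper takes, and the difference matters. The paper's proof (in Section~6.6) runs in the other direction: it identifies $R^{geo}(f)$ with the homotopy Reidemeister trace $R^{htpy}(f)$ via Proposition~\ref{geohtpycompare}, then identifies $R^{htpy}(f)$ with the Klein--Williams invariant $R^{KW}(f)$ via Theorem~\ref{KWclassicfinal}, and finally invokes Corollary~\ref{converse1} (the Klein--Williams obstruction-theoretic argument) to conclude. This chain is the whole point of the paper's program: it gives a conceptual, non-simplicial proof by exhibiting each invariant as a bicategorical trace and exploiting functoriality. Your argument instead pushes all the content back into Theorem~\ref{stdconverse}, and the paper's promised ``conceptual proof'' of \emph{that} theorem in Chapter~\ref{classfpsec2} is precisely the proof of Theorem~\ref{stdconverse2} just described, together with the trivial equivalence you used. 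So within the paper's own logical structure your argument is circular; it is only independently valid if you are willing to import the classical Wecken--Shi simplicial proof of Theorem~\ref{stdconverse}, which defeats the purpose.
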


\section{The algebraic Reidemeister trace}

We generalized the index to the geometric Reidemeister trace using
the fixed point classes of an endomorphism.  We can also use fixed
point classes to generalize the Lefschetz number to the algebraic
Reidemeister trace.

The algebraic Reidemeister trace is based on a generalization of the
trace for linear transformations to a trace for homomorphisms 
of finitely generated
projective modules.  We will define this generalized trace, called the
Hattori-Stallings trace, first and then use it to define the algebraic
Reidemeister trace.

Let $R$ be a ring. A \emph{trace function}\idx{trace function} 
$T$ is a function from square
matrices over $R$ to an abelian group such that
\renewcommand{\labelenumi}{$(\roman{enumi})$}
\begin{enumerate}
\item If $A, B\in \sM_{p\times p}(R)$ then $T(A+B)=T(A)+T(B)$.
\item If $A \in \sM_{p\times q}(R)$ and
$B\in \sM_{q\times p}(R)$ then $T(AB)=T(BA)$.
\end{enumerate}
Then $T(A)=\sum_iT(a_{ii})$ for $A=(a_{ij})$.

From a ring $R$ we define an abelian group $\sh{R}$ \nidx{shad@$\protect\sh{-}$}
as the
quotient of $R$ by the subgroup generated by elements of the form
\[r_1r_2-r_2r_1\] for $r_1,r_2\in R$.

\begin{prop}\cite{Stallings}\mylabel{staltrace1}
The universal trace function\idx{trace function!universal}\idx{universal
trace function}, a trace function through
which every trace function can be factored, is given on $1\times 1$-matrices
by the quotient map\nidx{T@${\protect\sT}$}
\[\sT\colon R\rightarrow \sh{R}.\]
This extends to $n\times n$ matrices by  $\sT(A)=\sum_i\sT(a_{ii})$ for
$A=(a_{ij})$.
\end{prop}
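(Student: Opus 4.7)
The plan is to establish two things: that $\sT$ as defined is itself a trace function, and that every other trace function factors uniquely through it. The whole argument rests on a preliminary reduction to $1\times 1$ matrices, so I would verify that first.

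\textbf{Step 1 (reduction via matrix units).} For any trace function $T$ and any $A = (a_{ij}) \in \sM_{p\times p}(R)$, write $A = \sum_{i,j} a_{ij} E_{ij}$ where $E_{ij}$ is the standard matrix unit. By additivity (axiom (i)), $T(A) = \sum_{i,j} T(a_{ij} E_{ij})$. For a single summand $a E_{ij}$, write $a E_{ij} = BC$ with $B \in \sM_{p\times 1}(R)$ having $a$ in row $i$ and $0$ elsewhere, and $C \in \sM_{1\times p}(R)$ having $1$ in column $j$ and $0$ elsewhere. Then $CB$ is the $1\times 1$ matrix $[a]$ if $i = j$ and $[0]$ if $i \neq j$, and axiom (ii) gives $T(aE_{ij}) = T(BC) = T(CB)$. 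Consequently $T(A) = \sum_i T([a_{ii}])$ on every square matrix; this is the formula stated in the text, and it reduces the study of any trace function to its restriction to $1\times 1$ matrices.

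\textbf{Step 2 ($\sT$ is a trace function).} Additivity is immediate since $\sT$ is the composition of the additive quotient map $R \to \sh{R}$ with the additive assignment $A \mapsto \sum_i a_{ii}$. For the trace property, given $A \in \sM_{p\times q}(R)$ and $B \in \sM_{q\times p}(R)$, compute
\[\sT(AB) \;=\; \sum_{i} \sT\Bigl(\sum_j a_{ij} b_{ji}\Bigr) \;=\; \sum_{i,j} \sT(a_{ij} b_{ji})\]
and analogously $\sT(BA) = \sum_{i,j} \sT(b_{ji} a_{ij})$. The two expressions agree in $\sh{R}$ because each difference $a_{ij} b_{ji} - b_{ji} a_{ij}$ is one of the defining commutator relations, hence zero after passing to $\sh{R}$.

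\textbf{Step 3 (universality).} Let $T$ be an arbitrary trace function with values in an abelian group $G$, and let $T_1 \colon R \to G$ denote its restriction to $1\times 1$ matrices. By (i), $T_1$ is a group homomorphism; by (ii) applied to $[r_1]$ and $[r_2]$, we have $T_1(r_1 r_2) = T_1(r_2 r_1)$, so $T_1$ kills the subgroup generated by commutators and descends uniquely to a homomorphism $\bar T \colon \sh{R} \to G$ with $T_1 = \bar T \circ \sT|_{\sM_{1\times 1}}$. Applying Step 1 to both $T$ and $\sT$, for any $A \in \sM_{p\times p}(R)$,
\[T(A) \;=\; \sum_i T_1(a_{ii}) \;=\; \sum_i \bar T(\sT(a_{ii})) \;=\; \bar T\Bigl(\sum_i \sT(a_{ii})\Bigr) \;=\; \bar T(\sT(A)),\]
so the factorization $T = \bar T \circ \sT$ extends to all matrices. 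Uniqueness of $\bar T$ follows because $\sT|_{\sM_{1\times 1}}$ is already surjective onto $\sh{R}$.

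There is no substantive obstacle; the whole proposition is really a consequence of the definition of $\sh{R}$ together with the matrix-unit computation in Step 1. The only point requiring a moment's care is that universality must be deduced on $n\times n$ matrices, not merely on $1\times 1$ ones, which is precisely what Step 1 makes automatic.
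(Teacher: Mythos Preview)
Your proof is correct. The paper does not actually supply a proof of this proposition; it simply cites Stallings and moves on. Your argument fills in the details that the cited reference would provide: the matrix-unit reduction in Step~1 is the standard move, Step~2 is the routine verification, and Step~3 is exactly the universal property of the quotient $R \to \sh{R}$. There is nothing to compare against in the paper itself, and nothing missing in what you wrote.
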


Let $M$ be a finitely generated free right $R$-module. Given any
trace function $T$ we can define a map $T \colon  \mathrm{End} (M)
\rightarrow  \sh{R}$. For each endomorphism $\phi$ choose a matrix
$A$ representing $\phi$ and define \[T(\phi) \coloneqq T(A).\]  By the
second property of a trace function this is well defined.

We can also use a trace function $T$ to define a map
\[T\colon \mathrm{End}(M)\rightarrow \sh{R}\] for a finitely generated
projective right $R$-module $M$.  Let $N$ be a module such that
$M\oplus N$ is a finitely generated free $R$-module. 
If $\phi$ is an endomorphism of $M$ define an
endomorphism of the free module $M\oplus N$ by $\phi\oplus 0$.
Then $T(\phi)$ is defined to be $T(\phi\oplus 0)$. This is
independent of all choices.

This description is given in terms of a `basis' since it is
defined using matrices.  There is an equivalent definition of the
universal trace function that does not require explicit use of the
basis.  For any right $R$-modules $P$ and $M$ there is a map
\[\xymatrix{\nu \colon P\otimes_R \Hom_R(M,R)
\ar[r]& \Hom_R(M,P)}\] 
defined by  $\nu(p\otimes \phi)(m)=p\phi(m)$.  If $M$ is a finitely
generated projective right $R$-module this map is an isomorphism.

\begin{prop}\mylabel{staltrace} If $M$ is a finitely generated projective
right $R$-module, the universal trace function is the composite
map \[\xymatrix@R=1pt{\Hom_R(M,M)\ar[r]^-{\nu
^{-1}}&M\otimes_R\Hom_R(M,R) \ar[r]^-\delta&{\sh{R}}
}\]
where $\delta(m\otimes \phi)=\sT(\phi(m))$.\end{prop}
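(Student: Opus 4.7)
The plan is to proceed in three steps: first check that $\delta$ is well-defined on the tensor product, then verify the formula in the finitely generated free case by direct computation with a basis, and finally reduce the projective case to the free case via the same direct sum construction used to define the universal trace function on projective modules.

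For well-definedness of $\delta$, the tensor relation $mr \otimes \phi = m \otimes r\phi$ requires that $\sT(\phi(mr)) = \sT((r\phi)(m))$. Since $\phi$ is a homomorphism of right $R$-modules the left side equals $\sT(\phi(m)\, r)$ while the right side equals $\sT(r\, \phi(m))$, and these agree in $\sh{R}$ because $\sT$ kills commutators. This is precisely the feature of $\sh{R}$ that makes it the target of the universal trace function.

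For the free case, let $\{m_1,\ldots,m_n\}$ be a basis of $M$ with dual basis $\{m_1',\ldots,m_n'\}$, and suppose $\phi\in\End_R(M)$ has matrix $A = (a_{ij})$ defined by $\phi(m_j) = \sum_i m_i a_{ij}$. Then $\nu^{-1}(\phi) = \sum_i \phi(m_i)\otimes m_i'$, as one verifies by applying $\nu$ and evaluating on basis elements. Hence
\[
\delta(\nu^{-1}(\phi)) \;=\; \sum_i \sT(m_i'(\phi(m_i))) \;=\; \sum_i \sT(a_{ii}),
\]
which is exactly the universal trace function of \myref{staltrace1}.

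For the projective reduction, write $M\oplus N = F$ with $F$ free, and let $\iota\colon M \hookrightarrow F$ and $\pi\colon F \twoheadrightarrow M$ be the inclusion and projection, so that $\phi\oplus 0 = \iota\circ \phi\circ \pi$. Define $\iota\otimes \pi^*\colon M\otimes_R \Hom_R(M,R) \to F\otimes_R \Hom_R(F,R)$, where $\pi^*(\psi) = \psi\circ \pi$. A direct computation shows $\nu_F\circ(\iota\otimes \pi^*)(m\otimes \psi) = \iota\circ \nu_M(m\otimes\psi)\circ \pi$, so by linearity $(\iota\otimes \pi^*)(\nu_M^{-1}(\phi)) = \nu_F^{-1}(\phi\oplus 0)$. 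Moreover, $\delta_F\circ(\iota\otimes \pi^*) = \delta_M$, since $\pi^*(\psi)(\iota(m)) = \psi(\pi\iota(m)) = \psi(m)$. Combining these with the free case applied to $F$ gives
\[
\delta_M(\nu_M^{-1}(\phi)) \;=\; \delta_F(\nu_F^{-1}(\phi\oplus 0)) \;=\; \sT(\phi\oplus 0) \;=\; \sT(\phi),
\]
where the last equality is the definition of $\sT$ on the projective module $M$. The main obstacle is the naturality argument of the final step: one must verify that extension-by-zero of endomorphisms corresponds on the tensor-product side to the map $\iota\otimes \pi^*$, and that $\delta$ is compatible with this map. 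Independence of the choice of complement $N$ is then automatic, since the composite $\delta\circ \nu^{-1}$ is defined intrinsically in terms of $M$.
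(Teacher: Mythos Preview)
The paper states this proposition without proof, so there is no argument to compare against directly. Your three-step proof is correct and is the standard verification: the well-definedness of $\delta$ uses exactly the commutator relation defining $\sh{R}$; the free case is a direct basis computation matching \myref{staltrace1}; and the projective reduction via $\iota\otimes\pi^*$ is the right naturality argument, with the key identity $\nu_F\circ(\iota\otimes\pi^*) = (\phi\mapsto\iota\circ\phi\circ\pi)\circ\nu_M$ checked correctly. Nothing is missing.
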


The algebraic Reidemeister trace requires a generalization of the
universal trace function. 
Before we can generalize the trace we need a more general
target for a trace function.

\begin{definition}\mylabel{ringshadow}
Let $P$ be an $R$-$R$-bimodule.  Then $\sh{P}$\nidx{shad@$\protect\sh{-}$} 
is the
quotient of $P$, as an abelian group, by the subgroup generated by
elements of the form $pr-rp$ for $r\in R$ and $p\in P$. 
\end{definition}
Let $\sT\colon P \rightarrow \sh{P}$ be the quotient map.

The generalization of the universal trace function is easier to
describe when not explicitly using a basis, so we will generalize
the description given in \myref{staltrace}.
Let $R$ be a ring, $P$ be an $R$-$R$-bimodule, and $M$ be a finitely
generated projective right $R$-module.   

\begin{definition}
The \emph{Hattori-Stallings trace}\idx{Hattori-Stallings trace}, 
$\tr$, of a map $f\colon M\rightarrow
M\otimes_RP$ is the image of $f$ in under the composite
\[\xymatrix{\Hom_R(M,M\otimes_R P)\ar[r]^-{\nu ^{-1}}&
(M\otimes_RP)\otimes_R\Hom_R(M,R)
\ar[r]^-\delta
&\sh{P}.
}\] where $\delta(m\otimes p\otimes \phi)=\sT(p\phi(m))$.\end{definition}

If $C_*$ is a finitely generated chain complex of projective right
$R$-modules, $P$ is an $R$-$R$-bimodule, and $f\colon C_*\rightarrow
C_*\otimes_RP$ is a map of chain complexes, the
\emph{Hattori-Stallings trace} of $f$ is
\[\sum (-1)^i \tr (f_i),\]
the alternating sum of the levelwise traces.  The sign enters
since the evaluation map requires a transposition.

The example of the Hattori-Stallings trace we are most interested
in is the algebraic Reidemeister trace.  To define this invariant
we must first fix some conventions.  Composition of paths in a
space $X$ is given by $(\beta, \alpha)\mapsto \beta\alpha$ where
$\alpha$ is a path from $a$ to $b$, $\beta$ is a path from $b$ to
$c$ and $\beta \alpha$ is a path from $a$ to $c$. This induces the
group multiplication in $\pi_1X$. If we think of $\tilde{X}$ as
homotopy classes of paths in $X$ that start at the base point
$\ast$  there is  an action of $\pi_1X$ on $\tilde{X}$ from the
right by $\tilde{X}\times\pi_1X\rightarrow \tilde{X}$,
$(\gamma,\alpha) \mapsto \gamma \alpha$.

Let $X$ be a finite connected CW complex. Pick a base point $\ast$
in $X$. Then the cellular chain complex of $\tilde{X}$ 
is a finitely generated free right
$\mathbb{Z}\pi_1(X,\ast)$-module.  A continuous map
$f\colon X\rightarrow X$ is not required to preserve a base
point, and so we define an induced map $\tilde{f}$ on the
universal cover by $\tilde{f}(\alpha)= f(\alpha)\basepath$ for some
choice of path $\basepath$ from $\ast$ to $f(\ast)$. Define a group
homomorphism \[\phi\colon \pi_1(X,\ast)\rightarrow\pi_1(X,\ast)\]  by
$\phi(\alpha)=\basepath^{-1}f(\alpha)\basepath$.  The map $\tilde{f}$ is
$\phi$-equivariant in the sense that
\[\tilde{f}(\gamma\alpha)=\tilde{f}(\gamma)\phi(\alpha)\] for $\alpha
\in\pi_1(X,\ast)$ and $\gamma \in\tilde{X}$.

Let $\mathbb{Z}\pi_1(X,\ast)^\phi$ be the
$\mathbb{Z}\pi_1(X,\ast)-\mathbb{Z} \pi_1(X,\ast)$-bimodule that
is $\mathbb{Z}\pi_1(X,\ast)$ as an abelian group with the usual
left action of $\pi_1(X,\ast)$ and the right action given by first
applying $\phi$ and then using the group multiplication. Then
$\tilde{f}$ defines a map \[\tilde{f}_*\colon C_*\tilde{X}\rightarrow
C_*\tilde{X}\otimes_{\mathbb{Z}\pi_1(X,\ast)}\mathbb{Z}\pi_1(X,\ast)^\phi\]
and this is a map of right $\mathbb{Z}\pi_1(X,\ast)$-modules.

\begin{definition} The \emph{algebraic Reidemeister trace}\idx{algebraic 
Reidemeister trace}\idx{Reidemeister trace!algebraic} of $f$,
$R^{alg}(f)$,\nidx{Ralg@$R^{alg}$} is the Hattori-Stallings trace of
$\tilde{f}_*$.\nidx{f@$\tilde{f}_*$}\end{definition}

\begin{theorem}\mylabel{coufalLH2}There is an isomorphism of abelian 
groups \[\mathbb{Z}\sh{
\pi_1(X,\ast)^\phi}\rightarrow \sh{\mathbb{Z}\pi_1(X,\ast)^\phi}\]
and under this isomorphism
\[R^{geo}(f)=R^{alg}(f).\]\end{theorem}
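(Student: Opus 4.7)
The plan is to construct the isomorphism first and then compare the two traces by reducing to a computation on an adapted CW structure.

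For the isomorphism, I would define $\Phi\colon \mathbb{Z}\sh{\pi_1(X,\ast)^\phi}\rightarrow \sh{\mathbb{Z}\pi_1(X,\ast)^\phi}$ on generators by sending the class of $\alpha\in\pi_1(X,\ast)$ to the class of $\alpha$ viewed as an element of $\mathbb{Z}\pi_1(X,\ast)^\phi$, and extending $\mathbb{Z}$-linearly. To check this is well defined, note that in $\sh{\mathbb{Z}\pi_1(X,\ast)^\phi}$ the relation $pr-rp=0$ for $r=\beta\in\pi_1(X,\ast)$ and $p=\alpha$ reads $\alpha\phi(\beta)-\beta\alpha\sim 0$; replacing $\beta$ by $\beta^{-1}$ and multiplying on the left by $\beta$ gives $\alpha\sim\beta\alpha\phi(\beta^{-1})$, which is exactly the semiconjugacy relation of \myref{semiconj}. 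Surjectivity is immediate since the group elements generate $\sh{\mathbb{Z}\pi_1(X,\ast)^\phi}$ additively. For injectivity I would construct the inverse: the quotient map $\mathbb{Z}\pi_1(X,\ast)^\phi\rightarrow\mathbb{Z}\sh{\pi_1(X,\ast)^\phi}$ sending each group element to its semiconjugacy class kills every commutator relation $pr-rp$ by the same calculation, hence factors through $\sh{\mathbb{Z}\pi_1(X,\ast)^\phi}$.

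For the identification of the traces, the strategy is to choose a CW structure adapted to $f$. Using cellular approximation together with a small homotopy (permissible since both sides are homotopy invariants, which I would verify as a preliminary lemma), I may assume that $f$ is cellular, that the fixed points are isolated, and that each fixed point lies in the interior of a top-dimensional cell with at most one fixed point per cell. Fix a lift $\tilde{e}_i$ of each cell $e_i$ of $X$, giving a free $\mathbb{Z}\pi_1(X,\ast)$-basis of $C_*\tilde{X}$. In this basis, $\tilde{f}_*(\tilde{e}_i)=\sum_j a_{ij}\tilde{e}_j$ with $a_{ij}\in\mathbb{Z}\pi_1(X,\ast)$, and by \myref{staltrace} applied degreewise the algebraic Reidemeister trace is the class of $\sum_i(-1)^{\dim e_i}\,a_{ii}$ in $\sh{\mathbb{Z}\pi_1(X,\ast)^\phi}$.

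The key computation is that a cell $e_i$ contributes a nonzero diagonal coefficient $a_{ii}$ precisely when it contains a fixed point $x$. In that case, writing the cellular chain $\tilde{f}_*(\tilde{e}_i)$ in terms of degrees of restrictions of $f$ to cells, the coefficient $a_{ii}$ becomes $\deg_x(f)\cdot\alpha_x$, where $\alpha_x\in\pi_1(X,\ast)$ is the deck transformation characterized by $\tilde{f}(\tilde{x})=\tilde{x}\cdot\alpha_x$ for the lift $\tilde{x}\in\tilde{e}_i$ of $x$. Unwinding the definitions, $\alpha_x$ is represented by the loop $\gamma_x^{-1}f(\gamma_x)\basepath$ of Section~\ref{geotracesec}, and the sign $(-1)^{\dim e_i}$ combines with $\deg_x(f)$ to yield exactly the local fixed point index $i(x)$ as in the classical proof of the Lefschetz-Hopf theorem (\myref{lhopf}). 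Summing over cells and using the fact that fixed points in the same fixed point class map to the same semiconjugacy class in $\sh{\pi_1(X,\ast)^\phi}$, the sum becomes $\sum_k i(F_k)\cdot F_k$, which is $R^{geo}(f)$ under $\Phi$.

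The main obstacle is the middle step: matching the local index at a fixed point with the signed degree-theoretic interpretation of the diagonal coefficient $a_{ii}$, including keeping track of the correct deck transformation determined by the chosen lift and the chosen path $\basepath$. This is essentially a universal-cover refinement of the classical simplicial proof of the Lefschetz-Hopf identity and requires a careful transversality/cellular argument; once it is in place, both the well-definedness of the comparison (independence from choices of lifts $\tilde{e}_i$, paths $\gamma_x$, and base path $\basepath$) and the agreement with the geometric trace follow from the definition of $\sh{\mathbb{Z}\pi_1(X,\ast)^\phi}$.
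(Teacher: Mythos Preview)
Your proposal is correct and follows the classical simplicial/cellular route: adapt the CW structure so that isolated fixed points sit in top cells, compute the diagonal entries of the matrix of $\tilde{f}_*$ in a lifted cellular basis, and identify each nonzero diagonal entry with a local index times the appropriate deck transformation. This is essentially the proof the paper cites from \cite[3.4]{geosurv} and \cite[1.13]{Husseini}.

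The paper's own proof, however, takes a genuinely different path. Rather than computing anything cellularly, it first shows that $R^{geo}(f)$ and $R^{alg}(f)$ are both instances of the abstract trace in a bicategory with shadows (the former in $\sM_{\Ex}$, the latter in $\sE_{\Ch}$), and then observes that the rational cellular chains functor $C_*(-;\mathbb{Q})\colon \sM_{\Ex}\rightarrow \sE_{\Ch}$ is a lax functor compatible with shadows for which the relevant comparison maps $\phi_{X,Y}$ and $\phi_B$ are isomorphisms. The identification $R^{geo}(f)=R^{alg}(f)$ then drops out of the general functoriality result \myref{functortrace}, with no cell-by-cell index computation at all (see \myref{alggeocompare}). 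Your approach is more elementary and self-contained, and makes the local-global correspondence visible; the paper's approach is more structural, avoids the delicate transversality/cellular step you flagged as the main obstacle, and --- crucially for the paper's purposes --- generalizes uniformly to the unbased, fiberwise, and other settings where a direct cellular computation is unavailable or awkward.
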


A proof of this theorem can be found in \cite[3.4]{geosurv} or 
\cite[1.13]{Husseini}.
In Chapter \ref{classfpsec2} we will give a more conceptual
proof of this result.

\section{A proof of the converse to the 
Lefschetz fixed point theorem}\label{usuconverse}

In \cite{wecken3}, Wecken showed that for some finite polyhedra the Nielsen
number of an endomorphism is zero if and only if the map is homotopic to a map
with no fixed points.  Shi \cite{shi} later proved a
refinement of Wecken's result.  These proofs used simplicial
techniques. Similar techniques can be used to prove an equivariant analogue
of this result,
but they are not as useful when trying to prove fiberwise results.

Here we will present the main ideas of an alternative proof due to
Klein and Williams from  \cite{KW}.  This proof gives the converse
to the Lefschetz fixed point theorem for manifolds of dimension at
least three and has fiberwise and equivariant generalizations.  The details of
the fiberwise version are in Section \ref{fibconvers}. The missing
details in this section can be recovered from that proof.

For their proof Klein and Williams translate fiberwise homotopy theory into
equivariant homotopy theory using a loop group construction. They
observe that their proof works equally well without
this transformation and that it would be necessary to eliminate
this transition to prove a converse to the fiberwise Lefschetz
fixed point theorem.  Here we present the main ideas of  Klein and  Williams'
proof of the converse to the  Lefschetz fixed point theorem using fiberwise
homotopy theory.

\begin{prop}\cite{FadellHPara}\cite{KW}\mylabel{fptosec}
\begin{enumerate}
\item
Let $X$ be a topological space and $f\colon X\rightarrow X$ a continuous map.
Homotopies
of $f$ to a fixed point free map correspond to
liftings which make the following diagram commute up to homotopy.  
\[\xymatrix{&X\times
X-\triangle\ar[d]\\X\ar@{.>}[ur]\ar[r]_{\Gamma_f}& X\times X.}\]
Here $\Gamma_f$\nidx{gammaf@$\Gamma_f$} is the graph of $f$.

\item For a continuous map $h\colon X\rightarrow Z$ let $\fibm(h)\colon \fibs(h)
\rightarrow Z$\nidx{r@$\fibm$}\nidx{N@$\fibs$}
be a Hurewicz fibration such that \[\xymatrix{X\ar[rr]\ar[dr]_h&&{\fibs(h)}
\ar[dl]^{\fibm(h)}\\
&Z}\] commutes and the map $X\rightarrow \fibs(h)$ is a homotopy
equivalence.   There is a bijective correspondence between
liftings up to homotopy in
the diagram  \[\xymatrix{&X\ar[d]^h\\
Y\ar[r]_g\ar@{.>}[ur]&Z}\] and  sections of the fibration
$g^*\fibs(h)\rightarrow Y$.
\end{enumerate}
\end{prop}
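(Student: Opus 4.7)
My plan is to first establish part (ii), which is a general statement about fibration replacements, and then to deduce part (i) by applying it to the inclusion of $X\times X$ minus the diagonal.

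For part (ii), I would proceed in three steps. Since $X\to\fibs(h)$ is a homotopy equivalence commuting with the structure maps to $Z$, homotopy classes of lifts of $g$ through $h$ correspond to homotopy classes of lifts of $g$ through $\fibm(h)$. Now $\fibm(h)$ is a Hurewicz fibration, so the HLP allows us to rigidify any homotopy lift: given $\tilde g\colon Y\to\fibs(h)$ with $\fibm(h)\tilde g\simeq g$ via a homotopy $H$, lifting $H$ against $\fibm(h)$ starting from $\tilde g$ produces a homotopic $\tilde g'$ satisfying $\fibm(h)\tilde g'=g$ on the nose. Finally, strict lifts of $g$ through $\fibm(h)$ are precisely sections of the pullback $g^*\fibs(h)\to Y$ by the universal property of pullbacks, and homotopies of strict lifts transport to homotopies of sections.

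For part (i), I would apply part (ii) with $h=i\colon X\times X-\triangle\hookrightarrow X\times X$, $Y=X$, and $g=\Gamma_f$, which transports the content of part (i) to a bijection between homotopy classes of lifts of $\Gamma_f$ into $X\times X-\triangle$ and sections of $\Gamma_f^*\fibs(i)\to X$. The genuine content of part (i) is then the identification of such lifts with homotopies of $f$ to fixed-point-free maps. In one direction, a homotopy $H_t\colon f\simeq g$ with $g$ fixed-point-free gives the graph homotopy $\hat H_t(x)=(x,H_t(x))$; its value at $t=1$ is $\Gamma_g$, which takes values in $X\times X-\triangle$, so the reversed path furnishes the lift together with a homotopy to $\Gamma_f$.

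The reverse direction is where the technical work lies. Given a map $\sigma\colon X\to X\times X-\triangle$ together with a path $K$ of maps from $i\circ\sigma$ to $\Gamma_f$, the aim is to extract a fixed-point-free $g\simeq f$. The obstacle is that $\sigma$ need not have first coordinate the identity, so its second coordinate is not of the required form. My plan is to exploit that $p_1\colon X\times X\to X$ is a Hurewicz fibration: projecting $K$ under $p_1$ gives a homotopy from $p_1\sigma$ to $\mathrm{id}_X$, and lifting it along $p_1$ from $i\circ\sigma$ produces a homotopy terminating at some graph map $\Gamma_g$ in $X\times X$. Concatenating this lifted homotopy with $K$ yields $\Gamma_g\simeq\Gamma_f$, hence $f\simeq g$. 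The hard part is arranging that $g$ is fixed-point-free, equivalently that the HLP-lifted homotopy ends inside $X\times X-\triangle$. This is the main obstacle, and it is handled by working with a section of $\Gamma_f^*\fibs(i)$ rather than an arbitrary homotopy class of lifts: the section provides coherent path data within the complement of the diagonal, and the match-up of equivalence relations on both sides is forced by the fibration structure established in part (ii).
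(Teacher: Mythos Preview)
Your treatment of part~(ii) is correct and standard. For part~(i), however, your approach is genuinely different from the paper's, and the gap you flag is real and is not closed by the fix you propose.

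The paper does not argue part~(i) directly; it defers to the fiberwise version (Proposition~\ref{fibfptosec}), whose proof specialises with $B=\ast$. That argument never tries to rectify a given lift into graph form by hand. Instead it sets up a map of fibration sequences
\[
\xymatrix{
\Top^*(X,X)\ar[r]\ar[d]^{\mathrm{graph}} & \Top(X,X)\ar[d]^{\mathrm{graph}}\\
\Top(X,X\times X-\triangle)\ar[r]\ar[d]^{(p_1)_*} & \Top(X,X\times X)\ar[d]^{(p_1)_*}\\
\Top(X,X)\ar@{=}[r] & \Top(X,X)
}
\]
(fibres taken over $\id_X$; here $\Top^*(X,X)$ denotes the fixed-point-free self-maps). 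The top square is then homotopy cartesian, so the homotopy fibre of $\Top^*(X,X)\hookrightarrow\Top(X,X)$ at $f$ is identified with the homotopy fibre of $\Top(X,X\times X-\triangle)\hookrightarrow\Top(X,X\times X)$ at $\Gamma_f$. This yields the correspondence at the level of spaces, with no need to straighten an individual lift.

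Your direct construction stalls exactly where you say it does, and invoking the section of $\Gamma_f^*\fibs(i)$ does not help. Unwinding the mapping-path model, such a section assigns to each $x$ a point $y_x\in X\times X-\triangle$ together with a path in $X\times X$ (not in $X\times X-\triangle$) from $y_x$ to $(x,f(x))$. Only the initial endpoint is guaranteed to miss the diagonal, so the section carries no more ``coherent path data within the complement'' than the bare homotopy lift already did. To push your argument through you genuinely need the homotopy lifting property for $p_1\colon X\times X-\triangle\to X$; this fails for arbitrary $X$ (take $X=[0,1]$, where the fibres $X\setminus\{x\}$ change homotopy type). For the closed smooth manifolds the paper actually uses, $p_1$ restricted to the complement is a fibre bundle (compare Lemma~\ref{bundle}), so your line can be salvaged there---but the fibration-sequence argument is both cleaner and what the paper does.
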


This proposition converts a fixed point question into a question about
sections of fibrations.  We will define a fixed point theory invariant
by defining an invariant that detects
sections of Hurewicz fibrations.

Let $\pro\colon E\rightarrow B$ be a Hurewicz fibration over a connected
space $B$.  The \emph{unreduced fiberwise suspension}\idx{unreduced
fiberwise suspension} of $E$ over
$B$ is the double mapping cylinder\nidx{Sb@$S_B$}
\[S_BE\coloneqq B\times \{0\}\cup_{\pro} E\times [0,1]\cup_{\pro} B\times \{1\}.\]
The map $\pro\colon E\rightarrow B$ induces a map $q\colon S_BE\rightarrow B$
which is also a fibration.\footnote{If $\lambda\colon \fibs\pro\rightarrow E^I$
is a lifting function for $\pro$ with adjoint $\bar{\lambda}$ define
\[\bar{\chi}\colon  \fibs q\times I\rightarrow S_BE\] by
$\bar{\chi}((e,t),\beta,s)=(\bar{\lambda}( e,\beta,s),t)$ for
$(e,t)\in E\times (0,1)$ and $\bar{\chi}(b,\beta,s) =b\in B\times
\{0\}$ for $b\in B\times \{0\}$ and similarly for $b\in B\times \{1\}$. Then
the adjoint of 
$\bar{\chi}$,\[\chi\colon \fibs q\rightarrow (S_BE)^I,\] is a lifting function for $q$.
See \cite{hall, strom} for similar results.} 
Let \[\sect_-,\sect_+\colon B\rightarrow S_BE\] be the
sections of $S_BE\rightarrow B$ given by the inclusions of
$B\times \{0\}$ and $B\times \{1\}$ into $S_BE$.

\begin{prop}\cite[3.1]{KW}\mylabel{firstform}
If $\pro\colon  E\rightarrow B$ admits a section then $\sect_-$
and $\sect_+$ are homotopic over $B$.

Conversely, assume $\pro\colon E\rightarrow B$ is $(r+1)$-connected and $B$
is homotopically a retract of a cell complex with cells in
dimensions $\leq 2r+1$.  If $\sect_-$ and $\sect_+$ are homotopic over
$B$, then $\pro$ has a section.
\end{prop}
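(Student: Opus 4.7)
My plan is to treat the two directions separately. The forward direction is essentially tautological: given a section $\sect \colon B \rightarrow E$ of $\pro$, define $H \colon B \times I \rightarrow S_B E$ by $H(b,t) = [\sect(b),t]$. Since the defining identifications send $[\sect(b),0]$ to $\pro(\sect(b)) = b \in B \times \{0\}$ and $[\sect(b),1]$ to $b \in B \times \{1\}$, we have $H_0 = \sect_-$ and $H_1 = \sect_+$. The map $H$ covers the identity on $B$, so it is a fiberwise homotopy over $B$.

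For the converse, my strategy is to interpret both the hypothesis and the conclusion as sections of fibrations over $B$ and then compare them using a fiberwise Freudenthal theorem. Let $W \rightarrow B$ be the fiberwise path space fibration whose fiber over $b$ is the space of paths in the unreduced suspension $(S_B E)_b$ from its lower cone point to its upper cone point; a fiberwise homotopy from $\sect_-$ to $\sect_+$ is exactly a section of $W \rightarrow B$. There is a natural map
\[
\mu \colon E \longrightarrow W, \qquad \mu(e)(t) = [e,t],
\]
over $B$ sending each $e \in E_b$ to the straight-line path through the equator $E_b \times \{1/2\}$. After collapsing a contractible arc to identify each fiber $(S_B E)_b$ with the reduced suspension $\Sigma E_b$, the map $\mu_b$ becomes the classical Freudenthal suspension map $E_b \rightarrow \Omega \Sigma E_b$. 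Because $\pro$ is $(r+1)$-connected each $E_b$ is $r$-connected, so Freudenthal's theorem makes $\mu_b$ a $(2r+1)$-equivalence, and the homotopy fiber of $\mu$ is fiberwise $2r$-connected.

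The given homotopy is a section $H$ of $W \rightarrow B$, and I want to lift it along $\mu$ to a section of $E \rightarrow B$; this is the same as finding a section of the pullback fibration $H^\ast E \rightarrow B$, whose fibers are $2r$-connected. Since $B$ is a retract of a CW complex of dimension at most $2r+1$, the standard obstruction classes to building such a section cell by cell, which lie in the cohomology of $B$ with local coefficients in the homotopy groups of the fiber, all vanish, and the desired section exists. The main technical hurdle will be executing the fiberwise Freudenthal step cleanly: the cone-collapse identification $(S_B E)_b \simeq \Sigma E_b$ depends on a basepoint in $E_b$, so performing it continuously in $b$ either requires choosing a global fiberwise basepoint (available up to homotopy under our connectivity hypotheses) or a basepoint-free reformulation of Freudenthal directly in terms of unbased path spaces in unreduced suspensions.
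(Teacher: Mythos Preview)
Your approach is correct and closely parallels the paper's own argument (written out in full for the fiberwise analogue, \myref{firstform2}). Both proofs compare $E$ to the space of paths in $S_BE$ from $\sect_-$ to $\sect_+$: the paper constructs this as the homotopy pullback $P$ of $\sect_-$ and $\sect_+$, you call it $W$, and your map $\mu$ is exactly the paper's comparison map $q\colon E\rightarrow P$. The difference is in how the $(2r+1)$-connectivity of this map is established. The paper decomposes $S_BE$ into its two cones, applies the Blakers--Massey theorem to the resulting pushout, and extracts a Mayer--Vietoris-type exact sequence which it compares term by term with the long exact sequence for $P$; you instead invoke the Freudenthal suspension theorem on each fiber as a black box. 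Since Freudenthal is itself a standard consequence of Blakers--Massey applied to the two-cone decomposition of a suspension, your route is simply a more packaged version of the same computation. For the final lifting step the paper appeals to a fiberwise Whitehead theorem rather than to obstruction theory directly, but these amount to the same thing under the dimension hypothesis on $B$.

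Your closing worry about basepoints is not a genuine obstacle. You do not need the identification $(S_BE)_b \simeq \Sigma E_b$ to vary continuously in $b$: all you need is that each $\mu_b$ is a $(2r+1)$-equivalence, and this can be checked one fiber at a time by choosing an arbitrary basepoint in the path-connected space $E_b$. Since $\mu$ is a map between Hurewicz fibrations over $B$, the homotopy fiber of $\mu$ over a point of $W_b$ coincides with the homotopy fiber of $\mu_b$, which is what your obstruction argument actually uses.
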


From this point we will work in the category of ex-spaces, rather than spaces
over $B$.  This means that all spaces over $B$ have a section and all
maps respect the section. In particular, $S_BE$ is an
ex-space with section $\sect_-$.

Let $S^0_B$\nidx{SB@$S^0_B$} be the ex-space over $B$ with total space two 
disjoint copies of 
$B$.  The inclusion of $B$ into one of the copies of $B$ is the section.  
The projection map is the identity on each component.
Under the assumptions in \myref{firstform},
a fiberwise version of the Freudenthal suspension theorem
\cite[3.19]{CrabbJames}
gives the following isomorphism
\[[S_B^0,S_BE]_B\cong \{S_B^0,S_BE\}_B.\]
The $\{-,-\}_B$\nidx{$\{-,-\}_B$} notation indicates fiberwise (sectioned)
stable homotopy classes
of fiberwise maps.

\begin{definition}\cite[3.4]{KW}
The \emph{stable cohomotopy Euler class}\idx{stable cohomotopy 
Euler class} of $\pro$ is the element 
of \[\{S_B^0,S_BE\}_B\] that corresponds to the map $\sect_-\amalg \sect_+$. 
\end{definition}

For a continuous map $f\colon M\rightarrow M$ there is  an associated fibration 
\[\Gamma_f^*(\fibm(i))\colon \Gamma_f^*(\fibs(i))\rightarrow M\] given by 
pulling the fibration associated to the inclusion
\[i\colon M\times M-\triangle\rightarrow M\times M\] back along the 
graph of $f$.
For this particular case we can give another description  
of the stable cohomotopy Euler class.  Let $\Lambda^fM=
\{\gamma\in M^I|f(\gamma(1))=\gamma(0)\}$.

\begin{prop}\cite[4.1, 5.1]{KW}\mylabel{kwclassidentify} There is an isomorphism
\[\{S_M^0, S_M(\Gamma_f^*(\fibs(i)))\}_M\cong \{S^0,\Lambda^fM_+\}.\]
\end{prop}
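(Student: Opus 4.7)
My plan follows Klein and Williams' approach in \cite{KW}, combining an explicit path-space model of both sides with a reduction from parametrized to ordinary stable homotopy. First, I would unwind the definitions via homotopy pullbacks. The pullback of the inclusion $i\colon M\times M-\Delta\hookrightarrow M\times M$ along $\Gamma_f$ realizes $\Gamma_f^*\fibs(i)$ as the space of pairs $(x,\gamma)$ with $\gamma\colon I\to M\times M$ satisfying $\gamma(1)=(x,f(x))$ and $\gamma(0)\notin \Delta$. In parallel, $\Lambda^f M$ is (up to homotopy equivalence) the homotopy pullback of $\Gamma_f$ along the diagonal $\Delta\colon M\to M\times M$: a point is a pair of paths $(\gamma_1,\gamma_2)$ in $M$ with common starting point and $\gamma_2(1)=f(\gamma_1(1))$, and the comparison with the definition given just before the statement is the concatenation $(\gamma_1,\gamma_2)\mapsto \gamma_1\cdot\gamma_2^{-1}$. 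Thus both sides are built from paths in $M\times M$ ending on the graph of $f$, distinguished only by whether the starting endpoint lies on $\Delta$ or in its complement.

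Second, I would exploit this open--closed dichotomy in the total path space. The space of all paths in $M\times M$ ending on the graph is a fibration over $M$ (via $\Gamma_f$) whose fibers are contractible path spaces, and so is equivalent to the trivial fibration $M\to M$. Restricting the other endpoint to lie on $\Delta$ cuts out the $\Lambda^f M$ piece, and restricting to the complement cuts out $\Gamma_f^*\fibs(i)$. After applying the fiberwise unreduced suspension $S_M$, this decomposition yields a stable cofiber sequence of ex-spaces over $M$ in which $S^0_M$ appears as the middle term and the outer terms involve $S_M(\Gamma_f^*\fibs(i))$ and a parametrized version of $\Lambda^f M$.

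Third, I would invoke the fiberwise Freudenthal suspension isomorphism $[S^0_B,S_BE]_B\cong \{S^0_B,S_BE\}_B$ already cited together with the adjunction between parametrized and ordinary stable cohomotopy given by pushforward along $M\to \mathrm{pt}$. For well-behaved ex-spaces $X$ over $M$, this adjunction identifies $\{S^0_M,X\}_M$ with $\{S^0,-\}$ of the derived pushforward, and the cofiber sequence from step two identifies the relevant pushforward with $\Lambda^f M_+$; the disjoint basepoint corresponds to the image of the section $\sect_-$ after the reduction. The main obstacle lies in step two: making precise the stable cofiber sequence associated to the pair $(M\times M,\Delta)$ and pulling it back along $\Gamma_f$ requires a Pontryagin--Thom collapse for the closed embedding $\Delta\hookrightarrow M\times M$ together with the fiberwise duality theory developed in \cite{MS,CrabbJames}. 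Once that infrastructure is in place, the final identification with $\Lambda^f M_+$ drops out of the path-space models of step one.
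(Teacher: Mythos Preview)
Your first step is correct: the path-space descriptions of $\Gamma_f^*\fibs(i)$ and $\Lambda^f M$ are right, and the paper's argument (sketched in \myref{kwclassidentify2}, with full details in the fiberwise setting in \myref{fibidentify}) begins from the same picture.

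The gap is in your third step. There is no adjunction identifying $\{S^0_M, X\}_M$ with $\{S^0,-\}$ of a pushforward: since $S^0_M = r^*S^0$ for $r\colon M\to *$, the adjunction $r^*\dashv r_*$ gives $\{S^0_M, X\}_M \cong \{S^0, r_* X\}$, which is the derived space of sections and not what you want; the other adjunction $r_!\dashv r^*$ goes the wrong way. The correct passage is Costenoble--Waner duality (\myref{CWdualmaps}): the dual pair $(S^0_M, tS^\nu)$ yields $\{S^0_M, X\}_M \cong \{S^0, X\boxtimes tS^\nu\}$, and this introduces a twist by the normal bundle $\nu$ of $M\subset\mathbb{R}^n$. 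Your second step hides a companion twist. The Pontryagin--Thom collapse for $\Delta\hookrightarrow M\times M$ does not produce an untwisted cofiber sequence with $S^0_M$ in the middle; it identifies $S_{M\times M}\fibs(i)$ with $\triangle_! S^{\nmalbdl} \boxtimes (\calP M, \tar\times \sou)_+$, where $\nmalbdl$ is the normal bundle of the diagonal (the tangent bundle of $M$), and pulling back along $\Gamma_f$ gives $S_M(\Gamma_f^*\fibs(i))\simeq \triangle_!S^{\nmalbdl}\boxtimes \Lambda^f M_+$. Combining the two steps yields $\{S^0, (S^\nu\wedge_M S^{\nmalbdl})\boxtimes \Lambda^f M_+\}$, and the final ingredient---absent from your outline---is that $\nu\oplus\nmalbdl$ is a trivial bundle of rank $n$, so the two Thom twists cancel and one lands on $\{S^0, S^n\wedge \Lambda^f M_+\}\cong\{S^0, \Lambda^f M_+\}$. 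Without tracking both twists and invoking this cancellation, the argument cannot close.
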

 
We will denote the image of $\sect_-\amalg \sect_+$ under this 
isomorphism by $R^{KW}(f)$\nidx{RKW@$R^{KW}$}.

If $M$ is of dimension $n$ then $\Gamma_f^*\fibs(i)\rightarrow M$
is $(n-1)$-connected, see \cite[6.1,6.2]{KW}.  If $n$ is at least three then
the corollary below follows from \myref{firstform}.
\begin{corollary}\cite[10.1]{KW}\mylabel{converse1}
If $M$ is a closed smooth manifold of dimension at least 3, $f$ is 
homotopic to a map with no fixed points if and only if $R^{KW}(f)$ is zero.
\end{corollary}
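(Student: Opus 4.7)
The plan is to thread together the three preceding propositions. By \myref{fptosec}, $f$ is homotopic to a fixed-point-free map if and only if the pullback fibration $\Gamma_f^{\ast}\fibs(i)\rightarrow M$ admits a section: part (1) converts the fixed-point problem into a lifting problem against $M\times M-\triangle \hookrightarrow M\times M$, and part (2) converts the lifting problem into a sectioning problem for $\Gamma_f^{\ast}\fibs(i)$. By \myref{firstform}, this sectioning question is in turn equivalent to the question of whether the two canonical sections $\sect_-$ and $\sect_+$ of the unreduced fiberwise suspension $S_M(\Gamma_f^{\ast}\fibs(i))$ are fiberwise homotopic. Finally, the fiberwise Freudenthal suspension theorem combined with \myref{kwclassidentify} identifies the class of $\sect_-\amalg \sect_+$ with $R^{KW}(f) \in \{S^0,\Lambda^f M_+\}$, and this class vanishes precisely when $\sect_-$ and $\sect_+$ are fiberwise homotopic.

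To justify the use of \myref{firstform}, I would verify the connectivity assumptions. A closed smooth $n$-manifold $M$ has the homotopy type of a finite CW complex with cells in dimensions at most $n$, and by \cite[6.1, 6.2]{KW} the fibration $\Gamma_f^{\ast}\fibs(i)\rightarrow M$ is $(n-1)$-connected. Setting $r = n-2$, the hypothesis that $M$ be homotopically a retract of a CW complex with cells in dimensions $\leq 2r+1 = 2n-3$ reduces to $n\geq 3$, exactly the dimension hypothesis of the corollary, while $n-1\geq r+1$ gives the required connectivity of the fibration. Within the same range, the fiberwise Freudenthal isomorphism $[S_M^0, S_M(\Gamma_f^{\ast}\fibs(i))]_M \cong \{S_M^0, S_M(\Gamma_f^{\ast}\fibs(i))\}_M$ ensures that nothing is lost by passing to stable ex-maps, so that $R^{KW}(f)$ already detects whether $\sect_-$ and $\sect_+$ agree as ex-maps.

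The main obstacle is purely this connectivity bookkeeping: aligning the hypotheses of \myref{firstform} and the fiberwise Freudenthal theorem with the single assumption $n\geq 3$. Once that is done, the remainder is a routine chain of equivalences, since the identification ``$[\sect_-\amalg \sect_+] = 0$ if and only if $\sect_-$ and $\sect_+$ are fiberwise homotopic'' is formal: a fiberwise homotopy $\sect_-\simeq_M \sect_+$ produces a homotopy of $\sect_-\amalg \sect_+$ to $\sect_-\amalg \sect_-$, which factors through the section of $S_M(\Gamma_f^{\ast}\fibs(i))$ and is therefore trivial in the ex-category, and conversely a null-homotopy of $\sect_-\amalg \sect_+$ restricts to a fiberwise homotopy between the two sections.
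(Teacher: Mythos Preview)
Your argument is correct and follows exactly the route the paper takes: the paper's entire proof is the sentence preceding the corollary, which cites the $(n-1)$-connectivity of $\Gamma_f^*\fibs(i)\to M$ from \cite[6.1,6.2]{KW} and invokes \myref{firstform}, leaving the reduction via \myref{fptosec}, the Freudenthal step, and \myref{kwclassidentify} implicit from the surrounding discussion. You have simply made the connectivity bookkeeping (setting $r=n-2$ so that $2r+1=2n-3\geq n$ exactly when $n\geq 3$) and the formal equivalence ``$[\sect_-\amalg\sect_+]=0$ iff $\sect_-\simeq_M\sect_+$'' explicit.
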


\chapter{Topological duality and fixed point theory}\label{summary}

In the previous chapters we recalled the definitions of some
classical fixed point theory invariants.  The definition we gave of
the algebraic Reidemeister trace is very similar to the trace in
the symmetric monoidal category of modules over a commutative
ring.  From the definition we gave in the last chapter it is less
clear that the geometric Reidemeister trace resembles the trace in
a symmetric monoidal category.

In this chapter we will give another description of the geometric Reidemeister
trace that will make the similarity with the trace in a symmetric monoidal
category more clear.  We will give definitions of duality and trace that
resemble the definitions for a symmetric monoidal category and are also
similar to the definition of the Hattori-Stallings trace for modules over a
ring.  These constructions will be shown to be examples of
duality and trace in a bicategory with shadows in Chapters \ref{whybicat} and 
\ref{classfpsec}.

We will also describe an invariant, originally defined by Crabb and James, 
that can be identified with Klein and Williams' invariant.  The
definition of this invariant is similar to the geometric Reidemeister trace
but it has two very significant differences.  This invariant does not require
a base point.  We also replace homotopy classes of paths 
with path spaces.  

In this chapter we are mostly interested in the impact of these changes on
the invariants that we have already described, but the changes are even more 
important for fiberwise invariants.
For a fiberwise space choosing a base point would
correspond to choosing a section and sections do not always exist. 
So fiberwise invariants need to be 
unbased.  The change to 
path spaces reflects the greater variety of invariants for fiberwise
spaces.  In the classical case, the invariant defined by Crabb and James 
is only zero when the geometric Reidemeister trace is zero.  These 
invariants have fiberwise generalizations that do not share this property.

Since the techniques of Chapter \ref{whybicat} 
significantly simplify some of the proofs we will delay most of the proofs 
until Chapters \ref{classfpsec} and \ref{classfpsec2}.

\section{Duality for spaces with group actions}\label{summary1}
In this section and the next section we will give an alternate definition of the
geometric Reidemeister trace that is closer to the trace in a
symmetric monoidal category. We first define duality for spaces
with an action by a group $\pi$.  The motivating example of
a space with a group action is the action of the fundamental group
of a manifold on the universal cover by deck transformations.  In the
next section we
use this definition of dual pairs to define a trace.  This
trace is similar to the Hattori-Stallings trace.

There are two important observations about the duality defined in
this section.  First, this duality is more similar in perspective
to the duality defined by Ranicki in \cite[3]{Ranicki} than to
duality in the symmetric monoidal category of $G$-spaces for some
group $G$. Second, despite the action of the group $\pi$, this
duality is used to study classical invariants, not equivariant
ones.

Let $\pi$ be a discrete group.  For a based right $\pi$-space $X$
and a based left $\pi$-space $Y$, let $X\odot Y$\nidx{$\odot $} denote the
based bar complex $B(X,\pi,Y)$\nidx{B@$B(-,\pi,-)$}.

The bar complex $B(X,\pi,Y)$ is the geometric realization of
the simplicial based space 
with $n$ simplices \[X\wedge (\pi^n)_+\wedge Y,\] face maps
\[\partial_0(x,g_1,g_2,\ldots, g_n,y)=(xg_1,g_2,\ldots, g_n,y)\]
\[\partial_i(x,g_1,g_2,\ldots, g_n,y)=(x,g_1,\ldots,g_ig_{i+1},\ldots, g_n,y)\,\,
\mathrm{for}\,\,0< i<n\]
\[\partial_n(x,g_1,g_2,\ldots, g_n,y)=(x,g_1,\ldots, g_{n-1}, g_ny)\]
and degeneracy maps
\[s_i(x,g_1,g_2,\ldots, g_n, y)=(x, g_1,\ldots ,g_i,e,g_{i+1}, \ldots
g_n,y).\]

If $Z$ has left and right actions by $\pi$ then
$B(X, \pi, B(Z,\pi,Y))$ is isomorphic to $B(B(X,\pi,Z),\pi, Y)$.
Also, $B(X,\pi,\pi_+)$ and $X$ are equivalent, but not isomorphic, 
as right $\pi$-spaces.

We think of $B(X, \pi,Y)$ as the homotopy coequalizer
of the maps \[\xymatrix{X\wedge \pi_+\wedge Y
\ar@<.5ex>[r]\ar@<-.5ex>[r]&X\wedge Y
}\] where
the maps $X\wedge \pi_+\wedge Y\rightarrow X\wedge Y$ are the
action of $\pi$ on $X$ and $\pi$ on $Y$.  We will denote the actual 
coequalizer of these maps $X\wedge_\pi Y$\nidx{$\wedge_\pi $}.

We use a homotopy coequalizer to define $\odot$ so that the
result has the correct homotopy type. Alternatively, we could make
assumptions on the actions of $\pi$ so that the bar resolution is 
equivalent to $X\wedge_\pi Y$.  This will be the case in the examples 
we consider.  

\begin{lemma}\cite[8.5]{classfib} If $\pi$ acts principally on $X$ and 
effectively on $Y$
then $B(X,\pi,Y)$ is weakly equivalent to $X\wedge_\pi Y$.
\end{lemma}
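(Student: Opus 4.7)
The plan is to construct the canonical augmentation $\varepsilon\colon B(X,\pi,Y)\to X\wedge_\pi Y$ and to verify it is a weak equivalence under the two hypotheses. The augmentation is produced at the simplicial level: since $\partial_0$ and $\partial_n$ record the right action on $X$ and the left action on $Y$, the natural map from $0$-simplices $X\wedge Y\to X\wedge_\pi Y$ coequalizes $\partial_0$ and $\partial_1$ at level $1$ and hence descends to the realization, giving $\varepsilon$.

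First I would dispose of the toy case $X=\pi_+$ with the regular right action. The augmentation $B(\pi_+,\pi,Y)\to \pi_+\wedge_\pi Y=Y$ admits extra degeneracies as a simplicial based space (insertion of the identity of $\pi$ into the leftmost slot), which realize to a contraction, so the map is a weak equivalence. Symmetrically, $B(X,\pi,\pi_+)\to X$ is a weak equivalence whenever $\pi$ acts principally on $X$: principality supplies the local triviality of $X\to X/\pi$ that lets one run the extra-degeneracy contraction locally and glue via the usual principal-bundle patching argument.

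Next I would leverage the associativity isomorphism $B(X,\pi,B(\pi_+,\pi,Y))\cong B(B(X,\pi,\pi_+),\pi,Y)$ noted in the text just above the lemma. Applying $B(X,\pi,-)$ to $B(\pi_+,\pi,Y)\simeq Y$ and using this isomorphism identifies $B(X,\pi,Y)$ with $B(B(X,\pi,\pi_+),\pi,Y)$ up to weak equivalence. Applying $B(-,\pi,Y)$ to the principal-case equivalence $B(X,\pi,\pi_+)\simeq X$ then collapses the inner bar construction, leaving $X\wedge_\pi Y$. For this chain to be legitimate, geometric realization must preserve the level-wise weak equivalences involved, which is where the effective hypothesis on $Y$ enters: it is precisely the cofibrancy condition needed to make $B_\bullet(X,\pi,Y)$ a proper simplicial based space so that realization behaves homotopically.

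The main obstacle is not conceptual but a careful unpacking of the exact content of \emph{principal} and \emph{effective} in this paper's conventions. These must be calibrated so that (i)~$B_\bullet(X,\pi,Y)$ is proper and realization preserves the level-wise equivalences just used, and (ii)~the level-wise coequalizer $X\wedge_\pi Y$ really has the correct homotopy type (so the simplicial augmentation is an objectwise equivalence on one side). Once these cofibrancy conditions are extracted, the sandwich plus extra-degeneracy argument above is the standard bar-construction technology of May and of Crabb--James, and the lemma follows without substantive additional work.
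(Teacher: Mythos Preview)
The paper does not prove this lemma; it simply cites May's memoir \cite[8.5]{classfib}. So the comparison is really with May's argument there, which runs by localizing the augmentation $\varepsilon\colon B(X,\pi,Y)\to X\wedge_\pi Y$ over the orbit space $X/\pi$: principality of the $\pi$-action on $X$ means $X\to X/\pi$ is locally trivial, and over a trivializing patch the augmentation becomes the extra-degeneracy contraction $B(U_+\wedge\pi_+,\pi,Y)\to U_+\wedge Y$; the effectiveness hypothesis on $Y$ controls the gluing. Your instinct to use extra degeneracies and local triviality is exactly right, but the way you assemble them has a genuine gap.

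The sandwich argument as you wrote it is circular. You apply $B(X,\pi,-)$ to $B(\pi_+,\pi,Y)\simeq Y$ and then $B(-,\pi,Y)$ to $B(X,\pi,\pi_+)\simeq X$. But $B(-,\pi,Y)$ applied to $X$ is $B(X,\pi,Y)$, not $X\wedge_\pi Y$; so your chain returns to its starting point rather than reaching the strict coequalizer. What would work is to apply the \emph{strict} functor $-\wedge_\pi Y$ to the equivariant equivalence $B(X,\pi,\pi_+)\simeq X$, using the levelwise identification $B(X,\pi,\pi_+)\wedge_\pi Y\cong B(X,\pi,Y)$. But then the entire content of the lemma is the claim that $-\wedge_\pi Y$ preserves this particular equivariant weak equivalence, and that is precisely where principality of the $X$-action must enter: it makes the quotient functor homotopically meaningful. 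Note also that $B(X,\pi,\pi_+)\simeq X$ holds for \emph{any} right $\pi$-space $X$ by the global extra degeneracy, so your attribution of principality to that step is misplaced. In short, you have the right ingredients but have not yet located where the hypothesis does its work; the cleanest fix is to argue fiberwise over $X/\pi$ as May does.
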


Let $K$ be a based space (without an action by $\pi$).  Then
$\vee_{\pi}K$, the wedge product of copies of $K$ indexed by the
elements of $\pi$, has left and right actions of $\pi$ given by
permutations of the factors.

\begin{definition}\mylabel{pindual} We say a based right  $\pi$-space $X$ has
\emph{n-dual}\idx{n-dual} $Y$ if $Y$ is a based left $\pi$-space, there
is a map $\eta\colon S^n\rightarrow X\odot Y$ and a $\pi-\pi$
equivariant map $\epsilon\colon Y\wedge X \rightarrow \vee_{\pi} S^n$
such that the diagrams below commute up to equivariant homotopy
after smashing with $S^m$ for some $m\in\mathbb{N}$.
\[\xymatrix{S^n\wedge X\ar[r]^-{\eta\wedge \id}\ar[dd]_{\gamma}&(X\odot Y)
\wedge X\ar[d]^{\cong} &
Y\wedge S^n\ar[r]^-{\id\wedge \eta}\ar[dd]_{(\sigma\wedge \id)
\gamma}&Y\wedge(X\odot Y)
\ar[d]^{\cong}\\
&X\odot (Y\wedge X)\ar[d]^-{\id \odot
\epsilon}&
&((Y\wedge X)\odot  Y)\ar[d]^-{ \epsilon \odot  \id}\\
X\wedge S^n\ar[r]_-{\cong}& X\odot 
(\vee_{\pi}S^n)
&S^n\wedge Y\ar[r]_-{\cong}& (\vee_{\pi}S^n)\odot 
Y}\]
\end{definition} 

The map $\sigma\colon S^n\rightarrow S^n$ is a map of degree
$(-1)^n$.  If $Y$ is the dual of $X$ we say that $(X,Y)$ is a dual pair.

Not many $\pi$-spaces are dualizable in this sense.  For example, 
Let $\sigma$ be a nontrivial subgroup of $\pi$.  If the $\pi$ space $(\pi/\sigma
)_+$ 
was dualizable with dual $Y$ there would be a $\pi$-$\pi$-equivariant map 
\[\epsilon:Y\wedge (\pi/\sigma)_+\rightarrow \vee_{\pi}S^n\] for some integer 
$n$ such that the diagrams in \myref{pindual} are satisfied.
Since $\epsilon$ is a $\pi$-$\pi$-equivariant map its image must be 
the basepoint of $\vee_{\pi}S^n$.  

Let $M$ be a smooth compact manifold with universal cover
$\tilde{M}$, quotient map $\pi\colon \tilde{M}\rightarrow M$\nidx{pi@$\pi$}, and normal
bundle $\nu$.

\begin{lemma}\mylabel{univcovdual1}
For a closed smooth manifold $M$, $\tilde{M}_+$ is dualizable as a
right $\pi_1M$ space with dual
$T\pi^*\nu$.\end{lemma}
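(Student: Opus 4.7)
The plan is to lift the classical $n$-duality between $M_+$ and $T\nu$ from \myref{topexdual} to the universal cover, using the principal $\pi_1 M$-action on $\tilde M$ and the pulled-back bundle $\pi^*\nu$. The key identification is $\tilde M_+ \wedge_{\pi_1 M} T\pi^*\nu \cong M_+ \wedge T\nu$; because $\pi_1 M$ acts principally on $\tilde M_+$ and effectively on $T\pi^*\nu$, the preceding lemma gives a weak equivalence $\tilde M_+ \odot T\pi^*\nu = B(\tilde M_+, \pi_1 M, T\pi^*\nu) \xrightarrow{\simeq} M_+ \wedge T\nu$, which will let us pass constructions back and forth between the cover and the base.

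For the coevaluation $\eta\colon S^n \to \tilde M_+ \odot T\pi^*\nu$, I take the classical coevaluation $S^n \to M_+ \wedge T\nu$ (the Pontryagin-Thom map for $M \hookrightarrow \bR^n$ followed by the Thom diagonal) and precompose with a homotopy inverse to the weak equivalence above. For the evaluation $\epsilon\colon T\pi^*\nu \wedge \tilde M_+ \to \vee_{\pi_1 M} S^n$, I pull the classical construction back to the cover directly. The preimage of the diagonal $M \hookrightarrow M\times M$ under $(\pi,\pi)\colon \tilde M\times \tilde M \to M\times M$ decomposes as $\bigsqcup_{g\in\pi_1 M}\tilde M_g$, where $\tilde M_g = \{(\tilde m, \tilde m\cdot g) : \tilde m\in\tilde M\}$. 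Each component has normal bundle $\pi^*(TM\oplus\nu) = \pi^*\bR^n$, trivial of rank $n$, so the Pontryagin-Thom collapse for an equivariantly chosen tubular neighborhood yields
\[
T\pi^*\nu \wedge \tilde M_+ \longrightarrow \bigvee_{g\in\pi_1 M} \tilde M_+ \wedge S^n,
\]
and composing with $\tilde M_+\to S^0$ produces $\epsilon$. With the chosen parametrization, the left action of $h$ on $T\pi^*\nu$ sends $\tilde M_g$ to $\tilde M_{hg}$ and the right action of $k$ on $\tilde M_+$ sends $\tilde M_g$ to $\tilde M_{gk}$, giving the required $\pi_1 M$-$\pi_1 M$-equivariance.

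Finally, I verify the two triangle identities of \myref{pindual} up to stable equivariant homotopy. I use the natural isomorphisms $X\odot(\vee_\pi S^n) \cong X\wedge S^n$ and $(\vee_\pi S^n)\odot Y \cong S^n\wedge Y$ built into the definition, together with the weak equivalence of the bar construction with the strict quotient in our setting. After these identifications, both composites in each triangle descend under the quotient $\tilde M \to M$ to the corresponding composites for the classical duality $(M_+, T\nu)$, which satisfy the triangle identities stably by \myref{topexdual}. The equivariance established in the construction of $\epsilon$ and the functoriality of the bar construction propagate this agreement back up to the cover.

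The main obstacle will be the last step: the bar complex $\tilde M_+\odot T\pi^*\nu$ is only weakly equivalent (not isomorphic) to $M_+\wedge T\nu$, so I must verify that the Pontryagin-Thom lifts intertwine correctly with the bar-complex structure maps and the canonical isomorphisms $X\odot(\vee_\pi S^n)\cong X\wedge S^n$, rather than merely matching the classical diagrams after quotienting. The systematic formalism for this transfer is exactly what Chapters \ref{whybicat} and \ref{classfpsec} will provide, but here the verification is carried out directly.
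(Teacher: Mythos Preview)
Your approach is correct in outline but takes a genuinely different route from the paper. You lift the classical duality $(M_+,T\nu)$ directly to the cover and then argue the triangle identities by descent along the quotient $\tilde M\to M$. The paper instead \emph{factors} the dual pair as a composite: it first establishes a ``covering'' dual pair $((\tilde M,\pi)_+,(\,_*\tilde M,\pi)_+)$ in the bicategory $\sM_{\Ex}$ (this is the monoid dual pair of \myref{topmonodidual}, whose triangle identities commute strictly and on the nose), and then composes it with the Costenoble-Waner dual pair $(S^0_M,tS^\nu)$ of \myref{dualM} using \myref{dualcomposites1}. The composite is exactly $(\tilde M_+,T\pi^*S^\nu)$.

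What this buys: in the paper's approach the triangle identities come for free from \myref{dualcomposites1}, so no direct verification is needed; all the analytic content sits in the Costenoble-Waner duality, which is quoted. Your approach avoids the bicategory machinery and Costenoble-Waner duality, relying only on the classical $(M_+,T\nu)$, but the price is exactly the obstacle you flag: you must check that the lifted Pontryagin--Thom maps intertwine correctly with the bar-complex associativity isomorphisms $(X\odot Y)\wedge X\cong X\odot(Y\wedge X)$ and the identifications $X\odot(\vee_\pi S^n)\cong X\wedge S^n$. Your descent argument (``quotient and compare to the classical triangles'') is the right idea---for free $\pi_1M$-spaces, equivariant homotopy classes biject with homotopy classes of quotients---but you have not actually carried it out, and the bookkeeping with the bar complex is nontrivial. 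The paper's factorization is precisely the systematic way to avoid that bookkeeping.
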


Let $S^\nu$\nidx{snu@$S^\nu$} be the fiberwise one point compactification 
of the normal bundle of $M$ and $\tilde{M}\rightarrow \pi^* S^\nu$ be the 
inclusion as the points at infinity.
Then $T\pi^*\nu$\nidx{Tpinu@$T\pi^*\nu$} 
is the pushout of the maps $\tilde{M}\rightarrow 
\pi^* S^\nu$ and $\tilde{M}\rightarrow*$.   We use
$T$ since we want to suggest the Thom space in analogy with the
duality described in Chapter \ref{reviewfp}. The space
$T\pi^*\nu$ is a left $\pi_1M$ space with action given by $
\alpha \cdot (\gamma,v)= (\gamma\alpha^{-1},v)$ for $\alpha\in
\pi_1M$ and  $(\gamma,v)\in T\pi^*\nu$.
Since the right action of $\pi_1M$ on $\tilde{M}$ is free,
$\tilde{M}_+\odot T\pi^*\nu$ is equivalent to $\tilde{M}\wedge_{\pi_1M}
T\pi^*\nu$.

The coevaluation map for $\tilde{M}$ is the composite
\[\xymatrix{S^n\ar[r]^{\chi}&T\nu\ar[r]&\tilde{M}_+\odot T\pi^*\nu.}\]
The first map is the Pontryagin-Thom map for an embedding of $M$
in $\mathbb{R}^n$.  
The second map is defined by \[v\mapsto
(\gamma_{\rho(v)}, \gamma_{\rho(v)},v)\] where $\rho\colon \nu\rightarrow M$
\nidx{rho@$\rho$} is
the projection and $\gamma_{\rho(v)}$ is any lift of $\rho(v)$ to
$\tilde{M}$.  The second map is independent of the chosen lift
since quotienting by the action of $\pi_1M$ will identify any two
different choices.

Before we define the evaluation map we need the following preliminary 
lemma.
\begin{lemma}\mylabel{localcont}\cite[II.5.2]{CrabbJames}  
Let $K$ be an ENR.  Then there is an open neighborhood $W$ of the
diagonal in $K\times K$ and a homotopy  $H\colon W\times I \rightarrow
K$\nidx{H@$H$} 
such that $H_0(x,y)=x$, $H_1(x,y)=y$ and $H_t(x,x)=x$ for all $(x,y)
\in W$ and $t\in I$. 
\end{lemma}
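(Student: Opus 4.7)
The plan is to exploit the ENR structure on $K$ to reduce the problem to a straight-line homotopy in Euclidean space. Since $K$ is an ENR, we may fix an embedding $i\colon K\hookrightarrow U\subseteq \mathbb{R}^n$ for some open set $U$ together with a retraction $r\colon U\rightarrow K$ satisfying $r\circ i = \id_K$. The desired neighborhood $W$ and homotopy $H$ will then be built by pulling back the affine structure of $\mathbb{R}^n$ through this pair.

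Concretely, I would define
\[ W \;=\; \bigl\{(x,y)\in K\times K \,\bigm|\, (1-t)i(x)+t\,i(y)\in U \text{ for all } t\in[0,1]\bigr\} \]
and set $H(x,y,t) = r\bigl((1-t)i(x)+t\,i(y)\bigr)$. The three required properties are then immediate: $H_0(x,y)=r(i(x))=x$, $H_1(x,y)=r(i(y))=y$, and $H_t(x,x)=r(i(x))=x$ for every $t$, because $r\circ i = \id_K$. Moreover $W$ plainly contains the diagonal, since for any $x\in K$ the constant segment $\{i(x)\}$ lies in $U$.

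The one step that requires a small argument is showing that $W$ is open in $K\times K$. For this I would argue as follows: given $(x_0,y_0)\in W$, the segment $\sigma_0 = \{(1-t)i(x_0)+t\,i(y_0) : t\in[0,1]\}$ is a compact subset of the open set $U\subseteq\mathbb{R}^n$, so there exists $\delta>0$ such that the $\delta$-tubular neighborhood of $\sigma_0$ is contained in $U$. By continuity of $i$, for $(x,y)$ sufficiently close to $(x_0,y_0)$ the points $i(x),i(y)$ lie within $\delta/2$ of $i(x_0),i(y_0)$, so every convex combination stays within $\delta$ of $\sigma_0$ and hence in $U$. This shows $(x_0,y_0)$ is interior to $W$.

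The main (and really the only) obstacle is this openness verification, which is routine once the tubular neighborhood argument is made precise; there is no subtlety beyond the observation that convex combinations of points near a compact subset of an open set remain in that open set. Since the ENR hypothesis is exactly what supplies the embedding–retraction pair $(i,r)$, the construction is essentially forced, and the three identities for $H$ reduce to the single identity $r\circ i = \id_K$.
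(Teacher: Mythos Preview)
Your proof is correct and is the standard argument for this fact. The paper itself does not give a proof of this lemma; it simply cites \cite[II.5.2]{CrabbJames} and then fixes such a homotopy $H$ for later use. The argument you give --- embedding $K$ as a retract of an open subset of $\mathbb{R}^n$ and retracting the straight-line homotopy --- is precisely the argument one finds in Crabb--James and in Dold's treatment of ENRs, so there is nothing to compare.
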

We fix such a homotopy $H$ and use it when defining all similar evaluation
maps.

The evaluation map for $\tilde{M}_+$,
\[\xymatrix{T\pi^*\nu\wedge\tilde{M}_+ \ar[r]&(\pi_1M)_+ \wedge
S^n=\vee_{\pi_1M}S^n,}\] is defined by
\[(\gamma_m,v,\gamma_n)\mapsto ({\gamma_m^{-1} H(n,m)\gamma_n},
\epsilon(v,n))\] where $\epsilon$ is the evaluation map from the
dual pair $(M_+,T\nu)$. The element \[{\gamma_m^{-1}
H(n,m)\gamma_n}\] of $\pi_1M$ is  the unique $g\in\pi_1M$ such
that $\gamma_m\cdot g$ is contained in a small neighborhood of $\gamma_{n}$.

If $\sect\colon M\rightarrow T\nu$ is the zero section,
points outside of a small neighborhood $N$ of
the image of $(\sect\times \id)\triangle$ are mapped by $\epsilon$ to
the base point of $S^n$.  If necessary we can shrink $N$ so that
for all $(v,m)\in N$, $(\rho(v),m)\in W$.  If $H(n,m)$ is not defined
then the triple $(\gamma_n,\gamma_m,v)$ is mapped to the base
point. The evaluation map is continuous and independent of choices. It is
also compatible with the actions of $\pi_1M$ on $T\pi^*\nu$ and
$\tilde{M}_+$.  We show that these maps make the required diagrams commute
in the proof of \myref{bunivdual}.

\section{The geometric Reidemeister trace as a trace}\label{summary1a}

Before we can define the trace, we need to introduce a little more
structure. This additional structure plays the role of the
symmetry isomorphism in the definition of trace in a symmetric
monoidal category and allows us to compare the target of the
coevaluation with the source of the evaluation for dual pairs of
$\pi$-spaces.

\begin{definition}Let $Z$ be a based $\pi$-$\pi$ space.  
The \emph{shadow}\idx{shadow} of $Z$,
$\sh{Z}$, \nidx{shad@$\protect\sh{-}$}
is the cyclic bar resolution $C(Z,\pi)$.\nidx{C@$C(-,\pi)$}
\end{definition}

The cyclic bar resolution $C(Z,\pi)$ is the geometric realization
of the simplicial based space with $n$ simplices
\[(\pi^n)_+ \wedge Z \] face maps
\[\partial_0(g_1,g_2,\ldots, g_n, z)=(g_1,\ldots,g_{n-1}, g_nz)\]
\[\partial_i(g_1,g_2,\ldots, g_n, z)=(g_1,\ldots,g_{n-i}g_{n-i+1},\ldots, g_n,z
)\,\,
\mathrm{for}\,\,0< i<n\]
\[\partial_n(g_1,g_2,\ldots, g_n,z)=(g_2,\ldots, g_{n}, zg_1)\]
and degeneracy maps
\[s_i(g_1,g_2,\ldots, g_n,z)=(g_1,\ldots ,g_i,e,g_{i+1}, \ldots
g_n,z).\] We think of $\sh{Z}$ as the homotopy coequalizer of the two
actions of $\pi$ on $Z$.

For a homomorphism $\phi\colon \pi \rightarrow \pi$ let 
$\pi^\phi$\nidx{piphi@$\pi^\phi$}  
be $\pi$ as a set.  On the left  $\pi$ acts on $\pi^\phi$
by multiplication and on the right $\pi$ acts by applying $\phi$
and then acting by multiplication. There is a simplicial map from
$C(\pi^\phi,\pi)$ to the constant simplicial set on the set of
semiconjugacy classes of $\pi$ with respect to $\phi$ given by
\[(g_1,g_2,\ldots, g_n, h)\mapsto (g_1g_2\ldots g_nh).\] This  map
is a simplicial homotopy equivalence with inverse given by
\[h\mapsto (e,\ldots, e,\phi(h)).\]  The homotopy between the
identity map and the map
\[(g_1,g_2,\ldots, g_n, h)\mapsto (e,\ldots,
e,\phi(g_1g_2\ldots g_nh))\] is
\[h_0(g_1,g_2,\ldots,g_n,h)=(g_1,g_2,\ldots, g_n,h,e)\]
\[h_i(g_1,g_2,\ldots,g_n,h)=(g_1,\ldots,g_{n-i},g_{n-i+1}
\ldots g_nh,e,\ldots,e )\,\,\mathrm{for}\,\,0<i<n\]
\[h_n(g_1,g_2,\ldots,g_n,h)=(g_1\ldots g_nh,e,\ldots,e ).\]
See \cite[I.5.1]{simplicial} for the definition of a simplicial homotopy.
In particular, $\sh{\pi^\phi}$ is equivalent to the set of semiconjugacy
classes.  This was the definition of $\sh{\pi^\phi}$ in \myref{semiconj}.

Taking the shadow of a $\pi$-$\pi$ space is similar to applying the
functor $\sh{-}$ to an $R$-$R$-bimodule for a ring $R$. In both
cases these solutions seem like an ad hoc resolution to a very
small problem.  In the next chapter we will show that structures
very similar to these are very
important in the definition of trace in a bicategory. 

In this section we will define trace for $\pi$-maps
\[f\colon X\rightarrow \,X^\phi \] where $\phi\colon \pi \rightarrow \pi$ is a
homomorphism, $X$ is a right $\pi$-space and  $X^{\phi}$\nidx{Xphi@$X^\phi$} is the
space $X$ with right action of $\pi$ given by $x\cdot g=x\phi(g)$.
There is a simplicial map from $B(X,\pi,\pi^\phi_+)$ to the
constant simplicial set $X^\phi$ given by the action of $\pi$ on
$X$. This map is a simplicial homotopy equivalence with inverse
given by \[x\mapsto (x,e,\ldots e) \] and so $X\odot 
\pi^\phi_+$ is equivalent to $X^\phi$ as a right $\pi$-space.

\begin{definition} Let $X$ be a right $\pi$-space with $n$-dual $Y$.
Let $\phi\colon \pi\rightarrow \pi$ be a homomorphism.  The \emph{trace}\idx{trace}
of an equivariant map $f\colon  X\rightarrow X^{\phi}$ is the stable
homotopy class of the map \[\xymatrix{S^n \ar[r]^-{\eta}
&X\odot Y\ar[r]^-{f\odot  \id}&
X^\phi \odot Y\cong(X\odot \pi^\phi_+)\odot Y
\cong\sh{Y\wedge X\odot (\pi^\phi)_+}
\ar[r]^-{\sh{\epsilon\odot \id}} &\vee_{\sh{\pi^\phi}}S^n.}\]

\end{definition}

Let $f\colon M\rightarrow M$ be a continuous map of a closed smooth  manifold
and $\phi\colon \pi_1(M)\rightarrow
\pi_1(M)$ be the induced map given by a choice of path $\basepath$ from the base point
to its image under $f$.
Define \[\tilde{f}\colon \tilde{M}\rightarrow \tilde{M}\] by
$\tilde{f}(\gamma) =f(\gamma)\basepath$.\nidx{f@$\tilde{f}$}  
Then $\tilde{f}$ satisfies
\[\tilde{f}(\gamma\alpha)= f(\gamma)f(\alpha)\basepath
=f(\gamma)\basepath\basepath^{-1}f(\alpha)\basepath=\tilde{f}(\gamma)
\phi(\alpha)\] and defines an equivariant map
\[ \tilde{f}\colon \tilde{M}_+ \rightarrow
 \tilde{M}_+^\phi.\]

Recall that the geometric Reidemeister trace of a map $f$ is 
\[\sum_{\mathrm{Fixed\, Point\, Classes}\,
F_k}i(F_k)\cdot F_k\in\mathbb{Z}\sh{\pi_1(M)^\phi}\]
where $i(F_k)$ is the index of the fixed point class $F_k$.

Let $\pi_i^s(X)$ be the $i^{th}$ stable homotopy group of $X$.

\begin{prop}\mylabel{pihiso} There is an isomorphism $\pi_0^s(X)\rightarrow H_0(X)$.
The image of the trace of $\tilde{f}$ under this isomorphism is 
the geometric Reidemeister trace of $f$.
\end{prop}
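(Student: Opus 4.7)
The plan is to carry out the same local decomposition that proves the classical Lefschetz--Hopf theorem (\myref{lhopf}), while tracking the additional group-theoretic data introduced by the coevaluation and evaluation of \myref{univcovdual1}.

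\textbf{Target identification.} The codomain $\vee_{\sh{\pi_1(M)^\phi}}S^n$ is an $n$-fold suspension of a wedge of copies of $S^0$ indexed by the discrete set $\sh{\pi_1(M)^\phi}$. For any discrete set $A$, the stable Hurewicz map gives $\pi_0^s(\vee_A S^0)\cong H_0(\vee_A S^0)\cong \bigoplus_A \mathbb{Z}=\mathbb{Z}\langle A\rangle$, and a wedge summand inclusion corresponds to the basis element labeled by that summand. After suspending $n$ times, this gives the first statement of the proposition and identifies the receptacle of $H_n$ applied to the trace with $\mathbb{Z}\sh{\pi_1(M)^\phi}$, the same group that contains $R^{geo}(f)$.

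\textbf{Localization at fixed points.} By homotopy invariance of the stable homotopy class of the trace, replace $f$ by a transverse representative with finite fixed point set $\{x_1,\ldots,x_r\}$. The unequivariant evaluation $\epsilon\colon T\nu\wedge M_+\to S^n$ is supported near the image of $(\sect\times\id)\triangle$, and the group-theoretic component $\gamma_m^{-1}H(n,m)\gamma_n$ in the definition of the evaluation for $\tilde{M}_+$ is defined only when $(n,m)\in W$ (with $H$ as in \myref{localcont}). Together these force the composite defining the trace to factor through small, mutually disjoint neighborhoods $U_i$ of the fixed points $x_i$. Standard additivity of stable homotopy for a pinch of $S^n$ then writes the stable class of the trace as a sum of local contributions $c_i$, one per fixed point.

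\textbf{Computing each local contribution.} At a fixed point $x_i$, choose a lift $\gamma_{x_i}\in\tilde{M}$. Follow a vector $v$ in a small disc of $\nu$ over $x_i$ through the composite: $\eta$ sends $v$ to the class of $(\gamma_{x_i},\gamma_{x_i},v)$; applying $\tilde{f}\odot\id$ gives $(f(\gamma_{x_i})\basepath,v)$; unfolding the shadow identification $X^\phi\odot Y\cong\sh{Y\wedge X\odot\pi^\phi_+}$ yields the class of $(v,\gamma_{x_i},f(\gamma_{x_i})\basepath)$; and the explicit evaluation from Section~\ref{summary1} maps this to $\bigl(\epsilon(v,x_i),\,[\gamma_{x_i}^{-1}H(f(x_i),x_i)f(\gamma_{x_i})\basepath]\bigr)$ in $S^n\times\sh{\pi_1(M)^\phi}$. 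Since $H(x_i,x_i)=x_i$ and $H_t(x_i,x_i)=x_i$ for the fixed homotopy of \myref{localcont}, the semiconjugacy class appearing in the second factor is $[\gamma_{x_i}^{-1}f(\gamma_{x_i})\basepath]$, and the classical Dold--Puppe computation for the $S^n$ coordinate identifies $H_n$ of the first factor with the local fixed point index $i(x_i)$. Thus $c_i=i(x_i)\cdot[\gamma_{x_i}^{-1}f(\gamma_{x_i})\basepath]$ in $\mathbb{Z}\sh{\pi_1(M)^\phi}$.

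\textbf{Assembly.} The semiconjugacy class $[\gamma_x^{-1}f(\gamma_x)\basepath]$ depends only on the fixed point class of $x$ (this is the injection from fixed point classes to $\sh{\pi_1(M)^\phi}$ used in \myref{geortracedef}). Grouping the $c_i$ by fixed point class and using additivity of the classical index within a class gives $\sum_k i(F_k)\cdot F_k=R^{geo}(f)$. The main obstacle is the third step: one must verify that the simplicial rearrangements hidden in the isomorphism $X^\phi\odot Y\cong\sh{Y\wedge X\odot\pi^\phi_+}$ produce exactly the element $[\gamma_{x_i}^{-1}f(\gamma_{x_i})\basepath]$ rather than a conjugate, and that the localization argument is compatible with the bar resolution defining $\odot$. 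The cyclic structure of $\sh{-}$ and the locality property of $H$ (ensuring $H(f(x_i),x_i)$ is trivial in $\pi_1$) are what make this work cleanly.
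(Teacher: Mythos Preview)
Your proposal is correct and follows essentially the same approach as the paper: both unfold the explicit trace composite using the coevaluation and evaluation of \myref{univcovdual1}, localize at (isolated) fixed points, identify the semiconjugacy-class label at each fixed point as $[\gamma_{x}^{-1}f(\gamma_{x})\basepath]$ via \myref{localcont}, and recognize the $S^n$-coordinate contribution as the local index. The paper phrases the localization slightly differently (computing the image of a general $v\in S^n$ and observing what happens near fixed points, rather than first pinching $S^n$), and it spells out the $\pi_0^s\cong H_0$ isomorphism for arbitrary $X$ via Freudenthal, Hurewicz, and the suspension isomorphism, but these are expository differences rather than mathematical ones.
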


\begin{proof} The isomorphism $\pi_0^s(X)\rightarrow H_0(X)$ is the 
composite 
\[\xymatrix{ \pi_0^s(X)&\pi_q(\Sigma^q X)\ar[l]\ar[r]&H_q(\Sigma^q X)&
H_0(X)\ar[l]}.\]
The first map is the inclusion.  For sufficiently large $q$ the Freudenthal 
suspension theorem implies this map is an isomorphism.  The second
map is the Hurewicz homomorphism.  It is an isomorphism 
since $\pi_i(\Sigma^qX)$ is trivial for all $i$ less than $q$.  The last 
map is the suspension isomorphism.

By \myref{univcovdual1},  $(\tilde{M}_+,T \pi^*\nu)$ is a dual
pair. The trace of $\tilde{f}$ with respect to this dual pair is
the composite
\[\xymatrix@C=25pt{S^n\ar[r]^-\eta& T\nu\ar[r]& \tilde{M}_+ \odot 
T\pi^*{\nu} \ar[r]^-{\tilde{f} \odot  \id}&\tilde{M}_+
\odot \pi_1M^\phi
\odot T\pi^*{\nu}\ar[r]^-\epsilon&
\vee_{\sh{\pi_1M^\phi}}S^n}\] The trace of $\tilde{f}$ is a map into a
wedge product, so specifying a point in $S^n$ and an element of $\sh{\pi_1M^\phi}$
identifies the image of $v\in S^n$ under
the trace of $\tilde{f}$.  The image of $v\in S^n$ under the trace $\tilde{f}$ in 
$S^n$ is  $\epsilon (\eta(v),f\rho(\eta(v)))$.  The image of 
 $v\in S^n$ under the trace $\tilde{f}$ in $\sh{\pi_1M^\phi}$ is
\[\gamma_{\rho\eta(v)}^{-1} H(f\rho\eta(v),\rho\eta(v))
f(\gamma_{\rho\eta(v)})\basepath.\] 
The maps $\eta$ and $\epsilon$ are the
coevaluation and evaluation for the dual pair $(M_+,T\nu)$ and $H$ is 
as in  \myref{localcont}. At a
fixed point the group element
\[\gamma_{\rho\eta(v)}^{-1} H(f\rho\eta(v),\rho\eta(v))
f(\gamma_{\rho\eta(v)})\basepath
\] is
$\gamma_{\rho\eta(v)}^{-1}f(\gamma_{\rho\eta(v)})\basepath$, the image of
the fixed point under the injection from the fixed point classes
to the semiconjugacy classes described in Section \ref{geotracesec}.
Since the index is local, the trace of $\tilde{f}$ restricted to a
neighborhood of a fixed point is a map of degree equal to the
index of the fixed point.

The trace of $\tilde{f}$ is an element of $\pi_0^s(\sh{\pi_1M^\phi}_+)$
and it is the image of an element in $\pi_n(\Sigma^n \sh{\pi_1M^\phi}_+)$
if $M$ is $n$-dualizable.  The image of this representative under the 
Hurewicz homomorphism is $(\tr(f))_*([S])$.  The projection
\[H_n(\Sigma^n \sh{\pi_1M^\phi}_+)\cong \oplus H_n(S^n)
\rightarrow H_n(S^n)\] to the component corresponding to 
$\alpha\in \sh{\pi_1(M)^\phi}$ takes $(\tr(f))_*([S])$ to the 
index of the fixed point class corresponding to $\alpha$.
\end{proof}

The trace of $\tilde{f}$ and the other invariants we will define here 
are more naturally described as elements of stable homotopy 
groups rather than homology classes.  
We will think of them as homotopy classes of maps, and use the isomorphism
$\pi_0^s(X)\cong H_0(X)$ when it is necessary to make a connection
with homology.  We will use this isomorphism to refer to the trace of $\tilde{f}$
as the geometric Reidemeister trace.

\begin{rmk} By looking at the explicit description of the trace of $\tilde{f}$
for a map $f\colon M\rightarrow M$ we see that $R^{geo}(f)$ does not depend
on the choice of the lift of $f$. Also note that $R^{geo}(f)$ is
an invariant of the homotopy class of $f$ since the trace of $\tilde{f}$
is an invariant of the homotopy class of $\tilde{f}$.
\end{rmk}

In the next chapter we will define duality and trace in a bicategory. In
Chapter \ref{classfpsec} we will show that the duality and trace defined
in this section are examples of duality and trace in a bicategory.

\section{Duality for spaces with path monoid actions}\label{summary3}

In the previous section we gave a description of the geometric Reidemeister
trace.  In this section we will describe an invariant that is similar
to the geometric Reidemeister trace, and coincides with 
the invariant defined by Klein and Williams.

In Section \ref{summary1} the motivating example was the action of
the fundamental group of a topological space on the universal
cover by deck transformations.  In this section the motivating
example is the `action' of the free path space of a topological
space on itself by composition.  From this action we can define 
modules over the path space and then define duality for  modules
and trace for homomorphisms.

We will use this trace to
define the homotopy Reidemeister trace which is a `derived' form
of the geometric Reidemeister trace. The homotopy Reidemeister trace 
is zero only when the
geometric Reidemeister trace is zero.

In the previous section we worked with homotopy classes of maps.  
In this section we will work with free Moore paths.  
The free Moore paths of a space have
composition, but composition is only defined for paths with
compatible endpoints.  This path space also satisfies unit
conditions, but it has many units rather than just one.  We use 
Moore paths rather than regular paths since the composition of Moore
paths is strictly associative.

In the previous section all spaces had base points 
and we had to make adjustments since
the maps were not  based.  In this section 
we will not use base points at all.  This has several advantages.  
The base point is part of the construction of the geometric Reidemeister
trace but the 
invariant does not depend on the choice of base point.  As we mentioned before,
fiberwise spaces don't always have sections, so in that case we will 
need unbased descriptions.

The  free Moore
path space\idx{free Moore paths} \nidx{pm@$\protect\calP M$} 
of a space $M$ is
\[ \calP M=\left\{(\gamma, u)\in \mathrm{Map}
([0,\infty), M)\times [0,\infty) \left|
\gamma(t)=\gamma(u)\,
\mathrm{for\,all}\,t\geq u
\right.\right\}\] 
This space is given the
subspace topology from $\mathrm{Map}
([0,\infty),M)\times [0,\infty)$.   
There are two maps $\sou,\tar\colon \calP M\rightarrow M$, given by
$\sou(\gamma,u)=\gamma(0)$ and $\tar(\gamma,u)=\gamma(u)$.  Later
we will think of $\calP M$ as a category and so $\sou$ and $\tar$ denote
source and target.\nidx{s@$\sou$}\nidx{t@$\tar$}
Two paths
$ (\beta,v),(\alpha, u)\in PM$ can be composed if $\alpha(u)=\beta(0)$
and this composition defines a unital and associative 
product \[\calP M\times_M\calP M=\{((\beta,v),(\alpha,u))
| \beta(0)=\alpha(u)\}\rightarrow
\calP M.\]
If we restrict to paths that start and end at some chosen point $\ast\in M$ the
composition induces the group multiplication in the fundamental
group.

Let $X$ be an ex-space with section and projection
maps $M\stackrel{\sect}{\rightarrow}
X\stackrel{\pro}{\rightarrow} M$ that satisfies the conditions of 
\myref{exspaceconditions}.
We define an ex-space over $M$, $\calP M\boxtimes X$\nidx{$\boxtimes$}, by
imposing additional identifications on the fiber product 
\[\calP M\times_M X=\{((\gamma,u),x)\in \calP 
M\times X|\gamma(0)=\pro(x)\}.\]
We identify $((\gamma,u),x)$ and $((\gamma',u'),x')$ if $x$ and 
$x'$ are both in the image of the section and $\gamma(u)=\gamma'(u')$.
The projection map \[\calP M\boxtimes X\rightarrow X\] is $((\gamma,u),x)
\mapsto \gamma(u)$.  The section \[M\rightarrow \calP M\boxtimes X\]
is $m\mapsto ((c_m,0),\sect(m))$ 
where $c_m$ is the constant path at $m$ in $M$. 
A different description of this product 
is given in Chapter \ref{classfpsec}.  

Similarly,  for two ex-spaces $X$ and $Y$ over $M$ we define a based 
space $X\boxtimes Y$ by
\[\{(x,y)|\pro(x)=\pro'(y)\}/\sim\] where $(x,y)$ is identified with $(x',y')$
if one of element of each pair is in the image of the section.
The base point is 
the point of $X\boxtimes Y$ given by the equivalence relation.

\begin{definition}\mylabel{sketchydef}An ex-space $X$ over $M$
is a \emph{right $ \calP M$-module}
if there is a map over and under $M$
\[\kappa\colon X\boxtimes \calP M\rightarrow X\] that is associative and unital
with respect to the product of $\calP M$.
\end{definition}

The definitions of a left module and a bimodule are similar.  The space
$\calP M$ is a space over $M\times M$ via the map $\tar\times \sou$.  
With a disjoint section added $\calP M$ is a ex-space over $M\times M$.
This ex-space is written $(\calP M, \tar\times \sou)_+$.  Using 
composition of paths $(\calP M,\tar\times \sou)_+$ is a 
$(\calP M,\tar\times \sou)_+$-bimodule.  
Using only one of the maps $\tar$ or $\sou$ we 
can think of $\calP M$ as either a left or right $\calP M$-module.
For example, 
with a disjoint section $(\calP M,\sou)_+$ is a right $\calP M$-module.

If $X$ and $Y$ are ex-spaces over $M$, the external smash product
$Y\bar{\wedge}X$ is an ex-space over $M\times M$.\nidx{$\bar{\wedge}$}  
If $X_n$ is the 
fiber of $X$ over $n$ and $Y_m$ is the fiber of $Y$ over $m$, the fiber 
of $Y\bar{\wedge}X$ over $(m,n)$ is  $Y_m\wedge X_n$. If $X$ 
is a right $\calP M$-module and $Y$ is a left $\calP M$-module then 
$Y\bar{\wedge}X$ is a $\calP M$-$\calP M$-bimodule.

For a right $\calP M$-module $X$ and a left $\calP M$-module $Y$ we
define a space $X\odot Y$\nidx{$\odot $} as the bar resolution
$B(X,\calP M,Y)$.\nidx{B@$B(-,\protect\calP M,-)$} 
The product used to define the bar resolution 
is the $\boxtimes$ product defined
above.  This is analogous to the $\odot $ for two spaces with
an action by a group $\pi$ or to the tensor product of modules
over a ring.  

\begin{definition} We say that a right $\calP M$-module $X$ is
$n$-dualizable\idx{n-dual} if there is a  left $\calP M$-module $Y$, a
continuous  map $\eta\colon S^n\rightarrow X\odot Y$ and a 
map $\epsilon\colon Y\bar{\wedge} X\rightarrow S^n\bar{\wedge}(\calP M, 
\tar\times \sou)_+$ of $\calP M$-$\calP M$-bimodules such that 
\[\xymatrix{S^n\bar{\wedge} X\ar[r]^-{\eta\wedge \id}\ar[dd]_{\gamma}&(X\odot Y)
\bar{\wedge} X\ar[d]^{\cong} &
Y\bar{\wedge} S^n\ar[r]^-{\id\wedge \eta}\ar[dd]_{(\sigma\wedge \id)
\gamma}&Y\bar{\wedge}(X\odot Y)
\ar[d]^{\cong}\\
&X\odot (Y\bar{\wedge} X)\ar[d]^-{\id \odot
\epsilon}&
&((Y\bar{\wedge} X)\odot  Y)\ar[d]^-{ \epsilon \odot  \id}\\
X\bar{\wedge} S^n\ar[r]_-{\cong}& X\odot 
(\vee_{\pi}S^n)
&S^n\bar{\wedge} Y\ar[r]_-{\cong}& (\vee_{\pi}S^n)\odot 
Y}\]
commute stably up to homotopy respecting the action by
$\calP M$.\end{definition}

Let $\sou^*S^\nu$\nidx{ssnu@$\sou^*S^\nu$} denote the pullback of $S^\nu$ along 
$\sou\colon \calP M\rightarrow M$.  Then  
$T_M\sou^*S^\nu$\nidx{Tssnu@$T_M\sou^*S^\nu$} 
is the quotient of $\sou^*S^\nu$ where we identify
$((\gamma,u),v)$ and $((\gamma',u'),v')$  
if $v$ and $v'$ are in the image of the section and 
$\gamma(u)=\gamma'(u')$.  This is an ex-space over $M$ with projection 
given by \[((\gamma,u),v)\mapsto \gamma(u).\]  
We use $T_M$ to denote the quotient 
since $T_M\sou^*S^\nu$ is related to the Thom space.
This is a left $\calP M$-module

\begin{lemma}\mylabel{bunivdual3p}
Let $M$ be a closed smooth manifold.  Then $(\calP M, \sou)_+$ is dualizable
with dual  $T_M\sou^*S^\nu$.
\end{lemma}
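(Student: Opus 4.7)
The plan is to mimic the proof of \myref{univcovdual1} with the group $\pi_1 M$ and the universal cover $\tilde{M}$ replaced by the free Moore path space $\calP M$ and with homotopy classes of paths replaced by actual Moore paths. Choose an embedding $M \subset \mathbb{R}^n$ with normal bundle $\nu$; the coevaluation
\[\eta\colon S^n \longrightarrow (\calP M, \sou)_+ \odot T_M\sou^*S^\nu\]
is defined as the composite of the Pontryagin--Thom collapse $\chi\colon S^n \to T\nu$ with the map that sends $v \in \nu$ with $\rho(v) = m$ to the class of $((c_m,0), ((c_m,0), v))$, where $c_m$ is the constant Moore path at $m$. This is well defined on the quotient because the $T_M$ and $\boxtimes$ identifications collapse exactly the ambiguity in the choice of constant-path representative, by direct analogy with the universal-cover construction where the choice of lift $\gamma_{\rho(v)}$ was absorbed by the action of $\pi_1 M$.

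For the evaluation
\[\epsilon\colon T_M\sou^*S^\nu \bar\wedge (\calP M,\sou)_+ \longrightarrow S^n \bar\wedge (\calP M,\tar\times \sou)_+\]
I would use the local homotopy $H$ from \myref{localcont}. Given representatives $((\gamma,u),v)$ and $(\gamma',u')$ with $\gamma(u) = m$ and $\rho(v) = m'$, first restrict to a tubular neighborhood $N$ of the zero section small enough that $(m,m')$ always lies in the domain $W$ of $H$; on $N$, send the pair to the point $\epsilon_0(v,m) \in S^n$ smashed with the Moore path obtained by concatenating $\gamma$, the short Moore path $t \mapsto H(m,m',t)$, and $\gamma'$. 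Here $\epsilon_0$ is the classical evaluation for the dual pair $(M_+,T\nu)$ of \myref{topexdual}(ii). Outside $N$ the pair goes to the basepoint. The formula descends through the $T_M$ identification because inserting a constant path on the right of $\gamma$ does not change its concatenation with the $H$-path, and it is $\calP M$-bilinear because a path inserted on either side is absorbed into the path concatenation in $(\calP M, \tar\times \sou)_+$, exactly mirroring the $\pi_1 M$-equivariance of the evaluation in the universal-cover case.

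The final step is to verify the two triangle diagrams stably up to homotopy respecting the $\calP M$-actions. After unravelling the bar resolution and the $\boxtimes$ identifications, both diagrams reduce to the corresponding triangle identities for the classical dual pair $(M_+,T\nu)$ combined with Moore-path concatenations that are homotopic through the unit laws to constant paths. This is the same reduction sketched in the proof of \myref{univcovdual1}, with $\pi_1 M$ replaced by $\calP M$ and equivariant homotopy of lifts replaced by homotopy of Moore paths rel endpoints.

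The main obstacle is the simplicial bookkeeping for the bar resolution $\odot = B(-,\calP M,-)$: one must check that the $T_M$-quotient, the $\boxtimes$ pullbacks, and the simplicial identifications in $\odot$ are all compatible with the constant-path and $H$-path insertions, and that $\epsilon$ is continuous through the tubular-neighborhood cutoff. The $\calP M$-action on $(\calP M,\sou)_+$ is sufficiently free (in the sense of an analogue of \cite{classfib}) that the bar resolution agrees up to weak equivalence with the strict $\boxtimes_{\calP M}$-product, so once continuity and well-definedness are secured, the triangle identities follow from the classical duality $(M_+,T\nu)$ by the same formal argument that proved \myref{univcovdual1}.
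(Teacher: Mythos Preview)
Your construction of the coevaluation and evaluation maps is correct and matches the explicit formulas the paper gives, but your overall strategy---direct verification of the triangle identities by unwinding the bar resolution and reducing to the classical pair $(M_+,T\nu)$---is more laborious than necessary and differs from the paper's approach.

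The paper proves this lemma (as part of \myref{bunivdual3}) by recognising $(\calP M,\sou)_+$ and $T_M\sou^*S^\nu$ as a \emph{composite} of two simpler dual pairs. First, $(\calP M,\tar\times\sou)_+$ is a monoid in $\Ex$, and \myref{topmonodidual} gives the purely formal dual pair $(R(\calP M,\tar\times\sou)_+,\,L(\calP M,\tar\times\sou)_+)$ with coevaluation the unit and evaluation the multiplication; the triangle identities here are just the monoid axioms. Second, \myref{dualM} gives the Costenoble--Waner dual pair $(S^0_M,tS^\nu)$, which packages all the manifold geometry. Since $(\calP M,\sou)_+\simeq S^0_M\odot R(\calP M,\tar\times\sou)_+$ and $T_M\sou^*S^\nu$ is by definition $L(\calP M,\tar\times\sou)_+\odot tS^\nu$, \myref{dualcomposites1} yields the desired dual pair immediately, and the explicit coevaluation and evaluation drop out of the composite formulas. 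No simplicial bookkeeping or bar-resolution analysis is needed.

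Your remark that ``the same formal argument that proved \myref{univcovdual1}'' applies is slightly off: the proof of that lemma, carried out in \myref{bunivdual}, is \emph{also} via composites of dual pairs (the monoid pair for $\pi_1M$ composed with $(S^0_M,tS^\nu)$), not by the direct reduction you sketch. So the paper's method is uniform across the $\pi_1M$, $\Pi M$, and $\calP M$ cases, and what it buys is that the triangle identities never have to be checked by hand---they are inherited from the monoid axioms and from Costenoble--Waner duality via the composition theorem. Your direct approach would work, but the modular decomposition is both shorter and conceptually cleaner.
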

We prove this lemma in Chapter \ref{classfpsec} where it is part of
\myref{bunivdual3}.

The coevaluation map \[S^n\rightarrow (\calP M,\sou)_+\odot 
T_M\sou^*S^{\nu}\] is the composite of the Pontryagin-Thom map for
the normal bundle of $M$ with the map that takes an
element $v$ of the normal bundle to $(c_{\rho(v)},c_{\rho(v)},v )$. 
Here $c_x$ is the constant path at $x$.
The evaluation map is more
difficult to describe.   It is closely related to the evaluation
map in \myref{univcovdual1} and it is defined in 
\myref{bunivdual3}.

\section{The homotopy Reidemeister trace as a trace}\label{symmary3a}
Before we define the trace we need to
define the shadow of a space with two actions of $\calP M$.

\begin{definition}Let $Z$ be a $\calP M$-$\calP M$-bimodule. 
The \emph{shadow}\idx{shadow} of $Z$,
$\sh{Z}$,\nidx{shad@$\protect\sh{-}$}
is the cyclic bar resolution $C(Z,\calP M)$.\nidx{C@$C(-,\protect\calP M)$} 
\end{definition}

For a map $f\colon M\rightarrow M$, $(\calP^f M, \tar\times \sou)_+$ 
is the $\calP M$-$\calP M$-bimodule 
\nidx{PfM@${\protect\calP^f M}$} defined by 
\[(\calP^fM, \tar\times \sou)
\coloneqq\{(m,(\gamma,u))\in
M\times \calP M|f(m)=\gamma(0)\}.\] This is a space over $M\times M$ with
projection map $(m,\gamma)\mapsto (\gamma(u),m)$. The actions of $\calP M$
are given by 
composition of paths on the left  and composing with $f$ followed by
composition of paths on the right.  
The shadow of $(\calP^fM, \tar\times \sou)_+$ 
is equivalent to the based space \[\Lambda^fM=\{(\gamma,u)\in
\calP M|f(\gamma(u))=\gamma(0)\}_+.\]  See Section \ref{classhtpysect}
for a description of this equivalence.

For a map $f$ there is an induced map
\[\tilde{f}\colon  (\calP M,\sou)_+\rightarrow (\calP^fM,\sou)_+\]\nidx{f@$\tilde{f}$} 
of  right $\calP M$-modules.

\begin{definition} If $M$ is closed smooth manifold
and $f\colon M\rightarrow M$ is a map, the \emph{trace}\idx{trace} of
$\tilde{f}$ is the stable homotopy class of the map
\[\xymatrix@R=5pt{S^n\ar[r]&(\calP M,\sou )_+\odot T\sou^*S^{\nu}
\ar[r]^-\cong &\sh{T\sou^*S^{\nu}\bar{\wedge} (\calP M,\sou)_+}\\
\,\ar[r]^-{\id\bar{\wedge} \tilde{f}}
&\sh{T\sou^*S^{\nu}\bar{\wedge} (\calP^f M,\sou)_+}\ar[r]&S^n\wedge\sh{(
\calP^fM,
\tar\times \sou)_+}}\]
\end{definition}

\begin{definition} The \emph{homotopy Reidemeister trace},\idx{homotopy 
Reidemeister trace}\idx{Reidemeister trace!homotopy}
$R^{htpy}(f)$, of $f$ is
the trace of $\tilde{f}$.
\end{definition}

The duality and trace defined in this section are also examples of the duality
and trace in bicategories defined in Chapter \ref{whybicat}.  The bicategory
used to define this duality is described in Chapter \ref{classfpsec}.

\chapter{Why bicategories?}\label{whybicat}

In the previous two chapters we have explained some of the reasons why
Dold and Puppe's trace in symmetric monoidal categories cannot describe
the Reidemeister trace.  We have also explained why there should be some
structure, similar to the trace in symmetric monoidal categories, that
does describe the Reidemeister trace.  In this chapter we will describe
that structure.

To achieve the necessary additional generality  we replace
symmetric monoidal categories by bicategories.   
Bicategories have structure
that is very similar to a symmetric monoidal category in the ways
that are important for defining duality and trace.  In particular,
bicategories have `tensor products', or composition, and units.
Bimodules over a ring and spaces with group actions are examples
of bicategories.

Without some additional structure bicategories are not similar
enough to symmetric monoidal categories to have a trace.  To
define a trace we add shadows.  The shadows are closely related to
the bicategory composition and they play a role similar to that of
the symmetry isomorphism in a symmetric monoidal category.

Some results that follow easily from the definitions of duality and trace
significantly simplify proofs of results stated in Chapter \ref{summary}.  We
include those results here and complete the proofs omitted from 
Chapter \ref{summary} in Chapters \ref{classfpsec}, \ref{classfpsec2}, 
\ref{fibfpsec} and \ref{fibfpsec2}.

We omit most proofs in this section since they are diagram chases
from the definitions.

\section{Definitions}\label{bicatdef}
Bicategories can be thought of as monoidal categories with many
objects.  Instead of having objects and morphisms, bicategories
have 0-cells, 1-cells, and 2-cells. Each 1-cell or 2-cell in a
bicategory $\sB$\nidx{B@$\protect\sB$} 
has a source 0-cell and a target 0-cell.  For two
0-cells $A$ and $B$, the 1-cells and 2-cells with source $A$ and
target $B$ form a category with objects the 1-cells and morphisms
the 2-cells.  This category is usually written $\sB(A,B)$.  In
addition, for 0-cells $A,B$, and $C$ there is a  functor\nidx{$\odot$} 
\[\odot\colon \sB(B,C)\times\sB(A,B)\rightarrow \sB(A,C)\]
that acts as
`composition' for 1-cells and 2-cells. For each 0-cell $A$ 
there is a functor
$U_A$\nidx{Ua@$U_A$} from the category with one object and
one morphism to $\sB(A,A)$.  Up to isomorphism 2-cells, the
functors $\odot$ are associative and unital with respect to the
functors $U_A$.\idx{0-cell}\idx{1-cell}\idx{2-cell}

\begin{definition}\cite[1.0]{Leinster} A \emph{bicategory}\idx{bicategory} 
$\sB$ consists of
\begin{enumerate}\item A collection $\ob\sB$.
\item Categories $\sB(A,B)$ for each $A,B\in \mathrm{ob}\sB$.
\item Functors \[\odot \colon \sB(B,C)\times \sB(A,B)\rightarrow
\sB(A,C)\]
\[U_A\colon \ast \rightarrow \sB(A,A)\]
for $A$, $B$ and $C$ in $\mathrm{ob}\sB$.
\end{enumerate}
Here $\ast$ denotes the category with one object and one morphism.
The functors $\odot$ are  required to satisfy unit and
associativity conditions  up to  natural isomorphism 2-cells.
\end{definition}

\begin{rmk}
There are two choices for the convention used for the bicategory composition.  
We follow the convention used in \cite{maclane}, but the 
choice \[\odot\colon  \sB(A,B)\times \sB(B,C)\rightarrow \sB(A,C)\]
is also used.
\end{rmk} 

Some examples of bicategories include:
\begin{itemize}
\item The bicategory with 0-cells rings, 1-cells bimodules, and
2-cells homomorphisms. \item The bicategory with 0-cells
categories, 1-cells functors, and 2-cells natural transformations.
\item The bicategory $\Ex$\nidx{ex@$\Ex$} of ex-spaces with 0-cells spaces; and
for two spaces $A$ and $B$ the category $\Ex(A,B)$ is the category
of ex-spaces over $B\times A$.\footnote{The name $\Ex$ is used to
refer to a different, but related, category in \cite{MS}.}
\end{itemize}

A monoidal category is a bicategory with a single 0-cell.  The
objects of the monoidal category are the 1-cells of the bicategory
and the morphisms are the 2-cells.  The monoidal product of the
monoidal category is the $\odot$ in the bicategory and the unit
object $I$ is used to define the single functor $U_I$.  Chapter
\ref{reviewbicat} contains more examples of bicategories.

\begin{definition}
A \emph{lax functor}\idx{lax functor of bicategories} 
$F\colon \sB\rightarrow \sB'$ between bicategories
consists of 
\begin{enumerate}
\item A function $F$  from the
0-cells of $\sB$ to the 0-cells of $\sB'$;
\item Functors $F\colon \sB(A,B)\rightarrow
\sB'(FA,FB)$ for all pairs of 0-cells $A$ and $B$ of $\sB$;  
\item Natural transformations \[\phi_{X,Y}\colon F(X)\odot F(Y)\rightarrow
F(X\odot Y)\] for
all 1-cells $X$ and $Y$;\nidx{phixy@$\phi_{X,Y}$}
\item Natural transformations $\phi_A\colon 
U'_{F(A)}\rightarrow F(U_A)$ for each 0-cell $A$\nidx{phia@$\phi_A$}
\end{enumerate}
that satisfy some coherence conditions.
\end{definition}

This is analogous
to a monoidal functor and its natural transformations \[F(A)\otimes
F(B)\rightarrow F(A\otimes B)\] and $I'\rightarrow F(I)$.
A \emph{strong functor}\idx{strong functor of bicategories} 
of bicategories is a lax functor where
the natural transformations $\phi_{X,Y}$ and $\phi_A$ are natural
isomorphisms.

The duality of Section \ref{doldsec} can be defined in a monoidal
category, but it is more common to define dual pairs in symmetric
monoidal categories.  Similarly, while we could make all of the
definitions in this chapter for a bicategory, we choose to impose
some additional conditions that will mimic some of the symmetry
conditions in a symmetric monoidal category.

\begin{definition}\cite[16.2.1]{MS}
An {\em involution}\idx{involution} on a bicategory $\sB$ consists of the
following data.
\begin{enumerate}
\item A bijection $t$ on the $0$-cells of $\sB$ such  that $ttA =
A$. 
\item Equivalences of categories $t\colon \sB(A,B)\rtarr
\sB(tB,tA) = \sB^{\text{op}}(tA,tB),$ with the equivalences given
by isomorphism $2$-cells $\xi\colon\id\iso tt$. 
 
\item Natural
isomorphism $2$-cells $\io = \io_A \colon tU_A\rtarr U_{tA}$ for
$0$-cells $A$ and
$$\ga = \ga_{X,Y}\colon (tY)^{\text{op}}\odot^{\text{op}} (tX)^{\text{op}}
\equiv tX \odot tY \rtarr t(Y\odot X)$$ for $1$-cells $X\colon
A\rtarr B$ and $Y\colon B\rtarr C$; the left and right unit
$2$-cells $\la$ and $\rh$ must be related by $t(\la_X)\ga_{X,U_X}
= \rh_{tX}(\id\odot\io)$ or, equivalently, $t(\rh_X)\ga_{U_X,X} =
\la_{tX}(\io\odot\id)$, the appropriate hexagonal coherence
diagram relating $\ga$ to the associativity $2$-cell $\al$ must
commute, and the following diagram relating $\xi$ to $\ga$ must
commute:
$$\xymatrix{
Y\odot X \ar[d]_{\xi\odot \xi} \ar[r]^-{\xi} &  tt(Y\odot X)\\
ttY\odot ttX \ar[r]_{\ga} & t(tX\odot tY). \ar[u]_{t(\ga)}\\}$$
We also require $\xi(U_A)=t(\iota_A)\iota_A$. 
\end{enumerate}
A {\em symmetric bicategory}\idx{bicategory!symmetric}\idx{symmetric 
bicategory} 
$\sB$ is a bicategory equipped with
an involution.
\end{definition}

A bicategory $\sB$ is \emph{closed}\idx{bicategory!closed}
\idx{closed!bicategory} 
if for 0-cells $A,\,B,\,C$ there are two functors
$$ \rc\colon \sB(A,B)^{\text{op}}\times \sB(A,C)\rtarr \sB(B,C) $$
and $$ \lc\colon \sB(A,C)\times \sB(B,C)^{\text{op}} \rtarr \sB(A,B)$$
and natural isomorphisms
\begin{equation}\label{intadj}\nonumber
\sB(A,B)(X,Z\lc Y) \cong \sB(A,C)(Y\odot X, Z) \cong \sB(B,C)(Y,X\rc Z)
\end{equation}
for $1$-cells $X\in\sB(A,B)$, $Y\in\sB(B,C)$, and $Z\in\sB(A,C)$.\nidx{$\rc$}
\nidx{$\lc$}

In a symmetric bicategory we can use the involution to express $\lc$
in terms of $\rc$ or $\rc$ in terms of $\lc$.

For more detailed definitions see 
\cite{Leinster, MS}.

\section{Rings, bimodules, and maps}
Many of the important features of bicategories can be seen in the
example of rings, bimodules, and homomorphisms.  The 0-cells of
this bicategory are rings, the 1-cells with source $A$ and target
$B$ are the $B$-$A$-bimodules. The 2-cells between two $B$-$A$
bimodules are the bimodule homomorphisms.  The $\odot$ is the
usual tensor product of modules over a ring.  The functor $U_A$
associated to a ring $A$ is $A$ regarded as an $A$-$A$-bimodule.

The bicategory of rings, bimodules, and homomorphisms,
denoted by $\Mod$,\nidx{M@$\protect\Mod$} 
is symmetric with the involution that takes a ring
to its opposite and takes an $A$-$B$-bimodule to its opposite, a
$B^{op}$-$A^{op}$-bimodule.  The isomorphism \[X\otimes_BY\cong
(Y^{op}\otimes_{B^{op}}X^{op})^{op}\] for an $A$-$B$-bimodule $X$
and $B$-$C$-bimodule $Y$ is also part of the structure of the
involution.

The bicategory $\Mod$ gives familiar examples of the functors $\rc$
and $\lc$. Let $X$ be a $B$-$A$-bimodule, $Y$ be a $C$-$B$-bimodule,
and $Z$ be a $C$-$A$-bimodule.  Then $X\rc Z$ is $\Hom_A(X,Z)$, the
$C$-$B$-bimodule of right $A$-module maps from $X$ to $Z$.
Similarly $Z\lc Y$ is $\Hom_C(Y,Z)$, the $B$-$A$-bimodule of left
$C$-module maps from $Y$ to $Z$. There are natural isomorphisms
\[\Mod(B,C)(Y,\Hom_A(X,Z))\cong\Mod(A,C)(Y\otimes_BX,Z)
\cong\Mod(A,B)(X,\Hom_C(Y,Z)).\]

To define the algebraic Reidemeister trace we defined the
Hattori-Stallings trace of an endomorphism of a finitely generated
projective module.  There we defined a  map
\[\nu\colon X\otimes_AP\otimes_A\Hom_A(X,A)\rightarrow \Hom_A(X,X
\otimes_AP)\] which is an isomorphism for all $A$-$A$-bimodules $P$
and finitely generated projective right $A$-modules $X$. The map $\nu$
can also be defined using the adjunction isomorphisms above.  
The adjunction isomorphisms 
also give a map \[\eta\colon \mathbb{Z}\rightarrow \Hom_A(X,X)\] for
any right $A$-module $X$.  If $X$ is a finitely generated
projective right $A$-module then the composite of $\eta$ with
$\nu^{-1}$ gives a map \[\mathbb{Z}\rightarrow
X\otimes_A\Hom_A(X,A).\] In fact, the existence of a map
\[\mathbb{Z} \rightarrow X\otimes _A\Hom_A(X,A)\] satisfying some
additional conditions is equivalent to $\nu$ being an isomorphism.

\begin{prop}\mylabel{dualrings} The following are equivalent for a right
$A$-module $X$.
\begin{enumerate}
\item $X$ is a finitely generated projective right $A$-module.
\item The map \[\nu\colon P\otimes_A\Hom_A(X,A)\rightarrow \Hom_A(X,P)\]
is an isomorphism for all $A$-$A$-bimodules $P$. \item  There is a
map $\eta\colon \mathbb{Z} \rightarrow X\otimes_AD_AX$,
$D_AX=\Hom_A(X,A)$, such that
\[\xymatrix{D_AX\ar[r]^-\cong\ar[d]^\id &{D_AX\otimes_\mathbb{Z}\mathbb{Z}}
\ar[r]^-{\id\otimes \eta}
&{D_AX\otimes_\mathbb{Z} (X\otimes_AD_AX)}\ar[d]^\cong\\
D_AX\ar[r]^-\cong&A\otimes_AD_AX&{(D_AX\otimes_{\mathbb{Z}}X)
\otimes_AD_AX}\ar[l]^-{\ev\otimes\id}}\]
and
\[\xymatrix@C=15pt{X\ar[r]^-\cong\ar[d]^\id&{\mathbb{Z}\otimes_{\mathbb{Z}}
X} \ar[r]^-{
\eta \otimes \id}&{(X\otimes_AD_AX)\otimes_{\mathbb{Z}}X}\ar[d]^{\cong}\\
X\ar[r]^-\cong&X\otimes_AA&{X\otimes_A(D_AX\otimes_\mathbb{Z}X)}
\ar[l]^-{\id\otimes \ev}}\] commute.
\end{enumerate}
\end{prop}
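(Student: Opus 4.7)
The plan is to prove the cycle of implications $(1)\Rightarrow(3)\Rightarrow(2)\Rightarrow(1)$, with $(3)\Rightarrow(2)$ being the purely formal direction that explains why the coevaluation map is the right notion of dualizability.

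First, for $(1)\Rightarrow(3)$ I would appeal to the classical dual basis theorem: a finitely generated projective right $A$-module $X$ has a dual basis $\{m_i,m_i'\}_{i=1}^n$ with $m_i\in X$, $m_i'\in D_AX=\Hom_A(X,A)$, such that $x=\sum_i m_i\cdot m_i'(x)$ for every $x\in X$. I would then define $\eta\colon\mathbb{Z}\to X\otimes_A D_AX$ by $\eta(1)=\sum_i m_i\otimes m_i'$ and verify the two triangle diagrams by direct computation: the top composite in the second diagram sends $x$ to $\sum_i m_i\otimes m_i'\otimes x\mapsto \sum_i m_i\otimes m_i'(x) = x$, and dually for $\phi\in D_AX$ in the first diagram one gets $\phi\mapsto \sum_i\phi(m_i)\otimes m_i'$ and then evaluation yields the element $\phi':x\mapsto\sum_i\phi(m_i)m_i'(x)=\phi\bigl(\sum_i m_i m_i'(x)\bigr)=\phi(x)$, so $\phi'=\phi$.

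For $(3)\Rightarrow(2)$, which is the formal heart of the argument, I would define a candidate inverse
\[
\mu\colon\Hom_A(X,P)\longrightarrow P\otimes_A D_AX,\qquad \mu(f)=(f\otimes\id_{D_AX})\circ\eta(1),
\]
where the composite is taken after identifying $X\otimes_A D_AX$ with $X\otimes_{\mathbb{Z}}D_AX$ modulo the tensor relation, exactly as in the diagrams. Then $\nu\mu(f)$ evaluated at $x\in X$ unfolds, using the second triangle identity of $(3)$, to $f(x)$; and $\mu\nu(p\otimes\phi)$ unfolds, using the first triangle identity, to $p\otimes\phi$. Both are short diagram chases directly from the two commutative squares in $(3)$, so no further hypothesis on $P$ is needed. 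This is the step where the formalism really pays off, and it is completely analogous to \myref{symotherchar} and \myref{monoidclosed} for symmetric monoidal categories.

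Finally, for $(2)\Rightarrow(1)$, I would specialize to $P=X$, so that $\nu\colon X\otimes_A D_AX\to\Hom_A(X,X)$ is an isomorphism, and take $\eta(1)=\nu^{-1}(\id_X)\in X\otimes_A D_AX$. Writing $\eta(1)=\sum_{i=1}^n m_i\otimes m_i'$ (a finite sum, by definition of the tensor product) gives $x=\nu(\eta(1))(x)=\sum_i m_i m_i'(x)$ for all $x$, which is exactly the dual basis condition. This expresses $\id_X$ as a factorization $X\xrightarrow{\langle m_i'\rangle}A^n\xrightarrow{\langle m_i\rangle} X$, showing $X$ is a direct summand of $A^n$ and hence finitely generated projective.

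The main obstacle is bookkeeping rather than conceptual difficulty: one must be careful about the identifications $X\otimes_{\mathbb{Z}}D_AX$ versus $X\otimes_A D_AX$ appearing in the two diagrams of $(3)$, and about which actions of $A$ are being used where the evaluation map $D_AX\otimes_{\mathbb{Z}}X\to A$ appears. Once those identifications are fixed, each implication reduces to a short diagram chase of the kind advertised at the start of the chapter.
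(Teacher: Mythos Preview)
The paper does not actually prove this proposition; it is one of the results in Chapter~\ref{whybicat} whose proof is explicitly omitted as a diagram chase. Your approach is the standard one, and your arguments for $(1)\Rightarrow(3)$ and $(3)\Rightarrow(2)$ are correct as written.

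There is one genuine gap. In your $(2)\Rightarrow(1)$ step you specialize to $P=X$, but condition~(2) as stated requires $P$ to be an $A$-$A$-\emph{bimodule}, and $X$ is only a right $A$-module (a $\mathbb{Z}$-$A$-bimodule). So $P=X$ is not a legal choice. Your final paragraph in fact proves $(3)\Rightarrow(1)$ directly: writing $\eta(1)=\sum_i m_i\otimes m_i'$, the second triangle identity reads $x=\sum_i m_i\,m_i'(x)$, which is exactly the dual basis condition. So you already have $(1)\Leftrightarrow(3)$ and $(3)\Rightarrow(2)$; what is genuinely missing is an argument that $(2)$, quantified only over $A$-$A$-bimodules, implies $(1)$ or $(3)$. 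That step needs a different idea than taking $P=X$, and you should flag it rather than eliding it. (One should also note that your $(3)\Rightarrow(2)$ argument in fact shows $\nu_M$ is an isomorphism for every right $A$-module $M$, not just bimodules; so the subtle direction really is getting back from the bimodule hypothesis.)
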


Using the map $\eta$ and the functor $\sh{-}$ from
\myref{ringshadow} we can give another
description of the Hattori-Stallings trace.

\begin{prop} If $X$ is a finitely generated projective right $A$-module,
$P$ is an $A$-$A$-bimodule, and $f\colon X\rightarrow X\otimes_AP$ is a
map of right $A$-modules, then the Hattori-Stallings trace of $f$
is the image of 1 under the composite
\[\xymatrix@C=20pt{{\mathbb{Z}}\ar[d]^-{\eta}&&\sh{P}\\
X\otimes_AD_A(X)
\ar[r]^-{f\otimes \id}& X\otimes_A P\otimes_A D_A(X)\ar[r]^-\cong
&\sh{P\otimes_AD_A(X)\otimes_\mathbb{Z}X}\ar[u]^-{\sh{\ev}}.}\]
\end{prop}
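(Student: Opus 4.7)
The plan is to show that the composite in the display, evaluated at $1 \in \mathbb{Z}$, recovers the expression $\delta(\nu^{-1}(f))$ given by \myref{staltrace}, which is the Hattori-Stallings trace of $f$. I would split this into two independent identifications: first, that the composite $(f \otimes \id) \circ \eta$, applied to $1$, equals $\nu^{-1}(f)$; and second, that $\sh{\ev}$ composed with the unlabeled isomorphism coincides with the map $\delta$ of \myref{staltrace}.

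For the first identification, recall that the coevaluation $\eta \colon \mathbb{Z} \to X \otimes_A D_A X$ is by construction the composite of the unit $\mathbb{Z} \to \Hom_A(X,X)$ (sending $1$ to $\id_X$) with the inverse of $\nu \colon X \otimes_A D_A X \to \Hom_A(X,X)$. Naturality of $\nu$ in its second variable gives the commutative square $(f \otimes \id_{D_A X}) \circ \nu^{-1} = \nu^{-1} \circ f_*$, where $f_* \colon \Hom_A(X,X) \to \Hom_A(X, X \otimes_A P)$ is post-composition with $f$. Evaluating both sides at $\id_X$ yields $(f \otimes \id)(\eta(1)) = \nu^{-1}(f)$ in $X \otimes_A P \otimes_A D_A X$.

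For the second identification, the unlabeled isomorphism $X \otimes_A P \otimes_A D_A X \cong \sh{P \otimes_A D_A X \otimes_\mathbb{Z} X}$ should be the cyclic rotation $x \otimes p \otimes \phi \mapsto [p \otimes \phi \otimes x]$, with inverse given by the obvious reverse rotation. Well-definedness is a direct check: the $A$-balancing $xa \otimes p \otimes \phi = x \otimes ap \otimes \phi$ on the left is sent to $[p \otimes \phi \otimes xa] = [ap \otimes \phi \otimes x]$, where the shadow relation $qa \sim aq$ (for $q = p \otimes \phi$) handles the discrepancy; the $\otimes_A$ balancing between $P$ and $D_A X$ is preserved under rotation. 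Composing with $\sh{\ev}$ applies $\phi \otimes x \mapsto \phi(x) \in A$ inside the tensor product and identifies $P \otimes_A A \cong P$ inside the shadow, so $[p \otimes \phi \otimes x]$ is sent to $[p\phi(x)] = \sT(p\phi(x))$ in $\sh{P}$. This matches $\delta(x \otimes p \otimes \phi)$ exactly.

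Combining the two steps, the composite sends $1$ to $\delta(\nu^{-1}(f))$, which is the Hattori-Stallings trace of $f$ by \myref{staltrace}. The only substantive step is verifying the explicit form of the cyclic rotation isomorphism and its well-definedness against the $A$-balancing relations on the source and the shadow relations on the target; once this bookkeeping is carried out, both identifications are essentially formal consequences of the adjunctions and naturality.
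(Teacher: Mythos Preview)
Your proposal is correct. The paper states this proposition without proof, treating it as an immediate reformulation of the definition of the Hattori-Stallings trace; your argument supplies exactly the verification one would carry out, namely that $(f\otimes\id)\circ\eta(1)=\nu^{-1}(f)$ by naturality of $\nu$, and that the cyclic isomorphism followed by $\sh{\ev}$ recovers the map $\delta$ from the definition.

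One small citation slip: you invoke \myref{staltrace}, but that proposition concerns the universal trace function without the bimodule $P$. The expression $\delta(\nu^{-1}(f))$ you want is the \emph{definition} of the Hattori-Stallings trace given immediately after \myref{staltrace}, so you should cite that definition rather than \myref{staltrace} itself. The mathematics is unaffected.
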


In the rest of this chapter we generalize this description of the
Hattori-Stallings trace to a trace in bicategories with shadows.  We
begin by recalling the generalization of
\myref{dualrings} to a bicategory from \cite{MS}.  Then we define shadows which
generalize the functors $\sh{-}$.

\section{Duality}
Duality in a bicategory is similar to  duality in a symmetric
monoidal category.  Many of the differences are already seen in
\myref{dualrings}, which we generalize here.   
This section is based on Chapter 16
of \cite{MS} which contains additional details.

\begin{definition} A 1-cell $X\in \sB(B,A)$ is \emph{right dualizable} 
\idx{right dualizable}\idx{dualizable!right} if
there is a 1-cell $Y\in\sB(A,B)$ and maps $\eta\colon U_A\rightarrow
X\odot Y$, called the \emph{coevaluation},\idx{coevaluation} 
and $\epsilon\colon Y\odot
X\rightarrow U_B$, called the \emph{evaluation},\idx{evaluation}
 such that the following diagrams
commute in $\sB (B,A)$ and $\sB (A,B)$, respectively.
\[\xymatrix@C=22pt{X \ar[d]_{\id} & U_A \odot X \ar[l]_-\cong\ar[r]^-{\eta\odot \id}
& (X\odot Y)\odot X \ar[d]^\cong\\
X & X\odot U_B \ar[l]^-\cong & X\odot (Y\odot X)\ar[l]^-{\id\odot
\epsilon} }
\qquad \xymatrix@C=22pt{Y \ar[d]_{\id}  & Y \odot U_A \ar[l]_-\cong\ar[r]^-{\id\odot \eta} & Y \odot (X\odot Y) \ar[d]^\cong \\
Y & U_B\odot Y \ar[l]^-\cong & (Y\odot X)\odot
Y\ar[l]^-{\epsilon\odot \id}}\]  
\end{definition}

If $X$ is right dualizable with dual $Y$
we say that $Y$ is the \emph{right dual}\idx{right dual}\idx{dual!right}
 of $X$ and that $(X,Y)$ is a
\emph{dual pair}.\idx{dual pair}
We also say that $Y$ is \emph{left dualizable} and that $X$ is the 
\emph{left dual} of $Y$.

\begin{prop}
Let $X$ be a 1-cell in $\sB(B,A)$, $Y$ be a 1-cell in $\sB(A,B)$, 
and $\epsilon\colon Y\odot X\rightarrow U_B$ be a 2-cell in $\sB(B,B)$.
Then the following are equivalent.
\begin{enumerate}
\item $Y$ is the right dual of $X$ with evaluation $\epsilon$.
\item The map $\epsilon/(-)\colon \sB(W, Z\odot Y)\rightarrow \sB(W\odot X,
Z)$ which takes a 2-cell $f\colon W\rightarrow Z\odot Y$ to the 2-cell
\[\xymatrix{W\odot X\ar[r]^-{f\odot \id}&Z\odot Y\odot X\ar[r]^-{\id \odot 
\epsilon}&Z\odot U_B\cong Z}\] is a bijection for all $W\in \sB(A,C)$
and $Z\in \sB(B,C)$.
\end{enumerate}
\end{prop}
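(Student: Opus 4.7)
The plan is to adapt the standard Yoneda-style argument for symmetric monoidal categories (Theorem \ref{symotherchar}) to this bicategorical setting, with the bookkeeping carefully tracking source and target 0-cells.

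For the direction (i) $\Rightarrow$ (ii), I would construct an explicit inverse to $\epsilon/(-)$. Given a dual pair $(X,Y)$ with coevaluation $\eta\colon U_A\to X\odot Y$ and evaluation $\epsilon$, define $\eta\backslash(-)\colon \sB(W\odot X, Z)\to\sB(W, Z\odot Y)$ by sending $g\colon W\odot X\to Z$ to the composite
\[\xymatrix{W\cong W\odot U_A\ar[r]^-{\id\odot \eta}& W\odot X\odot Y\ar[r]^-{g\odot \id}& Z\odot Y.}\]
Checking that $\epsilon/(-)$ and $\eta\backslash(-)$ are mutually inverse is a pair of diagram chases in which each round trip is shown to be the identity by inserting the appropriate triangle identity from the definition of a dual pair. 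These are direct generalizations of the analogous verifications in the symmetric monoidal setting.

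For the harder direction (ii) $\Rightarrow$ (i), I would use the hypothesized bijection to extract $\eta$. Specialize to $W = U_A$ and $Z = X$, so that $\epsilon/(-)$ gives a bijection
\[\sB(U_A, X\odot Y)\cong \sB(U_A\odot X, X)\cong \sB(X,X).\]
Define $\eta\colon U_A\to X\odot Y$ to be the preimage of $\id_X$ under this bijection. Unpacking the definition of $\epsilon/(-)$, this means that $\eta$ is characterized by the identity
\[\xymatrix{X\cong U_A\odot X\ar[r]^-{\eta\odot \id}& X\odot Y\odot X\ar[r]^-{\id\odot \epsilon}& X\odot U_B\cong X}\]
being the identity 2-cell on $X$. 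This is exactly the first triangle identity for a dual pair.

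The main obstacle will be establishing the second triangle identity, since it is not immediate from the definition of $\eta$. The standard trick is to use the bijection again, but now in its full generality with varying $W$ and $Z$, to reduce the claim to a statement that can be checked by chasing $\id_{Y\odot X}$ or some related canonical 2-cell through the adjunction. Concretely, I would show that both the second triangle composite and the identity on $Y$ have the same image under $\epsilon/(-)$ when applied to an appropriately chosen 2-cell (using $W = Y$ and $Z = U_B$), so injectivity of $\epsilon/(-)$ forces them to be equal. The coherence of associators and unitors in the bicategory enters here, but all the work reduces to pasting diagrams that are already implicit in the triangle identity we have just verified, together with the naturality of the bijection. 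Once both triangle identities are in place, $(X,Y)$ is a dual pair with the prescribed evaluation, completing the proof.
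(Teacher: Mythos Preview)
Your proposal is correct and follows the standard Yoneda-style argument for this type of characterization. The paper itself omits the proof entirely, stating at the outset of the chapter that ``we omit most proofs in this section since they are diagram chases from the definitions,'' so there is no paper proof to compare against; your outline is exactly the routine diagram chase the authors had in mind.
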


There is a similar characterization using the coevaluation. 

As in the symmetric monoidal case, any two right duals of a right
dualizable object are isomorphic.  If the duals of $X$ are $Y$ and
$Y'$ with coevaluation and evaluation maps
\[\xymatrix@R=5pt{\eta\colon U_A\ar[r]& X\odot Y
&{\mathrm{and}} &\epsilon\colon  Y\odot X\ar[r]& U_B\\
\eta'\colon U_A\ar[r]&X\odot Y'&{\mathrm{and}}&\epsilon'\colon Y'\odot X\ar[r]&
U_B}\]
then the map \[\xymatrix{Y\cong Y\odot U_A\ar[r]^{\id\odot \eta'}&
Y\odot X\odot Y'\ar[r]^{\epsilon\odot \id}&U_B\odot Y'
\cong Y'}\]
is an isomorphism with inverse \[\xymatrix{
Y'\cong Y'\odot U_A\ar[r]^{\id\odot \eta}
&Y'\odot X\odot Y\ar[r]^{\epsilon'\odot \id}&U_B\odot Y
\cong Y}.\]

If the bicategory is closed there
are canonical duals\idx{canonical dual} 
given by the $\rc$ and $\lc$ functors.  This
is the case in the bicategory of rings, bimodules, and
homomorphisms.
\begin{prop}
If $\sB$ is a closed bicategory the following are equivalent for a 1-cell $X\in \sB(B,A)$.
\begin{enumerate}
\item $X$ is right dualizable.
\item The map $\nu\colon  X\odot (X\rc U_B)\rightarrow X\rc X$ is an isomorphism.
\end{enumerate}
\end{prop}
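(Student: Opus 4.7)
The plan is to adapt the proof of Theorem \myref{monoidclosed} from the symmetric monoidal case, relying on the bijection characterization of right dualizability stated in the preceding proposition together with the closed-structure adjunction $\sB(W \odot X, Z) \cong \sB(W, X \rc Z)$. Recall that $\nu$ is the adjunct under this adjunction of the composite
\[
(X \odot (X \rc U_B)) \odot X \cong X \odot ((X \rc U_B) \odot X) \xrightarrow{\id \odot \epsilon} X \odot U_B \cong X,
\]
where $\epsilon\colon (X \rc U_B) \odot X \to U_B$ is the counit of $-\odot X \dashv X \rc -$.

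For $(1) \Rightarrow (2)$, suppose $X$ has a right dual $Y$ with evaluation $\epsilon'$. The bijection characterization gives a natural isomorphism $\sB(W, Z \odot Y) \cong \sB(W \odot X, Z)$ sending $f$ to $\epsilon'/f$, which composed with the closed adjunction yields $\sB(W, Z \odot Y) \cong \sB(W, X \rc Z)$. By the Yoneda lemma in $W$, this produces an isomorphism $Z \odot Y \cong X \rc Z$ for each $Z$. Setting $Z = U_B$ and composing with the unitor $U_B \odot Y \cong Y$ identifies the dual $Y$ with the canonical dual $X \rc U_B$; setting $Z = X$ and using this identification yields an isomorphism $X \odot (X \rc U_B) \cong X \rc X$. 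A diagram chase comparing adjuncts of identity 2-cells verifies that this Yoneda-produced isomorphism coincides with $\nu$.

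For $(2) \Rightarrow (1)$, set $Y := X \rc U_B$, take the evaluation to be the counit $\epsilon$, and define the coevaluation as $\eta := \nu^{-1} \circ \tilde{\eta}\colon U_A \to X \odot (X \rc U_B)$, where $\tilde{\eta}\colon U_A \to X \rc X$ is the adjunct of the left unitor $U_A \odot X \cong X$. The first triangle identity $(\id_X \odot \epsilon)(\eta \odot \id_X) = \id_X$ unfolds readily: the defining property of $\nu$ identifies $(\id_X \odot \epsilon)(\nu^{-1} \odot \id_X)$ with the counit $(X \rc X) \odot X \to X$ of the closed adjunction, which precomposed with $\tilde{\eta} \odot \id_X$ recovers the left unitor by the definition of $\tilde{\eta}$.

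The main obstacle is the second triangle identity $(\epsilon \odot \id_Y)(\id_Y \odot \eta) = \id_Y$. Passing through the closed adjunction $\sB(Y, Y) \cong \sB(Y \odot X, U_B)$, under which $\id_Y$ corresponds to $\epsilon$, this reduces to showing that the composite $\epsilon \circ (\epsilon \odot \id_Y \odot \id_X) \circ (\id_Y \odot \eta \odot \id_X)\colon Y \odot X \to U_B$ equals $\epsilon$. In the symmetric monoidal case the analogous verification is immediate from the commutativity of scalars in $\mathrm{End}(I)$; in the bicategorical setting no such shortcut is available, and one must instead argue through interchange, the defining property of $\nu$, and the first triangle identity, with careful bookkeeping of associator and unit coherence. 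The complete calculation appears in Chapter 16 of \cite{MS}.
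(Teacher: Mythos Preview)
The paper does not give a proof of this proposition: at the start of the chapter it states that most proofs are omitted as diagram chases, and the duality section is explicitly based on Chapter~16 of \cite{MS}, to which it defers for details. Your proposal follows the natural route---adapting the symmetric monoidal argument of \myref{monoidclosed} via the bijection characterization and the closed-structure adjunction---and lands at the same reference for the delicate second triangle identity, so it is entirely consistent with the paper's treatment and in fact supplies more of the outline than the paper itself does.
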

There is a similar result for left dualizable 1-cells.

A dual pair in a symmetric monoidal category is also a dual pair
in the corresponding one 0-cell bicategory.  However, in a
bicategory an object that is right dualizable might not be left
dualizable, and conversely.

The following results about composites of dual pairs follow
immediately from the definition of a dual pair.  While they are
both very easy to prove they have significant consequences.

\begin{theorem}\mylabel{dualcomposites1}
Let $X$ be a right dualizable 1-cell in $\sB(B,A)$ with dual $Y$
and $W$ be a right dualizable 1-cell in $\sB(C,B)$ with dual $Z$.

Let $(\eta,\epsilon)$ be coevaluation and evaluation maps for the dual
pair $(X,Y)$, and let $(\zeta,\psi)$ be coevaluation and evaluation maps
for the dual pair $(W,Z)$. Then the composites
\[\xymatrix@C=40pt{U_A \ar[r]^-{\eta}
& X\odot Y \ar[r]^-\iso
& X \odot U_B \odot Y \ar[r]^-{\id\odot \zeta\odot\id}
& (X\odot W)\odot (Z\odot Y)}\]
and
\[\xymatrix@C=40pt{ (Z \odot Y)\odot (X\odot W) \ar[r]^-{\id\odot\epsilon\odot \id}
& Z \odot U_B \odot W \ar[r]^-\iso
& Z \odot W \ar[r]^-\psi& U_C} \]
are coevaluation and evaluation maps that exhibit $(X\odot W, Z\odot Y)$
as a dual pair of $1$-cells.
\end{theorem}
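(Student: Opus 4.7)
The plan is to verify the two triangle identities for the proposed coevaluation
\[\eta'\colon U_A \to (X\odot W)\odot (Z\odot Y)\]
and evaluation
\[\epsilon'\colon (Z\odot Y)\odot (X\odot W)\to U_C\]
directly from the definitions, making use of the given triangle identities for $(X,Y)$ and for $(W,Z)$ together with the associativity and unit coherence 2-cells of $\sB$. The argument is a diagram chase; no further hypotheses on $\sB$ are needed.

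First I would verify the triangle identity in $\sB(C,A)$ stating that the composite
\[X\odot W \cong U_A\odot (X\odot W) \xrightarrow{\eta'\odot \id} ((X\odot W)\odot (Z\odot Y))\odot (X\odot W) \cong (X\odot W)\odot U_C \cong X\odot W\]
(obtained by composing with $\id\odot \epsilon'$ after reassociating) equals the identity. After expanding $\eta'$ and $\epsilon'$ and applying the associativity 2-cells to gather the inner factors, the composite acquires the interior composite
\[W \cong U_B\odot W \xrightarrow{\zeta\odot \id} (W\odot Z)\odot W \cong W\odot (Z\odot W) \xrightarrow{\id\odot \psi} W\odot U_C \cong W,\]
flanked on the outside by the analogous composite for $(X,Y)$ involving $\eta$ and $\epsilon$. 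The triangle identity for $(W,Z)$ collapses the interior to $\id_W$, and then the triangle identity for $(X,Y)$ collapses the outside to $\id_{X\odot W}$.

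The companion triangle identity for $Z\odot Y$ is verified symmetrically: after reassociation, the interior is the triangle identity composite for $(X,Y)$ applied to $Y$, which reduces to $\id_Y$; the remaining outer composite is the triangle identity for $(W,Z)$ applied to $Z$, which reduces to $\id_{Z\odot Y}$.

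The only real difficulty is bookkeeping the associativity and unit 2-cells so that the triangle identities for the two given dual pairs can be applied locally, in the correct order, at the correct spots. This is pure coherence: the sub-diagrams consisting only of associators, unitors, and identities all commute by MacLane's coherence theorem for bicategories, so every reassociation used above is canonical and the two routes through each triangle identity diagram agree on the nose.
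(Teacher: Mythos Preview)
Your proposal is correct and matches the paper's approach: the paper does not give a detailed proof but simply states that the result follows immediately from the definition of a dual pair via a diagram chase, which is exactly what you outline. Your description of how the triangle identities for $(W,Z)$ and $(X,Y)$ collapse the inner and outer composites, with coherence handling the associators and unitors, is the standard and intended argument.
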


\begin{theorem}\mylabel{dualcomposites2}
Let $(X,Y)$ be a dual pair with  evaluation
$\epsilon\colon Y\odot X\rightarrow U_B$.  Let $Z$ be another 1-cell and
suppose $(X\odot Z,V)$ is a dual pair. If $\epsilon$ is an isomorphism
then $(Z,V\odot X)$ is a dual pair.

Let $(Z,W)$ be a dual pair with coevaluation $\chi\colon U_B\rightarrow Z\odot W$
and $X$ be a 1-cell.  If $(X\odot Z,V)$
is a dual pair and $\chi$ is an isomorphism, then $(X, Z\odot V)$ is a
dual pair.
\end{theorem}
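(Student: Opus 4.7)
The two parts are proved by parallel strategies: construct explicit coevaluation and evaluation $2$-cells for the proposed dual pair, then verify the two triangle identities by diagram chases that reduce to the triangle identities of the hypothesized dual pairs, using invertibility of $\epsilon$ (respectively $\chi$) in an essential way. The two parts are symmetric, with the roles of ``left'' and ``right'' composition factors swapped, so the plan is to describe Part 1 in detail.

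For Part 1, take the evaluation to be $\epsilon_Z=\psi\colon V\odot X\odot Z\to U_C$, which is well-defined since $V\odot(X\odot Z)=V\odot X\odot Z$. Define the coevaluation
\[
\eta_Z\colon U_B\xrightarrow{\epsilon^{-1}}Y\odot X\cong Y\odot U_A\odot X\xrightarrow{\id_Y\odot\zeta\odot\id_X}Y\odot X\odot Z\odot V\odot X\xrightarrow{\epsilon\odot\id}Z\odot V\odot X,
\]
inserting $\zeta$ at the unit $U_A$ between $Y$ and $X$. The formula uses invertibility of $\epsilon$ precisely to make sense of $\epsilon^{-1}$.

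For the first triangle identity, expanding $\eta_Z$ in the composite $Z\cong U_B\odot Z\xrightarrow{\eta_Z\odot\id}Z\odot V\odot X\odot Z\xrightarrow{\id\odot\psi}Z\odot U_C\cong Z$ and commuting the inner $\epsilon$ past the outer $\psi$ by interchange (these act on disjoint strands of the intermediate $Y\odot X\odot Z\odot V\odot X\odot Z$) exposes the subcomposite $U_A\odot X\odot Z\xrightarrow{\zeta\odot\id}X\odot Z\odot V\odot X\odot Z\xrightarrow{\id\odot\psi}X\odot Z\odot U_C\cong X\odot Z$, which equals $\id_{X\odot Z}$ by the triangle identity for the right dualizable $X\odot Z$ in $(X\odot Z,V)$. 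The outer $\epsilon^{-1}$ and $\epsilon$ then collapse to the identity on $U_B$, yielding $\id_Z$.

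The second triangle identity is the main technical obstacle: after expansion, the $Y$-strand created by $\epsilon^{-1}$ separates the strands on which $\psi$ must act, so the same interchange trick does not apply directly. The plan here is to use invertibility of $\epsilon$ to straighten the $Y$-zigzag, reducing the remaining composite to a setting where the triangle identity for the right dual $V$ in $(X\odot Z,V)$ applies. Part 2 follows by the symmetric argument: take $\eta_X=\zeta\colon U_A\to X\odot(Z\odot V)$ as the coevaluation and construct the evaluation $\epsilon_X\colon Z\odot V\odot X\to U_B$ by the analogous composite involving $\psi$, $\chi$, and $\chi^{-1}$; the triangle identities are then verified by analogous diagram chases, with invertibility of $\chi$ replacing invertibility of $\epsilon$.
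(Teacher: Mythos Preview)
Your approach is correct and is exactly the kind of direct diagram chase the paper has in mind (the paper omits the proof, remarking only that results in that section ``are diagram chases from the definitions''). Your coevaluation and evaluation for Part~1 are the right ones, and your verification of the first triangle is clean.

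For the second triangle, your plan is right but the key step deserves to be named explicitly: because $(X,Y)$ is a dual pair, the triangle identity $(\id_X\odot\epsilon)\circ(\eta\odot\id_X)=\id_X$ together with invertibility of $\epsilon$ gives $\id_X\odot\epsilon^{-1}=\eta\odot\id_X$ (as maps $X\to X\odot Y\odot X$, modulo unitors). Substituting this for the first occurrence of $\epsilon^{-1}$ in your expansion, then using interchange to swap the order of the insertions of $\eta$ and $\zeta$, brings the middle of the composite into the form $(\id_X\odot\epsilon)\circ(\eta\odot\id_X)$ acting on the appropriate $X$-strand, which collapses to the identity by the same triangle for $(X,Y)$. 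What remains is precisely $(\psi\odot\id_{VX})\circ(\id_V\odot\zeta\odot\id_X)$, which is $\id_V\odot\id_X$ by the second triangle for $(X\odot Z,V)$ tensored with $\id_X$. That is the ``straightening of the $Y$-zigzag'' you allude to; it does use the $(X,Y)$ triangle identities, not just bare invertibility of $\epsilon$, though of course those identities are part of the dual-pair hypothesis. Part~2 is indeed the mirror image.
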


Strong functors of bicategories are compatible with dual pairs,
but weaker hypotheses can also give compatibility between functors
and dual pairs.
\begin{prop}\mylabel{bicatfuntoriality} 
Let $(X,Y)$ be a dual pair in a bicategory $\sB$, $X\in\sB(B,A)$, and
$F\colon \sB\rightarrow \sB'$ be a lax functor of bicategories such that $\phi_{X,Y}$
and $\phi_B$ are isomorphisms.  Then $(FX,FY)$ is a dual pair in $\sB'$.

If the coevaluation and evaluation maps for the dual pair $(X,Y)$ are
$$\eta\colon U_A\rtarr X\odot Y \ \ \
\text{and}\ \ \ \epsilon\colon Y\odot X\rtarr U_B,$$ then the coevaluation
and evaluation maps for
the dual pair $(FX,FY)$ are
\[\xymatrix@C=40pt{U_{FA}'\ar[r]^-{\phi_A}& F(U_A)\ar[r]^-{F(\eta)}& F(X\odot Y)
\ar[r]^-{(\phi_{X,Y})^{-1}}& FX\odot FY}\] and
\[\xymatrix@C=40pt{FY\odot FX\ar[r]^-{\phi_{Y,X}}& F(Y\odot X)\ar[r]^-{F(\epsilon)}& F(U_B)
\ar[r]^-{(\phi_B)^{-1}}&U_{FB}'.}\]

\end{prop}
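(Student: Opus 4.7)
The plan is to verify the two triangle identities that characterize $(FX,FY)$ as a dual pair in $\sB'$, with the proposed coevaluation $\tilde\eta$ and evaluation $\tilde\epsilon$. Because the two identities are dual in form, I would carry out the first in detail and then assert the second by an identical argument. So the goal is to show that the composite
\[
FX\xrightarrow{\cong} U_{FA}'\odot FX\xrightarrow{\tilde\eta\odot\id}(FX\odot FY)\odot FX\xrightarrow{\cong}FX\odot(FY\odot FX)\xrightarrow{\id\odot\tilde\epsilon}FX\odot U_{FB}'\xrightarrow{\cong}FX
\]
is the identity on $FX$.

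The key input is that $F$ applied to the triangle identity for $(X,Y)$ yields the identity on $FX$, that is
\[
\id_{FX}=F\Bigl(X\xrightarrow{\cong}U_A\odot X\xrightarrow{\eta\odot\id}(X\odot Y)\odot X\xrightarrow{\cong}X\odot(Y\odot X)\xrightarrow{\id\odot\epsilon}X\odot U_B\xrightarrow{\cong}X\Bigr).
\]
I would now insert the lax-functor constraints to rewrite each $F(-\odot-)$ in terms of $(F-)\odot(F-)$. Concretely, the naturality of $\phi_{X,Y}$ converts $F(\eta\odot\id_X)$ into the composite $\phi_{X\odot Y,X}\circ(F\eta\odot\id_{FX})\circ\phi_{U_A,X}^{-1}$, and similarly for $F(\id_X\odot\epsilon)$ using $\phi_{X,Y\odot X}$ and $\phi_{X,U_B}$. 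The unit coherence axioms for a lax functor identify the composites $F(U_A\odot X)\xrightarrow{F(\cong)}FX$ and $FX\xrightarrow{F(\cong)}F(X\odot U_B)$ with the composites $\phi_{U_A,X}\circ(\phi_A\odot\id)$ followed by a unit isomorphism in $\sB'$, and dually on the right; these are precisely the arrows that occur in $\tilde\eta$ and $\tilde\epsilon$. Finally, the associativity coherence for $\phi$ converts $F(\cong_{\text{assoc}})$ into the associator in $\sB'$ conjugated by appropriate $\phi$'s, producing cancelling $\phi_{X\odot Y,X}$ and $\phi_{X,Y\odot X}$ pairs on either side of the associator.

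Substituting all of these rewrites into the equation $F(\text{triangle})=\id_{FX}$ and cancelling the resulting $\phi\circ\phi^{-1}$ pairs leaves exactly the composite displayed above with $\tilde\eta$ and $\tilde\epsilon$ inserted, proving it equals $\id_{FX}$. The second triangle identity for $FY$ is verified by the symmetric argument, using $\phi_{Y,X}$ in place of $\phi_{X,Y}$ and $\phi_A$ on the right instead of the left.

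The main obstacle is not conceptual but bookkeeping: keeping track of the associators in $\sB$ and $\sB'$ and verifying that the $\phi_{-,-}$'s produced by naturality line up and cancel correctly. The hypothesis that $\phi_{X,Y}$ and $\phi_B$ are isomorphisms is what lets us invert the appropriate constraint cells; note that $\phi_A$ need not be assumed invertible because it only appears covariantly inside $\tilde\eta$ and the composite $\phi_{U_A,X}\circ(\phi_A\odot\id)$ is forced to be invertible along the path of the identity by the other invertibility hypotheses together with the unit coherence. Once all the coherence diagrams are laid out the verification is purely formal, which is why we are content to record the proposition here as a direct consequence of the lax functor axioms.
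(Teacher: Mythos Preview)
Your proposal is correct and is exactly the diagram chase the paper has in mind; the paper omits the proof entirely, saying at the start of the chapter that such results are ``diagram chases from the definitions,'' and your outline supplies precisely that chase. One small imprecision: you phrase naturality as $F(\eta\odot\id_X)=\phi_{X\odot Y,X}\circ(F\eta\odot\id_{FX})\circ\phi_{U_A,X}^{-1}$, which presupposes $\phi_{U_A,X}$ is invertible, but in fact the argument only needs the uninverted naturality square $\phi_{X\odot Y,X}\circ(F\eta\odot\id)=F(\eta\odot\id)\circ\phi_{U_A,X}$ together with the unit coherence $F(\lambda)\circ\phi_{U_A,X}\circ(\phi_A\odot\id)=\lambda'$, and these combine with the associativity coherence and the assumed invertibility of $\phi_{X,Y}$ and $\phi_B$ to reduce the $\sB'$-triangle to $F$ of the $\sB$-triangle without ever inverting any other $\phi$.
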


Let $\Ch$\nidx{ch@$\Ch$} be the bicategory with 0-cells rings, 1-cells chain complexes
of bimodules, and 2-cells maps of chain complexes. 
Let $C$ be a chain complex of left $R$-modules and $D$  be a chain complex
 of right $R$-modules.  Suppose $(D,C)$ is a dual pair. There is a map
\[H_*(D)\odot H_*(C)=H_*(D)\otimes_RH_*(C)\rightarrow H_*(D\otimes_RC).
\]  The K\"unneth Theorem implies this map  is an isomorphism if each  $C_i$
 is a projective module, the boundaries of $C_i$ are projective
and  the homology of $C$ is projective in each degree.  The
natural transformations $\phi_R$ are the identity for all rings
$R$ and so when the hypotheses of the K\"unneth Theorem are satisfied
\myref{bicatfuntoriality}
implies that the homology of a dualizable complex is
dualizable.

\section{Shadows}\label{shadowssection}

In symmetric monoidal categories the symmetry isomorphism
\[X\otimes Y\rightarrow Y\otimes X\] provides a way to compare
the target of the coevaluation with the source of the evaluation
for a dual pair $(X,Y)$. This is an important part of the
definition of the trace. To define trace in a bicategory we will
also need to be able to compare the target of coevaluation with
the source of evaluation. For example, in the bicategory of rings,
bimodules, and homomorphisms it is necessary to compare $X\otimes
_AY$ with $Y\otimes _BX$ for an $B$-$A$-bimodule $X$ with dual $Y$.

The comparisons we need are not automatically part of the
structure of a bicategory, as we can see in the bicategory $\Mod$.  In 
$\Mod$ we introduced the functors $\sh{-}$ to define the
Hattori-Stallings trace.  In this section we describe how to
generalize these functors to other bicategories.

\begin{definition} A bicategory $\sB$ has \emph{shadows}\idx{shadow} 
if there is a 0-cell
$I$, functors\nidx{shad@$\protect\sh{-}$} \[\sh{-}\colon \sB(A,A)\rightarrow \sB(I,I)\]
for all
0-cells $A$, and natural isomorphisms\nidx{theta@$\theta$} \[\theta\colon  \sh{X\odot
Y}\cong \sh{Y\odot X} \] for all pairs of 1-cells $X\in\sB(B,A)$
and $Y\in\sB(A,B)$. We also require that for $X\in \sB(I,I)$,
$\sh{X}\cong X$ and the following diagrams, relating the
isomorphisms $\theta$ to the unit and associativity isomorphisms
in the bicategory,  commute.\footnote{The following diagrams
 are due to Michael Shulman.}
\[\xymatrix{\sh{(X\odot Y)\odot Z}\ar[r]^\theta\ar[d]&\sh{Z\odot (X\odot Y)}
\ar[r]&\sh{(Z\odot X)\odot Y}\\
\sh{X\odot(Y\odot Z)}\ar[r]_\theta&\sh{(Y\odot Z)\odot X}\ar[r]&
\sh{Y\odot (Z\odot X)}\ar[u]_\theta
}\]
\[\xymatrix{\sh{Z\odot U_A}\ar[r]^\theta\ar[dr]&\sh{U_A\odot Z}\ar[d]
\ar[r]^\theta&\sh{Z\odot U_A}\ar[dl]\\&\sh{Z}}\]
\end{definition}

Shadows can be thought of as `cyclic
tensor products' since the natural isomorphisms will allow cyclic
permutations of 1-cells and 2-cells.

As we noted before, from a symmetric monoidal category we can
define a bicategory with a single 0-cell, 1-cells the objects of
the category, and 2-cells the morphisms.  The identity functor is
a shadow for this bicategory and the isomorphism $\theta$ is the
symmetry isomorphism.

The bicategory $\Ch$ is a bicategory with shadows.  The shadows are
given by applying the shadows of $\Mod$ levelwise.  The
isomorphism $\sh{X\odot Y}\rightarrow \sh{Y\odot X}$ is the usual
exchange of elements and adds a sign determined by degree.

Since shadows are not automatically part of the structure of a bicategory,
it is not surprising that additional hypotheses will be needed before a lax
functor is considered compatible with shadows.

\begin{definition} Let $\sB$ and $\sB'$ be bicategories with shadows and
$F\colon \sB\rightarrow \sB'$ be a lax functor of bicategories.  Then
$F$ is \emph{compatible with shadows}\idx{compatible with shadows} 
if for each 0-cell $A$ there
is a natural transformation \[\psi_A\colon \sh{F(-)}\rightarrow F\sh{-}\]\nidx{psia@$
\psi_A$}
such that
\[\xymatrix{\sh{FX\odot FY}\ar[r]^-\theta\ar[d]_{\phi_{X,Y}}&\sh{FY\odot FX}
\ar[d]^-{\phi_{Y,X}}\\
\sh{F(X\odot Y)}\ar[d]_{\psi_A}&\sh{F(Y\odot X)}\ar[d]^{\psi_B}\\
F\sh{X\odot Y}\ar[r]_\theta&F\sh{Y\odot X}}\] commutes for all
1-cells $X\in\sB(B,A)$ and $Y\in \sB(A,B)$.
\end{definition}

Homology is a lax functor of bicategories that is compatible with
shadows. The lax functor is the identity on 0-cells and the usual
homology functor on 1-cells and 2-cells. Define $\psi_R$ by the
coequalizer
\[\xymatrix{R\otimes H_*(C_*)\ar[r]\ar[d]&H_*(R\otimes C_*)\ar@<.5ex>[r]^-
{H_*(\kappa)}\ar@<-.5ex>[r]_-{H_*(\kappa\gamma)}&H_*(C_*)\ar@{=}[d]\ar[r]
&\sh{H_* (C_*)}\ar@{.>}[d]^{\psi_R}\\ H_*(R\otimes
C_*)\ar@<.5ex>[rr]^{H_*(\kappa)}\ar@<-.5ex>[rr]
_{H_*(\kappa\gamma)}&&H_*(C_*) \ar[r]&H_*(\sh{C_*}).}\]

\begin{rmk}
The definitions of shadows and a functor that is compatible with
shadows given here can be generalized while retaining the same
spirit, see \cite{PS1}.  Here we require that the shadow is a
functor from the hom categories to one chosen hom category in our
bicategory.  We could also define a shadow as a functor from the
hom categories to some symmetric monoidal category that has
associated symmetry isomorphisms that satisfy coherence
conditions.  We could similarly generalize the definition of a
functor that is compatible with shadows.
\end{rmk}

\section{Trace}\label{trace}
Motivated by the definition of trace in a symmetric monoidal category and
the Hattori-Stallings trace we can now use duality in a bicategory to define
trace in a bicategory with shadows.

In this section $\sB$ is a bicategory with shadows and $X\in \sB
(B,A)$ is a 1-cell
with right dual $Y\in\sB(A,B)$.  Let $\eta \colon U_A\rightarrow X\odot Y$
and $\epsilon \colon Y\odot X \rightarrow U_B$ be the coevaluation and evaluation
for the  dual pair $(X,Y)$.

\begin{definition}
For 1-cells $P\in\sB(B,B)$ and  $Q\in \sB(A,A)$ and a 2-cell \[f\colon 
Q\odot X\rightarrow X\odot P\] the \emph{trace}\idx{trace} 
of $f$ is the composite
\[\xymatrix{\sh{Q}\ar[d]^\cong
&&& \sh{P}\\
\sh{Q\odot U_A}
\ar[r]^-{\sh{\id\odot\eta}}&\sh{Q\odot X\odot Y} 
\ar[r]^-{\sh{f\odot\id}}&\sh{X\odot P\odot Y}\cong\sh{Y\odot X\odot
P} \ar[r]^-{\sh{\epsilon \odot \id}}&\sh{U_B\odot P}\ar[u]^\cong.}\]
\end{definition}

In a symmetric monoidal category, trace was defined only for endomorphisms
of dualizable objects. In a bicategory we add the 1-cells $P$
and $Q$ since we want to use this trace in fixed point theory
applications.  For these examples the maps we want to take the trace of
are not endomorphisms of 1-cells. Rather, they are 2-cells of the form
$X\rightarrow X\odot P$.  This can be seen in the definition of
the algebraic Reidemeister trace where $X=C_*(\tilde{M};
\mathbb{Q})$ for some compact manifold $M$ and
$P=\mathbb{Z}\pi_1(M)^\phi$.  While $Q$ is not needed in our applications,
we add it to the definition for symmetry.

We define the trace of a 2-cell $g\colon Y\odot Q\rightarrow P\odot Y$ similarly.

The following lemmas describe basic properties of the trace.  All of these 
lemmas are easy to prove. 

\begin{lemma}\mylabel{traceind}  
If $(X,Y)$ and $(X,Y')$ are dual pairs, the trace of $f$ with respect to
$(X,Y)$ is equal to the trace of $f$ with respect to $(X,Y')$.
\end{lemma}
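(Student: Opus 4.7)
The plan is to exploit the canonical isomorphism $\alpha \colon Y \to Y'$ between any two right duals of $X$ (displayed explicitly earlier in the chapter) and push it through the two traces via the cyclicity built into the shadow. More precisely, recall that
\[\alpha = (\epsilon \odot \id_{Y'}) \circ (\id_Y \odot \eta') \colon Y \to Y', \qquad \alpha^{-1} = (\epsilon' \odot \id_Y) \circ (\id_{Y'} \odot \eta).\]
A direct consequence of the triangle identities for both dual pairs (together with the uniqueness clause in the characterization of duals via $\epsilon/(-)$) is the compatibility
\[\eta' = (\id_X \odot \alpha)\circ \eta \qquad \text{and} \qquad \epsilon = \epsilon' \circ (\alpha \odot \id_X).\]
These two identities express $\eta'$ and $\epsilon'$ entirely in terms of $\eta$, $\epsilon$, and $\alpha$, which is exactly what is needed to compare the two traces.

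Substituting these identities into the definition of the trace of $f$ with respect to $(X,Y')$, the diagram we need to compare to the original trace reads
\[\sh{\epsilon\odot (\alpha^{-1}\odot \id_P)}\;\circ\;\theta\;\circ\;\sh{f\odot (\id_X\odot \alpha)}\;\circ\;\sh{\id_Q\odot \eta},\]
where I have identified $\eta' = (\id_X\odot \alpha)\eta$ and used $\epsilon'= \epsilon\circ(\alpha^{-1}\odot \id_X)$. The next step is to move the factor $\alpha$ occurring in the middle past $f$ using naturality of $\sh{-}$ applied to $f \odot \id$, and then across the shadow-swap $\theta$ using naturality of $\theta$ in both slots. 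After doing so, the two occurrences of $\alpha$ and $\alpha^{-1}$ sit adjacent to one another inside the shadow and cancel, leaving exactly the trace of $f$ with respect to $(X,Y)$.

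The main obstacle, and the only real content, is keeping track of the bracketings and the unit isomorphisms so that the naturality squares for $\theta$ and the coherence diagrams for the shadow can be applied cleanly; the two coherence diagrams in the definition of shadows are precisely what is needed to reassociate the $\sh{X\odot P\odot Y}\cong \sh{Y\odot X\odot P}$ rearrangement in the presence of the inserted $\alpha$, $\alpha^{-1}$. All other moves are formal consequences of the triangle identities and of the functoriality of $\sh{-}$, so the proof reduces to a diagram chase with no further ingredients beyond what has already been established in this section.
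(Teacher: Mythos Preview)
Your approach is correct and is exactly the routine diagram chase the paper has in mind; the paper in fact omits the proof entirely, remarking only that these lemmas ``are easy to prove'' since they follow from the definitions. A minor point: the displayed expression $\sh{\epsilon\odot(\alpha^{-1}\odot\id_P)}\circ\theta\circ\sh{f\odot(\id_X\odot\alpha)}\circ\sh{\id_Q\odot\eta}$ is notationally garbled (you mean composites like $\sh{(\epsilon\odot\id_P)\circ(\alpha^{-1}\odot\id_X\odot\id_P)}$ rather than $\odot$-ing $\epsilon$ with $\alpha^{-1}\odot\id_P$), but the argument you describe---push $\alpha$ past $f$ by interchange, across $\theta$ by naturality, and cancel against $\alpha^{-1}$---is exactly right.
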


Let $f'$ be the composite
\[\xymatrix@C=35pt{Y\odot Q\ar[r]^-{\cong}&
Y\odot Q\odot U_A\ar[r]^-{\id \odot \id\odot \eta}&
Y\odot Q\odot X\odot Y\ar[r]^-{\id\odot f\odot \id}&
\,}\]
\vspace{-12pt}\hspace{-50pt}
\[\xymatrix@C=35pt{
&Y\odot X\odot P\odot Y\ar[r]^-{\epsilon\odot \id\odot \id}&U_B\odot P
\odot Y\ar[r]^-\cong & P\odot Y.
}\]
The 2-cell $f'$ is the \emph{dual} of $f$.

\begin{lemma}\mylabel{tracedual} For $f$ and $f'$ as above,
\[\tr(f)=\tr (f').\]
\end{lemma}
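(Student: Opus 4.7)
The plan is to unfold both traces as explicit composites of $2$-cells in the shadow category $\sB(I,I)$, and then to show they agree via a diagram chase that combines the cyclicity isomorphisms $\theta$ with one of the triangle identities for the dual pair $(X,Y)$.

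First I would write out $\tr(f')$ using the definition of the trace of a $2$-cell $Y\odot Q\to P\odot Y$. After cycling with $\theta$, this is a composite
\[
\sh{Q}\cong \sh{U_A\odot Q}\xrightarrow{\sh{\eta\odot\id}} \sh{X\odot Y\odot Q}\xrightarrow{\sh{\id\odot f'}}\sh{X\odot P\odot Y}\xrightarrow{\theta}\sh{Y\odot X\odot P}\xrightarrow{\sh{\epsilon\odot\id}}\sh{P}.
\]
Next, I would substitute the definition of $f'$ into $\sh{\id\odot f'}$, producing a longer composite that applies $\eta$ a second time, applies $f\odot\id$ in the middle, and then applies $\epsilon$ before finishing with the trailing $\theta$ and $\sh{\epsilon\odot\id}$. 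In symbols, this expands to a composite containing the subexpression $\sh{X\odot Y\odot X\odot P\odot Y}$ between the two shadow-level applications of $\epsilon$.

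The heart of the proof is then to cycle the leading $X$ around to the tail using the coherence of $\theta$ with the associativity isomorphism, so that the factor $X\odot Y\odot X$ sits consecutively. Having arranged this, I can apply the triangle identity for the dual pair $(X,Y)$, namely $(\id_X\odot\epsilon)(\eta\odot\id_X)=\id_X$, to collapse one of the two $\eta$--$\epsilon$ pairs. After rotating back with $\theta$, the remaining composite is precisely the definition of $\tr(f)$.

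The main obstacle is bookkeeping: one has to verify that the cyclic rotations via $\theta$ applied in the expanded form are compatible with the associators and the unit isomorphisms, which is exactly the content of the coherence diagrams in the definition of a shadow. Once that compatibility is invoked, the triangle identity produces the desired cancellation formally, and the equality $\tr(f)=\tr(f')$ follows without any further input. In spirit this is the same diagram chase that proves $\tr(gf)=\tr(fg)$ for endomorphisms in a symmetric monoidal category, with $\theta$ playing the role of the symmetry isomorphism and the second shadow axiom ensuring that the triangle identity survives passage through $\sh{-}$.
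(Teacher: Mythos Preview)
Your approach is correct and is precisely the kind of diagram chase the paper has in mind; in fact the paper omits the proof entirely, noting at the start of the chapter that the results in this section are diagram chases from the definitions. Your expansion of $\tr(f')$ via the definition of $f'$, followed by a cyclic rotation using the $\theta$-coherence and an application of the triangle identity $(\id_X\odot\epsilon)(\eta\odot\id_X)=\id_X$ to collapse the duplicated $\eta$--$\epsilon$ pair, is exactly the intended argument.
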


One of the defining properties of a trace function on matrices is
commutativity.  The trace in bicategories is also commutative.

\begin{lemma}\mylabel{tracecyclic} If $X$ and $Z$ are right dualizable 1-cells, 
$g\colon R\odot Z\rightarrow X\odot S$, $f\colon Q\odot
X\rightarrow Z\odot P$ are 2-cells, and the composites 
\[\xymatrix@R=5pt{Q\odot R\odot Z\ar[r]^-{\id_Q\odot g}&Q\odot X\odot S
\ar[r]^-{f\odot \id_S}&Z\odot P\odot S \\
R\odot Q\odot X\ar[r]^-{\id_R\odot f}&R\odot Z\odot P\ar[r]^-{g\odot \id_P}&
X\odot S\odot P}\] are defined, then \[\tr((f\odot \id_S)(\id_Q\odot g))=\tr ((g\odot \id_P)
(\id_R\odot f))\]
\end{lemma}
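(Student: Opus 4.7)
The plan is to unfold both sides from the definition of trace in Section~\ref{trace} and then perform a diagram chase using the triangle identities for the dual pairs and the coherence axioms for the shadow. Let $Y$ be a right dual of $X$ with duality data $\eta_X\colon U_A\to X\odot Y$ and $\epsilon_X\colon Y\odot X\to U_B$, and let $W$ be a right dual of $Z$ with duality data $\eta_Z\colon U_C\to Z\odot W$ and $\epsilon_Z\colon W\odot Z\to U_D$.

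First, I would write both traces explicitly. Suppressing the associativity and unit coherence isomorphisms of the bicategory, the trace of $(f\odot \id_S)(\id_Q\odot g)\colon (Q\odot R)\odot Z\to Z\odot (P\odot S)$ with respect to the dual pair $(Z,W)$ is the composite
\[
\sh{Q\odot R}\xrightarrow{\sh{\id\odot\eta_Z}}\sh{Q\odot R\odot Z\odot W}\xrightarrow{\sh{\id\odot g\odot\id}}\sh{Q\odot X\odot S\odot W}\xrightarrow{\sh{f\odot\id\odot\id}}\sh{Z\odot P\odot S\odot W}\xrightarrow{\theta}\sh{W\odot Z\odot P\odot S}\xrightarrow{\sh{\epsilon_Z\odot\id}}\sh{P\odot S}.
\]
The trace of $(g\odot\id_P)(\id_R\odot f)$ with respect to $(X,Y)$ unfolds analogously, with the roles of $(X,Y)$ and $(Z,W)$ and of $f,g$ interchanged; it is a 2-cell $\sh{R\odot Q}\to \sh{S\odot P}$, which we identify with a 2-cell $\sh{Q\odot R}\to \sh{P\odot S}$ using the shadow isomorphism $\theta$.

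The heart of the proof is to show these two composites agree. The strategy is to insert a cancelling pair $\eta_X$, $\epsilon_X$ into the LHS via the triangle identity for $(X,Y)$, that is, to replace $\id_X$ by the composite $(\id_X\odot \epsilon_X)(\eta_X\odot\id_X)$ (up to unit isomorphisms). This produces an expression with \emph{both} coevaluations and \emph{both} evaluations present in the shadow. Using naturality of $\theta$ (which allows $\sh{f\odot \id}$ or $\sh{g\odot \id}$ to be pulled through a cyclic permutation inside $\sh{-}$), together with the hexagon and triangle coherence axioms for $\theta$, I would cyclically reshuffle the factors inside the shadow so that the $\eta_Z,\epsilon_Z$ pair now occupies the position where the triangle identity for $(Z,W)$ collapses it to an identity. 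After this collapse, the expression is exactly the expanded form of the RHS trace. Each step is one of (i) naturality of $\theta$, (ii) the hexagon coherence relating $\theta$ to associativity, (iii) the triangle axiom relating $\theta$ to unit isomorphisms, or (iv) a triangle identity for a dual pair.

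The main obstacle is purely combinatorial bookkeeping: keeping track of the associativity and unit coherence isomorphisms of $\sB$, the multiple cyclic reorderings inside the shadow, and the positions at which $f$ and $g$ act. No new structural input is required beyond the definition of a symmetric bicategory with shadows; the identity is formal in the same sense that the cyclic identity $\tr(fg)=\tr(gf)$ is formal in a symmetric monoidal category once one has dual pairs and the symmetry isomorphism, with $\theta$ here playing the role of that symmetry.
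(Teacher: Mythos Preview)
Your proposal is correct and matches the paper's approach: the paper explicitly omits the proof of this lemma, stating at the start of Section~\ref{trace} that the results ``are diagram chases from the definitions,'' and your outline---insert the $(X,Y)$ duality via a triangle identity, cyclically reshuffle inside the shadow using naturality and the hexagon/unit coherence of $\theta$, then collapse the $(Z,W)$ duality---is precisely such a chase. One minor point: the argument uses only the shadow structure, not the involution $t$, so ``bicategory with shadows'' rather than ``symmetric bicategory with shadows'' is the accurate hypothesis.
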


The trace respects the $\odot$ structure.

\begin{lemma}\mylabel{tracemult} If $X$ and $Y$ are right dualizable 1-cells, 
$f\colon Q\odot X\rightarrow X$ and $g\colon Z\rightarrow Z\odot P$
are 2-cells, and \[g\odot f\colon Z\odot Q\odot X\rightarrow Z\odot P
\odot X\] is defined, then 
\[\protect\tr (g\odot f)=\tr(g)\tr(f).\]
\end{lemma}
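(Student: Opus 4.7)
The plan is to apply \myref{dualcomposites1} to assemble the dualizable 1-cell $Z\odot X$ together with an explicit composite coevaluation and evaluation built from the dual data for $X$ and $Z$, substitute these into the definition of $\tr(g\odot f)$, and then use the naturality and coherence of the shadow isomorphism $\theta$ to factor the resulting diagram as $\tr(g)\,\tr(f)$. (Note that the hypothesis should be read as $X$ and $Z$ being right dualizable; let $Y$ and $W$ denote their right duals, with coevaluation/evaluation pairs $(\eta,\epsilon)$ and $(\zeta,\psi)$.)

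First I would invoke \myref{dualcomposites1} to write the coevaluation for the dual pair $(Z\odot X,\,Y\odot W)$ as
\[
U_{A'}\xrightarrow{\zeta} Z\odot W\cong Z\odot U_B\odot W \xrightarrow{\id\odot \eta\odot \id} Z\odot X\odot Y\odot W,
\]
and the evaluation as the analogous composite built from $\psi$ and $\epsilon$. Substituting these into the definition of $\tr(g\odot f)$ yields a long composite that applies $\sh{(g\odot f)\odot \id}$ in the middle and the two composite (co)evaluations on the outside. Because $g$ only touches the $Z$-factor (producing $P$ on its right) and $f$ only touches the $X$-factor (consuming $Q$ on its left), the 2-cell $(g\odot f)\odot\id$ splits naturally as $(g\odot \id_W)\odot(\id\odot f\odot \id_Y)$ up to the associator.

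Next I would use the cyclic isomorphism $\theta$ repeatedly, together with its coherence diagrams with the unit and associativity 2-cells, to rearrange the shadow of the long $\odot$-product so that the $W$-$Z$-$\psi$-$g$ block is placed next to the $Y$-$X$-$\epsilon$-$f$-$\eta$ block. After this cyclic reordering, the first block matches the defining composite for $\tr(g)$ and the second matches the defining composite for $\tr(f)$, so the full composite is exhibited as the $\odot$-product in $\sB(I,I)$ of $\tr(g)$ and $\tr(f)$, interpreted via the shadow structure. Combined with the preceding step this gives $\tr(g\odot f)=\tr(g)\,\tr(f)$.

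The main obstacle is bookkeeping: verifying at each stage that the cyclic permutations via $\theta$ are the ones prescribed by the pentagon and triangle coherence diagrams in the definition of a shadow, and that the insertions of $\zeta$ and $\psi$ (which live on the $Z$-side) can be moved past the insertions of $\eta$ and $\epsilon$ (on the $X$-side) without introducing any extraneous 2-cells. This is entirely formal once the composite dual pair is unpacked, but it requires patience with a fairly large diagram; the argument is essentially a bicategorical analog of the classical identity $\tr(g\otimes f)=\tr(g)\tr(f)$ in symmetric monoidal categories, with the symmetry isomorphism replaced by iterated use of $\theta$.
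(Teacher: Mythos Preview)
Your proposal is correct and is precisely the diagram chase the paper has in mind. The paper does not give a proof of this lemma at all: it states just before this block of results that ``All of these lemmas are easy to prove,'' and at the start of the section that proofs are omitted ``since they are diagram chases from the definitions.'' What you have written---build the dual of $Z\odot X$ via \myref{dualcomposites1}, expand the trace using that composite coevaluation/evaluation, then use naturality of $\theta$ together with its unit and associativity coherence to separate the $Z$--$W$--$g$ block from the $X$--$Y$--$f$ block---is exactly that chase, and your observation that the hypothesis should read ``$X$ and $Z$'' rather than ``$X$ and $Y$'' is also correct.
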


Some of the dual pairs that we will consider later have much more structure
than is required by the definitions.  The additional structure gives more information
about the traces.

\begin{lemma}\mylabel{tracegpd}
\begin{enumerate}
\item Let $(X,Y)$ be a dual pair such that the evaluation $\epsilon$ is
an isomorphism and let $Z$ be another 1-cell such that $X\odot Z$
is dualizable.  For a 2-cell $g\colon Q\odot Z\rightarrow Z\odot P$ 
let $g^\star$ be the composite \[\xymatrix{&X\odot Q\odot Y\odot X\odot
Z\ar[rr]^-{\id\odot \id\odot \epsilon \odot \id}&&X\odot Q\odot
U_A\odot Z\ar[d]^\cong\\& && X\odot Q\odot Z\ar[r]^-{\id\odot g}
&X\odot Z\odot P.}\]

Then \[\xymatrix{{\sh{X\odot Q\odot Y}}\ar[r]^-\cong&
{\sh{Y\odot X\odot Q}}\ar[r]^-{\sh{\epsilon\odot \id}}
&{\sh{U_A\odot Q}\cong \sh{Q}}\ar[r]^-{\tr (g)}&{\sh{P}}}\]
is the trace of $g^\star$.

\item Let $(Z,W)$ be a dual pair such that the coevaluation $\chi$ is an
isomorphism and let $X$ be another 1-cell such that $X\odot Z$ is dualizable.
If $f\colon Q\odot X\rightarrow X\odot P$ is a 2-cell let $f^\star$ be the
composite
\[\xymatrix{&Q\odot X\odot Z\ar[r]^-{f\odot \id}&X\odot P\odot Z
\ar[d]^\cong\\& &X\odot U_A\odot P\odot Z\ar[rr]^-{\id\odot
\chi\odot \id\odot \id}&& X\odot Z\odot W\odot P\odot Z.}\]

Then
\[\xymatrix{{\sh{Q}}\ar[r]^-{\tr (f)}&{\sh{P}\cong
\sh{P\odot U_A}}
\ar[r]^-{\sh{\id\odot \chi}}&{\sh{P\odot Z\odot W}}\ar[r]^{\cong}&
{\sh{W\odot P\odot Z}}}\]
is the trace of $f^\star$.

\end{enumerate}
\end{lemma}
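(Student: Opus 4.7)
The plan is to prove both parts by expanding $\tr(g^\star)$ (respectively $\tr(f^\star)$) using a well-chosen dual of $X\odot Z$, then using \myref{dualcomposites2} together with the independence of the trace from the choice of dual (\myref{traceind}) to identify the result with $\tr(g)$ (resp.\ $\tr(f)$) composed with the indicated prefactor. I will sketch part (1); part (2) is formally dual.

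For part (1), first fix a dual pair $(X\odot Z, V)$ with coevaluation $\eta''\colon U_A\to X\odot Z\odot V$ and evaluation $\epsilon''\colon V\odot X\odot Z\to U_C$. Since $\epsilon\colon Y\odot X\to U_B$ is an isomorphism, \myref{dualcomposites2} gives a dual pair $(Z, V\odot X)$; its evaluation is obtained from $\epsilon''$ by reassociation, and its coevaluation $\eta'\colon U_B\to Z\odot V\odot X$ is built from $\epsilon^{-1}$, $\eta''$, and one application of $\epsilon$ via the triangle identity for $(X,Y)$. Now I would expand $\tr(g^\star)$ using the dual $V$: the definition of $g^\star$ contributes exactly one copy of $\epsilon$ inside the shadow, nested between $Y$ and $X$. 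Rotating the outermost $X$ in $\sh{X\odot Q\odot Y\odot X\odot Z\odot V}$ to the right via the cyclicity isomorphism $\theta$, that copy of $\epsilon$ combines with the adjacent $\eta''$ to produce precisely the composite defining $\eta'$. A second copy of $\epsilon$ survives on the outside and becomes the prefactor $\sh{X\odot Q\odot Y}\to\sh{Y\odot X\odot Q}\to \sh{U_B\odot Q}\cong \sh{Q}$ built from $\theta$ and $\sh{\epsilon\odot \id}$. What remains after these cancellations is, term by term, $\tr(g)$ computed with respect to the dual $V\odot X$, which by \myref{traceind} equals the $\tr(g)$ appearing in the statement.

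Part (2) is verified by the dual argument, using the second clause of \myref{dualcomposites2} to extract the dual pair $(X, Z\odot V)$ from the dualizability of $X\odot Z$ and the hypothesis that $\chi$ is an isomorphism; the analogous diagram chase identifies $\tr(f^\star)$ (computed with respect to $V$ as dual of $X\odot Z$) with $\tr(f)$ (computed with respect to $Z\odot V$ as dual of $X$) postcomposed with $\sh{\id\odot \chi}$ and $\theta$. The main obstacle throughout is the size of the diagrams, not any conceptual difficulty: one must track many associator and cyclicity isomorphisms and verify that each $\epsilon$ (or $\chi^{-1}$) manufactured by $g^\star$ (or $f^\star$) cancels against the corresponding inverse hidden in the construction of $\eta'$ (or the dual coevaluation). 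The coherence axioms for shadows stated in Section \ref{shadowssection}, together with the triangle identities, force every cancellation.
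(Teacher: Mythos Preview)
Your approach is correct and is exactly what the paper has in mind. The paper does not give an explicit proof of this lemma; at the start of Section~\ref{trace} it says ``We omit most proofs in this section since they are diagram chases from the definitions,'' and \myref{tracegpd} is one of those omitted proofs. Your strategy of invoking \myref{dualcomposites2} to produce the dual pair $(Z,V\odot X)$ (respectively $(X,Z\odot V)$) and then appealing to \myref{traceind} is the natural way to organise that chase.

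One small point of exposition: your phrase ``a second copy of $\epsilon$ survives on the outside'' is slightly misleading, since $g^\star$ contributes only one $\epsilon$. What actually happens is that the coevaluation $\eta'$ for $(Z,V\odot X)$ built in \myref{dualcomposites2} has the form $(\epsilon\odot\id)\circ(\id_Y\odot\eta''\odot\id_X)\circ\epsilon^{-1}$; when you precompose $\id_Q\odot\eta'$ with the prefactor $\sh{\id_Q\odot\epsilon}$, the leading $\epsilon$ and the $\epsilon^{-1}$ inside $\eta'$ cancel, leaving precisely $(\id\odot\epsilon\odot\id)\circ(\id\odot\id\odot\eta''\odot\id)$, which is what you get by expanding $\tr(g^\star)$ and rotating the outer $X$ via $\theta$. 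So it is not that a second $\epsilon$ survives, but rather that one inserts an identity $\epsilon\circ\epsilon^{-1}$ and regroups. With that clarification, the argument goes through exactly as you describe.
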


Strong symmetric monoidal functors preserve dual pairs and trace
in a symmetric monoidal category, as do lax symmetric
monoidal functors that satisfy some additional hypotheses. 
Strong functors of bicategories, and lax
functors of bicategories where some of the coherence natural transformations
are isomorphisms, preserve dual pairs  in a bicategory.  Strong functors
that are compatible with shadows almost preserve the trace.

\begin{prop}\mylabel{functortrace} Let $F$ be a lax functor
compatible with shadows and $(X,Y)$ a dual pair such that
\[\phi_{X,Y}\colon F(X)\odot F(Y)\rightarrow F(X\odot Y)\]
and \[\phi_B\colon U'_{F(B)}\rightarrow F(U_B)\] are isomorphisms.  If
$f\colon Q \odot X \rightarrow X\odot P$ is a 2-cell, $\phi_{Q,X}$ is an
isomorphism and $\hat{f}$\nidx{f@$\hat{f}$} is the composite
\[\xymatrix{FQ\odot FX\ar[r]^{\phi_{Q,X}^{-1}}&F(Q\odot X)\ar[r]^{F(f)}
&F(X\odot P)\ar[r]^{\phi_{X,P}}&FX\odot FP
}\]
then the following diagram commutes.
\[\xymatrix@C=50pt
{\sh{FQ}\ar[r]^{\tr (\hat{f})}
\ar[d]_{\psi_A}&\sh{FP}\ar[d]^{\psi_B}\\
F\sh{Q}\ar[r]_-{F(\tr (f))}&F\sh{P}}\]
\end{prop}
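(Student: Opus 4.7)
The approach is a diagram chase based on unpacking the trace of $\hat f$ via the explicit coevaluation and evaluation for the dual pair $(FX,FY)$ supplied by \myref{bicatfuntoriality}, namely $\eta' = \phi_{X,Y}^{-1}\circ F(\eta)\circ \phi_A$ and $\epsilon' = \phi_B^{-1}\circ F(\epsilon)\circ \phi_{Y,X}$. Substituting these, together with $\hat f = \phi_{X,P}\circ F(f)\circ \phi_{Q,X}^{-1}$, displays $\tr(\hat f)$ as a long composite in $\sB'(I,I)$ whose 2-cells are all of one of three types: shadows of 2-cells in the image of $F$; shadows of isomorphisms built from the structure 2-cells $\phi_{?,?}$, $\phi_?$ and their inverses; and a single instance of $\theta$ in $\sB'$.

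The plan is then to push $\psi_B$ from the right end of this composite back to $\sh{FQ}$ on the left, picking up $\psi_A$ at the source and converting each step of $\tr(\hat f)$ into $F$ applied to the corresponding step of $\tr(f)$. For 2-cells of the first type I will use naturality of $\psi$, in the form $\psi\circ \sh{F(g)} = F(\sh g)\circ \psi$. For those of the second type the argument is bookkeeping: the inverses $\phi_{X,Y}^{-1}$, $\phi_B^{-1}$, $\phi_{Q,X}^{-1}$ introduced by $\eta'$, $\epsilon'$, and $\hat f$ pair against the $\phi$'s produced by reassembling $FX\odot FP\odot FY$ as $F(X\odot P\odot Y)$ (and similarly on the $Y\odot X\odot P$ side) via $\phi_{X\odot P,Y}\circ(\phi_{X,P}\odot\id)$, leaving only the unit coherences $\phi_A$ and $\phi_B$, which combine with the unit isomorphisms of $\sB$ and $\sB'$ to give identities on $\sh{FQ}$ and $\sh{FP}$ respectively.

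The step I expect to be the main obstacle is the single occurrence of $\theta$ in $\sB'$ at the transition $\sh{FX\odot FP\odot FY}\to \sh{FY\odot FX\odot FP}$: this cannot be swapped with $\psi$ by mere naturality since $\theta$ is not in the image of $F$. Here I will invoke the compatibility square from the definition of \emph{compatible with shadows}, which expresses
\[
F(\theta)\circ \psi_B\circ \sh{\phi_{X\odot P,Y}\circ(\phi_{X,P}\odot\id)} = \psi_A\circ \sh{\phi_{Y,X\odot P}\circ(\id\odot\phi_{Y,X})}\circ \theta,
\]
so that after the bookkeeping of the second paragraph has grouped the surrounding $\phi$'s correctly, this identity converts the $\sB'$-symmetry into $F(\theta)$ and simultaneously replaces $\psi_B$ by $\psi_A$. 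The coherence axioms for $\sh{-}$ ensure that all implicit reassociations match up, and once this single non-formal substitution has been made the remaining naturality moves propagate $\psi_A$ all the way to the source, exhibiting the composite $\psi_B\circ \tr(\hat f)$ as $F(\tr(f))\circ \psi_A$ and hence giving the claimed commutative square.
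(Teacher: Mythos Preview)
Your approach is exactly the diagram chase the paper has in mind; the paper omits the proof of this proposition entirely, remarking at the start of the chapter that the arguments are diagram chases from the definitions, and your outline---unwinding $\tr(\hat f)$ using the $(FX,FY)$ structure maps from \myref{bicatfuntoriality}, pushing $\psi$ through via naturality, and invoking the shadow-compatibility square once at the $\theta$ step---is precisely that chase. One small bookkeeping slip: in your displayed identity the subscripts on $\psi$ are interchanged (since $(X\odot P)\odot Y\in\sB(A,A)$ and $Y\odot(X\odot P)\in\sB(B,B)$, it is $\psi_A$ that sits over $\phi_{X\odot P,Y}$ and $\psi_B$ over $\phi_{Y,X\odot P}$), and the inner $\phi_{Y,X}$ on the right should be $\phi_{X,P}$; these do not affect the argument.
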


For the homology functor, the natural transformations $\phi_A$ are
all the identity and so the conditions of \myref{functortrace}
are all consequences of the K\"unneth Theorem.  

\begin{corollary} Let $C$ be a finitely generated chain complex of projective
right $R$-modules such that the boundaries and homology of $C$ are projective
in each degree.  If  $f\colon C\rightarrow C\otimes_R P$ is  map of chain complexes
then
\[\xymatrix@C=60pt{{\mathbb{Z}}\ar@{=}[d]\ar[r]^-{\tr(\phi\circ H_*(f))}
&\sh{H_*(P)}
\ar[d]^{\psi_A}\\ {\mathbb{Z}}\ar[r]_-{H_*(\tr(f))}&H_*(\sh{P})}\]
commutes.\end{corollary}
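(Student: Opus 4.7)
The plan is to specialize \myref{functortrace} to the homology functor $H_*\colon \Ch \to \Ch$, viewed as a lax functor of bicategories with shadows (where the target means chain complexes concentrated in degree zero, so effectively $\Mod$). This functor is the identity on rings, sends a chain complex of bimodules to its graded homology, and has lax structure given on 1-cells by the K\"unneth cross product $\phi_{X,Y}\colon H_*(X) \otimes_R H_*(Y) \to H_*(X \otimes_R Y)$ and by the identity $\phi_R = \id_R\colon R \to H_*(R)$ on units. Section \ref{shadowssection} has already exhibited $H_*$ as compatible with shadows, with $\psi_R$ defined via the coequalizer displayed there.

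Next I would verify the hypotheses of \myref{functortrace} with $X = C$, its right dual $Y = \Hom_R(C, R)$, $Q = U_{\mathbb{Z}} = \mathbb{Z}$, and the given $P$. The 1-cell $C$ is dualizable in $\Ch$ because it is finitely generated and projective in each degree. The map $\phi_R$ is the identity (hence an isomorphism) because $U_R$ is concentrated in degree zero. The standing hypotheses on $C$---projectivity of each $C_i$, of the boundaries, and of the homology---are precisely the input needed for the K\"unneth theorem to render $\phi_{C,Y}$ an isomorphism. Similarly, $\phi_{Q,X}$ with $Q = U_{\mathbb{Z}}$ is the identity, trivially an isomorphism.

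\myref{functortrace} therefore applies. Since $\phi_{Q,X}^{-1}$ is the identity, the 2-cell $\hat{H_*(f)}$ coincides with $\phi_{C,P} \circ H_*(f)$, which is what the corollary abbreviates as $\phi \circ H_*(f)$. Unpacking the terms of the diagram: $\sh{FQ} = \sh{\mathbb{Z}} = \mathbb{Z}$ sits in the upper left, $F\sh{Q} = H_*(\mathbb{Z}) = \mathbb{Z}$ in the lower left, and $\psi_{\mathbb{Z}}$ is the identity, which explains the equality on the left edge of the corollary's square. The only nonroutine step is invoking the K\"unneth theorem to verify that $\phi_{C,Y}$ is an isomorphism under the stated projectivity hypotheses; beyond that the argument is bookkeeping, and I do not expect any significant obstacle.
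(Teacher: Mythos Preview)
Your proposal is correct and matches the paper's approach exactly: the paper simply remarks that for the homology functor the natural transformations $\phi_A$ are all the identity, so the hypotheses of \myref{functortrace} reduce to the K\"unneth theorem, which holds under the stated projectivity assumptions. Your unpacking of $\hat{f}$, of the left edge as $\psi_{\mathbb{Z}} = \id$, and of $\phi_{Q,X}$ as the identity is precisely the bookkeeping the paper leaves implicit.
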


In particular, if $M$ is a finite CW complex and
$H_*(\tilde{M};\mathbb{Z})$ is projective as a right module over
$\mathbb{Z}\pi_1M$ then the algebraic Reidemeister trace computed
using the chains on $M$ is the same as the trace of the induced
map on homology.  Compare this observation with
\cite[1.4]{Husseini} and  \cite[4.3.b]{Dold}.

\chapter{Duality for parametrized modules}\label{classfpsec}

In this chapter we give several examples of the duality defined in the 
previous chapter. 
We will first describe the bicategory of ex-spaces and one particular 
example of duality in this bicategory.

From the bicategory of ex-spaces we can define a bicategory that is a
topological analogue of the bicategory of rings, bimodules, and
homomorphisms.  After defining the bicategory we give several
examples of dual pairs.  
These examples are similar to those
in Chapter \ref{summary}, but now we use the formal results from
Chapter \ref{whybicat} to simplify many proofs.

The results from Chapter \ref{whybicat} that do the most to simplify
the proofs here are the results about composites of dual pairs.
The first of these results shows that the composite of two dual
pairs is a dual pair.  This result, along with a particular dual
pair for a compact smooth manifold, will produce many of the dual
pairs we described in Chapter \ref{summary}. 

In the next chapter we use these dual pairs to show that several
forms of the Reidemeister trace are examples of trace in bicategories
and we use functoriality of the trace to relate these invariants.

\section{Costenoble-Waner Duality}

We first define the bicategory $\Ex$ of ex-spaces.   The 0-cells of
$\Ex$ are spaces. A 1-cell in $\Ex(A,B)$ is an ex-space $X$ over
$B\times A$, a space $X$ with maps \[B\times A
\stackrel{\sect}{\rightarrow} X\stackrel{\pro}{\rightarrow}B\times A\]
such that $\pro\circ \sect=\mathrm{id}$. The 2-cells of $\Ex$ are maps of
total spaces that commute with the section and projection.
If $Y$ is an ex-space over $B$ we think of it as an object
of $\Ex(B,\ast)$.

Recall from \myref{exspaceconditions} that for homotopical control
we will usually consider parametrized spaces $X$ over
$A\times B$ where the map $X\rightarrow A\times B$ is a fibration
and the map $A\times B\rightarrow X$ is a
fiberwise cofibration.  
This assumption implies that maps
in the homotopy category will correspond to fiberwise homotopy classes of maps.
While this is a restrictive assumption, in many of the examples we are
interested in this condition is satisfied.  When this does not hold we choose an 
equivalent replacement that does satisfy these conditions.
See \cite[9.1.2]{MS} for further details.

The external smash product $\bar{\wedge}$ of an ex-space
$X$ over $A$ with an ex-space $Y$ over $B$ is a
parametrized space over $A\times B$. The fiber of the external
smash product over $(a,b)$ is the fiber of $X$ over $a$ smashed
with the fiber of $Y$ over $b$.

If $ {X}$ is a parametrized space over $ {A}\times  B$ and $
Y$ is a parametrized space over $ B\times  C$ then we
define $ {X}\boxtimes  Y$,\nidx{$\boxtimes$} a parametrized space over $
{A}\times C$, as the pullback along $\triangle\colon 
B\rightarrow
 B\times  B$ and then push forward along
$r\colon  B \rightarrow \ast$ of $ {X}\bar{\wedge} Y$.
\[\xymatrix{ {A}\times C\ar[d]& {A}\times  B
\times  C\ar[l]_-{\id\times r\times \id}\ar[r]^-{\id\times
\triangle\times \id}\ar[d]& {A}\times  B\times  B
\times  C\ar[d]\\
 {X}\boxtimes Y\ar[d]&(\id\times \triangle\times \id)^*( {X}
\bar{\wedge} Y )\ar[r]\ar[l]\ar[d] & {X}\bar{\wedge}
 Y\ar[d]\\
 {A}\times C& {A}\times  B
\times  C\ar[l]^-{\id\times r\times \id}\ar[r]_-{\id\times
\triangle\times \id} & {A}\times  B\times  B \times
 C}\]
Following \cite{MS}, we write this as $X\boxtimes Y=r_!\triangle^*(X\bar{
\wedge}Y)$ where $(-)^*$\nidx{$(-)^*$} indicates pull back and 
$(-)_!$\nidx{$(-)_{"!}$} indicates
push forward.
This is the bicategory composition in $\Ex$.  For more details on these
definitions see Chapter 17 of \cite{MS}.

With the assumption that the projection maps are fibrations 
and the sections are fiberwise cofibrations
$X\boxtimes Y$ will have the correct homotopy type. 

For each 0-cell $B$, 
$(B,\triangle)_+\in \Ex(B,B)$\nidx{B@$(B,\triangle)_+$} 
denotes the ex-space with projection map the
diagonal map $\triangle\colon B\rightarrow B\times B$ and a
disjoint section.   This is the unit for $\boxtimes$ and so it will be denoted $U_B$.

More generally, if $p\colon X\rightarrow B$ is a continuous map, then
$(X,p)_+$\nidx{X@$(X,p)_+$} is the parametrized space with projection $p$ and a
disjoint section.  We regard $(X,p)_+$ as an object of $\Ex(B,
\ast)$.

The bicategory $\Ex$ has a particularly simple involution.
The bijection $t$ on 0-cells is the identity and the pullback along
the interchange map
can be used to define an equivalence of categories
\[\Ex(A,B)\rightarrow \Ex(B,A).\]  For a map $p\colon X\rightarrow B$,  
$t(X,p)_+$ is the same space as $(X,p)_+$, but is thought of 
an object of $\Ex(\ast,B)$.

Costenoble-Waner duality \cite[Chapter 18]{MS} for parametrized
spaces is an example of duality in a more sophisticated stable version of
the bicategory $\Ex$ but it  also has an interpretations
in terms of $n$-duality in $\Ex$.  

\begin{definition}\cite[18.3.1]{MS}
An ex-space $X$ over $B$ is \emph{Costenoble-Waner  n-dualizable}
\idx{Costenoble-Waner duality} 
if there is an ex-space $Y$ over $B$ and maps 
\[\xymatrix{S^n\ar[r]^-\eta&X\boxtimes tY&{\mathrm{and}}&tY\boxtimes X\ar[r]^-\epsilon
&\triangle_!S^n_B}\]
such that \[\xymatrix{S^n\boxtimes X\ar[r]^-{\eta\boxtimes \id}
\ar[dd]_{\gamma}&(X\boxtimes tY)
\boxtimes X\ar[d]^{\cong} &
tY\boxtimes S^n\ar[r]^-{\id\boxtimes \eta}\ar[dd]_{(\sigma\boxtimes \id)
\gamma}&tY\boxtimes(X\boxtimes tY)
\ar[d]^{\cong}\\
&X\boxtimes (tY\boxtimes X)\ar[d]^-{\id \boxtimes
\epsilon}&
&((tY\boxtimes X)\boxtimes  tY)\ar[d]^-{ \epsilon \boxtimes  \id}\\
X\boxtimes S^n\ar[r]_-{\cong}& X\boxtimes 
\triangle_!S^n_B
&S^n\boxtimes tY\ar[r]_-{\cong}& \triangle_!S^n_B\boxtimes
tY}\] commute up to fiberwise homotopy.
\end{definition}

Note that $\triangle_!(S_B^n)\in \Ex(B,B)$ is the pushforward of $S^n\times 
B$ along the diagonal map of $B$. 

\begin{theorem}\cite[18.6.1]{MS}\mylabel{dualM} Let $M$ be a closed
smooth manifold with an embedding in $\bR^n$.  
Then  $(S^0_M,tS^{\nu} )$ is a Costenoble-Waner 
$n$-dual pair.\end{theorem}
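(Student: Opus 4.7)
The plan is to construct coevaluation and evaluation maps exhibiting $(S^0_M, tS^\nu)$ as a Costenoble-Waner $n$-dual pair using parametrized versions of the classical Pontryagin-Thom collapse, then verify the triangle identities by reducing them to the corresponding identities for the based dual pair $(M_+, T\nu)$ of \myref{topexdual}.

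First I would unwind the relevant $\boxtimes$ composites using the definition $X\boxtimes Y = r_!\triangle^*(X\bar{\wedge}Y)$. A direct fiberwise computation shows that $S^0_M\boxtimes tS^\nu$ is canonically homeomorphic to the Thom space $T\nu$: the pullback of $S^0_M\bar{\wedge}tS^\nu$ along $\triangle\colon M\to M\times M$ is the ex-space $S^\nu$ over $M$ (its fiber at $m$ being $S^0\wedge S^{\nu_m}=S^{\nu_m}$), and pushing forward to a point collapses the fiberwise infinity to produce $T\nu$. Dually, $tS^\nu\boxtimes S^0_M$ simplifies to the external smash $tS^\nu\bar{\wedge}S^0_M$, an ex-space over $M\times M$ with fiber $S^{\nu_{m_1}}$ over $(m_1,m_2)$.

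Next I would take the coevaluation $\eta\colon S^n\to T\nu$ to be the classical Pontryagin-Thom collapse for the embedding $M\hookrightarrow \mathbb{R}^n\subset S^n$, using a tubular neighborhood of $M$ identified with a disk neighborhood of the zero section of $\nu$. For the evaluation $\epsilon\colon tS^\nu\bar{\wedge}S^0_M\to \triangle_!S^n_M$ I would follow the parametrized Pontryagin-Thom construction: on a tubular neighborhood of the diagonal in $M\times M$, identified with a neighborhood of the zero section of $\tau M$, use the splitting $\mathbb{R}^n\cong \tau_{m_1}M\oplus\nu_{m_1}$ to send $(m_1,v,m_2)$ with $m_2=\exp_{m_1}(w)$ to the image of $(v,w)$ in $S^n$, landing in the diagonal fiber of $\triangle_!S^n_M$; outside the tubular neighborhood $\epsilon$ is the basepoint section, with continuity at the boundary arranged by a radial blow-up that pushes $w$ to infinity as it approaches the boundary.

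The main task is to verify the two triangle identities up to fiberwise homotopy. My strategy is to apply the pushforward from $M\times M$ to a point to each composite and observe that the resulting based-space diagrams are precisely the classical triangle identity composites for $(M_+,T\nu)$, which hold up to homotopy by \myref{topexdual}. The parametrized identities then follow by tracking the factorizations through the parametrizing base $M\times M$ throughout, with the classical homotopies lifting to fiberwise homotopies because all maps involved are compatible with the chosen tubular neighborhoods. The main obstacle is the parametrized bookkeeping: formalizing $\epsilon$ as a continuous map of ex-spaces valued in the pushout $\triangle_!S^n_M$ (which requires care in the interplay between the tubular neighborhood of the diagonal, the fiberwise infinity of $S^\nu$, and the quotient topology on $\triangle_!S^n_M$) and then checking that each classical homotopy can be arranged to preserve the fiberwise structure. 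These arguments are carried out in detail in \cite{MS} in the setting of parametrized spectra, and the conclusion is the cited \cite[18.6.1]{MS}.
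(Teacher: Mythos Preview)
Your coevaluation map and the identification $S^0_M\boxtimes tS^\nu\cong T\nu$ match the paper exactly. The gap is in your evaluation map. As you write it, $\epsilon$ is supposed to be a map of ex-spaces over $M\times M$, but your formula does not respect fibers: a point $(m_1,v,m_2)$ in $tS^\nu\bar{\wedge}S^0_M$ lives over $(m_1,m_2)\in M\times M$, while the fiber of $\triangle_!S^n_M$ over any off-diagonal point is just the basepoint. Sending $(m_1,v,m_2)$ with $m_2=\exp_{m_1}(w)$ to ``the image of $(v,w)$ in $S^n$, landing in the diagonal fiber'' is not a fiberwise map unless $m_1=m_2$. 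So your $\epsilon$ is simply not defined as a 2-cell in $\Ex$.

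The paper handles this by noting (see \myref{exspaceconditions}) that one must work with fibrant replacements, and its evaluation is built to land in such a replacement of $\triangle_!S^n_M$. Concretely, the paper uses the local contractibility homotopy $H$ of \myref{localcont}: the evaluation is the Pontryagin--Thom collapse for $M\hookrightarrow \nu\times M$ followed by the map
\[
E(v,m)=\bigl(H(\rho(v),m),\,e(v,m)\bigr)\in \mathrm{Map}(I,M)\times(\mathbb{R}^n\times M),
\]
where $e$ trivializes a neighborhood of the diagonal. The path component $H(\rho(v),m)$ is exactly what lets the map be fiberwise over $M\times M$: it records a path from $\rho(v)$ to $m$, so the target $\mathrm{Map}(I,M)\times(\mathbb{R}^n\times M)$ (with endpoint-evaluation projection) is a fibrant model for $\triangle_!S^n_M$ with honest fibers off the diagonal. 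Your exponential/splitting description is morally the same geometry as the trivialization $e$, but without the path data it cannot be promoted to a map in $\Ex(M,M)$. Once you insert the $H$-paths, your triangle-identity strategy (reduce to the classical $(M_+,T\nu)$ identities and lift the homotopies fiberwise) is in line with the paper's treatment.
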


The ex-space $S^0_M\in \Ex(M,\ast)$\nidx{SM@$S^0_M$} 
has total space two copies of $M$.  
The projection
map is the identity map on both components.  The ex-space
$S^\nu\in \Ex(M,\ast)$\nidx{snu@$S^\nu$} is the 
fiberwise one point compactification of the normal bundle of $M$.  The 
section is the inclusion of $M$ as the points added by the compactification. 
It is a space over $M$ via the projection 
map $\rho\colon S^{\nu}\rightarrow M$.

The coevaluation map
\[\eta\colon S^n\rightarrow  S^0_M\boxtimes tS^{\nu} \cong T\nu\] is
the Pontryagin-Thom map for the normal bundle of the
embedding  $M\rightarrow
S^n$.

The diagonal gives an inclusion of $M$ into $\nu \times M$.
Let $e$ be an identification of 
a neighborhood $V$ of $M$ in $\nu\times M$
with the trivial bundle $\mathbb{R}^n\times M$.
We can define a map
\[E\colon V\rightarrow \mathrm{Map}(I,M)\times(\mathbb{R}^n\times M)\] by
\[E(v,m)=(H(\rho(v),m),e(v,m))\] where $H(\rho(v),m)$ is a path from 
$\rho(v)$ to $m$ as in \myref{localcont}. 

The evaluation  map $\epsilon$ is the composite of the
Pontryagin-Thom map for the embedding $M\rightarrow \nu\times
M$ with the map $E$.
This is related to the evaluation map described for the dual pair in  
\myref{univcovdual1}.

Since Costenoble-Waner dual pairs are examples of dual pairs in 
a bicategory there are other characterizations of Costenoble-Waner
duals.  Let $\{-,-\}$ denote stable homotopy classes of maps and $\{-,-\}_B$
\nidx{$\{-,-\}_B$}
denote fiberwise stable homotopy classes of maps over $B$.

\begin{corollary}\mylabel{CWdualmaps}
If $X$ is Costenoble-Waner $n$-dualizable with dual $Y$,
\[\{Z\boxtimes X, W\}_B\cong \{S^n\wedge Z, W\boxtimes tY \}\]
for $Z\in \Ex(\ast,\ast)$ and $W\in \Ex(B,\ast)$,
\end{corollary}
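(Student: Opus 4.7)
The plan is to specialize the bicategorical hom-set characterization of right duality (the unlabeled proposition analogous to \myref{symotherchar}) to the Costenoble--Waner dual pair $(X,tY)$, after passing to fiberwise stable classes so that the sphere factors in the definition of Costenoble--Waner duality are absorbed by stabilization. Concretely, in the stable category of parametrized spectra, the data $(\eta,\epsilon)$ exhibit $(X,tY)$ as a genuine dual pair: the triangle identities of Costenoble--Waner $n$-duality become the usual triangle identities on the nose once $S^n$ and $\triangle_!S^n_B$ are interpreted as invertible suspensions.

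With this in hand, the bicategorical characterization of duality yields, for any $1$-cells $P$ and $Q$, a bijection
\[
\epsilon/(-)\colon \sB(P,\,Q\boxtimes tY)\longrightarrow \sB(P\boxtimes X,\,Q),
\]
interpreted at the level of fiberwise stable classes. Specializing to $P=S^n\wedge Z\in\Ex(*,*)$ and $Q=W\in\Ex(B,*)$ gives
\[
\{S^n\wedge Z,\,W\boxtimes tY\}\cong \{(S^n\wedge Z)\boxtimes X,\,W\}_B.
\]

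To conclude, I would identify $(S^n\wedge Z)\boxtimes X\cong S^n\wedge(Z\boxtimes X)$, which holds because $Z\in\Ex(*,*)$ carries no $B$-parameter and so smashing with $S^n$ slides through the external operation $\boxtimes X$. Suspension stability of $\{-,-\}_B$ then produces the canonical isomorphism $\{S^n\wedge(Z\boxtimes X),W\}_B\cong\{Z\boxtimes X,W\}_B$, which combined with the previous display gives the statement of the corollary.

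The main obstacle will be the bookkeeping around the $n$-fold sphere shifts: one must verify that the bicategorical ``$\epsilon/(-)$'' bijection really does descend to fiberwise stable classes when the Costenoble--Waner duality data are only specified up to sphere shift and the diagrams only commute after smashing with some $S^m$, and that the corresponding inverse map ``$(-)/\eta$'' is similarly well-defined after stabilization. Once this compatibility between Costenoble--Waner duality and the stable bicategorical framework is pinned down, the remainder of the argument is a formal unpacking of the bijection together with the suspension isomorphism.
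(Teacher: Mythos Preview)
Your overall approach matches the paper's: the corollary is stated without proof, immediately after the remark that Costenoble--Waner duality is an instance of duality in a bicategory, and is meant to be read as a direct specialization of the hom-set characterization of dual pairs.

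There is, however, a slip in your sphere bookkeeping. The claimed isomorphism $\{S^n\wedge(Z\boxtimes X),W\}_B\cong\{Z\boxtimes X,W\}_B$ is not correct: suspending only the source shifts degree rather than returning the original stable hom-set. The problem originates upstream, in the claim that $(\eta,\epsilon)$ make $(X,tY)$ itself into a genuine dual pair in the stable category. Since $\eta$ has source $S^n$ rather than $S^0$, what one actually obtains after desuspending is that the stable dual of $X$ is $\Sigma^{-n}tY$. Feeding this into the bicategorical bijection with $P=Z$ (not $P=S^n\wedge Z$) gives
\[
\{Z\boxtimes X,\,W\}_B \;\cong\; \{Z,\,W\boxtimes \Sigma^{-n}tY\} \;\cong\; \{S^n\wedge Z,\,W\boxtimes tY\},
\]
with the $S^n$ appearing via the shift on the dual rather than being inserted into $P$ and later cancelled on the $B$-side. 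So the structure of your argument is right and agrees with the paper's intent, but the $S^n$ lives with the dual; your attempt to place it in $P$ and then remove it by ``suspension stability'' does not go through. You correctly anticipated that this was the delicate point.
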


In particular, for a closed smooth manifold $M$
\[\{S^0_M, U\}_M\cong \{S^0, U\boxtimes tS^\nu\}\]
for $U\in \Ex(M,\ast)$.

For any space $B$ the parametrized spaces $(B,\id)_+\in
\Ex(B,\ast)$ and $t(B,\id)_+\in \Ex(\ast,B)$ form a dual pair.  The
coevaluation is the diagonal map \[\triangle\colon B\rightarrow
B\times B.\]  If $r\colon B\rightarrow \ast$ is the map to a point, the
evaluation map is \[r_+\colon B_+\rightarrow S^0.\]  For a manifold $M$, 
the dual pairs $((M,\id)_+,t(M,\id)_+)$  and $(S_M^0, tS^\nu)$ 
can be composed to give the dual pair $(M_+,T\nu)$ described in
\myref{topexdual}.

\section{A bicategory of bimodules over parametrized monoids}
\label{randualpara}
In this section we define the bicategory that describes
the topological dual pairs we will use later.  This is a
bicategory of monoids, bimodules, and maps of bimodules and its
construction is similar to the construction of the bicategory of
rings, bimodules, and homomorphisms from the category of abelian
groups.

The bicategory in this chapter is a special case of the bicategory
described in Section \ref{catex4} with one exception.  In Section
\ref{catex4} we make frequent use of colimits.  In this section we
will use homotopy colimits. Here we are primarily interested in
homotopical information and homotopy colimits will give the right
homotopy types.  Also, we must use homotopy colimits to be able to
connect our invariants with classical invariants, especially the 
invariant defined by Klein and Williams.

\begin{definition} A \emph{monoid}\idx{monoid} in $\Ex$ is a parametrized space
$\sA\in \Ex(A,A)$ with parametrized maps
\[\xymatrix{\mu\colon  {\sA\boxtimes
\sA}\ar[r]&{ \sA}&{\mathrm{ and}}&\iota\colon U_A\ar[r]&{ \sA}}\]
such that
\[\xymatrix{{ \sA\cong U_A\boxtimes  \sA}
\ar[r]^-{\iota\boxtimes \id}& {\sA\boxtimes \sA}\ar[r]^-\mu&
{\sA}}\] and
\[\xymatrix{ {\sA\cong  \sA\boxtimes U_A}\ar[r]^-{\id
\boxtimes \iota}& {\sA\boxtimes \sA}\ar[r]^-\mu& {\sA}}\] are the
identity and
\[\xymatrix{{ \sA\boxtimes \sA\boxtimes \sA}\ar[r]^-{\mu\boxtimes
\id}\ar[d]_{\id\boxtimes \mu}& {\sA\boxtimes \sA}\ar[d]^\mu\\
 {\sA\boxtimes \sA}\ar[r]_-\mu&{ \sA}}\] commutes.
\end{definition}

We think of $\mu$ as composition and $\iota$ as the unit.

\begin{definition} Let $\sA$ and $\sB$ be two monoids in $\Ex$.   An 
$\sA$-$\sB$-\emph{bimodule}\idx{bimodule} 
is an object $ \sX\in \Ex( B, A)$ and two
parametrized maps \[\kappa\colon  \sA\boxtimes  \sX\rightarrow  \sX\]
and \[\kappa'\colon  \sX\boxtimes \sB\rightarrow  \sX\] that are unital
and associative with respect to the monoid structure of $\sA$ and
$\sB$. We also require that the actions $\kappa$ and $\kappa'$
commute.
\end{definition}

A monoid $\sA$ defines an $\sA$-$\sA$ bimodule with left and
right actions given by the monoid multiplication $\mu$. We will denote
this bimodule by $U_{\sA}$ since it is the unit in a bicategory.

A parametrized space $\sX$ over $A$ is trivially a bimodule. 
Thought of as a space over $\ast\times A$, $\sX$ has a left
action by $U_\ast$ using the obvious isomorphism. It also
has a right action by $U_A$ using the  unit
isomorphism \[\sX\boxtimes U_A \rightarrow  \sX.\]

\begin{definition} Let $\sX$ and $\sY$ be $\sA$-$\sB$-bimodules.
A \emph{map of bimodules}\idx{map of bimodules} 
is a parametrized map $f\colon  \sX\rightarrow
\sY$ such that
\[\xymatrix{{ \sA\boxtimes \sX}\ar[r]^-\kappa\ar[d]_{\id\boxtimes f}
& {\sX}\ar[d]^f &{\mathrm{and}}& {\sX\boxtimes
 \sB}\ar[r]^-{\kappa'}\ar[d]_{f\boxtimes \id}
& {\sX}\ar[d]^f\\
 {\sA\boxtimes  \sY}\ar[r]_-{\kappa}& {\sY}&&
 {\sY\boxtimes \sB}\ar[r]_-{\kappa'}& {\sY}}\]
commute.
\end{definition}

\begin{definition}\mylabel{exmonoidsodot}Let $ \sX$ be an $\sA$-$\sB$-bimodule 
and $ \sY$ a $\sB$-$\sC$-bimodule. Then $\sX\odot
\sY$\nidx{$\odot$} is  the bar resolution $B(\sX,\sB,\sY)$. This is 
an $\sA$-$\sC$-bimodule.
\end{definition}

The bar resolution $B(\sX,\sB,\sY)$\nidx{B@$B(-,\protect\sB,-)$} 
is the geometric realization
of the simplicial ex-space over $C\times A$ 
with $n$ simplices \[\sX\boxtimes (\sB)^n\boxtimes
\sY,\] face maps \[\partial_0=\kappa'\boxtimes \id_{\sB}^{n-1}\boxtimes
\id_\sY\]
\[\partial_i=\id_\sX\boxtimes \id_{\sB}^{i-1}\boxtimes \mu\boxtimes \id_{\sB}^{n-i-1}
\boxtimes \id_\sY\,\,\mathrm{for}\,\,0<i<n\]
\[\partial_n=\id_\sX\boxtimes \id_{\sB}^{n-1}\boxtimes \kappa\]
and degeneracy maps
\[s_i=\id_\sX\boxtimes \id_{\sB}^{i}\boxtimes \iota\boxtimes \id_{\sB}^{n-i}
\boxtimes \id_\sY.\]

We think of $\sX\odot \sY$ as the homotopy coequalizer
\[\xymatrix{ {\sX\boxtimes \sB\boxtimes \sY }\ar@<.5ex>[r]^-{\kappa'\boxtimes
\id}\ar@<-.5ex>[r]_-{\id\boxtimes \kappa}& {\sX\boxtimes
\sY}\ar[r]& {\sX \odot \sY}}\] as an ex-space.  

The bar resolution is associative up to isomorphism.  To see this recall
that the geometric realization is a tensor product of functors, 
see \cite{shulmanholim}.  Then the comparison of $B(\sX,\sB, B(\sY,\sC,\sZ))$
with $B(B(\sX,\sB,\sY),\sC,\sZ)$ is a comparison of coequalizers.  
The product $\sX\odot \sB$ is homotopy equivalent to $\sX$ using a simplicial
homotopy and the extra degeneracy in $\sB$.

This defines a bicategory $\sM_{\Ex}$\nidx{mex@$\protect{\sM}_{\Ex}$} 
with 0-cells monoids, 1-cells
bimodules, and 2-cells homotopy classes of maps of bimodules.  The
$\odot$ of \myref{exmonoidsodot} is the bicategory
composition.  The unit associated to a monoid $\sA$ is that monoid
regarded as a $\sA$-$\sA$-bimodule.  
The involution on this
bicategory is very similar to the involution on the bicategory of
rings, bimodules and homomorphisms.

\section{Ranicki duality for parametrized bimodules}\label{randualpara2}
Since the bicategory $\sM_{\Ex}$ is defined using spaces instead of
spectra, the definition of duality has to be modified a little
from the definition of duality in a bicategory. We imitate
the definition of $n$-duality for parametrized spaces.

If $ \sX $ is a $ \sA$-$\sB$-bimodule then $ \sX \bar{\wedge}S^n$
and $S^n\bar{\wedge}\sX $ are also $\sA$-$\sB$-bimodules.

\begin{definition} Let $ \sX $ be an $\sA$-$ \sB$-bimodule.
Then $ \sX $ is $n$-\emph{dualizable}\idx{n-dual} if there is a $\sB$-$\sA
$-bimodule  $ \sY$ and maps of bimodules
\[\xymatrix{\eta\colon S^n\bar{\wedge}  U_{\sA}\ar[r]&{\sX \odot \sY}&
{\mathrm{and}}&\epsilon\colon  \sY\odot \sX \ar[r]& S^n\bar{\wedge}
U_\sB}\] such that the following diagrams commute stably up to
parametrized homotopy respecting the module structure.
\[\xymatrix{{S^n\bar{\wedge}  \sX} \ar[r]^-\cong \ar[d]_{\gamma}
&{(S^n\bar{\wedge}U_\sA ) \odot  \sX} \ar[r]^{\eta\odot \id}& {\sX
\odot \sY\odot  \sX} \ar[d]^{\id\odot \epsilon}\\ {\sX
\bar{\wedge} S^n}&&{ \sX \odot (S^n\bar{\wedge} U_\sB)}
\ar[ll]^-\cong}\]

\[\xymatrix{ {\sY\bar{\wedge} S^n}\ar[r]^-\cong
\ar[d]_{(\sigma\bar{\wedge}\id)\gamma} & {\sY\odot(S^n\bar{\wedge}
 U_\sA )}\ar[r]^{\id\odot \eta}& {\sY \odot
 \sX \odot  \sY}\ar[d]^{\epsilon\odot \id}\\
{ S^n\bar{\wedge} \sY}&&{(S^n\bar{\wedge}  U_\sB) \odot  \sY}\ar[ll]^-\cong.}\]
As before, $\sigma$ is a map of degree $(-1)^n$.
\end{definition}

By neglect of structure any $\sA$-$\sB$-bimodule $\sX$ defines an
$\sA$-$U_{B}$-bimodule denoted $L(\sX)$\nidx{L@$L$} and a $U_A$-$\sB$-bimodule
denoted $R(\sX)$\nidx{r@$R$}.

\begin{lemma}\mylabel{topmonodidual} 
Let $\sA$ be a monoid.  Then $(R(U_\sA),L(U_\sA))$ is a
dual pair.  

The coevaluation map \[U_A\rightarrow R(U_\sA)\odot
L(U_\sA)\] is the unit map. The evaluation map \[L(U_\sA)\odot
R(U_\sA)\rightarrow U_\sA\] is the monoid multiplication.
\end{lemma}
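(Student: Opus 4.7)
The plan is to reduce the triangle identities to the unit and associativity axioms of the monoid $\sA$. First I would identify the two relevant bar constructions explicitly. Since $R(U_\sA)$ is $\sA$ regarded as a $U_A$-$\sA$-bimodule (trivial left action) and $L(U_\sA)$ is $\sA$ regarded as an $\sA$-$U_A$-bimodule (trivial right action), we have
$$R(U_\sA) \odot L(U_\sA) = B(\sA, \sA, \sA),$$
the two-sided bar construction, which is homotopy equivalent as a $U_A$-$U_A$-bimodule to $\sA$ via the augmentation induced by $\mu$. Because $U_A$ is the $\boxtimes$-unit,
$$L(U_\sA) \odot R(U_\sA) = B(\sA, U_A, \sA)$$
is simplicially homotopy equivalent to the constant simplicial object on $\sA \boxtimes \sA$ with its natural $\sA$-$\sA$-bimodule structure inherited from the outer copies of $\sA$.

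Under these identifications, ``the unit map'' $\eta$ is realized at zero-simplex level by $\iota \boxtimes \iota\colon U_A \cong U_A \boxtimes U_A \to \sA \boxtimes \sA$, which is manifestly a map of $U_A$-$U_A$-bimodules since both actions are trivial. The evaluation $\epsilon\colon \sA \boxtimes \sA \to \sA$ is the monoid multiplication $\mu$; associativity of $\mu$ is exactly the statement that $\mu$ is a map of $\sA$-$\sA$-bimodules, so $\epsilon$ is a legitimate $2$-cell in $\sM_{\Ex}$.

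It remains to verify the two triangle identities in the homotopy category of $\sM_{\Ex}$. After applying the associativity and unit coherence $2$-cells of $\sM_{\Ex}$ together with the equivalences $B(\sA,\sA,\sA)\simeq \sA$ and $B(\sA,U_A,\sA)\simeq \sA \boxtimes \sA$, the first triangle reduces to the statement that
$$\sA \cong U_A \boxtimes \sA \xrightarrow{\iota \boxtimes \id} \sA \boxtimes \sA \xrightarrow{\mu} \sA$$
is the identity on $\sA$, which is precisely the left unit axiom of $\sA$. The second triangle reduces symmetrically to the right unit axiom via $\sA \cong \sA \boxtimes U_A \xrightarrow{\id \boxtimes \iota} \sA \boxtimes \sA \xrightarrow{\mu} \sA$. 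The only real labor is tracking the bookkeeping of the bar construction and the associativity $2$-cells for $\odot$; there is no substantive obstacle, since the lemma is just the parametrized analogue of the well-known fact that in the bicategory of bimodules over monoids in any monoidal category, the unit bimodule of a monoid is canonically self-dual, with coevaluation the unit and evaluation the multiplication.
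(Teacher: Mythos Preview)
Your proposal is correct and follows essentially the same approach as the paper: the paper's proof (given in full generality as Proposition~\ref{monoiddual}) observes that the evaluation $\mu$ is an $\sA$-$\sA$-bimodule map by associativity, the coevaluation $\iota$ is a $U_A$-$U_A$-bimodule map trivially, and then reduces the two triangle identities directly to the unit axioms of the monoid, just as you do. The only additional wrinkle in the context of $\sM_{\Ex}$ is that $\odot$ is the bar resolution rather than a strict coequalizer, so one needs the equivalence $R(U_\sA)\odot L(U_\sA)=B(\sA,\sA,\sA)\simeq N(U_\sA)$; the paper handles this by the extra-degeneracy remark immediately following the lemma, and you handle it explicitly, so there is no substantive difference.
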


The simplicial ex-space used to define $R(U_\sA)\odot L(U_\sA)$ has an `extra'
degeneracy given by regarding an element of $R(U_\sA)$ or
$L(U_\sA)$ as an element of $\sA$.  This means that
$R(U_\sA)\odot L(U_\sA)$ is equivalent to $N(U_\sA)$,\nidx{N@$N$} the monoid
$\sA$ regarded as an $U_A$-$U_A$-bimodule.

Costenoble-Waner duality 
is an example of duality in the bicategory $\sM_{\Ex}$. 
The
Costenoble-Waner dual of an ex-space $X$ over $B$ is the dual of $X$ 
as a $U_\ast$-$U_B$-bimodule in $\sM_{\Ex}$.

Costenoble-Waner duality only uses monoids defined using 
unit isomorphisms.  The Ranicki dual pair described in Section \ref{summary1}
requires less trivial monoids. With the discrete topology,
$(\pi_1M)_+\in \Ex(\ast,\ast)$ is a monoid. The monoid multiplication
\[(\pi_1M)_+\boxtimes (\pi_1M)_+
=(\pi_1M\times\pi_1M )_+\rightarrow (\pi_1M)_+\] is the group
multiplication and  the unit $\iota\colon S^0\rightarrow (\pi_1M)_+$ is
the inclusion of the identity element of $\pi_1M$.  The universal cover
$\tilde{M}_+\in \Ex(\ast,\ast)$ is a right $(\pi_1M)_+$
module with the right action given by the usual action of $\pi_1M$
on $\tilde{M}$.  With this interpretation, a Ranicki dual for
$\tilde{M}_+$ is a dual for the module $\tilde{M}_+$ in the
bicategory $\sM_{\Ex}$. 

We can use the quotient map to regard $\tilde{M}$ as a space over $M$.
In contrast with the convention above, we choose to regard $(\tilde{M},\pi)_+$
as an element of $\Ex(\ast,M)$.  This choice 
is consistent with our later convention for path spaces since we think of 
$\tilde{M}$ as the homotopy classes of paths in $M$ that start at a
base point.
Then $(\tilde{M},\pi)_+$ is a right $(\pi_1M)_+$-module.
Note that
$\tilde{M}_+$ is equivalent to $S^0_M\odot (\tilde{M},\pi)_+$.
Let $\,_*\tilde{M}$ denote the universal cover of $M$ regarded as homotopy 
classes of paths ending
at the base point rather than starting at the base point.  
Then $(\,_*\tilde{M},\pi)_+$ is a left $\pi_1M$-module.

\begin{lemma}\mylabel{bunivdual}
For a closed smooth  manifold
$M$ we have the following dual pairs.
\begin{enumerate}\item $((\tilde{M},\pi)_+,(\,_*\tilde{M},\pi)_+)$.
\item $(\tilde{M}_+,T\pi^*S^{\nu})$
\end{enumerate}\end{lemma}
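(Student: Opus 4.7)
The plan is to establish (1) directly by constructing coevaluation and evaluation maps and checking the triangle identities, and to deduce (2) from (1) by composing dual pairs via \myref{dualcomposites1}, using the Costenoble--Waner dual pair $(S^0_M, tS^\nu)$ of \myref{dualM}.

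For (1), since the right $\pi_1 M$-action on $\tilde M$ and the left $\pi_1 M$-action on $\,_*\tilde M$ are free, the bar resolution defining $(\tilde M,\pi)_+\odot(\,_*\tilde M,\pi)_+$ is weakly equivalent to the ordinary pushout $\tilde M\wedge_{\pi_1 M}\,_*\tilde M$; and because $U_M$ is the trivial monoid, $(\,_*\tilde M,\pi)_+\odot(\tilde M,\pi)_+$ is equivalent to the fiber product $\,_*\tilde M\wedge_M\tilde M$. Using these identifications, define the coevaluation
\[\eta\colon (M,\triangle)_+ \rtarr (\tilde M,\pi)_+\odot(\,_*\tilde M,\pi)_+\]
by $m\mapsto [(\gamma_m,\gamma_m^{-1})]$ for any choice of path $\gamma_m$ from $\ast$ to $m$; this is well-defined on the quotient by freeness and continuous by choosing local sections of $\pi$. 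Define the evaluation
\[\epsilon\colon (\,_*\tilde M,\pi)_+\odot(\tilde M,\pi)_+ \rtarr (\pi_1 M)_+\]
by $(\delta,\gamma)\mapsto\delta\gamma$, which lies in $\pi_1 M$ since the constraint $\delta(0)=\gamma(1)$ forces $\delta\gamma$ to be a loop at $\ast$; one checks it is a map of $(\pi_1 M)_+$-$(\pi_1 M)_+$-bimodules. The two triangle identities reduce to direct calculations: for $\gamma\in\tilde M$ over $m$, the first composite sends $\gamma$ to the class of $(\gamma,\gamma^{-1},\gamma)$ (choosing the lift at $m$ to be $\gamma$ itself), and applying $\id\odot\epsilon$ followed by the unit isomorphism gives $\gamma\cdot(\gamma^{-1}\gamma)=\gamma\cdot e=\gamma$; the analogous computation handles the other triangle on the $\,_*\tilde M$ side.

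For (2), apply \myref{dualcomposites1} to the $n$-dual pair $(S^0_M,tS^\nu)$ from \myref{dualM} and the $0$-dual pair $((\tilde M,\pi)_+,(\,_*\tilde M,\pi)_+)$ from (1). The resulting $n$-dual pair is $\bigl(S^0_M\odot(\tilde M,\pi)_+,\,(\,_*\tilde M,\pi)_+\odot tS^\nu\bigr)$, and it remains to identify these composites. Because the middle monoid in each case is the trivial monoid $U_M$, each $\odot$ reduces to a fiberwise smash over $M$ followed by pushforward to a point: in the first composite the fiber over $m$ of $S^0_M\wedge_M(\tilde M,\pi)_+$ is $\pi^{-1}(m)_+$, so the pushforward to $\ast$ is $\tilde M_+$ as desired; in the second one obtains the pullback $\pi^*S^\nu$ along $\pi\colon\,_*\tilde M\rtarr M$ with the infinity section collapsed, which by definition is $T\pi^*S^\nu$. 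The induced left $(\pi_1 M)_+$-action on $T\pi^*S^\nu$ arises from the action on $\,_*\tilde M$ and agrees with the deck-transformation action described after \myref{univcovdual1}.

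The main obstacle is bookkeeping rather than conceptual: one has to verify carefully that the bar resolutions really do collapse to the claimed quotients once the actions are free or the intermediate monoid is trivial, and that every map above is continuous, bimodule-equivariant, and respects the fibration/cofibration conditions of \myref{exspaceconditions}. Once these homotopical checks are in place, the triangle identities for (1) are a direct computation and (2) is a formal consequence of \myref{dualcomposites1} together with the identifications of the two composites.
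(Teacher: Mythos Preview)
Your proposal is correct and follows essentially the same approach as the paper: part (1) is established directly by writing down explicit coevaluation and evaluation maps (the paper's formulas $m\mapsto(\gamma_m,\gamma_m)$ and $(\alpha,\beta)\mapsto\alpha\beta$ match yours up to the convention for how $\,_*\tilde M$ is parametrized), and part (2) is obtained as the composite of (1) with the Costenoble--Waner dual pair $(S^0_M,tS^\nu)$ via \myref{dualcomposites1}. The paper is somewhat terser about the triangle identities, simply remarking that they commute strictly without stabilization because this dual pair is of the monoid type (cf.\ \myref{topmonodidual}), whereas you spell out the computation; both are valid.
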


\begin{proof}
As before, $T\pi^*S^\nu$ is the pushout of the maps
$\tilde{M}\rightarrow \pi^*tS^\nu$ and $\tilde{M}\rightarrow *$.
This is equivalent to $(\,_*\tilde{M},\pi)_+ \odot S^\nu$.  The ex-space
\[(\tilde{M},\pi)_+\odot (\,_*\tilde{M},\pi)_+\] is equivalent to the 
coequalizer of the two actions of $\pi_1M$ on 
\[(\tilde{M}\times\tilde{M},\pi\times\pi)_+\] since $\pi_1M$
acts freely on $\tilde{M}$.  Similarly, \[\tilde{M}_+\odot
T\pi^*S^\nu\] is equivalent to the coequalizer of the 
two actions of $\pi_1M$ on \[ \tilde{M}_+\wedge T\pi^*S^\nu.\]

The coevaluation map for $((\tilde{M},\pi)_+,(\,_*\tilde{M},\pi)_+)$
\[U_M\rightarrow (\tilde{M},\pi)_+\odot (\,_*\tilde{M},\pi)_+
\simeq (\tilde{M}\times_{\pi_1M}\tilde{M},\pi\times \pi)_+\] is given by
$m\mapsto (\gamma_m,\gamma_m)$ for  any lift $\gamma_m$ of $m$.
This map is well defined since the action of $\pi_1M$ identifies
all possible choices. The evaluation map \[(\,_*\tilde{M},\pi)_+\odot
(\tilde{M},\pi)_+=
\{(\alpha,\beta)\in\tilde{M}\times\tilde{M}|\pi(\alpha)=\pi(\beta)\}_+
\rightarrow U_{\pi_1M}\] is given by $(\alpha,\beta)\mapsto
\alpha\beta$.

Note that the required diagrams for this dual pair commute
strictly and without needing to stabilize.  This is closely
related to the  observation above that monoids produce dual pairs.

The second dual pair is the composite of the dual pairs
$((\tilde{M},\pi)_+,(\,_*\tilde{M},\pi)_+)$ and $(S^0_M,tS^{\nu})$. The
coevaluation map
\[\xymatrix{S^n\ar[r]^-{\eta}&T\nu\ar[r]&\tilde{M}_+\wedge_{\pi_1M}T\pi^*
S^{\nu}\simeq \tilde{M}_+\odot  T\pi^*S^{\nu}}\]
is the composite of the
coevaluation, $\eta$,  for the dual pair $(S^0_M,tS^{\nu})$ and the
map \[v\mapsto ( \gamma_{p(v)}, \gamma_{p(v)},v).\] The
evaluation map
\[\xymatrix{T\pi^*S^{\nu}\odot \tilde{M}_+\simeq T\pi^*S^{\nu}
\wedge\tilde{M}_+ \ar[r]&(\pi_1M)_+ \wedge
S^n=\vee_{\pi_1M}S^n}\] is given by $(\gamma,v,\alpha)\mapsto
( \gamma H(\alpha(1),\gamma(0))\alpha, \epsilon(v,\alpha(1)))$ where $\epsilon$
is the evaluation map for the dual pair $(S^0_M,tS^{\nu})$. The path
$H(\alpha(1),\gamma(0))$ is defined in  \myref{localcont}.
\end{proof}

This lemma completes the proof of \myref{univcovdual1}.

\section{Moore loops and bicategories}\label{mlab}
In the previous section we defined duality in the bicategory of
monoids and bimodules in parametrized spaces and gave examples of
dual pairs.  With the exception of the first dual pair in 
\myref{bunivdual}, the dual pair for the universal cover of a
manifold regarded as a space over that manifold, we haven't used
the flexibility the bicategory $\Ex$ offers.  In this section we
will begin to exploit this greater flexibility.

One undesirable aspect of using Ranicki duality to describe
duality for universal covers was the need to choose a base point. 
There are two ways of dealing
with this problem.  The first is to verify that different choices
of the base point give ``the same'' dual pairs.  The second 
is to use all possible choices of base point. In other words, use
objects like the fundamental groupoid rather than the fundamental
group. The first
approach is used in \cite{brown, jiang} and the second in \cite{coufal}. We
will use the second approach here since it will also be useful
when defining fiberwise dual pairs.

For our topological applications, the fundamental groupoid is not
exactly the right object.  First, we would rather have a space of
objects and a space of morphisms rather than sets.  Second, for
fiberwise applications we would rather consider all paths than
homotopy classes of paths.  Instead of the fundamental groupoid we
will consider a topologized version of the fundamental groupoid
and the space of Moore paths.

Let $M$ be a compact manifold and $\Pi M$\nidx{pim@$\Pi M$}  
the space of homotopy
classes of paths in $M$ with endpoints fixed.  Topologize this
space using the quotient topology from the usual compact open
topology on $\mathrm{Map}(I,M)$.  
There is a Hurewicz fibration $\tar\times
\sou\colon \Pi M\rightarrow M\times M$\nidx{t@$\tar$}\nidx{s@$\sou$} 
given by $\sou(\gamma)=\gamma(0)$ and
$\tar(\gamma)=\gamma(1)$. 
The fiber product
\[\Pi M\times_M\Pi M=\{(\gamma_2,\gamma_1)\in \Pi M\times
\Pi M|\gamma_1(1)=\gamma_2(0)\} \] is a space with a map to
$M\times M$ given by $(\gamma_2,\gamma_1)\mapsto
(\gamma_2(1),\gamma_1(0))$.  Composition gives a strictly
associative map $\mu\colon  \Pi M\times_M\Pi M\rightarrow \Pi M$.
This is a map over $M\times M$.

The inclusion $\iota$\nidx{iota@$\iota$} of $M$ into $\Pi M$ by constant paths is
also a map over $M\times M$ if $M$ is regarded as a space over
$M\times M$ using the diagonal map\nidx{$\triangle$}  
\[\triangle\colon M\rightarrow M\times M.\] 
We regard $\Pi M\times_MM$ as a space over $M\times M$ by the
map $(\gamma, m)\mapsto (\gamma(1),m)$.  Then $\Pi M\times_MM$ is
homeomorphic to $\Pi M$ as a space over $M\times M$.  The map
$\iota$ acts as the unit for $\mu$ in the sense that the
following maps are the identity
\[\Pi M\cong \Pi M\times_MM\rightarrow \Pi M\times\Pi M\rightarrow
\Pi M\]
\[\Pi M\cong M\times_M\Pi M\rightarrow \Pi M\times \Pi M
\rightarrow \Pi M.\]
With a disjoint section, $\Pi M$ is a monoid in
$\Ex$.

In contrast with the previous sections, we will 
not use a different notation for a path space monoid and that monoid regarded 
as a bimodule.  

Recall that for an $\sA$-$\sB$-bimodule $\sX$, $R(\sX)$
is $\sX$ regarded
as a $U_A$-$\sB$-bimodule, $L(\sX)$ is $\sX$ regarded as a $\sA$-$U_B$-bimodule,
and $N(\sX)$ is $\sX$ regarded as a $U_A$-$U_B$-bimodule.

The parametrized space  $(\Pi M,\sou)_+$\nidx{pim@$(\Pi M,\sou)_+$}
has a right
action of $(\Pi M,\tar\times \sou)_+$ by composition of paths.
Recall that $S^\nu$ is the fiberwise one point compactification of
the normal bundle of $M$.  Then $T_M\sou^*S^\nu$\nidx{tmssnu@$T_M\sou^*S^\nu$} 
is defined to be 
$L(\Pi M,\tar\times \sou)_+\odot tS^\nu$.  This is an ex-space over
$M$, and it has a left action by $\Pi M$.

The dual pairs in \myref{bunivdual2} are the unbased versions of the dual
pairs in \myref{bunivdual}. We make
this comparison explicit in \myref{topdualcompare}.

\begin{lemma}\mylabel{bunivdual2}
For a smooth compact  manifold $M$ we have the following
dual pairs.
\begin{enumerate}
\item $(R(\Pi M,\tar\times \sou)_+,L(\Pi M,\tar\times \sou)_+)$ \item
$((\Pi M,\sou)_+,T_M\sou^*S^{\nu})$
\end{enumerate}
\end{lemma}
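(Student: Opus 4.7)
The plan is to treat both dual pairs uniformly by combining the monoid/bimodule formalism just established with the fundamental Costenoble-Waner dual pair $(S^0_M, tS^\nu)$ of \myref{dualM}. The key observation is that $\Pi M$, with the disjoint section, is a monoid in $\Ex$: composition of homotopy classes of paths gives a strictly associative, unital product $\mu\colon (\Pi M,\tar\times\sou)_+\boxtimes (\Pi M,\tar\times\sou)_+\to (\Pi M,\tar\times\sou)_+$ with unit $\iota$ the inclusion of constant paths. Once this is recorded, part (1) is immediate from \myref{topmonodidual} applied to the monoid $\sA = (\Pi M,\tar\times\sou)_+$: the coevaluation is the unit $\iota$ and the evaluation is composition $\mu$, and the triangle identities hold strictly.

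For part (2), the plan is to realize $((\Pi M,\sou)_+, T_M\sou^*S^\nu)$ as the composite of the dual pair from (1) with the Costenoble-Waner dual pair $(S^0_M, tS^\nu)$ of \myref{dualM}, then invoke \myref{dualcomposites1}. This requires two identifications. First, $(\Pi M,\sou)_+ \simeq S^0_M \odot R(\Pi M,\tar\times\sou)_+$: the right-hand side is the bar construction $B(S^0_M, U_M, R(\Pi M,\tar\times\sou)_+)$, which collapses by the extra degeneracy coming from $U_M$ to $S^0_M \boxtimes_M R(\Pi M,\tar\times\sou)_+$, and this is canonically the ex-space $(\Pi M,\sou)_+$ over $M$. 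Second, by the very definition given in the text, $T_M\sou^*S^\nu = L(\Pi M,\tar\times\sou)_+ \odot tS^\nu$. With these identifications, \myref{dualcomposites1} produces coevaluation and evaluation
\[
\eta\colon S^n \xrightarrow{\eta_{CW}} S^0_M \odot tS^\nu \xrightarrow{\id\odot\iota\odot\id} S^0_M \odot R(\Pi M,\tar\times\sou)_+ \odot L(\Pi M,\tar\times\sou)_+ \odot tS^\nu,
\]
\[
\epsilon\colon T_M\sou^*S^\nu \odot (\Pi M,\sou)_+ \xrightarrow{\id\odot\mu\odot\id} L(\Pi M,\tar\times\sou)_+\odot tS^\nu\odot S^0_M\odot R(\Pi M,\tar\times\sou)_+ \xrightarrow{\id\odot\epsilon_{CW}\odot\id} U_{(\Pi M,\tar\times\sou)_+},
\]
and exhibits them as a dual pair.

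The routine part is the string of bar-resolution manipulations verifying that the two composite 1-cells really do compute $(\Pi M,\sou)_+$ and $T_M\sou^*S^\nu$ up to equivalence respecting the module structure; this is a direct extra-degeneracy argument, analogous to the proof that $R(U_\sA)\odot L(U_\sA)\simeq N(U_\sA)$ used in \myref{topmonodidual}. The main conceptual obstacle, and what we gain from working in the bicategory $\sM_{\Ex}$, is that no explicit verification of the triangle identities is required for part (2): once the composite-of-dual-pairs identification is in place, the diagrams are handed to us by \myref{dualcomposites1}. This is a significant simplification over the proof of \myref{bunivdual}, where the analogous statement required a direct check using $H$ from \myref{localcont}. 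Finally, this proof sets up the comparison with \myref{bunivdual} that will be carried out in \myref{topdualcompare}: pulling back along a choice of base point recovers the group-theoretic dual pair, so \myref{bunivdual2} is genuinely an unbased refinement of \myref{bunivdual}.
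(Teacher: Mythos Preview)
Your approach is essentially the same as the paper's: part (1) is \myref{topmonodidual} applied to the monoid $(\Pi M,\tar\times\sou)_+$, and part (2) is the composite of that dual pair with $(S^0_M, tS^\nu)$ via \myref{dualcomposites1}, after the identifications $(\Pi M,\sou)_+\simeq S^0_M\odot R(\Pi M,\tar\times\sou)_+$ and $T_M\sou^*S^\nu = L(\Pi M,\tar\times\sou)_+\odot tS^\nu$.

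Two small corrections. First, your displayed evaluation map is garbled: the source $T_M\sou^*S^\nu\odot(\Pi M,\sou)_+$ already \emph{is} (equivalent to) $L(\Pi M)\odot tS^\nu\odot S^0_M\odot R(\Pi M)$, and from there one applies the inner Costenoble--Waner evaluation $\epsilon_{CW}$ first, then the outer composition $\mu$; your labels have the order reversed and the first arrow points the wrong way. Second, your remark that this is a ``significant simplification over the proof of \myref{bunivdual}'' misreads that proof: there too the second dual pair is obtained as a composite (of $((\tilde{M},\pi)_+,(\,_*\tilde{M},\pi)_+)$ with $(S^0_M,tS^\nu)$), and the formulas involving $H$ from \myref{localcont} are just the explicit description of the composite evaluation, exactly as the paper also records here for \myref{bunivdual2}. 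No extra triangle-identity verification is done in either case.
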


\begin{proof}
As noted before $R(\Pi M,\tar\times \sou)_+\odot L(\Pi M,\tar\times
\sou)_+$ is equivalent to  $N(\Pi M,\tar\times \sou)_+$.  

The first dual pair is a dual pair arising from a
monoid and so this dual pair follows from \myref{topmonodidual}.   
The
second dual pair is the composite of \[(R(\Pi M,\tar\times
\sou)_+,L(\Pi M,\tar\times \sou)_+)\] with the dual pair
$(S^0_M,tS^{\nu})$.

\vspace{5pt}
In $(i)$, the coevaluation map
\[U_M\rightarrow R(\Pi M,\tar\times \sou)_+\odot L(\Pi M,
\tar\times \sou)_+\] is given by $m\mapsto(c_m,c_m)$ where $c_m$ is the
constant path at $m$. The evaluation map
\[L(\Pi M,\tar\times \sou)_+\odot R(\Pi M,\tar\times \sou)_+
\rightarrow (\Pi M,
\tar\times \sou)_+\] is given by $(\alpha,\beta)\mapsto
\alpha\beta$.

\vspace{5pt} In $(ii)$,  note that \[((\Pi M,\sou)_+,T_M\sou^*S^{\nu})
\simeq(S^0_M\odot  R(\Pi M,\tar\times \sou)_+,  L(\Pi M,\tar\times \sou)_+\odot
tS^{\nu}).\]  The coevaluation map
\[S^n\rightarrow
T\nu\rightarrow (\Pi M,\sou)_+\odot T_M\sou^*S^{\nu}\] is given by
$v\mapsto ( c_{\rho(\eta(v))}, c_{\rho(\eta(v))},\eta(v))$, where $\eta$ is
the Pontryagin-Thom map for the normal bundle of $M$. The
evaluation map
\[T_M\sou^*S^{\nu}\odot (\Pi M,\sou)_+ \rightarrow S^n\bar{\wedge}
L(\Pi M,\tar\times \sou)_+\odot R(\Pi M,\tar\times \sou)_+\rightarrow
S^n\bar{\wedge} (\Pi M,\tar\times \sou)_+\] is given by $(\alpha,
v,\beta)\mapsto ( \epsilon(v,\beta(1)),\alpha H(\beta(1),\alpha(0))
\beta)$ where
$H(\beta(1),\alpha(0))$ is as in \myref{localcont} and $\epsilon$ is
the evaluation map for $(S^0_M,tS^{\nu})$.

\end{proof}

\begin{lemma}\mylabel{topdualcompare} $\tilde{M}_+$ is dualizable as a
$\pi_1M$-space if and only if
$(\Pi M,\sou)_+$ is dualizable as a $(\Pi M,\tar\times \sou)_+$-module.
\end{lemma}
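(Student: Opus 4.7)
The plan is to construct lax functors between the relevant sub-bicategories of $\sM_{\Ex}$ that transport dualizability in both directions via Proposition \myref{bicatfuntoriality}. The underlying observation is a ``Morita equivalence'': since $M$ is path-connected, a choice of basepoint $\ast \in M$ identifies, up to equivalence, the path monoid $(\Pi M,\tar\times \sou)_+$ with the discrete monoid $(\pi_1(M,\ast))_+$, and under this identification $(\Pi M,\sou)_+$ corresponds to $\tilde{M}_+$ equipped with its deck transformation action.

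For the implication ``$(\Pi M,\sou)_+$ dualizable $\Rightarrow$ $\tilde{M}_+$ dualizable,'' fix a basepoint and define a fiber lax functor $F$ by pulling back along $\{\ast\}\hookrightarrow M$ and $\{(\ast,\ast)\}\hookrightarrow M\times M$. Then $F$ sends the monoid $(\Pi M,\tar\times \sou)_+$ to its fiber $(\pi_1(M,\ast))_+$ over $(\ast,\ast)$, and sends the module $(\Pi M,\sou)_+$ to its fiber $\tilde{M}_+$ over $\ast$. Because $\tar\times \sou$ is a Hurewicz fibration and $\pi_1 M$ acts freely on $\tilde{M}$, the bar resolutions defining $\odot$ agree up to weak equivalence with strict coequalizers, so the structural $2$-cells $\phi_{X,Y}$ of $F$ are equivalences on the $1$-cells appearing in the dual pair. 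Proposition \myref{bicatfuntoriality} then delivers a dual pair for $\tilde{M}_+$, recovering (up to equivalence) the dual pair of \myref{bunivdual}.

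For the converse, construct an induction lax functor $G$ in the opposite direction. On monoids, $G$ sends $(\pi_1 M)_+$ to $(\Pi M,\tar\times \sou)_+$ using the inclusion of $\pi_1(M,\ast)$ as the fiber of $\tar\times \sou$ over $(\ast,\ast)$; on right modules, $G$ sends a $(\pi_1 M)_+$-module $V$ to the induced $(\Pi M,\tar\times \sou)_+$-module built by tensoring with $(\tilde{M},\pi)_+$ and letting $\Pi M$ act by concatenation of paths, so that $G(\tilde{M}_+) \simeq (\Pi M,\sou)_+$ in $\sM_{\Ex}$. Applying Proposition \myref{bicatfuntoriality} to $G$ transports the dual pair, which after composition with the Costenoble-Waner dual pair $(S^0_M,tS^\nu)$ of \myref{dualM} (using \myref{dualcomposites1}) yields $((\Pi M,\sou)_+, T_M \sou^* S^\nu)$ from $(\tilde{M}_+, T\pi^* S^\nu)$.

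The principal obstacle is the bookkeeping with homotopy coequalizers: one must verify that both $F$ and $G$ genuinely satisfy the hypotheses of Proposition \myref{bicatfuntoriality}, i.e., that their structural $2$-cells are weak equivalences on every $1$-cell that enters into the dual pairs. Thanks to the freeness of the $\pi_1 M$-action on $\tilde{M}$ together with the Hurewicz fibration property of $\tar$ and $\sou$, this reduces to the standard comparison between bar resolutions and strict quotients cited in the excerpt, so no further obstruction arises and the two dualizability statements are equivalent.
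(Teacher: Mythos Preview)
Your strategy is correct in spirit---the content really is a Morita-type equivalence between $(\pi_1 M)_+$-modules and $(\Pi M,\tar\times\sou)_+$-modules---but the paper packages this far more economically and avoids building lax functors altogether.

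The paper works entirely inside $\sM_{\Ex}$ using a single auxiliary dual pair. Fix $x\in M$ and regard $(\Pi M_x,\tar)_+$, the homotopy classes of paths starting at $x$, as a $(\Pi M,\tar\times\sou)_+$-$(\pi_1(M,x))_+$-bimodule; its dual is $(\,_x\Pi M,\sou)_+$, paths ending at $x$. The crucial feature is that the \emph{evaluation map of this dual pair is an isomorphism}. Since $(\Pi M,\sou)_+\odot(\Pi M_x,\tar)_+\simeq\tilde{M}_+$ as a right $(\pi_1 M)_+$-module, the ``if and only if'' drops out immediately from \myref{dualcomposites1} and \myref{dualcomposites2}: the first says composites of dual pairs are dual pairs, and the second says that when evaluation is invertible one may \emph{cancel} the intermediate factor, which supplies the converse direction.

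Your route via lax functors $F,G$ and \myref{bicatfuntoriality} is essentially the same Morita equivalence rephrased---your induction functor $G$ is $(-)\odot(\Pi M_x,\tar)_+$, and checking that its structural $\phi$'s are equivalences is exactly what \myref{dualcomposites1} already encodes. So your bookkeeping with bar resolutions, while not wrong, is duplicating work the composite-of-dual-pairs machinery was built to absorb. One minor confusion: the composition with the Costenoble--Waner pair $(S^0_M,tS^\nu)$ at the end of your $G$-argument is superfluous, since the lemma asserts dualizability of $(\Pi M,\sou)_+$ itself, not of a further composite; that step should simply be deleted.
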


\begin{proof} This result follows from Theorems \ref{dualcomposites1}
and \ref{dualcomposites2}. Let $x$ be a point in $M$ and $(\Pi M_x,\tar)_+$
\nidx{pimx@$\Pi M_x$}
be the universal cover of $M$ thought of as homotopy classes of paths in
$M$ that start at $x$.  This has a right action of $\pi_1(M,x)$ and a
left action of $(\Pi M,\tar\times \sou)_+$.  This space
 is dualizable with dual $(\,_x\Pi M,\sou)_+$, \nidx{pimx@$_x\Pi M $}the
universal cover thought of as homotopy classes of paths in $M$ ending
at $x$ with a right action by  $(\Pi M,\tar\times
\sou)_+$ and a left action by $\pi_1(M,x)$.  This dual pair satisfies 
the additional
condition that the evaluation map is an isomorphism.

Then $(\Pi M,\sou)_+\odot (\Pi M_x,\tar)_+$ is equivalent to
$\tilde{M}_+$ regarded as a right $\pi_1(M,x)$-space.
By Theorems \ref{dualcomposites1} and
\ref{dualcomposites2},
 $\tilde{M}_+$ is dualizable as a $\pi_1M$-space if and only if
$(\Pi M,\sou)_+$ is dualizable as a $(\Pi M,\tar\times \sou)_+$-space.
\end{proof}

Recall that $\calP M=\{(\gamma,u)\in M^{[0,\infty)}\times [0,\infty)|
\gamma(t)=\gamma(u)\,\mathrm{for}\, t\geq u\}$\nidx{pm@$\protect\calP M$} 
is the space of free Moore paths in $M$.  
With a disjoint section, $\calP M$ is a monoid over $M\times M$ in $\Ex$.
The parametrized space $(\calP M,\sou)_+$
has a right action of $(\calP M, \tar\times \sou)_+$
by composition of paths.

\begin{lemma}\mylabel{bunivdual3}
For a compact smooth manifold $M$ we have the following
dual pairs.
\begin{enumerate}
\item $(R(\calP M,\tar\times \sou)_+, L(\calP M,\tar\times \sou)_+)$ 
\item $((\calP M,\sou)_+,T_M\sou^*S^{\nu})$
\end{enumerate}
\end{lemma}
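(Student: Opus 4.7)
The plan is to prove both dual pairs by imitating the proof of \myref{bunivdual2}, with the only substantive change being the replacement of $\Pi M$ by $\calP M$. The key structural observation that makes this work is that free Moore paths compose in a strictly associative and unital way (this was the whole point of using Moore paths rather than ordinary paths), so $(\calP M,\tar\times\sou)_+$ is a genuine monoid in $\Ex$ in exactly the same sense that $(\Pi M,\tar\times\sou)_+$ was. Once this is established, both dual pairs will be formal consequences of results already in hand.

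First I would verify that $(\calP M,\tar\times\sou)_+$ is a monoid in $\Ex$. The projection $\tar\times\sou\colon\calP M\to M\times M$ is a Hurewicz fibration, composition of Moore paths gives a map $\mu\colon \calP M\times_M\calP M\to \calP M$ over $M\times M$ which is strictly associative (unlike for ordinary paths, no homotopy is needed), and the constant-path inclusion $\iota\colon M\to \calP M$, $m\mapsto(c_m,0)$, is a strict two-sided unit. Adding a disjoint section gives the monoid in $\Ex$. With this, part (1) is immediate from \myref{topmonodidual}: the dual pair $(R(U_{\sA}),L(U_{\sA}))$ exists for every monoid $\sA$, with coevaluation $m\mapsto (c_m,c_m)$ and evaluation $(\alpha,\beta)\mapsto \alpha\beta$ given by Moore path composition. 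The simplicial extra degeneracy argument used in the proof of \myref{topmonodidual} (realizing $R(U_\sA)\odot L(U_\sA)$ as equivalent to $N(U_\sA)$) goes through verbatim.

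For part (2), I would invoke \myref{dualcomposites1} to compose the dual pair from part (1) with the Costenoble-Waner dual pair $(S^0_M,tS^{\nu})$ from \myref{dualM}. The composite dual pair is
\[
\bigl(S^0_M\odot R(\calP M,\tar\times\sou)_+,\; L(\calP M,\tar\times\sou)_+\odot tS^{\nu}\bigr),
\]
and I would identify the first factor with $(\calP M,\sou)_+$ (projecting only to the source endpoint collapses the target) and the second factor with $T_M\sou^*S^{\nu}$ by definition. The coevaluation map is then the composite of the Pontryagin-Thom map $S^n\to T\nu$ with $v\mapsto (c_{\rho(\eta(v))},c_{\rho(\eta(v))},\eta(v))$, and the evaluation map sends $(\alpha,v,\beta)$ to $(\epsilon(v,\beta(\infty)),\, \alpha\cdot H(\beta(\infty),\alpha(0))\cdot\beta)$, where $H$ is the local contraction from \myref{localcont} and $\epsilon$ is the Costenoble-Waner evaluation; here $\beta(\infty)$ denotes $\beta(u)$ for $(\beta,u)\in\calP M$. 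This is exactly the description sketched after \myref{bunivdual3p}.

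The only real subtlety will be confirming that the triangle identities commute for the composite, but since \myref{dualcomposites1} packages this formally and the monoid dual pair of \myref{topmonodidual} satisfies the identities strictly, one only needs to check that the composite evaluation and coevaluation maps agree with those produced by the general recipe in \myref{dualcomposites1}. The homotopical hypotheses of \myref{exspaceconditions} (fibration projections, cofibration sections) must be maintained through the bar construction so that $\odot$ computes the correct homotopy type; this is the one place where using Moore paths rather than strict compositions of ordinary paths is genuinely needed, since otherwise the associativity isomorphisms in the bar resolution would introduce extra homotopies that would complicate the identification with $(\calP M,\sou)_+$.
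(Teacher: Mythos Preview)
Your proposal is correct and follows essentially the same approach as the paper: part (i) is obtained directly from \myref{topmonodidual} applied to the Moore path monoid, and part (ii) is the composite of this dual pair with the Costenoble-Waner pair $(S^0_M,tS^{\nu})$ via \myref{dualcomposites1}, with the same explicit coevaluation and evaluation maps you wrote down. The paper's proof is slightly terser but structurally identical.
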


\begin{proof}
$T_M\sou^*S^\nu$\nidx{tmssnu@$T_M\sou^*S^\nu$} 
is defined to be $L(\calP M,\tar\times \sou)_+\odot S^\nu$.

The first dual pair is a dual pair arising from a
monoid as in \myref{topmonodidual}.  
The second dual pair is the composite of $(R(\calP
M,\tar\times \sou)_+, L(\calP M,\tar\times \sou)_+)$ with the dual pair
$(S^0_M,tS^{\nu})$.

\vspace{5pt} In $(i)$, the coevaluation map
\[U_M\rightarrow R(\calP M,\tar\times \sou)_+
\odot L(\calP M,\tar\times \sou)_+
\simeq(\calP M,\tar\times \sou)_+\] is the inclusion of $M$ into $\calP
M$ as constant paths.  The evaluation map \[(\calP
M\times_M\calP M,\tar\times \sou)_+ \simeq L(\calP M,\tar\times \sou)_+
\odot
R(\calP M,\tar\times \sou)_+ \rightarrow (\calP M,\tar\times \sou)_+\] is
given by composition of paths.

\vspace{5pt} In $(ii)$,  note that \[(S^0_M\odot R(\calP
M,\tar\times \sou)_+, L(\calP M,\tar\times \sou)_+\odot S^{\nu})
\simeq ((\calP
M,\sou)_+,T_M\sou^*S^{\nu}).\]
  The coevaluation map
\[S^n\rightarrow  S^0_M\boxtimes S^{\nu}\simeq  T{\nu}\rightarrow
(\calP M,\sou)_+\odot T_M\sou^*S^{\nu}\] is given by  $v\mapsto
(c_{\rho\eta (v)}, c_{\rho\eta(v)},\eta (v) )$ where $\eta$ is the
Pontryagin-Thom for the embedding of $M$ in $\bR^n$.  The evaluation map
\[\xymatrix{T_M\sou^*S^{\nu}\odot (\calP M,\sou)_+\ar[r]& L(\calP
M,\tar\times \sou)_+\odot ((M,
\triangle)_+\bar{\wedge} S^n) \odot R(\calP M,\tar\times \sou)_+\ar[d]\\&
S^n\bar{\wedge} (\calP M,\tar\times \sou)_+}\] is given by  $(\alpha, v,
\beta)\mapsto(\epsilon (v,\beta(1)),\alpha H(\beta(1),\alpha(0))\beta)$
where $H$ is as in  \myref{localcont}.
\end{proof}

This lemma completes the proof of \myref{bunivdual3p}.

\myref{bunivdual3} is very similar to \myref{bunivdual2}.  In both 
lemmas a dual pair defined using a monoid is composed with the 
dual pair $(S^0_M,S\nu)$.  Let $c:\calP M\rightarrow \Pi M$ be the 
map that takes a path to its homotopy class with end points 
fixed.  This map induces a map \[N(\calP M,\tar\times \sou)_+)\rightarrow 
N(\Pi M, \tar\times \sou)_+\] and similarly for the corresponding
left and right modules.  Functoriality implies 
that the following diagrams commute
\[\xymatrix{U_M\ar[r]\ar[dr]&N(\calP M, \tar\times \sou)_+\ar[r]^-\sim\ar[d]& 
R(\calP M,\tar\times \sou)_+\odot L(\calP M,\tar\times \sou)_+\ar[d]\\
&N(\Pi M, \tar\times \sou)_+\ar[r]^-\sim& 
R(\Pi M,\tar\times \sou)_+\odot L(\Pi M,\tar\times \sou)_+}\]
\[\xymatrix{
L(\calP M,\tar\times \sou)_+\odot R(\calP M,\tar\times \sou)_+\ar[r]\ar[d]
&(\calP M,\tar\times \sou)_+\ar[d]\\
L(\Pi M,\tar\times \sou)_+ \odot R(\Pi M,\tar\times \sou)_+\ar[r]
&(\Pi M,\tar\times \sou)_+}\]
Composing with the dual pair $(S^0_M,S^\nu)$ gives similar diagrams 
showing compatibility of
the dual pairs $((\calP M,\sou)_+,T_M\sou^*S^{\nu})$ 
and $((\Pi M,\sou)_+,T_M\sou^*S^{\nu})$.   

\section{Shadows and traces for Ranicki dualizable bimodules}
\label{randualpara3}
The shadows in $\sM_{\Ex}$ are very similar to $\odot$ in $\sM_{\Ex}$.  
In special cases 
they are a `derived form' of the semiconjugacy classes of the fundamental 
group.   They also relate to the target of the Hattori-Stallings trace.  

\begin{definition} Let $ \sZ $ be an $\sA$-$\sA$-bimodule. 
Then $\sh\sZ$\idx{shadow}\nidx{shad@$\protect\sh{-}$} 
is the cyclic bar resolution $C(\sZ,\sA)$.

The maps $\theta$\nidx{theta@$\theta$} are induced by
\[\xymatrix{{r_!\triangle^*(\sX\boxtimes \sY)}\ar[r]\ar[d]&
{r_!\triangle^*(\sX\odot
\sY)}\ar[r]&{\sh{\sX\odot \sY}}\ar@{.>}[d]^{\theta}\\
{r_!\triangle^*(\sY\boxtimes \sX)}\ar[r]&{r_!\triangle^*(\sY\odot
\sX)}\ar[r]& {\sh{\sY\odot \sX}}}\]

\end{definition}

The target of the shadow functors is 
the category of $U_\ast$-$U_\ast$-bimodules.  This category is also 
the category of based spaces.

The cyclic bar resolution $C(\sZ,\sA)$\nidx{C@$C(-,\protect\sA)$} 
is the geometric
realization of the simplicial based space with $n$ simplices
\[r_!\triangle^*((\sA)^n\boxtimes\sZ ),\] face maps
\[\partial_0=\id_{\sA}^{n-1}\odot \kappa\]
\[\partial_i=\id_{\sA}^{n-i-1}\odot \mu\odot \id_{\sA}^{i-1}\odot \id_\sZ
\,\,\mathrm{for}\,\,0<i<n\]
\[\partial_n=(\id_{\sA}^{n-1}\odot \kappa')\gamma\]
where $\gamma\colon (\sA)^n\boxtimes\sZ \rightarrow (\sA)^{n-1}\boxtimes
\sZ\boxtimes \sA$ is the twist map, and degeneracy maps
\[s_i=\id_{\sA}^{i}\odot \iota\odot \id_{\sA}^{n-i}\odot \id_\sZ.\]

As before, we use $n$-duality to define the trace of a map.

\begin{definition} Let $ \sX $ be an $n$-dualizable $
 \sA$-$\sB$-bimodule with
dual $ \sY$, coevaluation and evaluation
\[\xymatrix{\eta\colon S^n\bar{\wedge}  U_\sA\ar[r]& {\sX \odot \sY} &{\mathrm{and}}
&\epsilon\colon  \sY\odot \sX \ar[r]&S^n\bar{\wedge} U_\sB.}\]
Suppose $ {\sQ}$ is a $ \sA$-$ \sA$-bimodule, $\sP$ is an 
$\sB$-$\sB$-bimodule 
and $f\colon  \sQ\odot \sX
\rightarrow {\sX}\odot \sP$ is a map of bimodules.  Then the
\emph{trace}\idx{trace} of $f$ is the stable homotopy class of the composite
\[\xymatrix@C=25pt{{\sh{\sQ\bar{\wedge} S^n}}
\ar[r]^-{\sh{\id \odot \eta}} &\sh{\sQ\odot \sX \odot \sY} 
\ar[d]^-{\sh{f\odot \id}}\\&{\sh{\sX \odot \sP \odot \sY} }
\ar[r]^-\cong&{\sh{\sY\odot \sX \odot \sP}}
\ar[r]^-{\sh{\epsilon\odot \id}}&{\sh{S^n\bar{\wedge} \sP}} }\]
\end{definition}

We give  examples of this trace in the next section.

\chapter{Classical fixed point theory}\label{classfpsec2}\label{classfpinv}
This chapter implements the plan described in the introduction for
proving \myref{reidemeister1}, the converse of the Lefschetz
fixed point theorem that uses the Reidemeister trace.  
We use the dual pairs from Chapter \ref{classfpsec} 
to interpret the fixed point
invariants we defined in Chapter \ref{reviewfp2} as examples of the trace in
bicategories with shadows.  
Then we use functoriality to identify the algebraic, geometric, and homotopy
Reidemeister traces. 

In Chapter \ref{classfpsec} we used the results on composites of dual
pairs to produce new dual pairs.  In this chapter we will use the 
corresponding 
results for the compatibility of composites of dual pairs and traces
to compare the based and unbased versions of different forms 
of the Reidemeister trace.
We also show how to use properties of
the trace in bicategories with shadows to recover some standard
fixed point theory results.  

\section{The geometric Reidemeister trace}

In this section we define  the
 unbased geometric Reidemeister trace using
trace in a bicategory.  For this invariant we use the topologized
fundamental groupoid.

Let $f\colon M\rightarrow M$ be an endomorphism of a compact manifold and
\nidx{pifm@$\Pi^fM$}
\[\Pi^fM=\{(\gamma,x)\in \Pi M\times M|
\gamma(0)=f(x)\}.\]  There is a  Hurewicz fibration $\tar\times
\sou\colon \Pi^fM\rightarrow M \times M$ given by $\sou(\gamma,x)=x$,
$\tar(\gamma,x)=\gamma(1)$.  This defines a $(\Pi M,\tar\times \sou)_+
$-$(\Pi M,\tar\times \sou)_+$-bimodule $(\Pi^fM,\tar\times \sou)_+$ with
the usual left action of $\Pi M$ on itself and the right action
given by first composing with $f$ and then composing paths. Then
$(\Pi M,\sou)_+\odot (\Pi^fM,\tar\times \sou)_+$ is equivalent to 
the right $(\Pi M,\tar\times
\sou)_+$-module $(\Pi^fM,\sou)_+$.   The map $f$ induces a map
of right $(\Pi M,\tar\times \sou)_+$-modules
\[f_*\colon 
(\Pi M,\sou)_+\rightarrow (\Pi M,\sou)_+\odot (\Pi^fM,\tar\times
\sou)_+.\]\nidx{f@$f_*$}

If $M$ is $n$-dualizable, the trace of $f_*$ is the stable 
homotopy class of
the  map
\[S^n\rightarrow \sh{S^n\bar{\wedge} \,(\Pi^fM, \tar\times \sou)_+}
\cong S^n\wedge \,\sh{(\Pi^fM,\tar\times \sou)_+}
\]
given by $v\mapsto (\epsilon[\eta(v),f(\rho\eta(v))], H[f\rho(\eta(v)),\rho(\eta
(v))])$ where
$H(f\rho(\eta(v)),\rho(\eta
(v)))$ is a path from $f\rho\eta(v)$ to $\rho\eta(v)$ as in \myref{localcont}.

\begin{definition} The \emph{unbased geometric Reidemeister trace}, 
\idx{unbased geometric Reidemeister trace}\idx{Reidemeister trace!unbased
geometric}$R^{U,
geo}(f)$,\nidx{rgeou@$R^{U,geo}$}  is the element 
$\tr(f_*)\in \pi_0^s(\sh{\Pi^fM,\tar\times\sou)_+})$.
\end{definition}

\begin{prop}\mylabel{geotracecompare} A choice of base point $\ast\in M$ 
determines an isomorphism
\[\sh{S^n\bar{\wedge} 
(\Pi^fM,\tar\times \sou)_+}\rightarrow \vee_{\sh{\pi_1M^{\phi}}}S^n.\]
Under this identification $R^{U,geo}(f)=R^{geo}(f)$.
\end{prop}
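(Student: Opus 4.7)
The plan is to prove the two assertions separately: first produce the claimed isomorphism of shadows from the base-point data, then deduce the equality $R^{U,geo}(f)=R^{geo}(f)$ by combining the decomposition of dual pairs from \myref{topdualcompare} with the compatibility statement for trace and composite dual pairs in \myref{tracegpd}.

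For the isomorphism, I first observe that since $M$ is path-connected, the inclusion of the discrete group $\pi_1(M,\ast)_+$ (regarded as a monoid over $\ast\times\ast$) into $(\Pi M,\tar\times\sou)_+$ at the base point is a weak equivalence of monoids in $\Ex$. The path $\basepath$ from $\ast$ to $f(\ast)$ and the paths $\gamma_x$ from $\ast$ to the fixed points then induce a map of $(\Pi M,\tar\times\sou)_+$-bimodules from (an extension of scalars of) $\pi_1(M,\ast)^\phi_+$ to $(\Pi^f M,\tar\times\sou)_+$ that becomes a weak equivalence on taking shadows. The shadow of $\pi_1(M,\ast)^\phi_+$ was computed in Section \ref{summary1a} (via the simplicial homotopy given there) to be equivalent to the discrete set $\sh{\pi_1 M^\phi}$ with a disjoint basepoint, so smashing with $S^n$ yields $\bigvee_{\sh{\pi_1 M^\phi}}S^n$.

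For the equality of traces, I apply \myref{topdualcompare} and its proof: the unbased dual pair $((\Pi M,\sou)_+,T_M\sou^*S^{\nu})$ decomposes as the composite of the based dual pair $(\tilde{M}_+,T\pi^*S^{\nu})$ with the auxiliary dual pair $((\Pi M_\ast,\tar)_+,({}_\ast\Pi M,\sou)_+)$, whose evaluation is an isomorphism. Under this decomposition the 2-cell $f_*$ on the unbased side corresponds, in the sense of \myref{tracegpd}(1), to the 2-cell $\tilde f$ on the based side. That lemma then identifies $\tr(f_*)$ with the composite of $\tr(\tilde f)$ and the shadow of the auxiliary evaluation $({}_\ast\Pi M,\sou)_+\odot(\Pi M_\ast,\tar)_+\to (\Pi M,\tar\times\sou)_+$. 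By \myref{pihiso}, $\tr(\tilde f)$ is $R^{geo}(f)$ under $\pi_0^s\cong H_0$.

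The main obstacle, and the only nontrivial check, is to verify that the shadow of this auxiliary evaluation coincides with the isomorphism constructed in the first paragraph. Unwinding definitions, both maps amount to the same procedure: take a loop or path representing a class in $\sh{(\Pi^f M,\tar\times\sou)_+}$, translate it via the chosen paths $\gamma_x$ back to a loop at the base point $\ast$, compose with $\basepath$, and pass to the semiconjugacy class in $\sh{\pi_1 M^\phi}$. Granting this identification the equality $R^{U,geo}(f)=R^{geo}(f)$ is immediate from \myref{tracegpd}(1), and independence of all intermediate choices is inherited from the corresponding independence statements for $R^{geo}(f)$ together with \myref{traceind}.
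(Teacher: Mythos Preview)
Your approach is correct and follows the same route as the paper: both arguments reduce to the combination of \myref{topdualcompare} (the composite decomposition via the auxiliary dual pair $((\Pi M_\ast,\tar)_+,({}_\ast\Pi M,\sou)_+)$ whose evaluation is an isomorphism) and \myref{tracegpd}. The paper's proof is just those two sentences.

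The one difference is organizational. You construct the isomorphism of shadows separately in your first paragraph (via a Morita-type comparison of the monoids $\pi_1(M,\ast)_+$ and $(\Pi M,\tar\times\sou)_+$) and then must verify in your third paragraph that this agrees with the map produced by \myref{tracegpd}. The paper skips this: it simply \emph{defines} the isomorphism to be the one that \myref{tracegpd} hands you, namely the shadow of the auxiliary evaluation $\sh{\epsilon\odot\id}$, so there is nothing further to check. Your first paragraph is thus redundant, and also a little imprecise as stated (the inclusion of $\pi_1(M,\ast)_+$ into $(\Pi M,\tar\times\sou)_+$ is not literally a map of monoids in $\Ex$ since the underlying $0$-cells differ; the correct statement is exactly the bimodule equivalence encoded in the auxiliary dual pair). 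Dropping that paragraph and letting \myref{tracegpd} supply the isomorphism would streamline your argument to match the paper's.
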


\begin{proof} In \myref{topdualcompare} we compared the dual
pairs \[((\Pi M,\sou)_+,T_M\sou^*S^\nu)\] and \[(\tilde{M}_+,T\pi^*S^\nu)\]
 using a third dual pair, $((\Pi M_x, \tar)_+,
(\,_x\Pi M, \sou)_+).$  The dual pair \[((\Pi M_x, \tar)_+,
(\,_x\Pi M, \sou)_+)\] has the property that the evaluation
is an isomorphism.  Then the result follows from 
\myref{tracegpd}.
\end{proof}

\begin{rmk}
The classical definition  of the geometric Reidemeister trace
described in Chapter \ref{reviewfp2} suggests that composites of dual pairs
might be relevant since the geometric Reidemeister trace is defined using
the index and information about the fundamental group.  The index
can be defined using the classical dual pair of a compact manifold
and the fundamental group information can be described using a
standard dual pair related to the fundamental groupoid regarded 
as a monoid.
\end{rmk}

\section{The homotopy Reidemeister trace}\label{classhtpysect}

The Moore paths monoid  can also be used to define a trace. Given a 
map $f\colon M\rightarrow M$, let \nidx{PfM@${\protect\calP^f M}$}
\[\calP^fM=\{(\gamma,u,x)\in \calP M\times M| \gamma(0)
=f(x)\}.\]
There are maps $\sou,\tar\colon \calP^fM\rightarrow M$ given by
$\sou(\gamma,u,x)=x$ and $\tar(\gamma,u,x)=\gamma(u)$ and these define a
$(\calP M,\tar\times \sou)_+$-$(\calP M,\tar\times \sou)_+$-bimodule
$(\calP^fM, \tar\times \sou)_+$
with the usual left action of $\calP M$ and the
right action of $\calP M$ given by first composing with $f$ and 
then composing paths.
The right $(\calP M,\tar\times \sou)_+$-module 
$(\calP M,\sou)_+\odot  (\calP^fM,\tar\times \sou)_+$ is equivalent
to the right
$(\calP M,\tar\times \sou)_+$-module $(\calP^fM, \sou)_+$. The map  $f$
induces a map of right $(\calP M,\tar\times \sou)_+$-modules
\[\tilde{f}\colon (\calP M,\sou)_+\rightarrow  (\calP M,\sou)_+\odot (\calP^fM,
\tar\times \sou)_+.\]

The shadow of $(\calP^fM, \tar\times\sou)$ is homotopy
equivalent to \nidx{lambdafm@$\Lambda^fM$} 
\[\Lambda^fM\coloneqq \{(\gamma,u)\in
\calP M|\gamma(0)=f(\gamma(u))\}\simeq\{\alpha\in M^I|\alpha(0)=
f (\alpha(1))\}.\]  
We can see this by an explicit construction of
a simplicial homotopy equivalence.  On the level of the $n$-simplices
the homotopy equivalence is given by  
\[\calP M\boxtimes\ldots \boxtimes \calP M\boxtimes \calP^fM\rightarrow
\Lambda^fM\]
\[(\alpha_1,\alpha_2,\ldots ,\alpha_n,\gamma)\mapsto
(\alpha_1\alpha_2\ldots \alpha_n\gamma)\] with inverse
\[\gamma \mapsto (c,c,\ldots ,c,f(\gamma))\] where $c$ is a constant
path.  The homotopy between
the identity map and the map \[((\alpha_1,\alpha_2,\ldots
,\alpha_n,\gamma)\mapsto (c,c, \ldots, c, f(\alpha_1\alpha_2\ldots
\alpha_n\gamma))\] is similar to the homotopy used in the
comparison of $C(\pi^\phi,\pi)$ with $\sh{\pi^\phi}$. 

The trace of $\tilde{f}$\nidx{f@$\tilde{f}$} is the stable homotopy 
class of the map
\[S^n\rightarrow  S^n\wedge \Lambda^fM_+\]
given by $v\mapsto ( \epsilon(\eta(v),f\rho\eta(v)),H(f(\rho\eta(v)),\rho\eta
(v)))$, where
$\eta$ and $\epsilon$ are the coevaluation and coevaluation from
the dual pair $(S^0_M,S^{\nu})$ and $H(f(\rho\eta(v)),\rho\eta
(v))$ is a path from
$f\rho\eta(v)$ to $\rho\eta(v)$ as in \myref{localcont}.

\begin{definition} The \emph{homotopy Reidemeister trace},\idx{homotopy
Reidemeister trace}\idx{Reidemeister trace!homotopy}
 $R^{htpy}(f)$,\nidx{rhtpy@$R^{htpy}$}
is the trace of $\tilde{f}$.
\end{definition}

\begin{prop}\mylabel{geohtpycompare} The map
\[\calP^fM\rightarrow \Pi^fM\] that takes a path to its homotopy class
with end points fixed induces an isomorphism 
\[\pi_0^s(\sh{(\calP^fM,\tar\times \sou)_+})\rightarrow
\pi_0^s(\sh{(\Pi^fM,\tar\times \sou)_+}).\]   The image of 
$R^{htpy}(f)$ under this isomorphism is $R^{U,geo}(f)$.
\end{prop}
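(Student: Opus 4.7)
The plan is to verify both claims via explicit identifications of the shadows together with an explicit comparison of the two trace formulas.

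First, the map $c\colon \calP M\to \Pi M$ collapsing a Moore path to its homotopy class (relative to endpoints) is a map of monoids in $\Ex$ over $M\times M$, so the induced map $\calP^fM\to \Pi^fM$ is a map of $\calP M$-$\calP M$-bimodules (with $\Pi^fM$ regarded as a $\calP M$-bimodule via $c$) and hence induces a map on cyclic bar constructions, i.e., on shadows. This is the map whose effect on $\pi_0^s$ we must analyze.

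To compute the effect on $\pi_0^s$, I would exploit the simplicial homotopy equivalence already recorded at the beginning of Section \ref{classhtpysect}, which gives $\sh{(\calP^fM,\tar\times\sou)_+}\simeq \Lambda^fM_+$. The same style of argument -- the extra-degeneracy contraction used for discrete groups in Section \ref{summary1a} -- applies with the topologized groupoid $\Pi M$ in place of $\pi_1M$ and identifies $\sh{(\Pi^fM,\tar\times\sou)_+}$ with the discrete set $\sh{\pi_1M^\phi}_+$ (after choosing a basepoint, as in \myref{geotracecompare}). Since $\pi_0^s(X_+)=\mathbb{Z}$ for each path-connected space $X$, these equivalences give
\[\pi_0^s(\sh{(\calP^fM,\tar\times\sou)_+})\cong \mathbb{Z}[\pi_0\Lambda^fM],\qquad \pi_0^s(\sh{(\Pi^fM,\tar\times\sou)_+})\cong \mathbb{Z}[\sh{\pi_1M^\phi}].\]
The natural bijection $\pi_0\Lambda^fM \leftrightarrow \sh{\pi_1M^\phi}$ is realized by $c$ on $\pi_0$, so the induced map on $\pi_0^s$ is the identity under these identifications and therefore an isomorphism.

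For the matching of traces, I would compare the explicit formulas obtained in this chapter. The formula for $R^{U,geo}(f)$ derived earlier in the section is $v\mapsto (\epsilon[\eta(v),f(\rho\eta(v))],[H(f\rho\eta(v),\rho\eta(v))])$, while the formula for $R^{htpy}(f)=\tr(\tilde f)$ given in Section \ref{classhtpysect} is identical except that the Moore path $H(f\rho\eta(v),\rho\eta(v))$ from \myref{localcont} is recorded as an element of $\Lambda^fM$ rather than as its class in $\pi_0\Lambda^fM$. Since the map on shadows induced by $c$ is precisely the passage from paths to path classes, it sends $R^{htpy}(f)$ to $R^{U,geo}(f)$. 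Conceptually, this is a special case of \myref{functortrace} applied to the lax functor induced by $c$, using the compatibility of the dual pairs $((\calP M,\sou)_+,T_M\sou^*S^\nu)$ and $((\Pi M,\sou)_+,T_M\sou^*S^\nu)$ established at the end of Section \ref{randualpara2}.

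The main obstacle is the precise bookkeeping in the simplicial homotopy equivalence for $\sh{(\Pi^fM,\tar\times\sou)_+}$: unlike the discrete-group situation, the topologized groupoid $\Pi M$ has a nontrivial space of objects $M$, and one must either work parametrically over $M$ or verify that the classical extra-degeneracy argument still produces a valid simplicial homotopy in this setting. No essentially new ideas are needed beyond those already used in Section \ref{summary1a} and the first half of Section \ref{classhtpysect}.
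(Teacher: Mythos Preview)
Your approach is essentially correct and, for the trace comparison, coincides with the paper's: both rely on the compatibility of the dual pairs $((\calP M,\sou)_+, T_M\sou^*S^\nu)$ and $((\Pi M,\sou)_+, T_M\sou^*S^\nu)$ under $c$ (established at the end of Section~\ref{mlab}, not Section~\ref{randualpara2}) together with the evident commutativity of the $f$-square, to conclude that $c$ carries one trace to the other. Your formula-matching and appeal to \myref{functortrace} is just a repackaging of the commutative diagrams the paper writes down.

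For the isomorphism on $\pi_0^s$, the paper takes a shorter route that sidesteps exactly the obstacle you flag. Rather than identifying each shadow explicitly, it invokes the natural isomorphism $\pi_0^s\cong H_0$ from \myref{pihiso} and checks only that $\sh{\calP^fM}\to\sh{\Pi^fM}$ induces a bijection on path components (hence on $H_0$). This holds because $\calP M\to\Pi M$ collapses each path-homotopy class---a connected subset---to a point, so the induced map on cyclic bar constructions is a $\pi_0$-bijection levelwise and therefore after realization. Your detour through an explicit identification of $\sh{(\Pi^fM,\tar\times\sou)_+}$ with a discrete set forces you to confront the quotient topology on $\Pi M$, which you correctly identify as the main bookkeeping burden; the $H_0$ argument makes this unnecessary since only the component structure matters.
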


\begin{proof}
In \myref{pihiso} we defined an isomorphism $\pi_0^s(X)\cong H_0(X)$.
The maps that define this isomorphism are all natural and so the following 
diagram commutes.
\[\xymatrix{\pi_0^s(\sh{\calP^fM}_+)\ar[d]&\pi_q(\Sigma^q 
\sh{\calP^fM}_+)\ar[r]\ar[l]\ar[d]&H_q(\Sigma^q \sh{\calP^fM}_+)\ar[d]
&H_0(\sh{\calP^fM}_+)\ar[d]\ar[l]\\
\pi_0^s(\sh{\Pi^fM}_+)&\pi_q(\Sigma^q 
\sh{\Pi^fM}_+)\ar[r]\ar[l]&H_q(\Sigma^q \sh{\Pi^fM}_+)
&H_0(\sh{\Pi^fM}_+)\ar[l]}\]
Here $q$ is chosen so that the Freudenthal suspension theorem implies 
that the first horizontal maps are isomorphisms.  The second horizontal
maps are the Hurewicz maps.  The third maps are the homology isomorphism.
The map $H_0(\sh{\calP^fM}_+)\rightarrow H_0(\sh{\Pi^fM}_+)$ is 
an isomorphism since the map $\sh{\calP^fM}\rightarrow \sh{\Pi^fM}$ 
is the map to components.  Then \[\pi_0^s(\sh{\calP^fM}_+)
\rightarrow \pi_0^s(\sh{\Pi^fM}_+)\] is also an isomorphism. 

At the end of Section \ref{mlab} we showed that the diagrams 
\[\xymatrix{S^0\ar[r]&S^0_M\odot U_M\odot S^\nu \ar[r]\ar[dr]& 
(\calP M,\sou)_+ \odot T_M\sou^*S^\nu\ar[d]
\\
&&(\Pi M,\sou)_+ \odot T_M\sou^*S^\nu}\]
and 
\[\xymatrix@C=7pt{T_M\sou^*S^\nu\odot (\calP M,\sou)_+\ar[r]\ar[d]&
S^n\odot L(\calP M,\tar\times \sou)_+
\odot R(\calP M,\tar\times \sou)_+\ar[r]\ar[d]&S^n\odot (\calP M,\tar\times \sou)_+
\ar[d]\\
T_M\sou^*S^\nu\odot (\Pi M,\sou)_+\ar[r]&S^n\odot L(\Pi M,\tar\times \sou)_+
\odot R(\Pi M,\tar\times \sou)_+\ar[r]&S^n\odot (\Pi M,\tar\times \sou)_+}\]
commute.  The diagram 
\[\xymatrix{(\calP M,\sou)_+\ar[r]\ar[d]&
(\calP^fM,\sou)_+\ar[d]\\
(\Pi M,\sou)_+\ar[r]&
(\Pi^fM,\sou)_+}\]
also commutes.  The compatibility of shadows and these diagrams
imply that the image of $R^{htpy}(f)$ under the map to 
components is $R^{geo}(f)$.
\end{proof}

\section{The Klein and Williams invariant as a trace}

To identify $R^{KW}(f)$ with $R^{htpy}(f)$ we need  to fill in some of the 
details we omitted in Section \ref{usuconverse}.  We will still not 
give a complete proof here.  All of the details can be found in 
Section \ref{fibconvers}.

Recall that $S_B$ is the unreduced fiberwise suspension.

\begin{prop}[\myref{kwclassidentify}]\mylabel{kwclassidentify2}
There is an isomorphism
\[\{S_M^0, S_M(\Gamma_f^*(\fibs(i)))\}_M\cong \{S^0,\Lambda^fM_+\}.\]
\end{prop}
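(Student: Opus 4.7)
The plan is to establish the isomorphism by converting both sides into a common expression using Costenoble-Waner duality for a closed smooth manifold. The central tools are the Costenoble-Waner dual pair $(S^0_M, tS^\nu)$ from Theorem~\myref{dualM} and the induced isomorphism on stable mapping sets recorded in Corollary~\myref{CWdualmaps}.

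First, I would apply Corollary~\myref{CWdualmaps} with $Z = S^0$ and $W = S_M(\Gamma_f^*(\fibs(i)))$ to obtain
\[\{S^0_M, S_M(\Gamma_f^*(\fibs(i)))\}_M \cong \{S^n, S_M(\Gamma_f^*(\fibs(i))) \boxtimes tS^\nu\}.\]
It then suffices to exhibit a stable equivalence of based spaces
\[S_M(\Gamma_f^*(\fibs(i))) \boxtimes tS^\nu \simeq S^n \wedge \Lambda^fM_+.\]

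Next, I would model $\fibs(i) \to M \times M$ explicitly as the path space $\{\gamma\colon I\to M\times M : \gamma(0)\notin \triangle\}$ with projection $\gamma\mapsto \gamma(1)$. Pulling back along $\Gamma_f$ gives an ex-space over $M$ whose fiber at $x$ consists of paths in $M\times M$ ending at $(x, f(x))$ with starting point off the diagonal; this is contractible when $f(x)\neq x$ and has the homotopy type of $S^{n-1}$ when $f(x)=x$. The unreduced fiberwise suspension $S_M(-)$ adjoins the two distinguished sections $\sect_-, \sect_+$ whose fiberwise homotopy class is exactly the KW invariant. Taking $\boxtimes$ with $tS^\nu$ is then a Pontryagin-Thom-type collapse: using that a tubular neighborhood of the diagonal $\triangle \subset M \times M$ is modeled by the tangent bundle $TM$, and that $S^\nu$ is the fiberwise one-point compactification of the normal bundle of the embedding $M\subset \mathbb{R}^n$, the combined collapse converts the pair $(M\times M,\,M\times M-\triangle)$ into a based Thom construction over $M$ and cuts the parameter space of paths down to those $\gamma$ satisfying $f(\gamma(1)) = \gamma(0)$, i.e., down to $\Lambda^fM$. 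The suspension ambient in the dual $tS^\nu$ accounts for the factor of $S^n$.

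The main obstacle is the careful bookkeeping of the two sections $\sect_-$ and $\sect_+$ and of the unreduced (as opposed to reduced) fiberwise suspension as they pass through the Thom collapse. One must show that the distinguished class $\sect_-\amalg\sect_+$ maps under the duality to the canonical Moore-loop class on the right that defines $R^{KW}(f)$, not merely that the underlying spaces are stably equivalent. This is most naturally done in the Moore path model, where strict associativity of composition makes the identification canonical rather than requiring a coherent homotopy, and the sections correspond to the unit inclusion of $M$ into $\calP M$ as constant paths. The complete verification of this bookkeeping, with its fiberwise generalization, is carried out in Section~\ref{fibconvers}, of which the classical statement here is a special case.
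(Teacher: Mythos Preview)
Your overall strategy matches the paper's: apply Costenoble--Waner duality via Corollary~\myref{CWdualmaps} to move from $\{S^0_M,-\}_M$ to $\{S^0,-\}$, and then identify the resulting target space. The ingredients you invoke in the second step---the tubular neighborhood of $\triangle\subset M\times M$ modeled on the tangent bundle, the Thom construction, and the reduction of the path space to $\Lambda^fM$---are exactly the right ones.

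Two comments on presentation. First, the paper organizes the middle identification more cleanly: rather than combining the collapse with $tS^\nu$ in one move, it first identifies $S_{M\times M}(\fibs(i))$ itself (before any pullback) as weakly equivalent to $\triangle_!S^{\nmalbdl}\boxtimes(\calP M,\tar\times\sou)_+$, where $\nmalbdl$ is the normal bundle of the diagonal. Pulling back along $\Gamma_f$ then gives $S_M(\Gamma_f^*\fibs(i))\simeq\triangle_!S^{\nmalbdl}\boxtimes\Lambda^fM_+$. Only at the end does one combine $S^\nu\boxtimes\triangle_!S^{\nmalbdl}\simeq S^\nu\wedge_M S^{\nmalbdl}$, which is trivial because $\nu\oplus\nmalbdl\cong\mathbb{R}^n$. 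This factorization isolates the geometric content (the cofiber sequence identifying $S_{M\times M}\fibs(i)$) from the bundle triviality, and is what is carried out in detail in Lemma~\myref{fibidentify}.

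Second, your final paragraph about tracking the sections $\sect_-\amalg\sect_+$ is not part of this proposition at all. Proposition~\myref{kwclassidentify2} asserts only the isomorphism of stable mapping sets; showing that the specific class $\sect_-\amalg\sect_+$ corresponds to $R^{htpy}(f)$ under this isomorphism is the content of the subsequent Theorem~\myref{KWclassicfinal}. Keep the two statements separate.
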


\begin{proof}[Sketch of proof]
Since $M$ is a closed smooth manifold \myref{CWdualmaps} implies
there  is an isomorphism 
\[\{S_M^0,S_M(\Gamma_f^*(\fibs(i)))\}_M\cong \{S^0,tS^{\nu}
\odot S_M(\Gamma_f^*(\fibs(i)))\}\]
where $\nu$ is the normal bundle of $M$.

It remains to identify $S_M(\Gamma_f^*(\fibs(i)))$.  Let ${\nmalbdl}$
\nidx{tau@$\protect\nmalbdl$}
be the normal bundle of the inclusion of the diagonal of $M$ into 
$M\times M$.   
The ex-space
$S_{M\times M}(\fibs(i))$ is weakly equivalent to \[\triangle_!S^{\nmalbdl}
\boxtimes (\calP M,\tar\times \sou)_+.\]  Taking pullbacks of both 
sides gives a weak equivalence
\[ S_M(\Gamma_f^*(\fibs(i)))\simeq \triangle_!S^{\nmalbdl}\boxtimes 
\Lambda^fM_+.\]
Combining this with the isomorphism above, we have an isomorphism 
\[\{S_M^0, S_M(\Gamma_f^*(\fibs(i)))\}_M\cong \{S^0,S^\nu\boxtimes \triangle_!
S^{\nmalbdl}\boxtimes \Lambda^fM_+\}.\]
By construction the $\boxtimes$ product $S^\nu\boxtimes \triangle_!
S^{\nmalbdl}$ is equivalent to $S^\nu\wedge_MS^{\nmalbdl}$ and this bundle
is trivial.  So we have an isomorphism 

\[\{S_M^0, S_M(\Gamma_f^*(\fibs(i)))\}_M\cong \{S^0,S^n\bar{\wedge}
\Lambda^fM_+\}.\]

\end{proof}

\begin{theorem}\mylabel{KWclassicfinal}
Let $M$ be a closed smooth manifold and $f\colon M\rightarrow M$ a
continuous map.  Under the identification in \myref{kwclassidentify2}
\[R^{KW}(f)=R^{htpy}(f).\]\end{theorem}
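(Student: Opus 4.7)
The plan is to trace both invariants through the chain of identifications built in the sketch of \myref{kwclassidentify2} and show that, term by term, the image of $\sect_-\amalg\sect_+$ matches the explicit formula for the trace of $\tilde{f}$. Both $R^{KW}(f)$ and $R^{htpy}(f)$ are assembled from the same pieces of data—the Pontryagin-Thom collapse for the normal bundle of $M$, the Pontryagin-Thom collapse for the diagonal, and the local homotopy $H$ from \myref{localcont}—but they are packaged through different machinery. The task is to witness the packaging.

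First I would replace $\sect_+$ on $S_M(\Gamma_f^*\fibs(i))$ by its image under the weak equivalence $S_M(\Gamma_f^*\fibs(i))\simeq \triangle_!S^{\nmalbdl}\boxtimes \Lambda^fM_+$ used in the proof of \myref{kwclassidentify2}. The section $\sect_-$ maps to a section factoring through the basepoint of $\triangle_!S^{\nmalbdl}\boxtimes \Lambda^fM_+$, so after passing to stable homotopy it contributes nothing; the non-trivial content is carried entirely by $\sect_+$. Under this equivalence $\sect_+$ becomes the map recording, at each $m\in M$, the Pontryagin-Thom class for the diagonal (in the $\triangle_!S^{\nmalbdl}$ factor) together with a Moore path from $f(m)$ back to $m$ furnished by $H$ (in the $\Lambda^fM$ factor). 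This is precisely the data on which the trace formula for $\tilde{f}$ evaluates.

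Next I would apply the Costenoble-Waner duality $(S^0_M,tS^\nu)$ from \myref{dualM} to convert the adjoint of $\sect_-\amalg\sect_+$ into a stable map $S^0\to tS^\nu\odot \triangle_!S^{\nmalbdl}\boxtimes \Lambda^fM_+$, and then use the fiberwise trivialization $S^\nu\wedge_MS^{\nmalbdl}\simeq S^n\times M$ to land in $\{S^0,S^n\bar{\wedge}\Lambda^fM_+\}$. Tracking the coevaluation of $(S^0_M,tS^\nu)$, which is the Pontryagin-Thom map $\eta$ for $M\hookrightarrow\bR^n$, the image becomes $v\mapsto (\epsilon(\eta(v),f\rho\eta(v)),H(f\rho\eta(v),\rho\eta(v)))$, matching the explicit description of the trace of $\tilde{f}$ recorded in Section \ref{classhtpysect}. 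At this stage the equality $R^{KW}(f)=R^{htpy}(f)$ is a direct comparison of formulas.

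The main obstacle is making the identification $S_M(\Gamma_f^*\fibs(i))\simeq \triangle_!S^{\nmalbdl}\boxtimes\Lambda^fM_+$ canonical enough that the two explicit sections $\sect_\pm$ can be tracked through it, rather than just an abstract weak equivalence. This requires choosing compatible Hurewicz replacements for $i\colon M\times M-\triangle\hookrightarrow M\times M$ and for the pullback along $\Gamma_f$, and checking that the unreduced fiberwise suspension of the resulting fibration is modeled by the smash product with $\Lambda^fM_+$ in a way compatible with $\sect_\pm$. As noted in the introduction of this chapter, the cleanest treatment of this compatibility is the fiberwise argument of Chapter \ref{fibfpsec2}; the classical statement then follows by specializing the fiber bundle to $M\to *$.
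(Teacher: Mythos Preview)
Your proposal is correct and follows essentially the same approach as the paper: both track $\sect_-\amalg\sect_+$ through the weak equivalence of \myref{kwclassidentify2} to an explicit section $\secttwo$, then apply the Costenoble-Waner duality $(S^0_M,tS^\nu)$ and the trivialization $S^\nu\wedge_MS^{\nmalbdl}\simeq S^n\times M$ to obtain the explicit trace formula from Section~\ref{classhtpysect}. The paper organizes this as a commutative square relating stabilization and duality isomorphisms, while you phrase it as a linear tracking of maps, but the content and the deferral of the explicit equivalence to the fiberwise argument are the same.
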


\begin{proof}
To define the stable homotopy Euler class we used the map
\[\sect_+\amalg \sect_-\colon S^0_M\rightarrow f^*S_{M\times M}N(M\times M-\triangle).\]
The corresponding map ${\secttwo}\colon S^0_M\rightarrow \triangle_!S^{\nmalbdl}\odot
\Lambda^fM_+$ under the identification in 
\myref{kwclassidentify2} takes
the section of $S^0_M$ to the section of $\triangle_!S^{\nmalbdl}\odot
\Lambda^fM_+$ 
and on the other copy of $M$,
\[\secttwo(m)=((m,f(m)),H(m,f(m)))\]  where 
$H(f(m),m)$ is a path from $f(m)$ to $m$ as in \myref{localcont}.

Let $\phi$ denote the equivalence in \myref{kwclassidentify2}.
Then we have the following isomorphisms.
\[\xymatrix{[S^0_M,S_Mf^*N(M\times
M-\triangle)]_M\ar[d]_F\ar[r]^-{\phi_*}&[S^0_M,\triangle_!
S^{\nmalbdl}\odot \Lambda^fM_+]_M\ar[d]^F\\
\{S^0_M,S_Mf^*N(M\times M-\triangle)\}_M\ar[r]^-{\phi_*}\ar[d]_D&
\{S^0_M,\triangle_!S^{\nmalbdl}\odot \Lambda^fM_+\}_M\ar[d]^D\\
\{S^0,S^{\nu}\odot S_Mf^*N(M\times M-\triangle)\}\ar[r]_-{(1\odot
\phi)_*}& \{S^0,S^{\nu}\odot \triangle_!S^{\nmalbdl} \odot \Lambda^fM_+\} }\]
The map  $F$ is the stabilization isomorphism.  The map $D$ is the isomorphism
that defines the dual pair $(S_M^0,tS^\nu)$ as in \myref{CWdualmaps}.

In the top left corner we have the stable cohomotopy
Euler class and in the bottom right corner the corresponding  map is
\[\xymatrix{S^0\ar[r]^-\eta& S^{\nu}\odot S_M^0\ar[r]^-{\id\odot \secttwo}&
S^{\nu}\odot\triangle_!S^{\nmalbdl}\odot \Lambda^fM_+\simeq
(S^\nu\wedge_MS^{\nmalbdl})\odot \Lambda^fM_+\simeq S^n\times \Lambda^fM}\] This map is
the trace of the map induced by $f\colon M\rightarrow M$ on the space of
free Moore paths in $M$.\end{proof}

\section{Duality for unbased bimodules enriched in chain complexes}
\label{algrtrace}
There is an unbased version of the algebraic Reidemeister trace
which can be defined using the trace in a bicategory with shadows.
This invariant was defined by Coufal in \cite{coufal2,coufal} using a
different approach.

The unbased algebraic Reidemeister trace is an example of the trace in
the bicategory of categories, bimodules and natural
transformations enriched in chain complexes of modules over a
commutative ring $R$. In this bicategory the 0-cells are categories 
enriched in chain
complexes of modules over $R$. If $\sA$ and $\sB$ are categories
enriched in $\Ch_R$ then $\sA\otimes \sB$ is the category with objects
$\ob \sA\times \ob \sB$ and morphisms chain complexes given by the tensor
product of the morphism chain complexes of $\sA$ and $\sB$.
The 1-cells are enriched functors of the form
\[\sA\otimes \sB^{op}\rightarrow \Ch_R.\] We 
refer to functors of this form as $\sA$-$\sB$-bimodules. The 2-cells
are enriched natural transformations.  We denote this bicategory 
$\sE_{\Ch}$.

The bicategory composition $\odot$ is the enriched tensor product of 
functors.  If $\sX\colon \sA\otimes \sB^{op}\rightarrow \Ch_R$ and 
$\sY\colon \sB\otimes \sC^{op}\rightarrow \Ch_R$ are enriched functors
$\sX\odot \sY$ is the coequalizer
\[\xymatrix@C=70pt{{\displaystyle\coprod_{b,b'\in \ob(\sB)}\sX(a,b')\otimes
\sB(b,b')
\otimes \sY(b,c)}\ar@<.5ex>[r]^-{
\coprod \kappa_\sB
\otimes \id}
\ar@<-.5ex>[r]_-{\coprod \id\otimes \kappa_\sB}& {\displaystyle
\coprod_{b\in \ob(\sB)}
\sX(a,b)\otimes \sY(b,c)}.}\]

This bicategory is
the `many object' generalization of the bicategory of rings, chain
complexes of bimodules and maps of chain complexes. A complete
definition of this bicategory can be found in Section
\ref{catex2}.

Let $M$ be a connected CW complex
and let $\mathbb{Z}\Pi M$\nidx{zpim@$\protect\bZ\Pi M$} be the
category with objects the points of $M$ and $\bZ\Pi M(x,y)$ the free
abelian group on the homotopy classes of paths in $M$ from $x$ to
$y$. We have forgotten the topology on $\Pi M$.  

We think of $\bZ\Pi M$
as a category enriched in chain complexes of abelian groups
concentrated in degree 0. 
Define a right $\mathbb{Z}\Pi M$-module \nidx{C@$\protect\calC$} 
\[\calC M\colon \mathbb{Z}\Pi M\rightarrow \Ch_\mathbb{Z}\]
where $\calC M(x)$ is the cellular chains on the universal cover of
$M$ based at $x$. The action of a homotopy class of paths is the
chain map that is induced by the action on the universal cover.

Note that \[\mathbb{Z}\Pi M(x,x)\cong\mathbb{Z}\pi_1(M,x)\]
and \[\calC M(x)\cong C_*(\tilde{M}).\]

\begin{lemma}\mylabel{covtransdual}
The right $\mathbb{Z}\Pi M$-module $\calC M$ is dualizable if and only
if the right $\mathbb{Z}\pi_1(M,x)$-module $ C_*(\tilde{M})$
is dualizable.

Let $M$ be a  finite, connected CW complex.  Then $\calC M(x)$ is
dualizable as a right $\bZ\Pi M(x,x)$-module for any $x\in M$ and $\calC M$
is dualizable as a right $\bZ\Pi M$-module.
\end{lemma}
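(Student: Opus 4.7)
The plan is to prove the iff portion in essentially the same way as \myref{topdualcompare}, namely by exhibiting an intermediate dual pair whose evaluation is an isomorphism and then applying Theorems \ref{dualcomposites1} and \ref{dualcomposites2}. The bicategory is $\sE_{\Ch}$, and the intermediate dual pair relates the 0-cells $\bZ\Pi M$ and $\bZ\pi_1(M,x)$ (the latter regarded as an enriched category with one object).

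For a chosen $x\in M$, define the $\bZ\pi_1(M,x)$-$\bZ\Pi M$-bimodule $\bZ\Pi M_x$ by $\bZ\Pi M_x(y) = \bZ\Pi M(x,y)$, the free abelian group on homotopy classes of paths from $x$ to $y$, with left $\bZ\pi_1(M,x)$-action and right $\bZ\Pi M$-action by path composition. Dually, define the $\bZ\Pi M$-$\bZ\pi_1(M,x)$-bimodule $\,_x\bZ\Pi M$ by $\,_x\bZ\Pi M(y) = \bZ\Pi M(y,x)$. I would then verify that $(\bZ\Pi M_x,\,_x\bZ\Pi M)$ is a dual pair in $\sE_{\Ch}$: the coevaluation $\bZ\pi_1(M,x)\rightarrow \bZ\Pi M_x\odot \,_x\bZ\Pi M$ sends a loop $\alpha$ to $\alpha\otimes c_x$ (where $c_x$ is the constant path), while the evaluation $\,_x\bZ\Pi M\odot \bZ\Pi M_x\rightarrow \bZ\Pi M$ is composition of paths. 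The triangle identities reduce to path-composition bookkeeping on generators, essentially as in the proof of \myref{bunivdual} $(i)$. Crucially, since $M$ is path-connected, this evaluation is an isomorphism: on the $(y,z)$-component it identifies the coequalizer of the two actions on $\bZ\Pi M(y,x)\otimes \bZ\Pi M(x,z)$ with $\bZ\Pi M(y,z)$ via concatenation.

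With this dual pair in hand, one checks $\calC M\odot \bZ\Pi M_x\simeq C_*(\tilde M)$ as right $\bZ\pi_1(M,x)$-modules (the tensor product over $\bZ\Pi M$ collapses a path class at $y$ in $C_*\tilde M(y)$ against a chosen path from $x$ to $y$, giving the usual based description of the universal cover). Since the evaluation of $(\bZ\Pi M_x,\,_x\bZ\Pi M)$ is an isomorphism, Theorems \ref{dualcomposites1} and \ref{dualcomposites2} apply and yield that $\calC M$ is dualizable as a right $\bZ\Pi M$-module if and only if $\calC M\odot \bZ\Pi M_x \simeq C_*(\tilde M)$ is dualizable as a right $\bZ\pi_1(M,x)$-module. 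This is precisely the first claim, and I expect the main technical obstacle to lie here, in identifying the composite $\calC M\odot \bZ\Pi M_x$ correctly and checking the triangle identities for the intermediate pair carefully.

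For the second statement, suppose $M$ is a finite connected CW complex. Lifting the CW structure to $\tilde M$ gives a $\pi_1(M,x)$-equivariant CW structure with finitely many free orbits in each dimension, so $C_*(\tilde M)$ is a finitely generated free chain complex of right $\bZ\pi_1(M,x)$-modules. The analogue of \myref{dualrings} for $\Ch$ (which was noted after the introduction of $\Ch_R$ as a symmetric monoidal category and, for the non-commutative case, follows formally from the bicategory version) implies that such a chain complex is dualizable, with dual $\Hom_{\bZ\pi_1(M,x)}(C_*(\tilde M),\bZ\pi_1(M,x))$. Combining this with the iff just established yields that $\calC M$ is dualizable as a right $\bZ\Pi M$-module.
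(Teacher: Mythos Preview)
Your approach is essentially the paper's: exhibit the representable $\bZ\Pi M$-$\bZ\pi_1(M,x)$ bimodule pair as a dual pair with isomorphism (co)evaluation, identify $\calC M\odot \bZ\Pi M(x,-)$ with $\calC M(x)\cong C_*(\tilde M)$, and invoke Theorems~\ref{dualcomposites1} and~\ref{dualcomposites2}; then for finite $M$ use that $C_*(\tilde M)$ is a bounded complex of finitely generated free $\bZ\pi_1(M,x)$-modules. The only difference is orientation: you set up the dual pair with coevaluation out of $\bZ\pi_1(M,x)$ and evaluation into $\bZ\Pi M$, whereas the paper uses the reverse (coevaluation $\bZ\Pi M\to \bZ\Pi M(x,-)\odot\bZ\Pi M(-,x)$ via $\alpha\mapsto(\alpha|_{[1/2,1]}\beta^{-1},\,\beta\alpha|_{[0,1/2]})$, evaluation $\to\bZ\pi_1(M,x)$ via composition). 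Since the inclusion of $x$ into $\Pi M$ is an equivalence of groupoids when $M$ is connected, both (co)evaluations are isomorphisms, so either orientation works and both halves of Theorem~\ref{dualcomposites2} apply. One small slip: with the paper's path-composition convention $(\beta,\alpha)\mapsto\beta\alpha$, $\bZ\Pi M(x,y)$ carries a \emph{right} $\bZ\pi_1(M,x)$-action and a \emph{left} $\bZ\Pi M$-action, so your bimodule labels on $\bZ\Pi M_x$ and $\,_x\bZ\Pi M$ are reversed; this is what makes $\calC M\odot \bZ\Pi M_x$ typecheck in the next line, and once corrected your argument goes through verbatim.
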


\begin{proof}

For any $x\in M$, the groupoid $\bZ\Pi M$ defines a
$\bZ\Pi M$-$\bZ\Pi M(x,x)$-bimodule $\bZ\Pi M(x,-)$\nidx{zpimx@$\protect\bZ
\Pi M(x,-)$} 
and a 
$\bZ\Pi M(x,x)$-$\bZ\Pi M$-bimodule
$\bZ\Pi M(-,x)$.\nidx{zpimx@$\protect\bZ\Pi M(-,x)$}  
These form a dual pair \[(\bZ\Pi M(-,x),
\bZ\Pi M(x,-)).\]  The
coevaluation map \[\eta\colon \bZ\Pi M\rightarrow \bZ\Pi M(x,-)\odot \bZ\Pi M(-,x)\]
takes a representative $\alpha$ of a homotopy class to
$(\alpha|_{[\frac{1}{2},1]}\circ \beta^{-1}  ,\beta \circ\alpha|_{[0,\frac{1}{2}]} )$
 where $\beta$ is any path from
$\alpha(\frac{1}{2})$ to $x$.  
This is independent of the choice of $\beta$ since
different choices are identified by the $\odot$ product.  The
evaluation map \[\epsilon\colon \bZ\Pi M(-,x)\odot \bZ\Pi M(x,-) \rightarrow
\bZ\Pi M(x,x)\] is composition.

Note that $\calC M\odot \bZ\Pi M(x,-)$ is isomorphic to $\calC M(x)$ for any
$x\in M$ and that the coevaluation and evaluation maps for the
dual pair $(\bZ\Pi M(x,-),\bZ\Pi M(-,x))$ are isomorphisms. The first 
statement then follows from Theorems \ref{dualcomposites1} and
\ref{dualcomposites2}.

For the second part of the lemma it is enough to show 
that one of the modules
is dualizable. For any $x\in M$, $\calC M(x)$ is a finitely generated
chain complex of free $\mathbb{Z}\pi_1(M,x)$-modules and so is
dualizable with dual $\Hom_{\mathbb{Z}\pi_1(M,x)}
(\calC M(x),\mathbb{Z}\pi_1(M,x))$.

Composing this dual pair with the dual pair  $(\bZ\Pi M(x,-),
\bZ\Pi M(-,x))$
gives a dual pair for $\calC M$. 
\end{proof}

This lemma is a special case of 
\myref{enrichedadjunction}.

\begin{rmk}
In the previous section we assumed that $M$ was a closed
smooth manifold. In this section we assumed that $M$ is a
connected finite  CW complex.  The different assumptions only
reflect that these categories have different `practical
generalities' that we want to work in.  
\end{rmk}

\section{The unbased algebraic Reidemeister trace}\label{algrtrace2}
Before we can define the unbased algebraic Reidemeister trace we need
to define the shadow in the bicategory of enriched categories, bimodules
and homomorphisms.  Let $\sZ$ be an $\sA$-$\sA$-bimodule.  Then $\sh{\sZ}$
is the coequalizer
\[\xymatrix@C=70pt{{\displaystyle\coprod_{a,a'\in \sA} \sA(a,a')\otimes
\sZ(a,a')}
\ar@<.5ex>[r]^-{\coprod
\kappa_\sA}
\ar@<-.5ex>[r]_-{\coprod(\kappa_\sA\circ \gamma)}
&{\displaystyle\coprod_{a\in \sA}\sZ(a,a)}
.}\]\nidx{shad@$\protect\sh{-}$}

Let $f\colon M\rightarrow M$ be a continuous map and $f_*$\nidx{f@$f_*$} 
the induced
map on $\bZ\Pi M$. Let
$U_{\bZ\Pi M}^{f_*}$\nidx{zpifm@$\protect U_{\bZ\Pi M}^{f_*}$} 
be the $\bZ\Pi M$-$\bZ\Pi M$-bimodule 
defined by
$U_{\bZ\Pi M}^{f_*}(x,y)=\bZ\Pi M(f(y),x)$ with action of $\bZ\Pi
M\otimes \bZ\Pi M^{op}$
 given by $(\gamma,\alpha,\beta)\mapsto (\gamma\alpha f(\beta))$.
The map $f\colon M\rightarrow M$ defines a natural transformation
\nidx{cf@$\protect\calC^{f_*}$}
\[\tilde{f}_*\colon \calC M\rightarrow \calC^{f_*}M\coloneqq 
\calC M\odot U_{\bZ\Pi M}^{f_*}.\]
The trace of $\tilde{f}_*$ in the bicategory of enriched
categories, bimodules, and natural transformations is a map
\[\mathbb{Z} \rightarrow \sh{U_{\bZ\Pi M}^{f_*}}.\]

\begin{definition} The \emph{unbased algebraic Reidemeister
trace}\idx{unbased algebraic Reidemeister trace}\idx{Reidemeister 
trace!unbased algebraic} of $f$, $R^{U,alg}(f)$,\nidx{ralgu@$R^{U,alg}$} is 
the image of 1 under
 the trace of $\tilde{f}_*$.  \end{definition}

\begin{rmk} The definition of the unbased algebraic Reidemeister
trace is not the same as the definition of the unbased algebraic
generalized Lefschetz number in \cite{coufal}, but it is not 
difficult to see the invariants  are the same. 
An equivalent characterization of a dualizable 
1-cell in a closed bicategory
implies that the dual of $\calC M$ is $\Hom_{\bZ\Pi M}(\calC M,\bZ\Pi M)$ 
and the coevaluation is 
\[\xymatrix{{\mathbb{Z}}\ar[r]&\Hom_{\bZ\Pi M}(\calC M,\calC M)\ar[r]^-{\nu^{-1}}&
\protect\calC M \odot \Hom_{\bZ\Pi M}(\calC M,\bZ\Pi M) .}\]
The unbased algebraic Reidemeister trace is the image of 1 under
the map
\[\xymatrix{{\mathbb{Z}}\ar[d]&
\sh{U_{\bZ\Pi M}^{f_*}}\\
\Hom_{\bZ\Pi M}(\calC M,\calC M)\ar[d]_{\tilde{f}_*}&
\sh{\calC^{f_*}M\odot \Hom_{\bZ\Pi M}(\calC M,\bZ\Pi M)}
\ar[u]^-\epsilon\\
\Hom_{\bZ\Pi M}(\calC M,\calC^{f_*}M)\ar[r]^-{\nu^{-1}}&
\Hom_{\bZ\Pi M}(\calC M,\bZ\Pi M)\odot 
\calC^{f_*} M
\ar[u]^\cong
}.\]

The invariant defined in \cite{coufal} is the image of $\tilde{f}_*$
under the map 
\[\Hom_{\bZ\Pi M}(\calC M,\calC^{f_*}M)\stackrel{\nu^{-1}}{\rightarrow}
\calC^{f_*}M\odot \Hom_{\bZ\Pi M}(\calC M,\bZ\Pi M) \stackrel{\ev}{\rightarrow} 
\sh{U_{\bZ\Pi M}^{f_*}}.\]
Therefore the unbased generalized Lefschetz number 
coincides with the unbased algebraic Reidemeister trace.
\end{rmk}

The map $\tilde{f}_*$ is of the form described in
\myref{tracegpd} and the coevaluation for the dual pair
$(\bZ\Pi M(x,-),\bZ\Pi M(-,x))$  is an isomorphism so we have the following
corollary.

\begin{corollary}\mylabel{algtracecompare}
A choice of base point $\ast\in M$  determines an isomorphism
\[\sh{U_{\bZ\Pi M}^{f_*}}\rightarrow \sh{\pi_1(M,\ast)^\phi} \] which
takes $R^{U,alg}(f)$ to $R^{alg}(f)$.

\end{corollary}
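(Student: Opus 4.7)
The plan is to deduce this identification as a direct application of part~(2) of \myref{tracegpd}, using the dual pair $(\bZ\Pi M(\ast,-),\bZ\Pi M(-,\ast))$ from \myref{covtransdual} as the auxiliary dual pair whose coevaluation is an isomorphism. A choice of base point $\ast\in M$ is exactly what is needed to single out this dual pair: its coevaluation $\chi\colon U_{\bZ\Pi M}\rightarrow \bZ\Pi M(\ast,-)\odot\bZ\Pi M(-,\ast)$ has $(y,z)$-component the factorization isomorphism $\bZ\Pi M(y,z)\cong \bZ\Pi M(y,\ast)\otimes_{\bZ\pi_1(M,\ast)}\bZ\Pi M(\ast,z)$, which is an isomorphism because $\Pi M$ is connected.

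First I would compose this dual pair with the dual pair for $\calC M$ (constructed in \myref{covtransdual}) via \myref{dualcomposites1}; the result is a dual pair for $\calC M\odot \bZ\Pi M(\ast,-)\cong \calC M(\ast)=C_*(\tilde M)$ viewed as a right $\bZ\pi_1(M,\ast)$-module. The 2-cell $\tilde{f}_*\colon\calC M\rightarrow \calC M\odot U_{\bZ\Pi M}^{f_*}$ then fits the hypotheses of \myref{tracegpd}(2) with $Q=U_{\bZ\Pi M}$, $X=\calC M$, $P=U_{\bZ\Pi M}^{f_*}$ and $(Z,W)=(\bZ\Pi M(\ast,-),\bZ\Pi M(-,\ast))$. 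The associated 2-cell $\tilde{f}_*^\star$ is a map $C_*(\tilde M)\rightarrow C_*(\tilde M)\odot\bigl(\bZ\Pi M(-,\ast)\odot U_{\bZ\Pi M}^{f_*}\odot\bZ\Pi M(\ast,-)\bigr)$, and unwinding the coequalizers shows that the bimodule on the right is $\bZ\pi_1(M,\ast)^\phi$, with $\tilde{f}_*^\star$ being exactly the classical chain-level map whose Hattori--Stallings trace is $R^{alg}(f)$.

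By \myref{tracegpd}(2), the trace of $\tilde{f}_*^\star$ equals
\[\sh{U_{\bZ\Pi M}}\xrightarrow{\tr(\tilde{f}_*)}\sh{U_{\bZ\Pi M}^{f_*}}\xrightarrow{\cong}\sh{\bZ\Pi M(-,\ast)\odot U_{\bZ\Pi M}^{f_*}\odot\bZ\Pi M(\ast,-)}\cong\sh{\bZ\pi_1(M,\ast)^\phi},\]
where the second arrow is the isomorphism on shadows induced by $\chi$ together with the cyclic rotation $\theta$. Evaluating both sides at $1\in\mathbb{Z}=\sh{U_{\bZ\Pi M}}$ identifies $R^{U,alg}(f)$ with $R^{alg}(f)$ under the displayed isomorphism, which is the assertion of the corollary.

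The main obstacle is the bookkeeping in the middle step: verifying that the formally constructed 2-cell $\tilde{f}_*^\star$ agrees on the nose with the classical twisted chain map $C_*(\tilde M)\rightarrow C_*(\tilde M)\otimes_{\bZ\pi_1(M,\ast)}\bZ\pi_1(M,\ast)^\phi$ used in the original definition of $R^{alg}(f)$. This requires carefully matching the coend/coequalizer description of $U_{\bZ\Pi M}^{f_*}$ and of the shadow with the convention $\phi(\alpha)=\basepath^{-1}f(\alpha)\basepath$ built into the classical definition, but once the identifications are pinned down the formal trace identity in \myref{tracegpd}(2) delivers the result without further computation.
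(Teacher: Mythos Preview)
Your proposal is correct and follows exactly the paper's approach: the paper's entire argument is the sentence preceding the corollary, namely that $\tilde{f}_*$ has the form required by \myref{tracegpd} and that the coevaluation for the dual pair $(\bZ\Pi M(\ast,-),\bZ\Pi M(-,\ast))$ is an isomorphism, so part~(2) of \myref{tracegpd} applies. You have simply unpacked this in more detail, including the identification of $\tilde{f}_*^\star$ with the classical twisted chain map, which the paper leaves implicit.
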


\section{The proof of  \myref{reidemeister1} and some properties of 
the trace}
We can now give a conceptual proof of \myref{reidemeister1}.  We will
start by connecting the geometric Reidemeister trace with the 
algebraic Reidemeister trace.  Here we use the functoriality of the 
trace in bicategories with shadows.

The rational cellular chains functor is a symmetric monoidal
functor which commutes with trace. We can use this functor to
define  a lax functor of bicategories
$C_*(-;\mathbb{Q})\colon \sM_{\Ex}\rightarrow \sE_{\Ch}$.\nidx{cq@$C_*(-; \protect\bQ)$} 
A monoid $\sA$
in $\Ex$ with projection map $\tar\times \sou\colon \sA\rightarrow A\times A$
is taken to the category with objects the set $A$ (forget
the topology).  For $a,a'\in A$, the morphism chain complex is
\[C_*((\tar\times \sou)^{-1}(a,a');\mathbb{Q}),\] the rational cellular
chains on the inverse image of $(a,a')$. An
$\sA$-$\sA'$-bimodule $\sX$ with projection $\pro\times\pro'$
is taken to the bifunctor which on a pair
of objects $(a,a')\in A\times A'$ is the chain complex
$C_*((\pro\times \pro')^{-1}(a,a');\mathbb{Q})$.  A map of modules is
taken to the induced map on cellular chains.

For any monoid $\sA$ the map $\phi_{\sA}$ is the identity.  For
bimodules $\sX$ and $\sY$ the map $\phi_{\sX,\sY}$ is induced by the
inclusion maps
\[(\pro_\sX\times \pro'_\sX)^{-1}(a,a')\wedge (\pro_\sY\times \pro'_\sY)^{-1}(a',a'')
\rightarrow (\pro_\sX)^{-1}(a)\odot (\pro'_\sY)^{-1}(a'').\]

\begin{lemma}\mylabel{alggeocompare} The functor $C_*(-;\mathbb{Q})$
defines an isomorphism \[\mathbb{Z}\sh{\Pi^{f_*}M}\rightarrow
\sh{t({\mathbb{Z}\Pi^{f_*}M})}\] and under this isomorphism
$R^{U,geo}(f)= R^{U,alg}(f)$.  The same functor defines an
isomorphism \[\mathbb{Z}\sh{ \pi_1M^\phi}\rightarrow
\sh{\mathbb{Z}\pi_1M^\phi}.\]  Under this isomorphism
$R^{geo}(f)=R^{alg}(f)$.
\end{lemma}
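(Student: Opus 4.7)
The plan is to apply \myref{functortrace} to the lax functor of bicategories $C_*(-;\mathbb{Q})\colon \sM_{\Ex}\rightarrow \sE_{\Ch}$, evaluated on the dual pair $((\Pi M,\sou)_+,T_M\sou^*S^{\nu})$ of \myref{bunivdual2} and the 2-cell $f_*$ induced by $f$. This produces the unbased statement directly; the based statement then follows by combining it with \myref{geotracecompare} and \myref{algtracecompare}.

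The first step is to verify that $C_*(-;\mathbb{Q})$ is compatible with shadows in the sense of Section \ref{shadowssection}. The shadow in $\sM_{\Ex}$ is the geometric realization of the cyclic bar simplicial ex-space; rational cellular chains commute (up to quasi-isomorphism via Eilenberg--Zilber) with geometric realization and with the bar construction, so the natural map $\psi_\sA\colon \sh{C_*(-;\mathbb{Q})}\rightarrow C_*(\sh{-};\mathbb{Q})$ exists, and the coherence diagram involving $\theta$ commutes because both sides reduce to the evident levelwise twist on tensor products of rational chain complexes. Next I would verify the hypotheses on the coherence maps $\phi_{\sX,\sY}$ and $\phi_\sB$ of \myref{functortrace}. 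The unit comparisons $\phi_\sA$ are the identity by construction. For the maps $\phi_{\sX,\sY}$ arising in the trace of $f_*$, the product $\sX\odot \sY$ is itself a bar construction, and over the field $\mathbb{Q}$ the K\"unneth theorem applied levelwise identifies $C_*(\sX;\mathbb{Q})\odot C_*(\sY;\mathbb{Q})$ with $C_*(\sX\odot \sY;\mathbb{Q})$, using that $\Pi M$, $T_M\sou^*S^{\nu}$, and their fibers have CW homotopy type with finitely generated rational homology.

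With these hypotheses in place, \myref{functortrace} supplies a commutative square whose top row is the trace in $\sE_{\Ch}$ of the chain-level 2-cell $\hat f_*$, which is by construction $R^{U,alg}(f)$, and whose bottom row is $C_*(-;\mathbb{Q})$ applied to the trace of $f_*$, which is $R^{U,geo}(f)$ after passing to $H_0$ via the isomorphism $\pi_0^s\cong H_0(-;\mathbb{Q})$ of \myref{pihiso}. The vertical $\psi$-maps then assemble into the claimed isomorphism $\mathbb{Z}\sh{\Pi^{f_*}M}\rightarrow \sh{t(\mathbb{Z}\Pi^{f_*}M)}$. For the based statement, a choice of base point $\ast\in M$ corresponds under $C_*(-;\mathbb{Q})$ to the dual pair $(\mathbb{Z}\Pi M(x,-),\mathbb{Z}\Pi M(-,x))$ of \myref{covtransdual}, which is the image of the topological dual pair $((\Pi M_x,\tar)_+,(\,_x\Pi M,\sou)_+)$ used in \myref{geotracecompare}. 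Both dual pairs have isomorphism evaluations, so \myref{tracegpd} applies in both bicategories, and its naturality under the lax functor $C_*(-;\mathbb{Q})$ matches the base-point compressions on the two sides, delivering $R^{geo}(f)=R^{alg}(f)$ under the resulting isomorphism $\mathbb{Z}\sh{\pi_1 M^\phi}\rightarrow \sh{\mathbb{Z}\pi_1 M^\phi}$.

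The main obstacle I anticipate is the explicit verification that $C_*(-;\mathbb{Q})$ is compatible with shadows and that the K\"unneth hypotheses hold for the specific path-space bimodules that appear in the trace; once those structural checks are in hand, the lemma reduces to a bookkeeping invocation of the functoriality machinery developed in Chapter \ref{whybicat}.
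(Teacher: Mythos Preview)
Your proposal is correct and follows exactly the approach the paper intends: the paper states this lemma without proof immediately after constructing the lax functor $C_*(-;\mathbb{Q})\colon \sM_{\Ex}\rightarrow \sE_{\Ch}$ and describing its coherence maps $\phi_{\sA}$ and $\phi_{\sX,\sY}$, leaving the reader to infer that it is an application of \myref{functortrace}. Your outline---verify compatibility with shadows, check the K\"unneth hypotheses for the relevant $\phi_{\sX,\sY}$, apply \myref{functortrace}, and then pass to the based version via \myref{geotracecompare} and \myref{algtracecompare}---is precisely the intended argument, and your anticipated obstacle is exactly the structural check the paper leaves implicit.
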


We can now complete the proof of \myref{reidemeister1} from 
the introduction.

\begin{proof}[Proof of \myref{reidemeister1}]
By \myref{alggeocompare} there is an isomorphism
\[\mathbb{Z}\sh{\pi_1M^\phi}\cong \sh{\mathbb{Z}\pi_1M^\phi}\] and
under this isomorphism \[R^{alg}(f)=R^{geo}(f).\]  
\myref{geohtpycompare} implies $R^{geo}(f)$ is zero if and only if
$R^{htpy}(f)$ is zero. By  \myref{KWclassicfinal}
there is an equivalence \[S_Mf^*N(M\times M-\triangle)\simeq 
\triangle_!S^{\nmalbdl}\odot \Lambda^fM_+\] and under this 
equivalence
\[R^{KW}(f)=R^{htpy}(f).\]  By \myref{converse1}
$R^{KW}(f)$ is zero if and only if $f$ is homotopic to
a map with no fixed points.
\end{proof}

In addition to demonstrating the compatibility of various forms of
the Reidemeister trace, we can use the trace in bicategories with
shadows to give new proofs of various basic results in fixed point theory. 
These results are applications of \myref{tracemult} and \myref{tracecyclic}. 

Since these results follow from properties of trace in bicategories
they hold for any of the forms of the Reidemeister trace.

\begin{prop} For a product of continuous maps \[f_M\times f_N\colon M\times N
\rightarrow M\times N\]  of closed smooth manifolds, one of which is
simply connected,
\[R(f_M\times f_N)=R(f_M)\times R(f_N).\]
\end{prop}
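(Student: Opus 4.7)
The plan is to reduce this to the multiplicativity of trace in bicategories with shadows, \myref{tracemult}, applied to an external product structure on $\sM_{\Ex}$. Using the identifications $R^{geo}(f)=R^{U,geo}(f)$ and the comparison with $R^{htpy}$ already established in this chapter, it suffices to prove the product formula for $R^{htpy}$ and transport the result through these identifications.

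First, I would construct an external product on $\sM_{\Ex}$: given a monoid $\sA$ over $A\times A$ and a monoid $\sB$ over $B\times B$, the external smash $\sA\bar{\wedge}\sB$ is a monoid over $(A\times B)\times(A\times B)$, and for bimodules $\sX$ and $\sY$ the external smash $\sX\bar{\wedge}\sY$ is naturally an $\sA\bar{\wedge}\sB$-bimodule. This operation preserves $\odot$ up to coherent isomorphism, preserves dual pairs, and satisfies $\sh{\sX\bar{\wedge}\sY}\simeq\sh{\sX}\wedge\sh{\sY}$. Applying this to the dual pairs from \myref{bunivdual3} for $M$ and $N$ gives a dual pair whose 1-cells are $(\calP M,\sou)_+\bar{\wedge}(\calP N,\sou)_+$ and $T_M\sou^*S^{\nu_M}\bar{\wedge}T_N\sou^*S^{\nu_N}$, sitting over $M\times N$.

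Next, assuming $N$ is simply connected, the natural monoid map $\calP M\bar{\wedge}\calP N\to\calP(M\times N)$ is a weak equivalence. This is where the simply connected hypothesis enters: the path space $\calP N$ is componentwise contractible to constant paths on components, so composition on the product side does not introduce new fundamental groupoid data beyond that of $\calP M$. Consequently the external product dual pair is equivalent to the canonical dual pair $((\calP(M\times N),\sou)_+,T_{M\times N}\sou^*S^{\nu_{M\times N}})$ of \myref{bunivdual3}, and under this equivalence the map $\widetilde{f_M\times f_N}$ on path spaces is identified with $\widetilde{f_M}\bar{\wedge}\widetilde{f_N}$.

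Finally, realizing the external product internally as $\odot$ in the enlarged bicategory whose 0-cells are products of base spaces, an application of \myref{tracemult} yields
\[
\tr(\widetilde{f_M}\bar{\wedge}\widetilde{f_N})=\tr(\widetilde{f_M})\wedge\tr(\widetilde{f_N}),
\]
and passing through the shadow decomposition $\sh{(\calP^{f_M\times f_N}(M\times N),\tar\times\sou)_+}\simeq\sh{(\calP^{f_M}M,\tar\times\sou)_+}\wedge\sh{(\calP^{f_N}N,\tar\times\sou)_+}$ gives the desired formula $R^{htpy}(f_M\times f_N)=R^{htpy}(f_M)\times R^{htpy}(f_N)$. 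The main obstacle is the second step: verifying that the external product of path-space monoids is equivalent, as a monoid, to the path-space monoid of the product, and that this equivalence is compatible with the bimodule actions on the dual pairs. Without the simply connected hypothesis, semiconjugacy classes in $\pi_1(M\times N)^{\phi_M\times\phi_N}$ involve mixed contributions from both factors, and the shadow fails to decompose as an external smash product, which is precisely the phenomenon the hypothesis rules out.
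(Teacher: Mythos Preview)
Your proposal misidentifies where the simply connected hypothesis is used, and as a consequence the invocation of \myref{tracemult} at the end does not go through as written.

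The equivalence of monoids $\calP M\bar{\wedge}\calP N\simeq\calP(M\times N)$ holds with no hypothesis on $N$: a Moore path in $M\times N$ is a pair of Moore paths with a common length parameter, and reparametrising to the maximum length gives a homotopy inverse to the inclusion. Likewise the shadow always factors: for groups, the semiconjugacy relation on $\pi_1M\times\pi_1N$ with respect to $\phi_M\times\phi_N$ acts coordinatewise, so $\sh{(\pi_1M\times\pi_1N)^{\phi_M\times\phi_N}}\cong\sh{\pi_1M^{\phi_M}}\times\sh{\pi_1N^{\phi_N}}$ without any simple connectivity assumption. So neither of the places you point to actually needs the hypothesis.

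The hypothesis is needed because of the asymmetric shape of \myref{tracemult}. That lemma applies to a pair of 2-cells $f\colon Q\odot X\to X$ and $g\colon Z\to Z\odot P$; one twist sits on the left, the other on the right. The maps $\tilde{f_M}$ and $\tilde{f_N}$ used to define the Reidemeister trace are both of the second form, $X\to X\odot P$, so an external product of them is not covered by \myref{tracemult} as stated. When $N$ is simply connected, however, the twist bimodule $(\Pi^{f_N}N,\tar\times\sou)_+$ is equivalent to the unit, so $\tilde{f_N}$ is just the endomorphism $(f_N)_+\colon N_+\to N_+$ of a 1-cell in $\sM_{\Ex}(U_*,U_*)$. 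Now the product decomposes as an \emph{internal} $\odot$-composite: $N_+\odot(\Pi M,\sou)_+$ is a right $(\Pi M,\tar\times\sou)_+$-module equivalent to $(\Pi(M\times N),\sou)_+$, and $\tilde{f_{M\times N}}$ is identified with $(f_N)_+\odot\tilde{f_M}$. This is exactly the situation of \myref{tracemult} (with one trivial twist), and the lemma gives the product formula directly, without building any external product machinery. That is the argument the paper has in mind; the references cited there handle the general product formula, which does require more than \myref{tracemult} provides.
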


This follows from \myref{tracemult} and 
is a very special case of results in
\cite{Dold3, Heath, HKW, No}.

According to \cite[I.5.2]{jiang} the Nielsen number satisfies a
commutativity property.  Let $X$ and $Y$ be compact connected
ENR's and $f\colon X\rightarrow Y$, $g\colon Y\rightarrow X$ be continuous
maps.  Then $N(g\circ f)= N(f\circ g)$.  We can recover this result from 
\myref{tracecyclic}.

\begin{corollary} If $M$ and $N$ are closed smooth 
manifolds and $f\colon M\rightarrow
N$ and $g\colon N\rightarrow M$ are continuous maps, there is a bijection
between the fixed point classes of $f\circ g$ and $g\circ f$ and
under this identification
\[R(f\circ g)=R(g\circ f).\]
\end{corollary}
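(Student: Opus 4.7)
The plan is to apply the cyclicity of the bicategorical trace (\myref{tracecyclic}) to 2-cells in $\sM_{\Ex}$ induced by $f$ and $g$. By \myref{bunivdual3}, the ex-space $X = (\calP M,\sou)_+$ is right dualizable as a $(\calP M,\tar\times\sou)_+$-module, and likewise $Z = (\calP N,\sou)_+$ is right dualizable over $(\calP N,\tar\times\sou)_+$. In direct analogy with the construction of $\tilde f$ for an endomorphism in Section \ref{classhtpysect}, the map $f\colon M\to N$ induces a 2-cell
$$\tilde f\colon X \longrightarrow Z \odot P, \qquad P = (\calP^f(M,N),\tar\times\sou)_+,$$
where $\calP^f(M,N) = \{(m,(\gamma,u))\in M\times \calP N\mid \gamma(0)=f(m)\}$. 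Symmetrically, $g\colon N\to M$ induces $\tilde g\colon Z\to X\odot S$ with $S = (\calP^g(N,M),\tar\times\sou)_+$.

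Taking $Q=R=U$ in \myref{tracecyclic}, the composites $(\tilde f\odot\id_S)(\id\odot\tilde g)\colon Z\to Z\odot P\odot S$ and $(\tilde g\odot\id_P)(\id\odot\tilde f)\colon X\to X\odot S\odot P$ are, after using the concatenation-of-paths maps to identify $P\odot S$ with $(\calP^{fg}N,\tar\times\sou)_+$ and $S\odot P$ with $(\calP^{gf}M,\tar\times\sou)_+$, precisely the 2-cells $\widetilde{fg}$ and $\widetilde{gf}$ whose traces compute $R^{htpy}(fg)$ and $R^{htpy}(gf)$. \myref{tracecyclic} then yields equality of these two traces, modulo the canonical shadow comparison $\theta\colon \sh{P\odot S}\xrightarrow{\cong}\sh{S\odot P}$, which is the isomorphism induced by cyclic permutation in the bar construction.

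Unpacking $\theta$ on the geometric side (via the identifications of Sections \ref{classhtpysect}--\ref{algrtrace2} between components of shadows of path-space bimodules and fixed point classes, together with \myref{geohtpycompare} and \myref{algtracecompare}) recovers the classical bijection $\mathrm{Fix}(fg)\to\mathrm{Fix}(gf)$ given by $x\mapsto g(x)$: indeed, $\theta$ sends the path datum recording a point $x\in N$ with $fg(x)=x$ to the cyclically rotated datum recording $g(x)\in M$ with $gf(g(x))=g(x)$. Under this bijection the indices (equivalently, the coefficients) are preserved, which is the content of $R(f\circ g)=R(g\circ f)$.

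The main obstacle will be the bookkeeping required to verify the two identifications $P\odot S\simeq (\calP^{fg}N,\tar\times\sou)_+$ and $S\odot P\simeq (\calP^{gf}M,\tar\times\sou)_+$ as bimodules, and then to check that the shadow isomorphism $\theta$ genuinely corresponds to $x\mapsto g(x)$ on fixed point classes rather than to some less geometric identification. Once these compatibilities are confirmed, the corollary follows mechanically from \myref{tracecyclic}, which is simply the bicategorical incarnation of the elementary identity $\mathrm{tr}(AB)=\mathrm{tr}(BA)$.
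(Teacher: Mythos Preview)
Your proposal is correct and takes essentially the same approach as the paper: the corollary is stated immediately after the remark that it ``follows from \myref{tracecyclic}'', and no further proof is given. You have simply fleshed out the application of \myref{tracecyclic} to the dualizable 1-cells $(\calP M,\sou)_+$ and $(\calP N,\sou)_+$ of \myref{bunivdual3}, which is exactly what the paper intends.
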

In particular, $N(f\circ g)=N(g\circ f)$.

\section{The Reidemeister trace for regular covering spaces}

In addition to the Reidemeister trace defined using the 
universal cover, there is a
Reidemeister trace for all regular covering spaces.  The theory
for regular covers is very similar to the theory for universal covers, except
maps do not always lift to regular covers.  To resolve this
problem, we will restrict attention to those maps that do have
lifts.  This means that for a normal subgroup $K$ of $\pi_1M$ we will
only consider maps $f\colon M\rightarrow M$ such that $\phi(K)\subset
K$.  Here $\phi$ is the same as in Chapter \ref{reviewfp2}; it is
the map induced on $\pi_1M$ by $f$ after choosing a base point and
a path $\basepath$  from that base point to its image under $f$.

\begin{definition}\cite[III.2.1]{jiang}
Two fixed points $x$ and $y$ of $f\colon M\rightarrow M$
are in the same \emph{mod $K$ fixed point class}\idx{mod K fixed point
class}\idx{fixed point class!mod K} if there exists a lift of $f$
to $\tilde{f}/K\colon \tilde{M}/K\rightarrow \tilde{M}/K$\nidx{mk@$\tilde{M}/K$} 
and lifts of $x,y$ to
$\tilde{x},\tilde{y}\in\tilde{M}/K$ such that $\tilde{f}/K
(\tilde{x})=\tilde{x}$
and $\tilde{f}/K(\tilde{y})=\tilde{y}$.
\end{definition}

If $K$ is the trivial subgroup of $\pi_1M$, this is the usual definition
of fixed point classes.

\begin{lemma}\cite[III.2.2]{jiang} Two fixed points $x$ and $y$ are in the same mod $K$ fixed
point class if and only if there is path $\gamma$ in $M$ from $x$ to $y$ such
that $\gamma f(\gamma^{-1})$ is in $K$.\end{lemma}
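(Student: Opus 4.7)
The plan is to translate the two conditions via standard path-lifting in the regular covering $\tilde{M}/K \to M$, whose deck transformation group is $\pi_1 M / K$ and whose corresponding subgroup of $\pi_1(M,x)$ is $K$. Throughout, I use the characterization that a loop in $M$ lifts to a loop in $\tilde{M}/K$ (as opposed to merely a path) precisely when its class lies in $K$.

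For the forward direction, suppose we have a lift $\tilde{f}/K\colon \tilde{M}/K \rightarrow \tilde{M}/K$ and lifts $\tilde{x},\tilde{y}$ with $\tilde{f}/K(\tilde{x})=\tilde{x}$ and $\tilde{f}/K(\tilde{y})=\tilde{y}$. Since $\tilde{M}/K$ is path-connected, choose a path $\tilde{\gamma}$ from $\tilde{x}$ to $\tilde{y}$ and let $\gamma = \pi\tilde{\gamma}$, where $\pi\colon \tilde{M}/K\rightarrow M$ is the covering projection. Then $\tilde{f}/K\circ\tilde{\gamma}$ is a lift of $f(\gamma)$ that also runs from $\tilde{x}$ to $\tilde{y}$, so $(\tilde{f}/K\circ\tilde{\gamma})^{-1}$ is a lift of $f(\gamma^{-1})$ running from $\tilde{y}$ back to $\tilde{x}$. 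Concatenating gives a lift of the loop $\gamma f(\gamma^{-1})$ based at $\tilde{x}$, so $[\gamma f(\gamma^{-1})]\in K$.

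For the converse, suppose $[\gamma f(\gamma^{-1})]\in K$. The hypothesis $\phi(K)\subseteq K$ guarantees that $f$ admits a lift to $\tilde{M}/K$; since $x$ is a fixed point, we can compose with a suitable deck transformation (an element of $\pi_1 M/K$) to obtain a lift $\tilde{f}/K$ with $\tilde{f}/K(\tilde{x})=\tilde{x}$ for some chosen lift $\tilde{x}$ of $x$. Lift $\gamma$ to the unique path $\tilde{\gamma}$ starting at $\tilde{x}$, and let $\tilde{y}$ be its endpoint, a lift of $y$. Because $[\gamma f(\gamma^{-1})]\in K$, the loop $\gamma f(\gamma^{-1})$ lifts to a loop at $\tilde{x}$; by uniqueness of path lifting, its first half is $\tilde{\gamma}$, and its second half is therefore the unique lift of $f(\gamma^{-1})$ beginning at $\tilde{y}$ and ending at $\tilde{x}$. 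Reversing, the unique lift of $f(\gamma)$ starting at $\tilde{x}$ terminates at $\tilde{y}$. But $\tilde{f}/K\circ\tilde{\gamma}$ is also a lift of $f(\gamma)$ starting at $\tilde{f}/K(\tilde{x})=\tilde{x}$, so by uniqueness it coincides with this lift, giving $\tilde{f}/K(\tilde{y})=\tilde{y}$.

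The only subtle point is the converse, specifically the initial choice of lift $\tilde{f}/K$ fixing $\tilde{x}$; I would expect this to be the main technical wrinkle and I would record the precise justification that $\phi(K)\subseteq K$ is exactly what is needed for some (hence, after adjustment by the deck group, any prescribed) lift to exist and to fix $\tilde{x}$. Once that is settled, both directions reduce to the unique path lifting property, exactly mirroring the universal-cover argument for the earlier lemma with $K$ trivial.
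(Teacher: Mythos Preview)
The paper does not supply its own proof of this lemma; it simply cites \cite[III.2.2]{jiang} and moves on. Your argument is the standard covering-space proof and is correct: both directions reduce to unique path lifting once you have a lift of $f$ fixing $\tilde{x}$, and your justification for that (lift exists because $\phi(K)\subset K$, then adjust by a deck transformation since the deck group $\pi_1M/K$ acts transitively on the fiber over $x$) is exactly right. One minor point: with the paper's path-composition convention $(\beta,\alpha)\mapsto\beta\alpha$ (do $\alpha$ first), the loop $\gamma f(\gamma^{-1})$ is based at $y$ rather than $x$, so you may want to run the argument from $\tilde{y}$ instead of $\tilde{x}$; this is purely cosmetic and does not affect the validity of the proof.
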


Since $\phi(K)\subset K$, $\phi$ induces a map $\phi\colon  \pi_1M/K
\rightarrow \pi_1M/K$.  Let $\sh{(\pi_1M/K)^\phi}$ be the
semiconjugacy classes of $\pi_1M/K$ with respect to the induced
homomorphism $\phi$.\nidx{shad@$\protect\sh{-}$}  For each fixed point 
$x$ pick a path $\gamma_x$ in $M$ from the base
point $\ast$ to $x$.  Then there
is a well defined injection from the mod $K$ fixed point classes of $f$ to
$\sh{(\pi_1M/K)^\phi}$\nidx{pimphi@$\protect\sh{(\pi_1M/K)^\phi}$} 
that takes a fixed point $x$ to the homotopy
class of the path $\gamma_x^{-1}f(\gamma_x)\basepath$.

\begin{definition} The \emph{mod $K$ geometric 
Reidemeister trace}\idx{mod K geometric Reidemeister trace}
\idx{Reidemeister trace!mod K geometric} of $f$, 
$R^{geo}_K(f)$,\nidx{rgeok@$R^{geo}_K$}  is
\[\sum_{\mathrm{mod\, K\, fixed\, point\, classes}\,F_j}i(F_j)\cdot F_j\in
\mathbb{Z}\sh{(\pi_1M/K)^\phi}\]
\end{definition}

For spaces with a universal cover there is a bijection between
regular covers and normal subgroups of the fundamental group.
These regular covers provide more examples of dual pairs.

\begin{lemma}\mylabel{modKdual}Suppose $M$ is a space with 
a universal cover $\tilde{M}$ and 
$\tilde{M}_+$ is
dualizable as a $\pi_1M$ space.  If $K\lc\pi_1M$, then
$(\tilde{M}/K)_+$ is dualizable as a $(\pi_1M)/K$ space.\end{lemma}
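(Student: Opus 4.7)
The plan is to explicitly construct an $n$-dual of $(\tilde{M}/K)_+$ from a given $n$-dual of $\tilde{M}_+$. Let $Y$ be an $n$-dual of $\tilde{M}_+$ as a right $\pi_1 M$-space, with coevaluation $\eta\colon S^n \to \tilde{M}_+ \odot Y$ and evaluation $\epsilon\colon Y \wedge \tilde{M}_+ \to \bigvee_{\pi_1 M} S^n$. Since $K$ is normal in $\pi_1 M$, the left $K$-action on $Y$ restricted from the $\pi_1 M$-action is normal, so the quotient $Y/K$ carries a natural left $(\pi_1 M)/K$-action. I claim $Y/K$ is an $n$-dual of $(\tilde{M}/K)_+$.

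First I would identify $\tilde{M}_+ \odot Y$ with $(\tilde{M}/K)_+ \odot (Y/K)$. Since $\pi_1 M$ acts freely on $\tilde{M}$, the bar construction $\tilde{M}_+ \odot Y$ is equivalent to the strict coequalizer $\tilde{M}_+ \wedge_{\pi_1 M} Y$. Computing this coequalizer in two stages, first by $K$ and then by $(\pi_1 M)/K$, gives a natural homeomorphism $\tilde{M}_+ \wedge_{\pi_1 M} Y \cong (\tilde{M}/K)_+ \wedge_{(\pi_1 M)/K} (Y/K)$, which is in turn equivalent to $(\tilde{M}/K)_+ \odot (Y/K)$ by the same free-action argument applied to the residual $(\pi_1 M)/K$-action on $\tilde{M}/K$. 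The new coevaluation $\eta'$ is then $\eta$ followed by this equivalence.

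For the evaluation, I would show that the composite
\[
Y \wedge \tilde{M}_+ \xrightarrow{\epsilon} \bigvee_{\pi_1 M} S^n \longrightarrow \bigvee_{(\pi_1 M)/K} S^n,
\]
where the second map is induced by the group quotient, descends to a $(\pi_1 M)/K$-bi-equivariant map $\epsilon'\colon (Y/K) \wedge (\tilde{M}/K)_+ \to \bigvee_{(\pi_1 M)/K} S^n$. The key point is $K$-invariance: for $k_1, k_2 \in K$, bi-equivariance of $\epsilon$ gives $\epsilon(k_1 y, x k_2) = k_1 \epsilon(y, x) k_2$, which represents the same element as $\epsilon(y, x)$ in $\bigvee_{(\pi_1 M)/K} S^n$ since $k_1, k_2$ act trivially on the index set $(\pi_1 M)/K$. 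Equivariance of $\epsilon'$ with respect to $(\pi_1 M)/K$ on both sides is immediate from equivariance of $\epsilon$ with respect to $\pi_1 M$.

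Finally, the triangle identities for $(\eta', \epsilon')$ follow from those for $(\eta, \epsilon)$ by naturality: each diagram in \myref{pindual} applied to $(\tilde{M}_+, Y)$ maps term-by-term under the quotient to the corresponding diagram for $((\tilde{M}/K)_+, Y/K)$. The main, though still mild, technical point I anticipate is that the diagrams in \myref{pindual} only commute up to stable $\pi_1 M$-equivariant homotopy after smashing with some $S^m$; however, any $\pi_1 M$-equivariant map or homotopy is in particular $K$-equivariant, and so descends to a stable $(\pi_1 M)/K$-equivariant homotopy witnessing the required commutativity for the quotient diagrams.
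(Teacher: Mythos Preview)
Your approach is close in spirit but contains one genuine error. The claimed homeomorphism
\[
\tilde{M}_+ \wedge_{\pi_1 M} Y \;\cong\; (\tilde{M}/K)_+ \wedge_{(\pi_1 M)/K} (Y/K)
\]
is false. Your ``two-stage'' argument conflates two different quotients: forming the balanced product $\tilde M_+\wedge_K Y$ (which imposes only the relation $(xk,y)\sim(x,ky)$ for $k\in K$) is not the same as passing to $(\tilde M/K)_+\wedge(Y/K)$ (which kills the $K$-action on each factor separately). Concretely, since $\pi_1M$ acts freely on $\tilde M$, the left-hand side is a bundle over $M=\tilde M/\pi_1M$ with fiber $Y$, while the right-hand side is a bundle over $M$ with fiber $Y/K$; these disagree whenever $K$ acts nontrivially on $Y$, as it does for $Y=T\pi^*\nu$.

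The good news is that the argument does not actually need an isomorphism here. There \emph{is} a natural surjection $\tilde M_+\wedge_{\pi_1M}Y\to(\tilde M/K)_+\wedge_{(\pi_1M)/K}(Y/K)$ induced by the quotient maps on the factors, and defining $\eta'$ as $\eta$ followed by this surjection is enough. Your construction of $\epsilon'$ and your descent of the triangle identities (using that $\pi_1M$-equivariant homotopies are in particular $K$-equivariant and hence descend) then go through as written.

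The paper takes a different, more structural route. Rather than building $\eta'$ and $\epsilon'$ by hand, it observes that $(\pi_1M/K)_+$, regarded as a $\pi_1M$--$(\pi_1M/K)$-bimodule, is itself right dualizable with dual $t(\pi_1M/K)_+$ (coevaluation the quotient map $\pi_1M\to\pi_1M/K$, evaluation the multiplication), and then invokes \myref{dualcomposites1} on the composite $\tilde M_+\odot(\pi_1M/K)_+\simeq(\tilde M/K)_+$. This explains conceptually why the map you need to postcompose with $\eta$ is the quotient map (it is the coevaluation of the inner dual pair), and it feeds directly into \myref{tracegpd}, which is what the paper uses next in \myref{modKtrace}. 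Your hands-on construction buys independence from the bicategorical machinery, at the cost of the explicit verification of the triangle identities.
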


\begin{proof} This proof uses a composite of dual pairs.  The group $\pi_1M/K$
has actions by $\pi_1M$ and $\pi_1M/K$ on both the left and the
right. We think of $\pi_1M/K$ as a $\pi_1M$-$\pi_1M/K$-bimodule and
$t\pi_1M/K$ as a $\pi_1M/K$-$\pi_1M$-bimodule.  Then
\[(\pi_1M/K,t(\pi_1M/K))\] is a dual pair.  The coevaluation
\[\pi_1M\rightarrow \pi_1M/K\odot t(\pi_1M/K)\]
is the quotient map.  The evaluation
\[t(\pi_1M/K)\odot\pi_1M/K \rightarrow \pi_1M/K\] is
composition.

Note that $\tilde{M}_+\odot (\pi_1M/K)$ is a cover of $M$
corresponding to the subgroup $K\subset \pi_1M$.  Since 
both $\tilde{M}_+$ and $\pi_1M/K$ are dualizable 
\myref{dualcomposites1} implies the 
composite $\tilde{M}/K$ is dualizable with dual $(t\pi_1M/K)\odot T\pi^*\nu$.
\end{proof}

The proof of \myref{pihiso} also implies the following result.

\begin{lemma} The map induced on homology by the 
trace of $\tilde{f}/K\colon \tilde{M}/K\rightarrow \tilde{M}/K$\nidx{fk@$\tilde{f}/K$}
is the Mod $K$ geometric Reidemeister trace of $f$.\end{lemma}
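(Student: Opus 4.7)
The plan is to imitate the proof of \myref{pihiso} almost verbatim, replacing $\tilde{M}$ by $\tilde{M}/K$ and $\pi_1 M$ by $\pi_1 M/K$, using the dual pair produced in \myref{modKdual}. First I would fix the composite dual pair from the proof of \myref{modKdual}: $(\tilde{M}/K,\, (t\pi_1M/K)\odot T\pi^*\nu)$ is obtained as the $\odot$-composite of the classical dual pair $((\tilde{M})_+, T\pi^*\nu)$ with the groupoid-type dual pair $(\pi_1M/K, t\pi_1M/K)$, and the induced map $\tilde{f}/K$ is covered by the equivariant lift $\tilde{f}$. This reduces the statement to computing the trace of $\tilde{f}/K$ with respect to this particular dual pair, since, by \myref{traceind}, the trace is independent of the choice of dual.

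Next I would unwind the explicit formulas for coevaluation and evaluation. As in the calculation in \myref{pihiso}, the image of $v\in S^n$ under the composite
\[\xymatrix@C=15pt{S^n\ar[r]^-\eta& T\nu\ar[r]& (\tilde{M}/K)_+\odot ((t\pi_1M/K)\odot T\pi^*\nu)\ar[r]^-{(\tilde f/K)\odot\id}&\cdots\ar[r]^-\epsilon& \vee_{\sh{(\pi_1M/K)^\phi}}S^n}\]
is the point whose $S^n$-coordinate is $\epsilon(\eta(v),f\rho\eta(v))$ and whose label in $\sh{(\pi_1M/K)^\phi}$ is the mod $K$ reduction of
\[\gamma_{\rho\eta(v)}^{-1}\, H(f\rho\eta(v),\rho\eta(v))\, f(\gamma_{\rho\eta(v)})\,\basepath.\]
Here $\eta,\epsilon$ refer to the original dual pair $(S^0_M,tS^\nu)$, the reduction comes from the quotient map $\pi_1M\to\pi_1M/K$ built into the evaluation of the composite dual pair, and the formula makes sense because the hypothesis $\phi(K)\subset K$ ensures that $\tilde{f}/K$ exists and is $\phi$-equivariant modulo $K$.

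Then I would identify the class-labels with mod $K$ fixed point classes. At a point $x\in V_i$ near a fixed point, $H(f(x),x)$ collapses to the constant path, so the label becomes $\gamma_x^{-1}f(\gamma_x)\basepath \bmod K$, which is precisely the image of $x$ under the injection from mod $K$ fixed point classes into $\sh{(\pi_1M/K)^\phi}$ described after \myref{modKdual}. Consequently the trace restricted to a small open neighborhood of the fixed point class $F_j$ maps into the wedge summand indexed by the class of $F_j$ and has degree equal to the local index $i(f|_{V_j})=i(F_j)$, by locality and additivity of the fixed point index together with the identification of trace with index for manifolds (as used in \myref{pihiso}).

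Finally I would pass to $H_0$. Using the isomorphism $\pi_0^s(\sh{(\pi_1M/K)^\phi}_+)\cong H_0(\sh{(\pi_1M/K)^\phi}_+)\cong \bZ\sh{(\pi_1M/K)^\phi}$ constructed exactly as in \myref{pihiso} (Freudenthal suspension, Hurewicz, suspension isomorphism, applied wedge-summand by wedge-summand), the trace of $\tilde{f}/K$ becomes $\sum_j i(F_j)\cdot F_j$, which is $R^{geo}_K(f)$. The main obstacle is a bookkeeping one: one must verify that the quotient map $\pi_1M\to\pi_1M/K$ really intertwines the evaluation of the composite dual pair $(\tilde{M}/K, (t\pi_1M/K)\odot T\pi^*\nu)$ with the evaluation of $(\tilde{M}_+, T\pi^*\nu)$ followed by reduction mod $K$; this is a diagram chase using \myref{dualcomposites1} and the naturality of the involution, but no new ideas beyond those in \myref{pihiso} are required.
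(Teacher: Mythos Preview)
Your proposal is correct and follows exactly the approach the paper intends: the paper simply states that ``The proof of \myref{pihiso} also implies the following result,'' and your write-up is a careful mod-$K$ transcription of that proof using the composite dual pair from \myref{modKdual}. The only quibble is a reference slip---the injection from mod $K$ fixed point classes into $\sh{(\pi_1M/K)^\phi}$ is described just \emph{before} \myref{modKdual}, not after it.
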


\begin{lemma}\mylabel{modKtrace} The map $\sh{\pi_1M^\phi}\rightarrow
\sh{(\pi_1M/K)^\phi}$ that takes a semiconjugacy class to the
corresponding mod $K$ semiconjugacy class takes the Reidemeister
trace of $f$ to the mod $K$ Reidemeister trace of $f$.  In
particular, the sum of the coefficients of the Reidemeister trace
is the Lefschetz number.
\end{lemma}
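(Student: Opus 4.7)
The plan is to realize the map $\sh{\pi_1M^\phi}\to\sh{(\pi_1M/K)^\phi}$ as the map on shadows induced by composing with the dual pair $(\pi_1M/K,t(\pi_1M/K))$ appearing in the proof of \myref{modKdual}, and then to invoke the composite-dual-pair trace formula of \myref{tracegpd}(1).

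First, I would observe that by the dual-pair construction in the proof of \myref{modKdual}, the dual pair for $(\tilde{M}/K)_+$ is the composite of the dual pair for $\tilde{M}_+$ with the dual pair $(\pi_1M/K,t(\pi_1M/K))$. The evaluation of this second dual pair is the group multiplication modulo $K$, which is an \emph{isomorphism}. Thus the hypotheses of \myref{tracegpd}(1) are met for the induced map $\tilde{f}/K\colon (\tilde{M}/K)_+\rightarrow (\tilde{M}/K)_+^\phi$ viewed as arising from $\tilde{f}\colon \tilde{M}_+\rightarrow \tilde{M}_+^\phi$ by smashing over $\pi_1M$ with $\pi_1M/K$. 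Applying \myref{tracegpd}(1) identifies $\tr(\tilde{f}/K)$ with the composite
\[\xymatrix{\sh{\pi_1M^\phi}\ar[r]^-{\tr(\tilde{f})\,\text{via \myref{pihiso}}}& \sh{\pi_1M^\phi\odot (\pi_1M/K)}\ar[r]^-{\sh{\epsilon\odot\id}}&\sh{(\pi_1M/K)^\phi}}\]
where $\epsilon$ is the evaluation for the $(\pi_1M/K,t(\pi_1M/K))$ dual pair.

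Second, I would identify this composite on the level of semiconjugacy classes. Tracing through the definitions, the map induced on $\sh{-}$ by $\epsilon$ sends a semiconjugacy class $[\alpha]\in\sh{\pi_1M^\phi}$ to its image $[\alpha K]\in\sh{(\pi_1M/K)^\phi}$, because $\epsilon$ is the quotient homomorphism at the level of underlying sets. Combining this with the identification of $\tr(\tilde{f})$ as $R^{geo}(f)$ from \myref{pihiso} and the identification (immediately preceding this lemma) of $\tr(\tilde{f}/K)$ as $R^{geo}_K(f)$ yields the first assertion.

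For the ``in particular'' clause, I would specialize to $K=\pi_1M$. Then $\pi_1M/K$ is trivial, so $\sh{(\pi_1M/K)^\phi}\cong\{*\}$ and $\mathbb{Z}\sh{(\pi_1M/K)^\phi}\cong\mathbb{Z}$; the map $\mathbb{Z}\sh{\pi_1M^\phi}\to\mathbb{Z}$ is the sum of coefficients. On the other hand, there is a single mod $\pi_1M$ fixed point class consisting of all fixed points, whose index equals the fixed point index $I_f$, and by the Lefschetz--Hopf theorem \myref{lhopf} this equals the Lefschetz number of $f$.

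The main obstacle is the second step: verifying that the map $\sh{\epsilon\odot\id}$ on shadows agrees with the quotient map $[\alpha]\mapsto[\alpha K]$ on semiconjugacy classes under the identifications $\sh{\pi^\phi}\simeq\{\text{semiconjugacy classes}\}$ used throughout Chapter \ref{reviewfp2}. This requires unwinding the simplicial homotopy equivalence $C(\pi^\phi,\pi)\simeq \sh{\pi^\phi}$ described after \myref{semiconj} and checking naturality in $\pi$ against the evaluation map $\epsilon$, which is a direct but somewhat tedious simplicial computation.
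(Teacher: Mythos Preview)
Your overall strategy---realise the dual pair for $\tilde{M}/K$ as the composite of the dual pair for $\tilde{M}_+$ with the dual pair $(\pi_1M/K,\,t(\pi_1M/K))$ from \myref{modKdual}, then invoke \myref{tracegpd}---is exactly the paper's approach. The paper's proof is equally terse.

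There is, however, a mismatch in which part of \myref{tracegpd} you invoke. In part~(1) the map $g$ lives on the \emph{second} factor $Z$ of the composite $X\odot Z$, and the isomorphism hypothesis is on the evaluation of the \emph{first} factor's dual pair $(X,Y)$. In your situation the map $\tilde{f}$ lives on the first factor $\tilde{M}_+$, and the dual pair whose evaluation is an isomorphism is that of the second factor $Z=\pi_1M/K$. So part~(1) does not apply as stated. Structurally it is part~(2) that is relevant: there the map $f$ is on $X$ and one passes to $f^\star$ on $X\odot Z$, with the resulting trace equal to $\tr(f)$ post-composed with $\sh{\id\odot\chi}$. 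In your displayed composite you wrote $\sh{\epsilon\odot\id}$, but the map on shadows that produces the quotient $\sh{\pi_1M^\phi}\to\sh{(\pi_1M/K)^\phi}$ is induced by the \emph{coevaluation} $\chi$ (the quotient $\pi_1M\to\pi_1M/K$), not the evaluation.

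A second wrinkle: part~(2) as stated hypothesises that $\chi$ is an isomorphism, which fails here (the quotient map is not injective). The conclusion of part~(2) nevertheless holds once you know independently that both $X=\tilde{M}_+$ and $Z=\pi_1M/K$ are right dualizable, so that the composite dual comes from \myref{dualcomposites1}; the verification is the same diagram chase, using one triangle identity for $(Z,W)$ to cancel a $\chi$ against the $\psi$ in $\epsilon_{X\odot Z}$. The paper glosses over this point as well, so your argument is in the same spirit; just correct the citation to part~(2), replace $\epsilon$ by $\chi$ in your displayed map, and note that the isomorphism hypothesis is not actually needed here. Your ``main obstacle'' (identifying $\sh{\id\odot\chi}$ with the quotient on semiconjugacy classes) is then a routine check: one verifies $t(\pi_1M/K)\odot_{\pi_1M}\pi_1M^\phi\odot_{\pi_1M}(\pi_1M/K)\cong(\pi_1M/K)^{\bar\phi}$ as bimodules, under which $\sh{\id\odot\chi}$ becomes the obvious quotient.
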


\begin{proof}The map used to
define the Reidemeister trace is of the form described in
\myref{tracegpd} so this result follows from \myref{modKdual}
and \myref{tracegpd}. If $K$ is $\pi_1M$ the mod $K$ 
Reidemeister trace is the Lefschetz number, so  
the sum of the coefficients of
$R^{geo}(f)$ is the Lefschetz number of $f$.
\end{proof}

\chapter{Duality for fiberwise parametrized modules}
\label{fibfpsec}

In this chapter we describe fiberwise generalizations of some of
the results from the previous two chapters.  Unfortunately, since
we are now interested in fiberwise maps not all invariants defined
in Chapter \ref{classfpsec} make sense.  For example, it is no
longer possible to choose a base point and so we will now only use
unbased invariants. Another challenge, and benefit, of the
fiberwise generalization is that the invariant that gives a
converse to the fiberwise Lefschetz fixed point theorem, a
generalization of the homotopy Reidemeister trace, is much richer
than the classical invariant.  One consequence of this is that it
isn't clear what invariants, if any, deserve to be called the
fiberwise geometric Reidemeister trace or the fiberwise algebraic
Reidemeister trace.  We describe one candidate invariant for the
fiberwise geometric Reidemeister trace. This invariant was defined
by Scofield.  It does not give a converse to the fiberwise
Lefschetz fixed point theorem.

We define the fiberwise homotopy Reidemeister trace using the
approach of the previous chapters. This invariant  can be
identified with the fiberwise invariant defined by Klein and
Williams.  The definitions of these invariants, their comparison,
and even the proof that these invariants give a converse to the
fiberwise Lefschetz fixed point theorem are almost identical to
the approach in the classical case.  

\section{Fiberwise Costenoble-Waner duality}

The bicategory we use to study fiberwise spaces is closely related
to the bicategory $\Ex$ we used to study classical fixed point
theory.  The 0-cells are spaces over $B$.  The 1-cells are spaces over 
and under the 0-cells.  The 2-cells are maps of total spaces that commute
with the section and projection.  This bicategory was introduced in
\cite{MS}, where a more sophisticated stable version was also studied.

More formally, the 0-cells of $\Ex_B$\nidx{Exb@$\Ex_B$} are spaces over
$B$. That is, a space $C$ with a map $C\rightarrow B$. A 1-cell
from $C\rightarrow B$ to $D\rightarrow B$ is a space $X$ and maps
\[\xymatrix{D\times_BC\ar[r]^-{\sect}& X\ar[r]^-{\pro} &D\times_BC}\]
such that the composite $\pro\circ \sect$ is the identity map of $D\times_BC$.
For two 1-cells $X$ and $Y$ from $C$ to $D$, a 2-cell from $X$ to
$Y$ is a map $f\colon X\rightarrow Y$ such that
\[\xymatrix{C\times_BD\ar[r]\ar@{=}[d]&X\ar[r]\ar[d]^f
&C\times_BD\ar@{=}[d]\\
C\times_BD\ar[r]&Y\ar[r]&C\times_BD}\] commutes.

As in \myref{exspaceconditions}, for a 1-cell $X$ over 
$C$ and $D$ we require that $X$ and $C\times_BD$ are 
of the homotopy types of CW-complexes,  
the projection $X\rightarrow C\times_BD$ is a Hurewicz
fibration, and the section $C\times_BD\rightarrow X$ 
is a fiberwise cofibration.  When these conditions are
not satisfied 
we implicitly use the model structures and approximation
techniques from \cite{MS} to maintain homotopical control. 

The bicategory composition in $\Ex_B$ is very similar to the
bicategory composition in $\Ex$.  The external smash product of two
1-cells in $\Ex_B$, written $\bar{\wedge}$\nidx{$\bar{\wedge}$}, is
defined by taking the fiberwise smash product over the 0-cells.
This is not the fiberwise smash product over $B$. If $X$ is a
1-cell from $C$ to $D$ and $Y$ is a 1-cell from $D$ to $E$ then we
define $X\boxtimes Y$, a 1-cell from $C$ to $E$, as the pullback
along $\triangle\colon D\rightarrow D\times_BD$ and then pushforward
along $r\colon D\rightarrow B$ of $X\bar{\wedge}Y$\nidx{$\boxtimes$}
 \[\xymatrix{C\times_B E\ar[d]&C\times_B D
\times_B E\ar[l]_-{\id\times r\times \id}\ar[r]^-{\id\times
\triangle\times \id}\ar[d]&C\times_B D\times_B D
\times_B E\ar[d]\\
X\boxtimes Y\ar[d]&(\id\times \triangle\times \id)^*(X
\bar{\wedge}Y )\ar[r]\ar[l]\ar[d] &X\bar{\wedge}
Y\ar[d]\\
C\times_B E&C\times_B D \times_B E\ar[l]^-{\id\times r\times
\id}\ar[r]_-{\id\times \triangle\times \id} &C\times_B D\times_B D
\times_B E}\]
The unit 1-cell associated to a 0-cell $C\rightarrow B$ is $(C,\triangle)_+$
and we  will denote this $U_C$.

\begin{definition}
A 1-cell $X$ in $\Ex_B$ over $C$ is \emph{fiberwise Costenoble-Waner  n-dualizable}
\idx{Costenoble-Waner duality} 
if there is a 1-cell $Y$ over $C$ and maps 
\[\xymatrix{S^n_B\ar[r]^-\eta&X\boxtimes tY
&{\mathrm{and}}&tY\boxtimes X\ar[r]^-\epsilon
&\triangle_!S^n_C}\]
such that
\[\xymatrix{S^n_B\boxtimes X\ar[r]^-{\eta\boxtimes \id}
\ar[dd]_{\gamma}&(X\boxtimes tY)
\boxtimes X\ar[d]^{\cong} &
tY\boxtimes S^n_B\ar[r]^-{\id\boxtimes \eta}\ar[dd]_{(\sigma\boxtimes \id)
\gamma}&tY\boxtimes(X\boxtimes tY)
\ar[d]^{\cong}\\
&X\boxtimes (tY\boxtimes X)\ar[d]^-{\id \boxtimes
\epsilon}&
&((tY\boxtimes X)\boxtimes  tY)\ar[d]^-{ \epsilon \boxtimes  \id}\\
X\boxtimes S^n\ar[r]_-{\cong}& X\boxtimes 
\triangle_!S^n_C
&S^n\boxtimes tY\ar[r]_-{\cong}& \triangle_!S^n_C\boxtimes
tY}\] 
 commute up to fiberwise homotopy over $C$.
\end{definition}

To give examples of this kind of duality we will need to consider
equivariant Costenoble-Waner duality.\idx{Costenoble-Waner duality!equivariant}\idx{equivariant
Costenoble-Waner duality}
A bundle construction gives a connection between dual pairs in the
bicategory $G\Ex$ and dual pairs in the bicategory $\Ex_B$.

Let $G$ be a compact Lie group.  There is a bicategory
$G\Ex$\nidx{gex@$G\Ex$} with 0-cells $G$-spaces.  The 1-cells in $G\Ex$
are ex-spaces $X$ with an action by $G$ such that the section and
projection maps are equivariant.  The 2-cells are equivariant maps
of total spaces that commute with the section and projection maps.
The bicategory composition is induced from that in $\Ex$.  The
group $G$ acts by the diagonal action.

This bicategory also has a stable version and duality in that bicategory
has an interpretation as $V$-duality in $G\Ex$.\idx{v dual@$V$-dual}
\idx{v dualizable@$V$-dualizable}

Let $S^V$\nidx{sv@$S^V$} denote the one point compactification of a
representation $V$ of $G$.
\begin{definition}\cite[18.3.1]{MS}
A 1-cell $X$ in $G\Ex$ is $V$-dualizable for a
representation $V$ of $G$ if there is a 1-cell $Y$ in $G\Ex$ and
maps
\[\xymatrix{S^V\ar[r]^-\eta&X\boxtimes tY&{\mathrm{and}}&
tY\boxtimes X\ar[r]^-\epsilon&\triangle_!S^V_B}\]
such that 
\[\xymatrix{S^V \wedge X \ar[d]_{\gamma}\ar[r]^-{\eta\wedge \id}
& (X\wedge Y)\wedge X \ar[d]^\cong\\
 X\wedge S^V & X\wedge (Y\wedge X)\ar[l]^-{\id\wedge \epsilon} }
\qquad
\xymatrix{ Y \wedge S^V \ar[d]_{(\sigma\wedge \id)\gamma}\ar[r]^-{\id\wedge \eta} 
& Y \wedge (X\wedge Y) \ar[d]^\cong \\
 S^V\wedge Y & (Y\wedge X)\wedge Y\ar[l]^-{\epsilon\wedge \id}}\]
commute stably up to equivariant fiberwise homotopy.
\end{definition}

\myref{dualM} has a generalization to the bicategory $G\Ex$.
\begin{theorem}\mylabel{GdualM}\cite[18.6.1]{MS}
Let $M$ be a closed smooth  manifold
embedded in a representation $V$.  Then $(S^0_M, S^\nu)$ is a
Costenoble-Waner $V$-dual pair.
\end{theorem}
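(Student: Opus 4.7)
The plan is to mirror the non-equivariant proof of Theorem \ref{dualM} step by step, replacing each construction with its $G$-equivariant analogue. Since $G$ is a compact Lie group acting smoothly on $M$ and $M$ is embedded in the representation $V$ (rather than $\mathbb{R}^n$ with trivial action), equivariant differential topology — in particular the existence of $G$-invariant tubular neighborhoods around $G$-submanifolds — gives us all the geometric ingredients we need.

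First I would construct the coevaluation map $\eta\colon S^V \to S^0_M \boxtimes tS^\nu$. Since $S^0_M \boxtimes tS^\nu$ is equivalent to $T\nu$, the Thom space of the equivariant normal bundle of $M \hookrightarrow V$, I would take $\eta$ to be the equivariant Pontryagin–Thom collapse associated to a $G$-invariant tubular neighborhood of $M$ in $V$. The existence of such a neighborhood is a standard application of the equivariant tubular neighborhood theorem for compact Lie group actions.

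Next I would construct the evaluation map $\epsilon\colon tS^\nu \boxtimes S^0_M \to \triangle_! S^V_M$. Following the non-equivariant model, I would use the embedding $M \stackrel{\triangle}{\to} M \times M \stackrel{\sigma \times \id}{\to} \nu \times M$ as the zero section into the normal bundle crossed with $M$, and take a $G$-invariant tubular neighborhood $V \subset \nu \times M$ which can be chosen since $M$ is $G$-embedded as a $G$-submanifold. The evaluation map then is the composite of the equivariant Pontryagin–Thom collapse for this embedding with the map $E\colon V \to \mathrm{Map}(I,M) \times (V \times M)$ built from the fiberwise identification $e$ of the neighborhood with the trivial bundle together with the local homotopy $H$ from Lemma \ref{localcont}, which exists in an equivariant form since $M$ is a $G$-ENR and so admits an invariant neighborhood of the diagonal with an equivariant homotopy between the two projections.

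Finally I would verify the two triangle identities. Here the key observation is that the argument given for Theorem \ref{dualM} in \cite[18.6.1]{MS} is entirely geometric — it proceeds by identifying both composites, up to controlled deformation, with the Pontryagin–Thom collapse of an overall embedding — and each geometric step (tubular neighborhood, zero section, collapse map, local homotopy) has a $G$-equivariant refinement when the group is compact Lie and the action on $M$ is smooth. The main obstacle, and the only place where extra care is required, is checking that the deformations used to verify the triangle identities can be chosen equivariantly; this reduces to choosing $G$-invariant tubular neighborhoods and $G$-equivariant straight-line homotopies in the representation $V$, both of which are routine. The verification then runs in complete analogy with the proof of Theorem \ref{dualM}, so the statement follows by citing \cite[18.6.1]{MS}.
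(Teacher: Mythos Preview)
The paper does not prove this theorem; it is stated with a citation to \cite[18.6.1]{MS} and no argument is given. (The same is true of the non-equivariant Theorem~\ref{dualM}: the paper records the coevaluation and evaluation maps but defers verification of the triangle identities to \cite{MS}.) Your sketch is a reasonable outline of how the argument in \cite{MS} goes, and you correctly end by citing that reference, so there is nothing substantive to compare.

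One small correction: you refer to ``the argument given for Theorem~\ref{dualM} in \cite[18.6.1]{MS}'' and to ``the non-equivariant proof of Theorem~\ref{dualM}'' as if the present paper proved it; it does not. Both the equivariant and non-equivariant statements are imported wholesale from \cite{MS}, so your proposal is really a sketch of the proof in that reference rather than an alternative to anything in this paper.
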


Let $P$ be a principal $G$-bundle and $B$ be $P/G$.  Then there is a lax
functor \nidx{p@$\protect\bP$} \[\bP\colon G\Ex\rightarrow \Ex_B.\]
This functor takes a $G$-space $F$ to $P\times_GF$, where $P\times_GF$
is $P\times F$ quotiented by the diagonal action of $G$.  On 1-cells
and 2-cells $\bP$ is also given by the functor $P\times_G (-)$, which
converts an ex-$G$-space $E$ over a $G$-space $F$ into a 1-cell in $\Ex_B$.
The section and projection
maps of $E$ over $F$ induce section and projection maps for $\bP(E)$
over $B$.

\begin{theorem}\cite[19.4.4]{MS} If $(X,Y)$ is a dual pair in
$G\Ex$,  $(\bP(X),\bP(Y))$ is a dual pair in $\Ex_B$.
\end{theorem}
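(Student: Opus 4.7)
The plan is to apply Proposition \ref{bicatfuntoriality} to the functor $\bP$, which reduces the statement to checking that $\bP$ is a lax functor of bicategories whose comparison natural transformations $\phi_{X,Y}$ and $\phi_A$ are isomorphisms on the 1-cells at hand. Once this is verified, the coevaluation and evaluation maps for the dual pair $(\bP(X),\bP(Y))$ in $\Ex_B$ are given by the explicit formulas in Proposition \ref{bicatfuntoriality} applied to the coevaluation $\eta\colon S^V\rightarrow X\boxtimes tY$ and evaluation $\epsilon\colon tY\boxtimes X\rightarrow \triangle_!S_A^V$ of the original $V$-dual pair in $G\Ex$.

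First, I would write down the definition of $\bP$ on 0-cells, 1-cells and 2-cells, and construct the structural natural transformations: the map $\phi_A\colon U_{\bP(A)}\rightarrow \bP(U_A)$ comes from the canonical homeomorphism $P\times_G(A\times_G A)\cong (P\times_GA)\times_B(P\times_GA)$ along the diagonal, and the map $\phi_{X,Y}\colon \bP(X)\boxtimes \bP(Y)\rightarrow \bP(X\boxtimes Y)$ is induced by commuting the bundle construction $P\times_G(-)$ with the pullback along the diagonal and pushforward along the projection that define $\boxtimes$. Because $P\rightarrow B$ is a principal $G$-bundle, the quotient functor $P\times_G(-)$ preserves the pullbacks, pushforwards, and external smash products appearing in the definition of $\boxtimes$; this makes each of $\phi_A$ and $\phi_{X,Y}$ a homeomorphism, so the lax functor is in fact strong on the 1-cells of interest. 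The point-set conditions of \myref{exspaceconditions} are maintained because $G$ is a compact Lie group acting on objects satisfying the equivariant analogues of these conditions, and the bundle construction preserves Hurewicz fibrations and fiberwise cofibrations in this setting.

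Next, I would check that $\bP$ sends the representation sphere $S^V$ to the correct sphere object in $\Ex_B$, namely the associated sphere bundle $\bP(S^V)=P\times_GS^V$ over $B$. The coevaluation for $(\bP(X),\bP(Y))$ is then the composite
\[
S^V_B\cong \bP(S^V)\xrightarrow{\bP(\eta)}\bP(X\boxtimes tY)\xrightarrow{\phi_{X,tY}^{-1}}\bP(X)\boxtimes \bP(tY),
\]
and the evaluation is the analogous composite ending in $\triangle_!\bP(S^V)_{\bP(A)}$. Verifying the two triangle identities for this new pair reduces, via the coherence conditions for a lax functor, to applying $\bP$ to the triangle identities for $(X,Y)$ and using that $\phi_{X,Y}$, $\phi_A$ and the coherence isomorphisms of $\bP$ are all isomorphisms; this is the purely diagrammatic content of \myref{bicatfuntoriality}.

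The main obstacle is verifying that the bundle construction genuinely commutes with the bicategory composition $\boxtimes$, i.e.\ that $\phi_{X,Y}$ is an isomorphism and compatible with the associator. This requires a careful argument that $P\times_G(-)$ commutes with pullback along $\triangle$ and pushforward along $r$ in the appropriate equivariant-to-fiberwise sense, for which one relies on the fact that principal bundles are locally trivial and $G$ is compact Lie, so the quotient behaves well on each of the spaces in the pullback–pushforward diagram that defines $\boxtimes$. A secondary subtlety is that in $G\Ex$ we work with $V$-duality while in $\Ex_B$ we want $n$-duality, but the bundle construction sends $S^V$ to the associated sphere bundle $P\times_GS^V$, and after stabilization this is the appropriate sphere object for duality in $\Ex_B$; once this identification is made, the diagram chase required by \myref{bicatfuntoriality} goes through without further difficulty.
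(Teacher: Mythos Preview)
The paper does not give its own proof of this theorem; it is quoted verbatim from \cite[19.4.4]{MS} and used as a black box. So there is no ``paper's proof'' to compare against here, only the argument in May--Sigurdsson. Your strategy via \myref{bicatfuntoriality} is exactly the right one and is essentially how the result is established in \cite{MS}: one shows that the bundle construction $P\times_G(-)$ is a strong (pseudo) functor of bicategories $G\Ex\rightarrow\Ex_B$, so that it carries dual pairs to dual pairs by the general formal argument.

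One point deserves more care than you give it. You correctly note that $\bP(S^V)=P\times_G S^V$ is an associated sphere \emph{bundle} over $B$, not the trivial bundle $S^n_B$, and you wave this away with ``after stabilization this is the appropriate sphere object.'' This is the genuine content left to check: the definition of fiberwise Costenoble--Waner $n$-duality in the paper uses the trivial sphere $S^n_B$, whereas what $\bP$ produces is duality twisted by the sphere bundle $P\times_G S^V$. In \cite{MS} this is handled by working in the parametrized stable category, where such sphere bundles are invertible and can be desuspended away; at the unstable $n$-duality level you would need to embed $V$ in a larger trivial representation and stabilize to untwist. Your sketch is correct in spirit, but if you were writing this out in full you would need to say explicitly how the twisted-sphere duality you obtain is converted to the untwisted $n$-duality the paper's definition asks for.
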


Combining this theorem with \myref{GdualM} we have the following
result.

\begin{corollary}\mylabel{GfibdualM}
If $p\colon M\rightarrow B$ is a fiber bundle with compact manifold fibers,
then the dual of $(M,\id)_+\in \Ex_B(M,\ast)$ is the fiberwise one point
compactification of the fiberwise normal bundle.
\end{corollary}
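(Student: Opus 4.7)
The plan is to deduce this as a direct combination of the two results invoked just above: the preceding theorem that the bundle construction $\bP\colon G\Ex\rightarrow \Ex_B$ preserves dual pairs, together with \myref{GdualM} which provides the equivariant Costenoble-Waner dual pair for a closed smooth $G$-manifold embedded in a representation. The corollary should fall out once the bundle $p\colon M\rightarrow B$ is exhibited as an associated bundle $\bP(F)=P\times_GF$ for some principal $G$-bundle $P\rightarrow B$ with $G$ a compact Lie group acting on a model fiber $F$.

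First I would reduce the structure group: since $p\colon M\rightarrow B$ is a fiber bundle with compact smooth manifold fibers, pick a model fiber $F$, equip it with a Riemannian metric, and reduce the structure group to a compact Lie group $G$ (for instance, to the isometry group, or by a standard averaging argument on a locally trivializing cover). Write $p\colon M\rightarrow B$ as $\bP(F)$ for the corresponding principal $G$-bundle $P\rightarrow B$. Next, by the Mostow-Palais equivariant embedding theorem, choose a $G$-representation $V$ and a $G$-equivariant embedding $F\hookrightarrow V$; let $\nu$ denote the equivariant normal bundle and $S^\nu$ its fiberwise one-point compactification.

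Second, apply \myref{GdualM} to obtain a Costenoble-Waner $V$-dual pair $(S^0_F,S^\nu)$ in $G\Ex$. Then apply the bundle functor $\bP$ and invoke the cited theorem (\cite[19.4.4]{MS}) to conclude that $(\bP(S^0_F),\bP(S^\nu))$ is a dual pair in $\Ex_B$. It only remains to identify these 1-cells: $\bP(S^0_F)=P\times_G(F\amalg F)=M\amalg M$, which over the 0-cell $M\rightarrow B$ is precisely $(M,\id)_+\in \Ex_B(M,\ast)$, with the disjoint copy serving as section. Similarly $\bP(S^\nu)=P\times_G S^\nu$ is the fiberwise one-point compactification of $P\times_G\nu$, and the latter is by construction the fiberwise normal bundle of $M\rightarrow B$.

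The main obstacle is the first step: producing a compact Lie group structure group and an equivariant embedding of the fiber into a representation. Under the smoothness hypothesis this is routine via the Mostow-Palais theorem, but if one wants to weaken smoothness to a topological bundle hypothesis then a different argument (perhaps working locally over $B$ and gluing, or passing through the stable parametrized framework of \cite{MS}) would be required. The remaining steps—applying $\bP$ to a known dual pair and identifying the resulting 1-cells—are essentially unpacking of definitions.
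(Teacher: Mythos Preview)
Your proposal is correct and follows exactly the route the paper intends: the corollary is stated immediately after the bundle-construction theorem with the remark ``Combining this theorem with \myref{GdualM} we have the following result,'' and no further proof is given. You have simply filled in the details the paper leaves implicit---reducing the structure group to a compact Lie group, invoking Mostow--Palais for the equivariant embedding, applying $\bP$, and identifying $\bP(S^0_F)$ and $\bP(S^\nu)$ with $(M,\id)_+$ and the fiberwise sphere bundle of the fiberwise normal bundle.
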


The coevaluation and evaluation maps for this dual pair are similar to
those for the dual pair in \myref{dualM}.  The evaluation
is defined using the following generalization of \myref{localcont}.

\begin{lemma}\cite[II.5.17]{CrabbJames} \mylabel{fiblocalcont}
Let $L\rightarrow B$ be a fiberwise ENR.  Then there is an open
neighborhood $W$ of the diagonal in $L\times_BL$ and a fiberwise
homotopy $H_t\colon W\rightarrow L$ such that $H_0(x,y)=x$, $H_1(x,y)=y$
and $H_t(x,x)=x$.
\end{lemma}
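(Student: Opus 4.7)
The plan is to mimic the classical argument from Lemma \ref{localcont} fiberwise, exploiting the linear structure available in the ambient Euclidean fibers of a fiberwise Euclidean neighborhood. Recall that by definition a fiberwise ENR $L\to B$ embeds fiberwise as a retract of some open subset of a trivial bundle $B\times \bR^n$ (or, more generally, of a vector bundle over $B$; the argument below works in either case by taking the straight-line interpolation inside each fiber). So I would start by fixing fiberwise maps $i\colon L\to U$ and $r\colon U\to L$ over $B$ with $U\subseteq B\times \bR^n$ open and $r\circ i=\id_L$.

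Next, for $(x,y)\in L\times_B L$ the images $i(x),i(y)$ lie in the same fiber $\{b\}\times \bR^n$ over $B$, so the straight-line interpolation
\[
\gamma_{x,y}(t) = i(x) + t\bigl(i(y) - i(x)\bigr)
\]
is well-defined in $\bR^n$ and depends continuously on $(x,y,t)$. I would then define
\[
W \;=\; \bigl\{(x,y)\in L\times_B L \,:\, \gamma_{x,y}([0,1])\subseteq U\bigr\},
\]
which contains the diagonal (since $\gamma_{x,x}(t)=i(x)\in U$) and is open in $L\times_B L$ by a standard compactness argument applied to the unit interval together with the openness of $U$. The homotopy is then
\[
H_t(x,y) \;=\; r\bigl(\gamma_{x,y}(t)\bigr),
\]
which is continuous, takes values in $L$, and is a map over $B$ because each step (the embedding $i$, the fiberwise linear interpolation, and the retraction $r$) respects the projection to $B$. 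The boundary and diagonal conditions $H_0(x,y)=r(i(x))=x$, $H_1(x,y)=r(i(y))=y$, and $H_t(x,x)=r(i(x))=x$ are then immediate.

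The main obstacle, such as it is, lies in arranging the fiberwise embedding in a Euclidean bundle in sufficient generality: if one takes the definition of fiberwise ENR to permit only local embeddings into trivial bundles over $B$, then one must patch these local homotopies together using a partition of unity on $B$ and a fiberwise version of the convex combination trick. The patching is routine because the straight-line homotopy is already canonical inside each chart, and two such homotopies on overlapping charts differ only by composing with the retraction into $L$; choosing $W$ small enough that the patched interpolations stay in $L$ via the retraction preserves all three required properties. Apart from this bookkeeping, the proof is structurally identical to the classical Lemma \ref{localcont}.
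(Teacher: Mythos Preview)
The paper does not prove this lemma; it simply cites it from Crabb--James \cite[II.5.17]{CrabbJames}. Your argument is the standard one and is essentially what appears there: embed $L$ fiberwise as a retract of an open $U\subseteq B\times\bR^n$, interpolate linearly in the Euclidean fibers, and retract back to $L$. One minor remark: in the Crabb--James definition a fiberwise ENR is \emph{globally} a fiberwise retract of an open set in some $B\times\bR^n$, so your final paragraph about patching local embeddings via a partition of unity is unnecessary under that convention.
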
\nidx{H@$H$}

For a 0-cell $M$ in $\Ex_B$ there is a dual pair $((M, \id)_+,t(M,\id)_+)$.
The coevaluation map is the diagonal map
\[\triangle\colon  M\rightarrow M\times_BM.\]
If $p\colon M\rightarrow B$ is the map from $M$ to $B$, the evaluation map is
\[p_+\colon M_+\rightarrow S^0_B.\]
The dual pair $(M_+,T\nu)$ in \myref{fibdualclas} is the composite
of $((M,\id)_+,t(M,\id)_+$ with the dual pair in \myref{GfibdualM}.

Since fiberwise Costenoble-Waner duality is duality in a bicategory
there are other descriptions of dual pairs.  The only other
characterization we will need is given in the following
corollary.\idx{fiberwise Costenoble-Waner duality}\idx{Costenoble-Waner
duality!fiberwise}

\begin{corollary}\mylabel{CWfibdualmaps}
If $(X,Y)$ is a Costenoble-Waner dual pair in $\Ex_B$,
then the coevaluation map of the dual pair induces an isomorphism
\[\{W\odot X,Z\}_M\rightarrow \{W,Z\odot tY\}_B\]
for $W\in \Ex_B(B,B)$ and $Z\in \Ex_B(M,B)$.
\end{corollary}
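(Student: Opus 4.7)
The plan is to derive this corollary from the general bicategorical characterization of right duals given earlier in the paper (the proposition in Chapter \ref{whybicat} stating that $Y$ is the right dual of $X$ with evaluation $\epsilon$ if and only if the map $\epsilon/(-)\colon \sB(W, Z\odot Y)\to \sB(W\odot X, Z)$ is a bijection for all $W,Z$), applied to the stable version of $\Ex_B$ where Costenoble-Waner duality is genuine bicategorical duality. Concretely, in this stable setting the fiberwise stable homotopy classes $\{-,-\}_M$ and $\{-,-\}_B$ are precisely the hom sets of the relevant hom categories (cf.\ \cite[16.2.1]{MS}), and Costenoble-Waner $n$-duality becomes ordinary right-duality after inverting sphere bundles. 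So the corollary is the instance of that bijection with the 1-cells $W\in \Ex_B(B,B)$ and $Z\in \Ex_B(M,B)$.

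Unpacking this in unstable terms, I would construct the forward map sending $f\colon W\odot X \to Z$ to the composite
\[
\tilde f\colon W \xrightarrow{\cong} W \odot U_B \xrightarrow{\id\odot\,\eta} W \odot (X\odot tY) \cong (W\odot X)\odot tY \xrightarrow{f\odot\id} Z\odot tY,
\]
using the coevaluation $\eta\colon S^n_B\to X\odot tY$ (absorbing the $S^n_B$ into a desuspension allowed by passing to stable classes over $B$). The inverse sends $g\colon W\to Z\odot tY$ to
\[
g^{\flat}\colon W\odot X \xrightarrow{g\odot\id} (Z\odot tY)\odot X \cong Z\odot (tY\odot X) \xrightarrow{\id\odot\,\epsilon} Z\odot \triangle_!S^n_C,
\]
with the trailing $S^n$-factor again absorbed stably. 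Well-definedness on homotopy classes is immediate because $\eta$ and $\epsilon$ are specified by the dual pair, and the $\boxtimes$-product is functorial on (fiberwise) homotopy classes under the CW/cofibration hypotheses of \myref{exspaceconditions}.

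To check that $\tilde{(-)}$ and $(-)^{\flat}$ are mutually inverse, I would substitute and use the two triangle-identity diagrams that define a Costenoble-Waner dual pair. The composite $(\tilde f)^{\flat}$ is $f$ precomposed with
\[
W\odot X \xrightarrow{\id\odot\,\eta\odot\,\id} W\odot X\odot tY\odot X \xrightarrow{\id\odot\,\epsilon} W\odot X\odot \triangle_!S^n_C,
\]
and the first triangle identity says this composite is the suspension of the identity on $W\odot X$; hence $(\tilde f)^{\flat} = f$ stably. The other composite is handled symmetrically using the second triangle identity. The Freudenthal suspension theorem in the fiberwise setting (cf.\ \cite[3.19]{CrabbJames}) is what legitimizes absorbing the $S^n$ factor so that the stable bijection is unaffected.

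The only real obstacle is the bookkeeping of the $S^n_B$ and $\triangle_!S^n_C$ factors and the verification that the desuspensions are compatible on both sides of the bijection; this is exactly the point at which one passes from $n$-duality to honest duality in a stable bicategory, where the corollary becomes a formal consequence of the characterization of dual pairs by the bijection $\epsilon/(-)$. All of this is parallel to the non-fiberwise statement \myref{CWdualmaps}, so the proof reduces to checking that the constructions of $\tilde{(-)}$ and $(-)^{\flat}$ go through verbatim in $\Ex_B$.
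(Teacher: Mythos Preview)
Your proposal is correct and matches the paper's approach: the paper does not give an explicit proof, but simply records the corollary after observing that ``fiberwise Costenoble-Waner duality is duality in a bicategory'' and hence inherits the alternative characterizations of dual pairs from Chapter~\ref{whybicat}. Your write-up is a faithful unpacking of exactly that, invoking the coevaluation variant of the proposition characterizing duals and handling the $S^n$ bookkeeping by passing to stable homotopy classes.
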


\section{Ranicki duality for fiberwise spaces}

The bicategory $\sM_{\Ex_B}$ of monoids,
bimodules, and maps in $\Ex_B$ is defined exactly as the bicategory
$\sM_\Ex$ is defined.  The bicategory composition and
shadow are defined in analogy with the composition and shadow in
Chapter \ref{classfpsec}.  We also have equivalences of the bar
resolutions and cyclic bar resolutions with colimits in some
special cases.
Like dual pairs of spaces and dual pairs in $\Ex$, the dual pairs  in 
$\Ex_B$ and dual pairs of modules in $\Ex_B$ are
defined using $n$-duality.

If $M$ is a space over $B$, $\pro\colon M\rightarrow B$, instead of
considering the topologized fundamental groupoid or the free Moore
path space we will use the fiberwise free Moore paths,\idx{fiberwise
free Moore paths}\idx{free Moore paths!fiberwise} \nidx{pbm@$\protect\calP
_BM$}
\[\calP _BM=\{(\gamma,u)\in \mathrm{Map}_B(B\times [0,\infty),M)\times [0,\infty)|
\gamma(t)=\gamma(u)\,\mathrm{for \, all}\, t\geq u\}.\]  This is a
space over $B$ with the map to $B$ given by $(\gamma,u)\mapsto
\pro\gamma(0)$. The space $\calP_BM$ is  the free Moore paths in
$M$ that are each contained in a single fiber over $B$.
This space is given the subspace topology from $M^{[0,\infty)}\times [0,\infty)$.
For more details on $\mathrm{Map}_B$ see \cite[1.3.7]{MS}.

\begin{lemma}\mylabel{fibrationforfiberproduct}
 If $p\colon M\rightarrow B$ is a fibration
the map 
\[\tar\times \sou\colon P _BM\rightarrow M\times_BM\] given by $(\tar\times
\sou)(\gamma,u) =(\gamma(u),\gamma(0))$  is a fibration.
\end{lemma}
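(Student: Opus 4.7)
The plan is to verify the homotopy lifting property for $\tar\times\sou\colon \calP_BM\to M\times_BM$ using the strengthening of the Hurewicz lifting property to arbitrary closed cofibrations: if $p\colon M\to B$ is a Hurewicz fibration and $i\colon A\hookrightarrow Y$ is a closed Hurewicz cofibration, then for any $f\colon A\to M$ and $g\colon Y\to B$ with $pf=gi$ there is a diagonal filler $\tilde g\colon Y\to M$ extending $f$ with $p\tilde g=g$ (Str{\o}m). The key observation is that a Moore path lift parametrized by a three-sided square is precisely what the homotopy lifting problem demands: the bottom edge gives the initial Moore path, the two vertical edges give the prescribed source and target curves, and the top edge is free.

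Suppose we are given $X$, an initial lift $\tilde h\colon X\to\calP_BM$ with $\tilde h(x)=(\gamma_x,u_x)$, and a homotopy $H=(\tau,\sigma)\colon X\times I\to M\times_BM$ with $(\tar\times\sou)\tilde h=H(-,0)$; set $b(x,t):=p\sigma(x,t)=p\tau(x,t)$. Take the three-sided square $A:=[0,1]\times\{0\}\cup\{0,1\}\times I\subset[0,1]\times I$, a closed cofibration whose product with $X$ remains one since NDR pairs are preserved under products. Define $f\colon X\times A\to M$ by $f(x,r,0):=\gamma_x(ru_x)$, $f(x,0,t):=\sigma(x,t)$, $f(x,1,t):=\tau(x,t)$; these are consistent at the four corners because $\gamma_x(0)=\sigma(x,0)$ and $\gamma_x(u_x)=\tau(x,0)$. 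Define $\bar H\colon X\times[0,1]\times I\to B$ by $\bar H(x,r,t):=b(x,t)$; one checks $\bar H|_{X\times A}=pf$ using the fiberwise constancy of $p\gamma_x$ at $b(x,0)$. The cofibration lifting theorem now produces a continuous $G\colon X\times[0,1]\times I\to M$ extending $f$ with $pG=\bar H$. Setting $u_x^t:=u_x+t$ and $\gamma_x^t(s):=G(x,s/u_x^t,t)$ for $s\in[0,u_x^t]$, extended by the constant value $\tau(x,t)=G(x,1,t)$ for $s\geq u_x^t$, yields a Moore path of length $u_x^t$ lying entirely in $p^{-1}(b(x,t))$, whose source is $\sigma(x,t)$ and whose target is $\tau(x,t)$, and which reduces to $(\gamma_x,u_x)$ at $t=0$. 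Then $\tilde H(x,t):=(\gamma_x^t,u_x^t)$ is the required lift.

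The algebraic identities $(\tar\times\sou)\tilde H=H$, $\tilde H(-,0)=\tilde h$, and the single-fiber condition $p\gamma_x^t=b(x,t)$ all drop out of the boundary prescription $G|_{X\times A}=f$ and the identity $pG=\bar H$. The main technical obstacle is continuity of $\tilde H$ as a map into $\calP_BM$ in the Moore path topology, since the reparametrization $s/u_x^t$ degenerates when $u_x^t=0$. The choice $u_x^t=u_x+t$ keeps $u_x^t$ strictly positive for all $t>0$, eliminating the division-by-zero issue away from the initial slice, and continuity at points where $u_x=0$ at $t=0$ is the standard Moore path estimate (the path collapses uniformly onto its constant value as the length shrinks to zero). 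Once this continuity check is in place, the lifting function is fully specified and $\tar\times\sou$ is a Hurewicz fibration.
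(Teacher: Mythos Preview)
Your argument is correct and follows the same core idea as the paper's proof: both reduce the lifting problem for $\tar\times\sou$ to a filling problem for the three-sided square $A=([0,1]\times\{0\})\cup(\{0,1\}\times I)\hookrightarrow I\times I$, using that this inclusion is a closed cofibration and that the Hurewicz fibration $p$ has the right lifting property against it. The differences are minor but worth noting. First, the paper explicitly simplifies to ordinary paths rather than Moore paths, which lets it avoid the reparametrization step and the attendant continuity check at length-zero paths that you flag at the end; your direct treatment of Moore paths via $u_x^t=u_x+t$ is a clean way to handle this, and the continuity claim you sketch is indeed routine. Second, where you invoke Str{\o}m's theorem that Hurewicz fibrations lift against closed cofibrations, the paper instead writes down an explicit homeomorphism of pairs $(I\times I,A)\cong(A\times I,A\times\{0\})$ to reduce to the standard covering homotopy property. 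These are two well-known equivalent ways to obtain the same filler, so neither approach buys anything the other does not; your version is marginally more self-contained for Moore paths, the paper's is marginally cleaner by sidestepping the length bookkeeping.
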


To minimize notation we will use paths rather than Moore paths
in this proof.

\begin{proof}
We must show that all diagrams
\[\xymatrix{ X\ar[r]^-f\ar[d]_{i_0}&{\notcalP}_BM\ar[d]^{(\tar,\sou)}\\
X\times I \ar[r]_-H\ar@{.>}[ur]^k&M\times_BM}\] have a lift
$k$.

The map $f$ has an adjoint $\bar{f}\colon X\times I\rightarrow M$
which satisfies $p(f(x, t))=p(f(x,0))$ for all $x\in X$, $t\in
I$. Let $H=H_1\times_B H_0$. Since the diagram commutes $\bar{f}$
must also satisfy $\bar{f}(x,0)=H_0(x, 0)$ and $\bar{f}(x,1)
=H_1(x,0)$.
Let $J$ be the subset $(0,I)\cup (I,0) \cup (1,I)$
of $I\times I$.  Then a lift $k$ in the
diagram above corresponds to a lift $\bar{k}$ in the
diagram
\[\xymatrix{ X\times J\ar[r]^g\ar[d]_\iota &M\ar[d]^{p_2}\\
X\times I \times I\ar[r]_-{\tilde{H}}\ar@{.>}[ur]^{\bar{k}}&B}\]
where $g\colon J\times X\rightarrow M$ is defined by
\[\begin{array}{rcl}
g(x,0,s)&=&H_0(x,s) \\
g(x,t,0)&=&\bar{f}(x,t) \\
g(x,1,s)&=& H_1(x,s)
\end{array}\]
and $\tilde{H}(x,t,s)=pH_1(x,s)$

Let $\phi\colon X\times J\times I\rightarrow X\times I\times I$ be a
homeomorphism such that \[\xymatrix{X\times
J\ar[rr]^\iota\ar[dr]_{i_0}&&X\times I\times I
\\&X\times J\times I\ar[ru]_\phi}\] commutes.
Then there is a lift $\tilde{k}$ in the diagram
\[\xymatrix{X\times J\ar[rr]^g\ar[dr]^{i_0}
\ar[dd]_\iota &&M\ar[dd]^{p_2}\\
&X\times J \times I \ar[dr]^-{\tilde{H}\circ \phi}\ar[dl]_\phi
\ar@{.>}[ur]^{\tilde{k}}\\
X\times I \times I\ar[rr]_{\tilde{H}}&&B}\] since $p_2$ is a
fibration, and $\tilde{k}\circ\phi^{-1}$ defines the lift
$\bar{k}$.

\end{proof}

Composition of paths gives a
strictly associative product $\calP_BM\times_M\calP_BM \rightarrow
\calP_BM$.  The inclusion of $M$ into $\calP_BM$ is the unit.
Adding a disjoint section to $\calP_BM$ gives a monoid in
$\Ex_B$.

Recall that for a monoid $\sA$, $R(\sA)$ is $\sA$ regarded as a
right $\sA$-module and $L(\sA)$ is $\sA$ regarded as a left $\sA$-module.

\begin{corollary}
Let $M\rightarrow B$ be a fiber bundle with compact
manifold fibers.  Then we have the following dual pairs.
\begin{enumerate} \item $(R(\calP_BM,\tar\times \sou)_+,
L(\calP _BM, \tar\times \sou)_+)$
\item $( (\calP_BM,\sou)_+, T_M\sou^*S_M^{\nu_B})$.\end{enumerate}
\end{corollary}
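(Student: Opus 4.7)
The plan is to imitate the proof of \myref{bunivdual3} almost verbatim, substituting the fiberwise Costenoble-Waner dual pair from \myref{GfibdualM} for the classical one. The strategy has two steps: first, exhibit the first dual pair as the canonical dual pair arising from a monoid in $\Ex_B$; second, compose with the fiberwise Costenoble-Waner dual pair $(S^0_M, tS^{\nu_B})$ to obtain the second dual pair via \myref{dualcomposites1}.

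For the first dual pair, I would verify that $(\calP_BM, \tar \times \sou)_+$ is a monoid in the bicategory $\sM_{\Ex_B}$. The projection $\tar\times\sou$ is a Hurewicz fibration by \myref{fibrationforfiberproduct}, so $(\calP_BM,\tar\times\sou)_+$ satisfies the homotopical hypotheses of \myref{exspaceconditions}. Composition of fiberwise Moore paths is strictly associative and the inclusion of constant paths is a strict unit, exactly as in the classical case, because everything takes place inside a single fiber over $B$. The analogue of \myref{topmonodidual} in $\sM_{\Ex_B}$ then gives that $(R(\calP_BM,\tar\times\sou)_+, L(\calP_BM,\tar\times\sou)_+)$ is a dual pair, with coevaluation the inclusion of $M$ into $\calP_BM$ as constant paths and evaluation given by composition of paths; the verification that the triangle identities hold reduces to the extra degeneracy argument showing $R(\sA)\odot L(\sA) \simeq N(\sA)$, which is formal and works in any bicategory of monoids and bimodules over $\Ex_B$.

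For the second dual pair, I would define $T_M\sou^*S_M^{\nu_B}$ to be $L(\calP_BM, \tar\times\sou)_+ \odot tS^{\nu_B}$, and observe that
\[
((\calP_BM,\sou)_+, T_M\sou^*S_M^{\nu_B}) \simeq (S^0_M \odot R(\calP_BM,\tar\times\sou)_+,\ L(\calP_BM,\tar\times\sou)_+ \odot tS^{\nu_B}).
\]
By \myref{GfibdualM}, $(S^0_M, tS^{\nu_B})$ is a fiberwise Costenoble-Waner dual pair, and by part (1) so is $(R(\calP_BM,\tar\times\sou)_+, L(\calP_BM,\tar\times\sou)_+)$. Applying \myref{dualcomposites1} to these two dual pairs produces the second dual pair, with coevaluation and evaluation that can be written down explicitly by analogy with the classical case: the coevaluation is the composite of the fiberwise Pontryagin--Thom map with the inclusion $v\mapsto (c_{\rho\eta(v)}, c_{\rho\eta(v)}, \eta(v))$ of constant paths, and the evaluation is $(\alpha, v, \beta)\mapsto (\epsilon(v,\beta(1)), \alpha H(\beta(1),\alpha(0))\beta)$ using the fiberwise local contraction $H$ from \myref{fiblocalcont} in place of \myref{localcont}.

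The main obstacle I anticipate is not the algebraic structure, which is formal once we know \myref{dualcomposites1} applies, but rather maintaining the homotopical hypotheses throughout. Specifically, one must check that the bar and cyclic bar resolutions used to define $\odot$ in $\sM_{\Ex_B}$ have the correct homotopy type when fed fiberwise fibrations; this is where we implicitly rely on the model-theoretic approximation machinery of \cite{MS} referenced in \myref{exspaceconditions} and in the discussion preceding this corollary. Once this is granted, the proof is a direct transcription of the proof of \myref{bunivdual3}, with $\calP M$ replaced by $\calP_BM$, the normal bundle of an embedding replaced by the fiberwise normal bundle, and the classical Costenoble-Waner duality replaced by \myref{GfibdualM}.
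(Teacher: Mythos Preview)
Your proposal is correct and follows essentially the same approach as the paper: the first dual pair comes from the monoid $(\calP_BM,\tar\times\sou)_+$ via \myref{topmonodidual}, and the second is obtained by composing with the fiberwise Costenoble--Waner dual pair $(S^0_M, tS_M^{\nu_B})$ from \myref{GfibdualM} using \myref{dualcomposites1}. The paper's proof is in fact more terse than yours---it does not write out the explicit coevaluation and evaluation maps or discuss the homotopical hypotheses---so your added detail is welcome but not strictly necessary.
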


\begin{proof}
Here $T_M\sou^*S_M^{\nu_B}$\nidx{tmssmnub@$T_M\protect\sou^*S_M^{\nu_B}$}
is defined to be $L(\calP_BM,\tar\times \sou )_+\odot S_M^{\nu_B}$
where $\nu_B$ is the fiberwise normal bundle of $M$ over $B$ and
$S_M^{\nu_B}$ is the fiberwise one point compactification of
$\nu_B$ over $M$.\nidx{nub@$\nu_B$}\nidx{smnub@$S_M^{\nu_B}$}

The first of these dual pairs comes from the monoid $(\calP
_BM,\tar\times \sou)_+$ as in \myref{topmonodidual}.
The second is the composite of the dual pairs
\[( S_M^0,  tS_M^{\nu_B})\] and \[(R(\calP _BM,\tar\times\sou)_+,
L(\calP _BM,\tar\times \sou)_+).\] \end{proof}

\chapter{Fiberwise fixed point theory}\label{fibfpsec2}

In Chapter  \ref{classfpinv}, the corresponding chapter for classical
invariants, we described many invariants and gave several applications
of trace in bicategories.  This section is much
shorter.  One of the reasons is that based invariants cannot be
defined for fiberwise space.  Another is that is not clear 
what invariants should be the generalization of the algebraic and geometric
Reidemeister traces.

In contrast to the algebraic and geometric Reidemeister traces,
the homotopy Reidemeister trace has a straightforward fiberwise
generalization. The invariant defined by Klein and Williams also
has a fiberwise generalization and their proof of the converse to
the Lefschetz fixed point theorem easily generalizes.

\section{Fiberwise fixed point theory invariants}

The fiberwise homotopy Reidemeister trace is based on the
fiberwise free Moore paths monoid.  This is the
Nielsen-Reidemeister invariant defined by Crabb and James
in  \cite[II.6]{CrabbJames} and in Section \ref{fibkwidsec}
it is identified with the invariant defined by Klein
and Williams in \cite{KW}.

Let $M\rightarrow B$ be a fiber bundle with dualizable fibers and
$f\colon M\rightarrow M$ a fiberwise map.  Then $f$ can be used to
define a monoid $(\calP_B^fM,\tar\times \sou)_+$\nidx{pbfm@$\protect\calP_B^fM$}
in $\Ex_B$.  This is analogous to the definition of $(\calP^f M,
\tar\times \sou)_+$. The map $f$ also defines a map of right $(\calP
_BM,\tar\times \sou)_+$ modules
\[\tilde{f}\colon (\calP _BM,\sou)_+\rightarrow
(\calP_BM,\sou)_+\odot (\calP^f_BM,\tar\times \sou)_+.\]\nidx{f@$\tilde{f}$}

\begin{definition} The \emph{fiberwise homotopy Reidemeister trace},
$R^{htpy}_B(f)$, is the
trace of $\tilde{f}$.
\end{definition}\idx{fiberwise homotopy Reidemeister trace}\idx{Reidemeister
trace!fiberwise homotopy}\nidx{rhtpyb@$R^{htpy}_B$}

The fiberwise homotopy Reidemeister trace is a fiberwise stable map over $B$
\[S^n\times B\rightarrow \sh{S^n\wedge (\calP^f _BM, \tar\times
\sou)_+}.\]
The shadow of $(\calP^f_BM,\tar\times \sou)$ is equivalent to
\[\Lambda^f_BM=\{\gamma\in M^I|f(\gamma(1))=
\gamma(0), p(\gamma(t))=p(\gamma(0)) \,\mathrm{for\,all}\,t\in I\}.
\]\nidx{lambdafb@$\Lambda_B^fM$}

The fiberwise homotopy Reidemeister trace
is a very rich invariant and it should be possible to use this
invariant to define other, simpler, invariants.  One invariant we can
extract from the fiberwise homotopy Reidemeister trace is the
fiberwise Nielsen number defined by Scofield.

The fiberwise
homotopy Reidemeister trace is a map
\[S^n\times B\rightarrow S^n\wedge\Lambda_B^fM_+.\]
Each connected component of $\Lambda^f_BM$ has a map to $B$ and so we get a
map
\[\xymatrix@C=15pt{H_0(B_+)\cong H_n(S^n\wedge (B_+))\ar[r]&
H_n(S^n\wedge \Lambda^f_{B}M_+)
\ar[r]^-\phi&\oplus H_n(S^n\wedge (B_+))\cong \oplus H_0(B_+)}\]
where $\phi$ is induced by the decomposition of $\Lambda_B^fM_+$
into path components followed by the map to $B$.
The image of this map on one component $H_0(B)$ in $\oplus H_0(B)$  is the
fiberwise index of the corresponding
`fiberwise fixed point class'.  The fiberwise
Nielsen number is the number of fiberwise fixed point classes with
nonzero index.\idx{fiberwise fixed point classes}\idx{fiberwise Nielsen
number}\idx{Nielsen number!fiberwise}

Scofield showed this invariant does not give a converse to the
fiberwise Lefschetz fixed point theorem.

\begin{ex}\cite[V.3.16]{scofield} Let $f\colon S^3\times S^3\rightarrow S^3\times S^3$
be the map $f(b,z)=(b,b^{24}z)$.  If $S^3\times S^3$ is a space
over $S^3$ via the first coordinate projection, $f$ is a fiberwise
map.  All maps fiberwise homotopic to $f$ have a fixed point, but
the fiberwise Nielsen number of $f$ is zero.
\end{ex}

\section{The converse to the fiberwise Lefschetz fixed point theorem}

\label{fibconvers}

In this section we describe the fiberwise generalization of Klein
and Williams' proof of the converse to the Lefschetz Fixed Point
Theorem.  The intuition and general structure here are identical
to that in Section \ref{usuconverse}.

\begin{prop}\mylabel{fibfptosec}\cite{KW,FadellHPara}
Let $M\rightarrow B$ be a continuous map. Then fiberwise
homotopies of a fiberwise map $f\colon M\rightarrow M$ to a fixed point
free map correspond to liftings which make the diagram below
commute up to fiberwise homotopy.
\[\xymatrix{&M\times_B
M-\triangle\ar[d]\\M\ar@{.>}[ur]\ar[r]^{\Gamma_f}& M\times_B M.}\]
Here $\Gamma_f$ is the graph of $f$.\end{prop}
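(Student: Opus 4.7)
The plan is to parallel the proof of the classical case, \myref{fptosec}(1), using the standing fibration and cofibration hypotheses of \myref{exspaceconditions} together with the fiberwise neighborhood-of-the-diagonal provided by \myref{fiblocalcont}. The key structural fact is that under the graph correspondence $g \mapsto \Gamma_g$, fiberwise self-maps of $M$ are in bijection with fiberwise sections of the first projection $p_1\colon M \times_B M \to M$, and fixed-point-free $g$ correspond precisely to sections factoring through the open subspace $M \times_B M - \triangle$.

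For the forward direction, a fiberwise homotopy $f_t\colon M \to M$ from $f = f_0$ to a fixed-point-free $g = f_1$ induces, via the graph construction applied levelwise, a fiberwise homotopy $\Gamma_{f_t}\colon M \to M \times_B M$ from $\Gamma_f$ to $\Gamma_g$. Since $g$ has no fixed points, $\Gamma_g$ factors literally through $M \times_B M - \triangle$, yielding the desired lift of $\Gamma_f$ up to fiberwise homotopy.

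For the reverse direction, suppose we are given $\ell\colon M \to M \times_B M - \triangle$ together with a fiberwise homotopy $H = (\alpha, \beta)\colon M \times I \to M \times_B M$ with $H_0 = \Gamma_f = (\id, f)$ and $H_1 = i \circ \ell$. Because $M \to B$ is a Hurewicz fibration, so is $p_1\colon M \times_B M \to M$, and hence the postcomposition map $q\colon \mathrm{Map}_B(M, M \times_B M) \to \mathrm{Map}_B(M, M)$ given by $h \mapsto p_1 \circ h$ is also a Hurewicz fibration. The homotopy $H$ is a path in the total space starting in the fiber $q^{-1}(\id_M)$ (which is exactly the space of graphs) and projecting to the path $\alpha_t$ in the base from $\id_M$ to $\alpha_1$. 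Lifting the reverse path $\alpha_{1-s}$ starting at $\ell$ and concatenating with $H$ produces a fiberwise homotopy from $\Gamma_f$ to $\Gamma_{b'}$ for some fiberwise map $b'\colon M \to M$; via the graph correspondence this is a fiberwise homotopy from $f$ to $b'$.

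The main obstacle is guaranteeing that $b'$ is fixed-point free, since an arbitrary lift need not keep $\Gamma_{b'}$ inside the open subspace $M \times_B M - \triangle$. I would overcome this by choosing the lifting function for $q$ carefully, using the fiberwise neighborhood $W$ of the diagonal supplied by \myref{fiblocalcont} and its retraction: shrinking $\ell$ so that its image lies outside a controlled open neighborhood of $\triangle$, then lifting through paths in $M \times_B M - \triangle$ rather than $M \times_B M$. Alternatively, one can replace the diagram in the statement with the fibration replacement $r(i)\colon N(i) \to M \times_B M$ (as in part (2) of the classical \myref{fptosec}, whose fiberwise analogue is formally identical) so that liftings up to fiberwise homotopy correspond to honest fiberwise sections of $\Gamma_f^*N(i) \to M$; such a section is, by unwinding the definition of $N(i)$, exactly the datum of a path from $(m, f(m))$ into $M \times_B M - \triangle$ for every $m$, from which the fiberwise homotopy of $f$ to a fixed-point-free map can be read off directly. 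This latter formulation bypasses the technical difficulty of keeping the homotopy inside the complement of the diagonal at the cost of invoking the fibrant replacement machinery.
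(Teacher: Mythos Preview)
Your forward direction is fine and matches the paper. The reverse direction has a genuine gap that your proposed fixes do not close.

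The issue is exactly the one you identify: given a lift $\ell$ and a fiberwise homotopy $H=(\alpha,\beta)$ from $\Gamma_f$ to $i\circ\ell$, you must produce a homotopy of $f$ to a fixed-point-free map. Your lifting-along-$q$ argument corrects the first coordinate to the identity but loses control of whether the endpoint lies in $M\times_B M-\triangle$. Your first fix (using the neighborhood $W$ from \myref{fiblocalcont}) is too vague to be a proof. Your second fix (passing to the fibrant replacement $N(i)$) does not actually bypass the difficulty: a section of $\Gamma_f^*N(i)\to M$ gives, for each $m$, a path $t\mapsto(\alpha(m,t),\beta(m,t))$ in $M\times_B M$ from $(m,f(m))$ to a point off the diagonal, but $\beta(-,1)$ need not be fixed-point-free, since the condition at $t=1$ is $\alpha(m,1)\neq\beta(m,1)$, not $m\neq\beta(m,1)$. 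You are back to the same problem of straightening $\alpha$ to the identity while staying off the diagonal.

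The paper's argument is different and sidesteps this entirely. It observes that the graph map embeds $(\Top/B)(M,M)$ as the fiber over $\id_M$ of $\mathrm{proj}_{1*}\colon(\Top/B)(M,M\times_B M)\to(\Top/B)(M,M)$, and likewise embeds $(\Top^*/B)(M,M)$ as the fiber over $\id_M$ of $\mathrm{proj}_{1*}\colon(\Top/B)(M,M\times_B M-\triangle)\to(\Top/B)(M,M)$. These two fibration sequences sit over the identity map of $(\Top/B)(M,M)$, so the square relating the two inclusions is homotopy cartesian, and hence the homotopy fibers of the two horizontal inclusions coincide. This is precisely the statement that fiberwise homotopies of $f$ to a fixed-point-free map correspond to liftings of $\Gamma_f$ up to fiberwise homotopy. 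The point is that working with fibers of $\mathrm{proj}_{1*}$ over $\id_M$ builds in the constraint ``first coordinate is the identity'' from the start, so the straightening problem never arises.
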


\begin{proof}
Let $(\Top^*/B)(M,M)$\nidx{topb@$\Top/B$}\nidx{topb@$\Top^*/B$} be the
fiberwise maps $M\rightarrow M$ that are fixed point free.  Let
${\mathrm{proj}}_1\colon M\times_B M\rightarrow M$ be the first
coordinate projection. We have the following map of fibration
sequences.
\[\xymatrix{ (\Top^*/B)(M,M)\ar@{^(->}[r]\ar[d]^{\mathrm{graph}}&(\Top/B)(M,M)
\ar[d]^{\mathrm{graph}}\\
(\Top/B)(M,M\times_B M-\triangle)\ar@{^(->}[r]\ar[d]^{{\mathrm{proj}}_{1*}}&(\Top/B)
(M,M\times_B M)
\ar[d]^{{\mathrm{proj}}_{1*}}\\
(\Top/B)(M,M)\ar@{=}[r]&(\Top/B)(M,M)}\]  The fibers are taken over the
identity map.

The top square is homotopy cartesian and so the homotopy fibers of
the inclusions \[(\Top^*/B)(M,M)\rightarrow (\Top/B)(M,M)\] and
\[(\Top/B)(M,M\times_B M-\triangle)\rightarrow (\Top/B)(M,M\times_B M)\]
coincide up to homotopy.
\end{proof}

We can convert this lifting question into a question
about the existence of sections.  For a fiberwise map
$f\colon X\rightarrow Y$, let $\fibm_B(f) \colon \fibs_B(f)\rightarrow Y$
\nidx{rb@$\protect\fibm_B$}\nidx{nb@$\protect\fibs_B$} be a
Hurewicz fiberwise fibration such that
\[\xymatrix{X\ar[rr]\ar[dr]_f&&\fibs_B(f)\ar[dl]^{\fibm_B(f)}\\&Y}\]
commutes and $X\rightarrow \fibs_B(f)$is an equivalence. Liftings
up to fiberwise homotopy in a diagram \[\xymatrix{&X\ar[d]^f\\
Z\ar[r]_g\ar@{.>}[ur]&Y}\] correspond to sections of the fiberwise fibration
$g^*\fibs_B(f)\rightarrow Z$.

Suppose $\pro\colon E\rightarrow M$ is a fiberwise Hurewicz fibration over a
space $B$. The unreduced fiberwise suspension\idx{unreduced fiberwise
suspension} of $E$ over $M$ is
the double mapping cylinder
\[S_ME\coloneqq M\times \{0\}\cup_{\pro} E\times [0,1]\cup_{\pro} M\times \{1\}.\]
\nidx{sm@$S_M$} This has a map to $M$. Let
\[\sect_-,\sect_+\colon M\rightarrow S_ME\] \nidx{sigma@$\protect\sect_-$}
\nidx{simga@$\protect\sect_+$} be the sections of $S_ME\rightarrow M$
given by the inclusions of $M\times \{0\}$ and $M\times \{1\}$
into $S_ME$.

\begin{prop}
\mylabel{firstform2}
If $E\rightarrow M$ admits a section then $\sect_-$
and $\sect_+$ are homotopic over $M$.

Conversely, assume  $M\rightarrow B$ is a fibration,
$\pro\colon E\rightarrow M$ is $(r+1)$-connected, and $M$ is a cell complex
over $B$ of dimension less than or equal to $2r+1$. If $\sect_-$ and
$\sect_+$ are homotopic over $M$, then $\pro$ has a section.
\end{prop}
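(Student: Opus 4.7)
For the first direction, the plan is a direct construction. Given a section $s\colon M\rightarrow E$ of $\pro$, define $H\colon M\times I\rightarrow S_ME$ by sending $(m,t)$ to the image of $(s(m),t)\in E\times[0,1]$ in the double mapping cylinder. The defining identifications yield $H(m,0)=(\pro s(m),0)=(m,0)=\sect_-(m)$ and similarly $H(m,1)=\sect_+(m)$, while the projection $S_ME\rightarrow M$ sends $H(m,t)$ to $\pro s(m)=m$, so $H$ is the required homotopy over $M$.

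For the converse, the plan is to reinterpret the homotopy as a section of an auxiliary fibration over $M$ and then apply a fiberwise Freudenthal argument to lift it back to a section of $\pro$. A homotopy $\sect_-\simeq \sect_+$ over $M$ is precisely a section of the fiberwise path space
\[
\calP=\bigl\{(m,\gamma)\bigm|\gamma\colon I\rightarrow (S_ME)_m,\ \gamma(0)=\sect_-(m),\ \gamma(1)=\sect_+(m)\bigr\}\rightarrow M,
\]
so I would introduce the canonical fiberwise comparison map $\Phi\colon E\rightarrow \calP$ sending $e\in E_m$ to the path $t\mapsto[e,t]$ in $(S_ME)_m$. The fiber $(S_ME)_m$ is the unreduced suspension of $E_m$ with cone points $\sect_\pm(m)$, so on each fiber $\Phi$ is the classical unreduced Freudenthal comparison from $E_m$ into the space of paths in the unreduced suspension of $E_m$ joining the two cone points.

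The main obstacle is the cell-by-cell obstruction theory over the relative CW structure of $M\rightarrow B$. The hypothesis that $\pro$ is $(r+1)$-connected forces each $E_m$ to be $r$-connected, so the Freudenthal theorem gives that $\Phi$ is $(2r+1)$-connected on every fiber, and its fiberwise homotopy fiber over $M$ is therefore $2r$-connected. Starting from the section of $\calP\rightarrow M$ provided by the homotopy, I would extend its lift through $\Phi$ across each relative $n$-cell of $M$ over $B$; the obstruction at such a cell lies in $\pi_{n-1}$ of this $2r$-connected homotopy fiber and so vanishes whenever $n\leq 2r+1$. Under the dimension hypothesis the lift exists, and it produces the desired section of $\pro$. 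The delicate technical points are to formulate the obstruction theory relative to $B$ rather than absolutely, and to verify that the unreduced Freudenthal comparison map has the required connectivity without loss from working in the unreduced setting; both issues parallel the classical proof of \myref{firstform} in \cite{KW}.
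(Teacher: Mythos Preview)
Your argument is correct and reaches the same target space as the paper, but the two proofs are organized differently.  Both identify the relevant object as the fiberwise space of paths in $S_ME$ from $\sect_-$ to $\sect_+$ and produce a comparison map from $E$ into it; the paper calls this the homotopy pullback $P$ of $\sect_-$ and $\sect_+$, and your $\calP$ is its fiberwise version.  Where you invoke the Freudenthal suspension theorem fiber-by-fiber to see that $\Phi\colon E\to\calP$ is $(2r+1)$-connected, the paper instead decomposes $S_ME$ as the union of two mapping cylinders along $E\times\{1/2\}$, applies Blakers--Massey to this pushout, and compares the resulting partial Mayer--Vietoris sequence for $(E,M,M,S_ME)$ with the one coming from the homotopy pullback square for $P$; a five-lemma chase then gives the connectivity of $q\colon E\to P$.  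For the lifting step, you run obstruction theory across the relative cells of $M$ over $B$, while the paper simply quotes the fiberwise Whitehead theorem \cite[24.1.2]{MS} to get a surjection $[M,E]_B\to[M,P]_B$, and then uses the fiberwise homotopy lifting property to straighten the resulting map into a genuine section.

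One small point to tighten: you deduce that each fiber $E_m$ is $r$-connected from the hypothesis that $\pro$ is $(r+1)$-connected.  That passage from map connectivity to fiber connectivity uses that $\pro$ is (or has been replaced by) a fibration, which the paper is tacitly assuming as well (it invokes the fiberwise HLP of $\pro$ at the very end).  The paper's Blakers--Massey route sidesteps this by working with the pair $(X_i,E)$ rather than with individual fibers, which is a mild robustness advantage; your approach is in exchange more direct once one is willing to assume the fibration hypothesis.
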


For the definition of a cell complex over $B$, see
\cite[24.1]{MS}.

\begin{proof} If $E\rightarrow M$ admits a section $\secttwo$ \nidx{sigma@
$\secttwo$}
this section defines a map \[S_M\secttwo\colon S_MM=M\times I\rightarrow
S_ME\] which is a homotopy over $M$ between $\sect_-$ and
$\sect_+$.

Let $X_1=M\cup_{\pro}(E\times [0,1/2])$ and $X_2=M\cup_{\pro}(E\times [1/2,1])$.
Then \[S_ME=X_1\cup_{E\times \{1/2\}}X_2.\]  Since
$p\colon E\rightarrow M$ is $(r+1)$-connected the pairs $(X_1,E)$ and $(X_2,E)$
are also $(r+1)$-connected.  By the Blakers-Massey Theorem, for any choice
of base point,
\[\pi_i(X_1,E)\rightarrow \pi_i(S_ME,X_2)\] is an isomorphism for $i<2r+2$
and a surjection for $i=2r+2$.

From the pairs $(X_1,E)$ and $(S_ME,X_2)$ we get two long exact sequences
of homotopy groups for any choice of base point in $E$
\[\ldots \rightarrow \pi_i(E)\rightarrow \pi_i(X_1)\rightarrow \pi_i(X_1,E)
\rightarrow \pi_{i-1}(E)\rightarrow \ldots\]
 \[\ldots \rightarrow \pi_i(X_2)\rightarrow \pi_i(S_ME)\rightarrow \pi_i(S_ME
,X_2)\rightarrow \pi_{i-1}(X_2)\rightarrow \ldots\]
Diagram chasing and the isomorphism from the Blakers-Massey Theorem
give an exact sequence
\[\pi_{2r+1}(E)\rightarrow \pi_{2r+1}(X_1)\oplus \pi_{2r+1}(X_2)\rightarrow
\pi_{2r+1}(S_ME)
\rightarrow \pi_{2r}(E)\rightarrow \ldots.\]
The exact sequence continues to the right but does not continue further
to the left.  Using the retractions of $X_1$ and $X_2$ to $M$ we get
an exact sequence
\[\pi_{2r+1}(E)\rightarrow \pi_{2r+1}(M)\oplus \pi_{2r+1}(M)\rightarrow
\pi_{2r+1}(S_ME)
\rightarrow \pi_{2r}(E)\rightarrow \ldots.\]

We would like to compare $E$ to the homotopy pullback of the maps
$\sect_-$ and $\sect_+$.  The homotopy pullback $P$ is the pullback in the
diagram
\[\xymatrix{P\ar[r]\ar[d]_-{\sect_-^*\fibm(\sect_+)}
&{\fibs M}\ar[d]^{\fibm(\sect_+)}
\\M\ar[r]_-{\sect_-}&S_ME}\]
where $\fibm(\sect_+)\colon \fibs M\rightarrow S_ME$ is the map $\sect_+\colon M\rightarrow S_ME$
converted into a fibration (not necessarily a fiberwise
fibration). The map ${\sect_-^*\fibm(\sect_+)}\colon 
P\rightarrow M$ is also a  fibration
with the same fiber.

For any choice of base point in $P$
we get two long exact sequences associated to these  fibrations
\[\ldots \rightarrow \pi_i(F)\rightarrow \pi_i(\fibs M)\rightarrow \pi_i(
S_ME)\rightarrow \pi_{i-1}(F)\rightarrow \ldots\]
\[\ldots \rightarrow \pi_i(F)\rightarrow \pi_i(M)\rightarrow \pi_i(
P)\rightarrow \pi_{i-1}(F)\rightarrow \ldots\] where $F$ is the
fiber of ${\fibm(\sect_+)}$.
The same diagram chase as above gives a long
exact sequence
\[\ldots \rightarrow \pi_i(P)\rightarrow \pi_i(M)\oplus\pi_i(M)\rightarrow
\pi_i(S_ME)\rightarrow \pi_{i-1}(P)\rightarrow \ldots \]

The diagram
\[\xymatrix{E\ar[r]^\pro\ar[d]_\pro&M\ar[d]^{\sect_-}\\
M\ar[r]_{\sect_+}&S_ME}\] is commutative up to preferred fiberwise
homotopy given by the homotopy from $\sect_-$ to $\sect_+$ and so there is
a map $q\colon E\rightarrow P$ such that
\[\xymatrix{E\ar[rr]^-q\ar[dr]_-p&&P\ar[ld]^-{\sect_-^*\fibm(\sect_+)}\\ &M}\]
commutes.  In particular, $q$ is a fiberwise map over $B$. By
comparing our exact sequences we see that for any choice of base
point in $E$, $q_*\colon \pi_i(E)\rightarrow \pi_i(P)$ is a bijection
for $i<2r+1$ and a surjection for $i=2r+1$.

If $[-,-]_B$\nidx{$[-,-]_B$}
denotes (unsectioned) fiberwise homotopy classes of
fiberwise maps, the fiberwise Whitehead theorem \cite[24.1.2]{MS}
and the fact that the map $E\rightarrow P$ is $(2r+1)$-connected
imply that
\[q_*\colon [M,E]_B\rightarrow [M,P]_B\] is a surjection.
The fiberwise homotopy between the sections $\sect_-$ and $\sect_+$
defines a fiberwise map $h\colon M\rightarrow P$ and so there is a
fiberwise map $\secttwo\colon M\rightarrow E$ such that $q\secttwo\colon 
M \rightarrow P$ is
fiberwise homotopic to $h$. Then
\[\mathrm{id}_M\simeq_B {\sect_-^*\fibm(\sect_+)} \circ h\simeq_B {\sect_-^*\fibm(\sect_+)}
\circ q\circ \secttwo
\simeq_B \pro\circ \secttwo\] We can use the fiberwise homotopy lifting
property of $\pro$ to deform $\secttwo$ into an actual section.

\end{proof}

We now assume that all spaces over $M$ are sectioned and all maps over $M$
preserve this section.  In particular, $S_ME$ is an ex-space over $M$ with
section $\sect_-$ and the notation $[-,-]_B$ is now used for the sectioned
fiberwise homotopy classes of maps.

There is a fiberwise
fibration replacing the inclusion
\[i\colon M\times_BM-\triangle\rightarrow M\times_BM\] that is also a
Hurewicz fibration of spaces.
Since $i$ can be
replaced with a map that is both a fibration and a fiberwise fibration
 \[[S^0_M,S_M\Gamma_{f*}(\fibs_M(M\times_BM-\triangle))]_M\] is both the fiberwise
homotopy classes of maps and the maps in
the homotopy category for a model structure, see \cite[9.1]{MS}.
This connects the
geometric description above with duality in $\Ex$.

The candidate for the fiberwise fibration replacing $i$ is the map
\[\xymatrix{
(M\times_BM-\triangle)\times_{M\times_BM}\notcalP_B(M\times_B M) \ar[r]&
M\times_BM}\]
where  $(M\times_BM-\triangle)\times_{M\times_BM}\notcalP_B(M\times_B M)$ is
\[\{( (m_1,m_2),\gamma )\in (M\times_BM-\triangle)\times \notcalP_B(M\times_B M)|
\gamma(1)=(m_1,m_2)\}\] and the map to $M\times_BM$ is given by
evaluation at 0.  For this result we use the path space rather
than the space of Moore paths.

We need a preliminary lemma.

\begin{lemma}\mylabel{fibration}
Suppose $\pro_1\colon E_1\rightarrow B$ and $\pro_2\colon E_2\rightarrow B$ are
fibrations and $f\colon E_1\rightarrow E_2$ is a map over $B$. Then the
map \[\sou\colon E_1\times_{E_2}\notcalP_BE_2=\{(e,\gamma)|\gamma(1)=f(e)\}
\rightarrow E_2\] given by $(e, \gamma)\mapsto \gamma(0)$ is a
fibration.\end{lemma}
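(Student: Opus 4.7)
The plan is to solve an arbitrary lifting problem
\[\xymatrix{X \ar[r]^-{\varphi} \ar[d]_{i_0} & E_1 \times_{E_2} \notcalP_B E_2 \ar[d]^{\sou} \\ X \times I \ar[r]_-{H} \ar@{.>}[ur]^{k} & E_2}\]
by constructing $k$ coordinate by coordinate. Write $\varphi(x) = (e_x, \gamma_x)$, so by hypothesis $\gamma_x(1) = f(e_x)$, $\gamma_x(0) = H(x,0)$, and $\pro_2 \circ \gamma_x$ is the constant path at $\pro_1(e_x)$. We need $k(x,t) = (e'_{x,t}, \gamma_{x,t})$ with $e'_{x,0} = e_x$, $\gamma_{x,0} = \gamma_x$, $\gamma_{x,t}(0) = H(x,t)$, $\gamma_{x,t}(1) = f(e'_{x,t})$, and $\pro_2 \circ \gamma_{x,t}$ constant at $\pro_2 H(x,t)$. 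In particular this forces $\pro_1(e'_{x,t}) = \pro_2 H(x,t)$.

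First I would build the $E_1$-component. The map $\pro_2 H \colon X \times I \to B$ together with the map $e_\bullet \colon X \to E_1$ satisfies $\pro_1 e_x = \pro_2 H(x,0)$, so applying the HLP for $\pro_1$ produces a homotopy $e'\colon X \times I \to E_1$ with $e'(x,0) = e_x$ and $\pro_1 e'(x,t) = \pro_2 H(x,t)$.

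The main step is producing the path component $\gamma_{x,t}$, which I phrase as filling a square. I will construct a continuous map $F \colon X \times I \times I \to E_2$, where the first $I$-coordinate is $t$ and the second is $s$, satisfying $F(x,0,s) = \gamma_x(s)$, $F(x,t,0) = H(x,t)$, $F(x,t,1) = f(e'(x,t))$, and $\pro_2 F(x,t,s) = \pro_2 H(x,t)$ (in particular independent of $s$). These boundary conditions are compatible at the two relevant corners: at $(0,0)$ we have $\gamma_x(0) = H(x,0)$, and at $(0,1)$ we have $\gamma_x(1) = f(e_x) = f(e'(x,0))$. They are also compatible with the base projection, since $\pro_2 \gamma_x(s) = \pro_1(e_x) = \pro_2 H(x,0)$ and $\pro_2 f(e'(x,t)) = \pro_1 e'(x,t) = \pro_2 H(x,t)$. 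Thus the assignment gives a well-defined partial lift along $\pro_2$ defined on $X \times J$, where $J = (\{0\}\times I) \cup (I\times\{0\}) \cup (I\times\{1\}) \subset I\times I$, of the map $(x,t,s) \mapsto \pro_2 H(x,t)$. Because $J$ is a "U"-shaped union of three sides of the square, there is a homeomorphism of pairs $\phi\colon (I\times I, I\times\{0\}) \to (I\times I, J)$, exactly as in the proof of \myref{fibrationforfiberproduct}. Composing with $\phi$ reduces the extension to the ordinary HLP for $\pro_2$, producing the desired $F$. Setting $\gamma_{x,t}(s) = F(x,t,s)$ and $k(x,t) = (e'(x,t), \gamma_{x,t})$ then completes the lift.

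The only real obstacle is verifying that the "three-sided" extension is legitimate, which is handled by the standard homeomorphism of pairs identifying $(I\times I, J)$ with $(I\times I, I\times\{0\})$; once that is in place the HLPs of $\pro_1$ and $\pro_2$ do all the work, and continuity of $k$ in $x$ is automatic from the chosen lifting functions.
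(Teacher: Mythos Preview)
Your argument is correct. You directly solve the lifting problem by first lifting through $\pro_1$ to produce the $E_1$-component $e'$, and then filling a three-sided box in $E_2$ over the given base homotopy using the standard homeomorphism $(I\times I, J)\cong (I\times I, I\times\{0\})$ and the HLP for $\pro_2$. All the compatibility checks you list are the right ones, and they go through.

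The paper takes a different, more structural route: it factors the map $\sou$ as the composite
\[
E_1\times_{E_2}\notcalP_BE_2 \xrightarrow{\ \id\times\sou\ } E_1\times_B E_2 \xrightarrow{\ \mathrm{proj}_2\ } E_2,
\]
observes that the first map is the pullback of $\tar\times\sou\colon \notcalP_BE_2\to E_2\times_B E_2$ along $f\times\id$, and that the second is the pullback of $\pro_1$ along $\pro_2$. Since $\tar\times\sou$ is a fibration by \myref{fibrationforfiberproduct} and $\pro_1$ is a fibration by hypothesis, both factors are pullbacks of fibrations and hence fibrations. This avoids redoing the three-sided-box trick by packaging it inside the already-proved \myref{fibrationforfiberproduct}. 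Your approach has the virtue of being self-contained and making the dependence on the two HLPs completely explicit; the paper's approach is shorter and highlights that the result is formal once one knows the pullback decomposition.
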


\begin{proof}
Recall that $\notcalP_BE_2$ is the subspace of paths
in $E_2$ consisting of paths $\gamma $ such that
$p_2(\gamma(t))=p_2(\gamma(0))$ for all $t\in I$.

The space $E_1\times_{E_2}\notcalP_BE_2$ is the pullback of
\[\xymatrix{&{\notcalP}_BE_2\ar[d]^{\tar\times \sou}
\\ E_1\times_BE_2\ar[r]^{f\times \id}&E_2\times_B
E_2}\] and the map $E_1\times_{E_2}\notcalP_BE_2\rightarrow E_2$ is
the composite of \[\mathrm{id}\times \sou\colon 
E_1\times_{E_2}\notcalP_BE_2\rightarrow
E_1\times_BE_2\] and  the second coordinate projection
\[{\mathrm{proj}}_2\colon E_1\times_BE_2\rightarrow E_2.\] So it
is enough to show that both of these maps are fibrations.

The projection map is a fibration since it
is the pullback
\[\xymatrix{E_1\times_BE_2\ar[r]\ar[d]_{{\mathrm{proj}}_2}&E_1\ar[d]^{p_1}\\
E_2\ar[r]_{p_2}&B}\]
along a fibration.  \myref{fibrationforfiberproduct} shows that 
$\tar\times \sou\colon  \notcalP_B E_2
\rightarrow E_2\times_B E_2$ is a fibration.  This implies 
$E_1\times_{E_2}\notcalP_BE_2\rightarrow E_1\times_BE_2$ is a
fibration.
\end{proof}

\begin{lemma}\mylabel{bundle}
Let $p\colon M\rightarrow B$ be a fiber bundle.  Then there is a fiberwise
fibration replacing the inclusion
\[M\times_BM-\triangle \rightarrow M\times_BM\] that is also
a Hurewicz fibration.
\end{lemma}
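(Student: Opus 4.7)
The plan is to construct the replacement as a fiberwise mapping path space, so that the fiberwise fibration property becomes a formal consequence of the Hurewicz fibration property. Explicitly, I would take
\[
E = (M\times_BM - \triangle) \times_{M\times_BM} \notcalP_B(M\times_BM) = \{(x,\gamma) \mid \gamma\in \notcalP_B(M\times_BM),\ \gamma(1)=x\},
\]
with structure map $\rho\colon E \to M\times_B M$ given by $\rho(x,\gamma) = \gamma(0)$. The inclusion $j\colon M\times_B M - \triangle \to E$ sending $x$ to $(x, c_x)$, where $c_x$ is the constant path at $x$, is a homotopy equivalence (with inverse the projection $(x,\gamma)\mapsto x$ and a standard deformation retracting $\gamma$ to $c_x$), and satisfies $\rho \circ j = i$, so $\rho$ genuinely replaces $i$.

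First I would check the hypotheses of \myref{fibration} for $E_1 = M\times_B M - \triangle$ and $E_2 = M\times_B M$, with $f$ the inclusion. Because $p\colon M\to B$ is a fiber bundle with fiber $F$, the map $M\times_B M \to B$ is a fiber bundle with fiber $F\times F$; the diagonal is a closed sub-bundle with fiber $\triangle_F$, so removing it produces a sub-bundle $M\times_B M - \triangle \to B$ with fiber $F\times F - \triangle_F$. Both maps to $B$ are therefore fibrations, and \myref{fibration} applies to give that $\rho$ is a Hurewicz fibration of spaces.

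For the fiberwise fibration property, observe that $E$ is a space over $B$ via $(x,\gamma)\mapsto p_{M\times_B M}(\gamma(0))$, which is well-defined because $\gamma\in\notcalP_B$ stays in a single fiber of $B$, and $\rho$ is a map over $B$ by construction, since $E\to B$ factors as $p_{M\times_B M}\circ \rho$. Given any fiberwise lifting problem
\[
\xymatrix{Y\ar[r]^-{\tilde h_0}\ar[d]_-{i_0} & E\ar[d]^\rho\\
Y\times I \ar[r]_-H \ar@{.>}[ur]& M\times_B M}
\]
in which $H$ is a homotopy over $B$, the Hurewicz property provides a lift $\tilde H\colon Y \times I \to E$. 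Composing with the projection $E\to B$ gives $p_{M\times_B M}\circ \rho\circ \tilde H = p_{M\times_B M}\circ H$, which is independent of $t\in I$ precisely because $H$ is over $B$; hence $\tilde H$ is automatically a map over $B$, and $\rho$ is a fiberwise fibration as well.

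The main point to watch is the step identifying $M\times_B M - \triangle$ as a subbundle of $M\times_B M$: this uses the local triviality of $p\colon M\to B$ to check that under any local trivialization $M|_U\cong U\times F$, the diagonal in $M\times_B M$ corresponds to $U\times \triangle_F$, so that removing it gives $U\times (F\times F - \triangle_F)$. Once this is in hand the remaining ingredients are formal: the Hurewicz property follows directly from \myref{fibration}, and the fiberwise property is automatic because the structure map $E\to B$ factors through $\rho$.
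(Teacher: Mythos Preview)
Your proposal is correct and follows essentially the same route as the paper: you use the same fiberwise mapping-path replacement, verify via local trivializations that $M\times_B M - \triangle \to B$ is a fiber bundle, and invoke \myref{fibration} to obtain the Hurewicz fibration property. The paper's proof is terser---it simply records the sub-bundle argument and cites \myref{fibration}---while you additionally spell out why the Hurewicz lift of a fiberwise homotopy is automatically fiberwise (using that the projection $E\to B$ factors through $\rho$); the paper appears to take the fiberwise fibration property of the standard replacement as understood.
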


\begin{proof}

Since $p\colon M\rightarrow B$ is a bundle there is a cover $\{U_i\}$ of
$B$ and homeomorphisms $f_i\colon p^{-1}(U_i)\rightarrow F\times U_i$.
The maps $f_i$ are maps to a product so projection to $F$ gives
maps 
\[f_{i,F}\colon p^{-1}(U_i)\rightarrow F.\]

The fiber product $q\colon M\times_BM\rightarrow B$ is locally trivial
with respect to this open cover and the trivialization
homeomorphisms are
\[f_{i,F}\times f_{i,F}\times p\colon q^{-1}(U_i)\rightarrow F\times F\times U_i\]
These homeomorphisms restrict to give a local trivialization of
\[M\times_B M-\triangle\rightarrow B\] since the maps $f_{i,F}$
are injective.  So $M\times_BM-\triangle \rightarrow B$ is a fiber
bundle with fiber $F\times F-\triangle$.  Then \myref{fibration}
completes the proof.

\end{proof}

Under the assumptions in \myref{firstform2}, the fiberwise Freudenthal
suspension theorem gives the following isomorphism:
\[[S_M^0,S_ME]_M\cong \{S_M^0,S_ME\}_M.\]
If we further assume that $M\rightarrow B$ is a space over $B$
such that $S_M^0$ is Costenoble-Waner dualizable in $\Ex_B$ with
dual $T_{M,B}$,\nidx{Tmb@$T_{M,B}$} then we have an isomorphism
\[\{S_M^0,S_ME\}_M\cong \{S^0_B,T_{M,B}\odot S_ME\}_B.\]

\begin{definition}\cite[4.3]{KW} Let $E$ and
$M$ be as in \myref{firstform2} and assume $S_M^0$ is
Costenoble-Waner dualizable in $\Ex_B$. The \emph{fiberwise stable
homotopy Euler characteristic}\idx{fiberwise stable homotopy Euler
characteristic}\idx{stable homotopy Euler characteristic!fiberwise}
of $p\colon E\rightarrow M$ is the class
\[\chi_B(p)\in \{S^0_B,T_{M,B}\odot  S_ME\}_B\]\nidx{chib@$\chi_B$}
which corresponds to the
map \[\sect_+\amalg \sect_-\colon S_M^0\rightarrow S_ME\] via the isomorphisms above.
\end{definition}

If $M$ is a space over $B$, $f\colon M\rightarrow M$ is a fiberwise map, and
\[i\colon M\times_BM-\triangle\rightarrow M\times_BM\] is the inclusion
we denote the fiberwise stable homotopy Euler characteristic
of $\Gamma_f^*\fibm_B(i)$ by $R^{KW}_B(f)$.\nidx{rkwb@$R^{KW}_B$}

\begin{corollary}\mylabel{KWfibconverse9}
Let $f\colon M\rightarrow M$ be a fiberwise map of a smooth
fiber bundle with compact manifold fibers $F$. If $\mathrm{dim}(B)\leq
\mathrm{dim}(F)-3$, the map $f$ is fiberwise homotopic to a fixed point
free map if and only if $R^{KW}_B(f)$ is trivial.
\end{corollary}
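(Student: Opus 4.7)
The plan is to assemble the ingredients just developed into a chain of equivalences that mirrors the classical argument of \myref{converse1}, with each link supplied by a result proved earlier in this chapter.

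First, I would reduce the existence of a fixed-point-free fiberwise homotopy to a section problem. By \myref{fibfptosec}, the map $f$ is fiberwise homotopic to a fixed point free map if and only if the graph $\Gamma_f\colon M\rightarrow M\times_B M$ lifts up to fiberwise homotopy through the inclusion $i\colon M\times_BM-\triangle\rightarrow M\times_BM$. Since $p\colon M\rightarrow B$ is a smooth fiber bundle, \myref{bundle} lets me replace $i$ by a map $\fibm_B(i)\colon \fibs_B(i)\rightarrow M\times_BM$ that is both a fiberwise fibration and a Hurewicz fibration. Liftings up to fiberwise homotopy of $\Gamma_f$ through $\fibm_B(i)$ then correspond bijectively to sections of the pullback fiberwise fibration $\Gamma_f^*\fibm_B(i)\colon \Gamma_f^*\fibs_B(i)\rightarrow M$.

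Next, I would apply \myref{firstform2} to translate the existence of a section into a comparison of $\sect_-$ and $\sect_+$. The point that needs checking is the connectivity hypothesis. The fiber of $\fibm_B(i)$ over the diagonal point $(m,m)$ is, by tubular neighborhood considerations for the normal bundle of the diagonal in $M\times_BM$, homotopy equivalent to $S^{\dim F-1}$, while away from the diagonal it is contractible; consequently $\Gamma_f^*\fibm_B(i)$ is $(\dim F-1)$-connected. Taking $r=\dim F-2$, the dimension bound $\dim M=\dim B+\dim F\leq 2\dim F-3=2r+1$ is exactly the hypothesis $\dim B\leq \dim F-3$. Hence \myref{firstform2} applies, and the existence of a section is equivalent to $\sect_-$ being fiberwise homotopic over $M$ to $\sect_+$ in $S_M(\Gamma_f^*\fibs_B(i))$.

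Finally, I would convert this fiberwise homotopy condition into the vanishing of $R^{KW}_B(f)$. With the same connectivity in hand, the fiberwise Freudenthal suspension theorem yields
\[[S^0_M,S_M\Gamma_f^*\fibs_B(i)]_M\cong \{S^0_M,S_M\Gamma_f^*\fibs_B(i)\}_M.\]
By \myref{GfibdualM} the ex-space $S^0_M$ is Costenoble-Waner dualizable in $\Ex_B$ with dual $T_{M,B}$, so \myref{CWfibdualmaps} supplies a further isomorphism
\[\{S^0_M,S_M\Gamma_f^*\fibs_B(i)\}_M\cong \{S^0_B,T_{M,B}\odot S_M\Gamma_f^*\fibs_B(i)\}_B.\]
Under these identifications the homotopy class of $\sect_-\amalg \sect_+$ is by definition $R^{KW}_B(f)$. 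Therefore $\sect_-$ and $\sect_+$ are fiberwise homotopic over $M$ if and only if $R^{KW}_B(f)$ is trivial, completing the chain of equivalences.

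The main obstacle will be the connectivity estimate for $\Gamma_f^*\fibm_B(i)$, which is the one input that is not purely formal; the remaining steps are essentially the fiberwise analogues of the classical arguments and follow mechanically from the interlocking of \myref{fibfptosec}, \myref{bundle}, \myref{firstform2}, \myref{GfibdualM}, and \myref{CWfibdualmaps} once that estimate is in place.
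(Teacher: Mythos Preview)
Your proposal is correct and is exactly the argument the paper intends: the corollary is stated without proof because it is meant to follow by assembling \myref{fibfptosec}, \myref{bundle}, \myref{firstform2}, the fiberwise Freudenthal isomorphism, and the Costenoble-Waner duality isomorphism in precisely the way you describe. Your connectivity computation ($r=\dim F-2$, so $2r+1=2\dim F-3\geq \dim M$) is the right translation of the dimension hypothesis, matching the classical case where the paper cites \cite[6.1,6.2]{KW} for the analogous estimate.
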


\begin{rmk}
In \cite{FadellHPara} Fadell and Husseini defined a different fiberwise invariant
using obstruction theory.   For this invariant they require that
the dimension of the fiber is at least three.\end{rmk}

\section{Identification of $R^{KW}$ with $R^{htpy}$}\label{fibkwidsec}
Let $\nmalbdl_B$\nidx{taub@$\nmalbdl_B$} be the fiberwise  normal
bundle of the inclusion of the diagonal into $M\times_B M$. Regard
the sphere bundle $S(\nmalbdl_B)$\nidx{s@$S$} and the disk bundle
$D(\nmalbdl_B)$\nidx{d@$D$} as spaces over $M\times_B M$ by
inclusion. Let $\fibs _BS(\nmalbdl_B)$ and $\fibs _BD(\nmalbdl_B)$
be the total spaces of the fiberwise fibrations corresponding to
the inclusions. Also using this notation, let $\fibs _B(M\times_B
M-\triangle)$ be the total space of the fiberwise fibration
corresponding to $M\times_B M-\triangle \rightarrow M\times_B M$.

\begin{lemma}\mylabel{fibidentify}
As an ex-space over $M\times_B M$, $S_{M\times_B M}\fibs _B(M\times_B
M-\triangle)$ is weakly equivalent to $\triangle_! S^{\nmalbdl_B}\odot
(\calP_B M,\tar\times \sou)_+$.
\end{lemma}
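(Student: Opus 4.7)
The plan is to adapt the argument of \myref{kwclassidentify2} to the fiberwise setting over $B$. The geometric picture is that, near the diagonal, the inclusion $i\colon M \times_B M - \triangle \hookrightarrow M \times_B M$ is locally modeled, via a fiberwise tubular neighborhood, by $\nmalbdl_B - M \hookrightarrow \nmalbdl_B$. Consequently the homotopy fiber of $i$ is (up to equivalence) concentrated at the diagonal; taking unreduced fiberwise suspension turns the sphere-bundle-like homotopy fiber on the diagonal into the fiberwise Thom space $S^{\nmalbdl_B}$ and kills everything off the diagonal. The remaining $(\calP_B M, \tar \times \sou)_+$ factor encodes the extra data needed to transport a point on the diagonal to an arbitrary $(m_1, m_2) \in M \times_B M$ via fiberwise Moore paths.

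Concretely, I would first fix a fibrant model for $\fibs_B(M \times_B M - \triangle)$: the fiber over $(m_1, m_2)$ consists of tuples $((x_1, x_2), (\gamma_1, \gamma_2))$ with $(x_1, x_2) \in M \times_B M - \triangle$ and $\gamma_i$ a fiberwise Moore path from $x_i$ to $m_i$. Next, choose a fiberwise tubular neighborhood $e\colon \nmalbdl_B \hookrightarrow M \times_B M$ of the diagonal, which exists since $M \to B$ is a smooth fiber bundle. Then construct an explicit comparison
\[
\Phi\colon S_{M \times_B M} \fibs_B(M \times_B M - \triangle) \longrightarrow \triangle_! S^{\nmalbdl_B} \odot (\calP_B M, \tar \times \sou)_+
\]
over $M \times_B M$ as follows: a point $((x_1, x_2), (\gamma_1, \gamma_2), t)$ of the suspension lying over $(m_1, m_2)$ is sent to $(tv, \gamma_1^{-1}\gamma_2)$, where $v = e^{-1}(x_1, x_2) \in \nmalbdl_B$ based at some $m \in M$, $tv$ is interpreted in $S^{\nmalbdl_B}$ at $m$ using $t=0$ to map to the section and $t=1$ to infinity, and $\gamma_1^{-1}\gamma_2$ is the composite fiberwise Moore path from $m_2$ through $m$ to $m_1$. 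The caps $\sect_-, \sect_+$ of the unreduced suspension go to the section of $\triangle_! S^{\nmalbdl_B}$ and to the point at infinity, respectively; outside the image of the tubular neighborhood, a cut-off function forces $\Phi$ to land in the section.

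Finally, I would verify that $\Phi$ is a fiberwise weak equivalence. Over $(m_1, m_2)$ off the diagonal, the homotopy fiber of $i$ is contractible (witnessed by the constant path at $(m_1,m_2)$), so its unreduced fiberwise suspension is weakly trivial as an ex-space; on the right, $\triangle_! S^{\nmalbdl_B}$ has trivial fibers off the diagonal, so the bar-construction $\odot$ also has trivial fibers there. Over $(m, m)$ on the diagonal, excision using the tubular neighborhood together with the Moore-path reduction identifies the unreduced suspension of the homotopy fiber with the smash product $(S^{\nmalbdl_B})_m \wedge (\calP_B M)_{(m,m)+}$, matching the fiber of the right side computed via the definition of $\odot$ in $\Ex_B$. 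The fiberwise Whitehead theorem then upgrades these fiberwise weak equivalences to a weak equivalence of ex-spaces. The main obstacle is ensuring that $\Phi$ is continuous and globally well-defined as one moves between the tubular neighborhood and its complement; this is handled by combining a smooth cut-off function with the observation that on the overlap the two definitions agree by construction, since both send data far from the diagonal into the section of the Thom space.
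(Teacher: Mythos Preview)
Your fiber-by-fiber analysis off the diagonal is incorrect, and this is a genuine gap. You claim that over $(m_1,m_2)$ with $m_1\neq m_2$ the homotopy fiber of $i$ is contractible, ``witnessed by the constant path at $(m_1,m_2)$.'' But exhibiting a point in the homotopy fiber does not make it contractible. Take $B=\ast$ and $M=\mathbb{R}^n$: then $M\times M-\triangle\simeq S^{n-1}$ while $M\times M$ is contractible, so the homotopy fiber of $i$ at every point is $S^{n-1}$ and its unreduced suspension is $S^n$, not a point. Equally, the right-hand side is not trivial off the diagonal: the fiber of $\triangle_! S^{\nmalbdl_B}\boxtimes(\calP_B M,\tar\times\sou)_+$ over $(m_1,m_2)$ is $(S^{\nmalbdl_B})_{m_1}\wedge (\calP_B M)_{(m_1,m_2)+}$, since the $\boxtimes$ with the path space spreads the Thom space away from the diagonal. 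So your verification that $\Phi$ is a weak equivalence breaks down precisely where most of the content lies. There is also a bookkeeping problem in your map: $\gamma_1^{-1}$ ends at $x_1$ while $\gamma_2$ begins at $x_2$, so $\gamma_1^{-1}\gamma_2$ is not composable without an interpolating path such as the $H(x_1,x_2)$ of \myref{fiblocalcont}, and your output vector $v$ lives at the tubular-neighborhood footpoint $m$ rather than at $m_1$.

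The paper sidesteps both issues by arguing globally instead of fiberwise. It uses the fiberwise homotopy cocartesian square with rows $S(\nmalbdl_B)\to D(\nmalbdl_B)$ and $M\times_B M-\triangle\to M\times_B M$, replaces all maps to $M\times_B M$ by fiberwise fibrations, and compares the two horizontal cofibers. The bottom cofiber is $S_{M\times_B M}\fibs_B(M\times_B M-\triangle)$ by definition of unreduced fiberwise suspension; the top map $\fibs_B S(\nmalbdl_B)\to \fibs_B D(\nmalbdl_B)$ is a fiberwise cofibration, so its cofiber is computed on the nose as a quotient of fiberwise paths ending in $D(\nmalbdl_B)$. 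An explicit formula using $H$ from \myref{fiblocalcont} then identifies this quotient with $(\calP_B M,\tar\times\sou)_+\odot\triangle_! S^{\nmalbdl_B}\odot(\calP_B M,\tar\times\sou)_+$, after which one factor of $(\calP_B M,\tar\times\sou)_+$ is absorbed via the equivalence $(M,\triangle)_+\simeq(\calP_B M,\tar\times\sou)_+$. This route never needs a pointwise fiber computation, which is why it does not run into the error above.
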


\begin{proof}
There is a fiberwise homotopy cocartesian square
\[\xymatrix{S(\nmalbdl_B)\ar[r]\ar[d]&D(\nmalbdl_B)\ar[d]\\
M\times_B M-\triangle\ar[r]&M\times_B M}\] This is a diagram of
inclusions over $M\times_B M$.  Replacing all of the maps to $M\times_B
M$ by fibrations, we get the following fiberwise homotopy cocartesian
square.
\[\xymatrix{\fibs _BS(\nmalbdl_B)\ar[r]\ar[d]&\fibs _BD(\nmalbdl_B)\ar[d]\\
\fibs _B(M\times_B M-\triangle)\ar[r]&M\times_B M}\]

The fiberwise homotopy cofiber \cite[II.2.1]{CrabbJames} of the bottom
arrow is \[S_{M\times_B M}\fibs _B( M\times_B M-\triangle)\] This
is weakly equivalent to the fiberwise homotopy cofiber of the top
arrow.

The top arrow is a fiberwise cofibration.  To see this, observe
that the inclusion $S(\nmalbdl_B)\rightarrow D(\nmalbdl_B)$ is a cofibration.
Pulling back
along \[\sou\colon  \calP_B(M\times_BM)\rightarrow M\times_B M\]
preserves cofibrations and in this case
converts a cofibration into a fiberwise cofibration. The
fiberwise homotopy cofiber of $\fibs _BS(\nmalbdl_B)\rightarrow \fibs _BD(\nmalbdl_B)$ is
weakly equivalent to its fiberwise cofiber. The fiberwise cofiber
is
\[\fibs_BD(\nmalbdl_B)/\sim=
\{(\gamma,u)\in \calP_B(M\times_BM)|\gamma(u)\in D(\nmalbdl_B)\}/\sim.\]
Here
$\gamma_1\sim\gamma_2$ if $\gamma_1(u_1),\gamma_2(u_2)\in S(\nmalbdl_B)$ and
$\gamma_1(0)=\gamma_2(0)$.

There is a map
\[\fibs _BD(\nmalbdl_B)\rightarrow (\calP_BM,\tar\times \sou)
\times_M(D(\nmalbdl_B),\triangle\circ p) \times_M(\calP_BM,
\tar\times \sou)\] given by
\[(\gamma_1,\gamma_2)\mapsto (\gamma_2^{-1}, (\gamma_1(u_1),\gamma_2(u_2)),
H(\gamma_1(u_1),\gamma_2(u_2))\gamma_1).\] The path
$H(\gamma_1(u_1),\gamma_2(u_2))$ is as in \myref{fiblocalcont}.
This map descends to an equivalence
\[\fibs_B D(\nmalbdl_B)/\sim \rightarrow (\calP_BM,\tar\times \sou)_+\odot
\triangle_! S^{\nmalbdl_B}\odot (\calP_BM,\tar\times \sou)_+.\]

The inclusion of $M$ into $\calP_BM$ as constant paths defines a map
\[(M,\triangle)_+\rightarrow (\calP_BM,\tar\times \sou)_+.\]  This map is 
an equivalence and so there is an equivalence
between $\fibs D(\nmalbdl_B)/\sim$ and $\triangle_!S^{\nmalbdl_B}\odot
(\calP_BM,\tar\times \sou)_+$.

\end{proof}

When we defined the stable homotopy Euler characteristic we pulled
the fibration back before taking the fiberwise suspension.  These
operations commute, so we have a weak equivalence between
$S_Mf^*\fibs (M\times_B M-\triangle)$ and $\triangle_!S^{\nmalbdl_B}\odot
\Lambda^f_{B}M_+$ where \[\Lambda^f_BM=\{(\gamma,u)\in \calP_BM
|f(\gamma(u))=\gamma(0)\}.\]  This is a space over $M$ by
$\gamma \mapsto \gamma(u).$

\begin{theorem}\mylabel{fibidentify2}
Let $M\rightarrow B$ be a smooth fiber bundle with compact
manifold fibers and $f\colon M\rightarrow M$ a fiberwise map.  Then
there is an isomorphism \[\{S_B^0, T_{M,B}\odot S_ME\}_B\cong
\{S^0_B,S^n\wedge_B\Lambda^f_{B}M_+\}_B\] and under this
isomorphism
\[R^{KW}_B(f)=R^{htpy}_B(f).\]

\end{theorem}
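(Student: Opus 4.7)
The plan is to imitate the classical proof of \myref{KWclassicfinal} almost verbatim, using the fiberwise versions of the ingredients that have already been assembled. First I would invoke \myref{GfibdualM} to get that $(S_M^0,tS^{\nu_B}_M)$ is a Costenoble-Waner dual pair in $\Ex_B$, where $\nu_B$ is the fiberwise normal bundle of a fiberwise embedding of $M$ into $B\times \mathbb{R}^n$. Then \myref{CWfibdualmaps} yields a natural isomorphism
\[\{S_M^0,U\}_M\cong \{S_B^0,tS^{\nu_B}_M\odot U\}_B\]
for $U\in \Ex_B(M,B)$. Combined with the fiberwise Freudenthal suspension isomorphism $[S_M^0,S_ME]_M\cong\{S_M^0,S_ME\}_M$ used to define $R^{KW}_B(f)$, applying this with $U=S_M(\Gamma_f^*\fibs_B(i))$ puts the fiberwise stable Euler class into the group $\{S_B^0,tS^{\nu_B}_M\odot S_M(\Gamma_f^*\fibs_B(i))\}_B$.

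Next I would use \myref{fibidentify} (together with the remark after it that pulling back commutes with fiberwise suspension) to replace $S_Mf^*\fibs_B(M\times_BM-\triangle)$ by $\triangle_!S^{\nmalbdl_B}\odot\Lambda^f_BM_+$. The key computation is then that $tS^{\nu_B}_M\odot\triangle_!S^{\nmalbdl_B}$ is equivalent, by the same argument used in the classical case, to the fiberwise smash $S^{\nu_B}\wedge_M S^{\nmalbdl_B}$. Since $\nu_B\oplus\nmalbdl_B$ is trivial of rank $n$ (the fiberwise normal bundle of $M\subset B\times\mathbb{R}^n$ stabilizes the fiberwise tangent bundle $T_BM=\nmalbdl_B$), this smash is $S^n_M$, giving the claimed isomorphism
\[\{S_B^0,T_{M,B}\odot S_ME\}_B\cong \{S_B^0,S^n\wedge_B\Lambda^f_BM_+\}_B.\]

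For the identification of the two invariants, I would trace the map $\sect_-\amalg\sect_+\colon S_M^0\to S_M(\Gamma_f^*\fibs_B(i))$ through each step, exactly as in the proof of \myref{KWclassicfinal}. Under the equivalence of \myref{fibidentify} pulled back along $\Gamma_f$, the two sections become the evident constant-path section and the section $m\mapsto((m,f(m)),H(m,f(m)))$ using the path specified by \myref{fiblocalcont}. After applying the Costenoble-Waner duality isomorphism, the resulting map $S_B^0\to S^{\nu_B}\odot\triangle_!S^{\nmalbdl_B}\odot\Lambda^f_BM_+\simeq S^n\wedge_B\Lambda^f_BM_+$ is precisely the composite defining the trace of $\tilde{f}\colon (\calP_BM,\sou)_+\to(\calP_BM,\sou)_+\odot(\calP_B^fM,\tar\times\sou)_+$ in the bicategory $\sM_{\Ex_B}$ with respect to the dual pair $((\calP_BM,\sou)_+,T_M\sou^*S_M^{\nu_B})$, i.e.\ $R^{htpy}_B(f)$.

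The main obstacle is the third step: verifying carefully that the section $\sect_+$ is sent to a map which, after unwinding the Pontryagin--Thom construction hidden in the Costenoble-Waner coevaluation of \myref{GfibdualM} and the evaluation for the Ranicki dual pair of Moore paths, agrees with the explicit formula for $R^{htpy}_B(f)$. This is exactly parallel to the classical verification in \myref{KWclassicfinal}, and reduces to compatibility of the fiberwise Pontryagin--Thom map with the diagonal and with $\Gamma_f$, together with the choice of path $H$ in \myref{fiblocalcont}. All other steps are formal consequences of functoriality and of results already proved.
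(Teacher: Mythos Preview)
Your proposal is correct and follows essentially the same approach as the paper's proof: use the Costenoble-Waner dual pair from \myref{GfibdualM} together with \myref{CWfibdualmaps} for the duality isomorphism, apply the weak equivalence of \myref{fibidentify} (pulled back along $\Gamma_f$) to identify $S_Mf^*\fibs_B(M\times_BM-\triangle)$ with $\triangle_!S^{\nmalbdl_B}\odot\Lambda^f_BM_+$, trivialize $S^{\nu_B}\wedge_MS^{\nmalbdl_B}$, and then trace $\sect_-\amalg\sect_+$ through the resulting commutative square of isomorphisms to recognize the outcome as the bicategorical trace defining $R^{htpy}_B(f)$. The paper organizes this into an explicit commuting diagram with stabilization and duality as vertical maps, but the content is identical to what you outline.
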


\begin{proof}

To define the stable homotopy Euler characteristic we used the map
\[\sect_+\amalg \sect_-\colon S^0_M\rightarrow f^*S_{M\times_B M}\fibs_B
(M\times_B M-\triangle).\]
The corresponding map $\secttwo\colon S^0_M\rightarrow
\triangle_!S^{\nmalbdl_B}\odot \Lambda^f_{B}M_+$ takes the section of $S^0_M$
to the section of $\triangle_!S^{\nmalbdl_B}\odot \Lambda^f_{B}M_+$.  On the
other copy of $M$, $\secttwo$ is defined by
\[\secttwo(m)= ((m,f(m),H(f(m),m))\] where $H$ is as in \myref{fiblocalcont}.

Let $\phi_B$ be the weak equivalence of \myref{fibidentify}.
Then the following diagram of isomorphisms commutes.
\[\xymatrix{[S^0_M,S_Mf^*\fibs _B(M\times_B M-\triangle)]_M\ar[d]_F
\ar[r]^-{\phi_{B*}}&
[S^0_M,\triangle_!S^{\nmalbdl_B}\odot \Lambda^f_{B}M_+]_M\ar[d]^F\\
\{S^0_M,S_Mf^*\fibs _B(M\times_B M-\triangle)\}_M\ar[r]^-{\phi_{B*}}\ar[d]_D&
\{S^0_M,\triangle_!S^{\nmalbdl_B}\odot \Lambda^f_{B}M_+\}_M\ar[d]^D\\
\{S^0_B,T_{M,B}\odot S_Mf^*\fibs (M\times
M-\triangle)\}_B\ar[r]_-{(\id\odot \phi_B)_*}&
\{S^0_B,T_{M,B}\odot \triangle_!S^{\nmalbdl_B}\odot \Lambda^f_{B}M_+\}_B }\]
The stabilization map $F$ is an isomorphism because of dimension
assumptions.  The map $D$ is the isomorphism from
\myref{CWfibdualmaps}.

In the top left corner we have the stable cohomotopy
Euler class and in the bottom right corner the corresponding  map is
\[\xymatrix{S^0_B\ar[r]^-\eta& T_{M,B}\odot S_M\ar[r]^-{\id\odot \secttwo}
& T_{M,B}\odot
\triangle_!S^{\nmalbdl_B}\odot \Lambda^f_{B}M_+\cong S^n{\wedge}_B
\Lambda^f_BM_+}.\] This map is the trace of a lift of $f\colon M\rightarrow M$ 
to the space of fiberwise Moore paths.
\end{proof}

\begin{proof}[Proof of \myref{reidemeister2}] By \myref{fibidentify2}
\[R^{htpy}_B(f)=R^{KW}_B(f).\]  By \myref{KWfibconverse9},
$R^{KW}_B(f)$ is zero if and only if $f$ is homotopic to a map with no
fixed points.
\end{proof}

\chapter{A review of bicategory theory}\label{reviewbicat}

In this chapter we will give 
several examples of bicategories with shadows. 
We used the bicategories in Sections \ref{catex2} and \ref{catex4}
earlier.
The other 
examples in this section are not necessary for what came earlier,
but they may be helpful as an alternative source of motivation.

The first example, in Section \ref{catex1}, can be interpreted as
a generalization of the bicategory of rings, bimodules, and
homomorphisms to a symmetric monoidal category that is not the
category of abelian groups.  

The example Section \ref{catex2} is also a
generalization of the bicategory of rings, bimodules, and
homomorphisms and of the example in Section \ref{catex1}.  
It is an enriched version
of the bicategory of categories, bimodules, and natural
transformations.  In this context a bimodule is an enriched functor 
\[\sA\otimes \sB^{op}\rightarrow \sV\] where $\sV$ is a symmetric monoidal
category and $\sA$ and $\sB$ are categories enriched in $\sV$.
This is
the `many object' version of the bicategory of rings, bimodules, and 
homomorphisms and was used to define the
unbased algebraic Reidemeister trace.

The example of Section \ref{catex3} 
is the internal version of the bicategory of categories, 
bimodules, and natural transformations. The last example comes back
to the definitions of Section \ref{catex1} and is a generalization 
of these definitions from symmetric monoidal categories to  bicategories.

\section{Bicategory of enriched monoids, bimodules, and maps}\label{catex1}

In this and the following two sections we will describe
three bicategories that can be defined from a symmetric monoidal
category.  These bicategories all have shadows and any
symmetric monoidal functor induces a lax functor of the associated
bicategories that is compatible with shadows.

For this section and the two that follow let
$\sV$\nidx{v@$\protect\sV$} be a symmetric monoidal category with
unit object $I$,\nidx{i@$I$} tensor product $\otimes$,
\nidx{$\otimes$} and symmetry isomorphism $\gamma$.
\nidx{gamma@$\gamma$} The category $\sV$ must also have all colimits.

A \emph{monoid}\idx{monoid} in $\sV$ is an  object $A$ in $\sV$
with  maps $\mu\colon A\otimes A\rightarrow A$\nidx{mu@$\mu$} and $\iota\colon 
I\rightarrow A$\nidx{iota@$\iota$} which are unital and associative. An
\emph{ $A$-$B$-bimodule}\idx{bimodule} is an object  $X$ in $\sV$
with a pair of maps $\kappa_A\colon A\otimes X\rightarrow
X$\nidx{kappa@$\kappa$} and $\kappa_B\colon X\otimes B\rightarrow X$ that are
unital and associative with respect to the maps $\mu$ and $\iota$
for $A$ and $B$. We also require that \[\kappa_B(\kappa_A\otimes
\id_B)=\kappa_A(\id_A \otimes \kappa_B).\]

Let $X$ and $Y$ be $A$-$B$-bimodules.  A map $f\colon X\rightarrow Y$ in $\sV$ is
a \emph{map of bimodules}\idx{map of bimodules} if the following diagram
\[\xymatrix{A\otimes X\ar[r]^-{\kappa_A}\ar[d]_{\id\otimes f}&X\ar[d]^f\\
A\otimes Y\ar[r]^-{\kappa_A}&Y}\] and the corresponding diagram for
the map $\kappa_B$ commute.

\begin{definition}\mylabel{monoidsodot} Let $X$ be an $A$-$B$-bimodule
and $Y$ a $B$-$C$-bimodule. Then $X\odot Y$,\nidx{$\odot$}
an $A$-$C$-bimodule, is the
coequalizer
\[\xymatrix{X\otimes B\otimes
Y\ar@<.5ex>[r]^-{\kappa_B\otimes \id}\ar@<-.5ex>[r]_-{\id\otimes
\kappa_B}&X\otimes Y\ar[r]^-{\pi}&X\odot Y.}\]  The left module
structure is induced by the map $\kappa_A\otimes \id$ and the right
module structure is induced by $\id \otimes \kappa_C$.\end{definition}

Define a bicategory $\sNV$\nidx{nv@$\protect\sNV$}
with 0-cells the monoids in $\sV$ and
$\sNV(A,B)$ the category of $B$-$A$-bimodules and bimodule maps. The
$\odot$ product is described in \myref{monoidsodot}, and
the unit functor associated to a monoid $A$ is that monoid
considered as an $A$-$A$-bimodule using the monoid multiplication
$\mu$.\footnote{An alternative bicategory composition and shadows
for these 0-cells, 1-cells, and 2-cells is described  in
\cite{Nicas}.  This is related to the homotopy colimits we used
earlier.}

Duality in this bicategory was considered in \cite[2.1]{Pareigis5}.

The bicategory $\sNV$ is symmetric with involution $t$.  For
a monoid $A$, $tA$ is $A$ as an object of $\sV$ and has multiplication
given by
\[\xymatrix{A\otimes A\ar[r]^{\gamma}&A\otimes A\ar[r]^-{\mu}&A.}\]
For a module $X$, $tX$ is $X$ as an object of $\sV$ and the  multiplication
is given by
\[\xymatrix{A\otimes X\ar[r]^{\gamma}&X\otimes A\ar[r]^-{\kappa_A}&X.}\]

\begin{definition}For $Z\in\sNV(A, A)$ the \emph{shadow}\idx{shadow} of $Z$,
$\sh{Z}$,\nidx{shad@$\protect\sh{-}$} is the coequalizer
\[\xymatrix{A\otimes Z\ar@<.5ex>[r]^-{\kappa_A}\ar@<-.5ex>[r]_-{\kappa_A\gamma}
&Z\ar[r]&\sh{Z}}\]
\end{definition}
The target of the shadow is the category $\sV$.  This is also
the category of $I$-$I$-bimodules where $I$ is
regarded as monoid using the unit isomorphism.

Let $X$ be an $A$-$B$-bimodule and $Y$  a $B$-$A$-bimodule. Noting
that \[A\otimes (X\odot Y)\cong (A\otimes X)\odot Y,\] the natural
transformations $\theta_{B,A}$ are induced by:
\[\xymatrix{A\otimes X\odot Y\ar@<.5ex>[r]\ar@<-.5ex>[r]&X\odot Y
\ar[r]\ar[d]&\sh{X\odot Y}\ar@{.>}[dl]^{\theta_{B,A}}\\
&\sh{Y\odot X}}\] where the map $X\odot Y\rightarrow
\sh{Y\odot X}$ is induced by the composite
\[X\otimes Y\rightarrow Y\otimes X\rightarrow Y\odot X
\rightarrow \sh{Y\odot X}.\]\nidx{theta@$\theta$}

Let $\Ab$ be the category of abelian groups.   The monoids in $\Ab$ are the
associative rings with unit and $\odot$ is the usual tensor
product over a ring.  The bicategory $\sN_{\Ab}$ is $\Mod$, the
bicategory of rings, bimodules, and homomorphisms. The bicategory
 $\sN_{\Mod_R}$ is the
bicategory with 0-cells algebras over $R$, 1-cells bimodules and
2-cells homomorphisms.

Let $\Ch$ be the category of chain complexes of abelian groups and
chain maps.  A ring $R$, thought of as a chain complex
concentrated in degree 0, is a monoid in $\Ch$. Then
$\sN_{\Ch}(\mathbb{Z},R)$ is the category of chain complexes of
left $R$-modules and chain maps. The functor $\odot$ is the usual
tensor product of chain complexes over $R$.

\begin{definition}
Let $F\colon \sV\rightarrow \sU$ be a lax  monoidal functor.
Define a lax functor of bicategories $\sNV\rightarrow \sNU$ as follows.
\begin{enumerate}
\item If $A$ is a monoid with composition $\mu\colon A\otimes
A\rightarrow A$ and unit $\iota\colon I\rightarrow A$ then $FA$ is a
monoid with composition \[\xymatrix{FA\otimes FA\ar[r]^\symf&
F(A\otimes A)\ar[r]^-{F(\mu)}&F}\]
and unit
\[\xymatrix{I'\ar[r]&F(I)\ar[r]^{F(\iota)}&F(A)}.\]
\item
If $X$ is an $A$-$B$-bimodule with action $\kappa_A\colon A\otimes
X\rightarrow X$ by $A$ and action $\kappa_B\colon X\otimes B\rightarrow X$ by $B$
then $FX$
is an $FA$-$FB$-bimodule with action $F(\kappa_A)\symf$ by $F(A)$ and
action $F(\kappa_B)\symf$ by $F(B)$.
\item The natural transformations $\phi_{X,Y}$
are induced by $\symf$. \item The natural transformations $\phi_A$
are the identity.
\end{enumerate}\end{definition}

\begin{lemma}The lax functor induced by a lax symmetric monoidal functor
 is compatible with shadows.\end{lemma}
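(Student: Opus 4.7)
The proof has two parts: constructing the natural transformation $\psi_A\colon \sh{F(-)}\to F\sh{-}$, and then verifying the pentagon-like coherence diagram that defines compatibility with shadows.

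\textbf{Construction of $\psi_A$.} For an $A$-$A$-bimodule $Z$ in $\sV$, the shadow $\sh{Z}$ is by definition the coequalizer in $\sV$ of the two maps $A\otimes Z\rightrightarrows Z$ (the left action and the right action composed with the symmetry), and $\sh{FZ}$ is the analogous coequalizer in $\sU$ of the corresponding pair $FA\otimes FZ\rightrightarrows FZ$. I will produce $\psi_A$ by the universal property of $\sh{FZ}$. First I pass the coequalizer defining $\sh{Z}$ through $F$ to obtain a cofork $F(A\otimes Z)\rightrightarrows FZ\to F\sh{Z}$. Precomposing with $\phi_{A,Z}\colon FA\otimes FZ\to F(A\otimes Z)$ gives a cofork $FA\otimes FZ\rightrightarrows FZ\to F\sh{Z}$, and I check that its parallel pair is exactly the one used to form $\sh{FZ}$: the first leg is the left $FA$-action on $FZ$ by the very definition of the induced lax functor $\sNV\to \sNU$, while the second leg is identified with the right $FA$-action precomposed with the symmetry $\gamma'$ of $\sU$ by invoking the defining axiom of a lax symmetric monoidal functor, namely $\phi_{Z,A}\circ\gamma' = F(\gamma_{A,Z})\circ\phi_{A,Z}$. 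The universal property of $\sh{FZ}$ then yields a unique $\psi_A\colon \sh{FZ}\to F\sh{Z}$, and naturality of $\psi_A$ in $Z$ is automatic by uniqueness.

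\textbf{Coherence.} To verify the required square, I exploit the fact that $\sh{FX\odot FY}$ is built by iterated coequalizers out of $FX\otimes FY$, so it suffices to test equality of the two composites after precomposing with the canonical quotient $FX\otimes FY\to \sh{FX\odot FY}$. Tracing the definitions of $\theta$ (which on representatives is induced by the ambient symmetry), of the 2-cell $\phi_{X,Y}$ (built on representatives from $\phi_{X,Y}\colon FX\otimes FY\to F(X\otimes Y)$), and of $\psi_A$ as just constructed, both composites in the square reduce to the outer boundary of
\[\xymatrix{FX\otimes FY\ar[r]^-{\gamma'}\ar[d]_{\phi_{X,Y}}&FY\otimes FX\ar[d]^{\phi_{Y,X}}\\ F(X\otimes Y)\ar[r]_-{F(\gamma_{X,Y})}&F(Y\otimes X)}\]
followed by the projection $F(Y\otimes X)\to F(Y\odot X)\to F\sh{Y\odot X}$. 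This square commutes precisely because $F$ is lax symmetric monoidal, so the two composites agree.

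\textbf{Main obstacle.} The only subtle point is bookkeeping: $F$ need not preserve the coequalizers used to form either $\odot$ or $\sh{-}$, so $\psi_A$ and the 2-cells $\phi_{X,Y}$ must be exhibited by universal properties rather than by direct formulas, and the diagram chase has to be carried out at the level of representatives $FX\otimes FY$ before descending through the quotients. Once the symmetry axiom for $F$ is pinpointed as the sole nontrivial input, both halves of the argument are entirely formal.
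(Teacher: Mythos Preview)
Your proof is correct and follows essentially the same approach as the paper: construct $\psi_A$ via the universal property of the coequalizer defining $\sh{FZ}$, using the symmetry axiom for $F$ to identify the parallel pair. The paper's proof is extremely terse---it records only the diagram producing $\psi_A$ and omits the coherence verification entirely---so your writeup is considerably more detailed, particularly in the second half, but the underlying strategy is the same.
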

\begin{proof}The natural map
$\sh{FX}\rightarrow F\sh{X}$ is defined
using the following diagram.
\[\xymatrix@C=50pt{FX\otimes FA\ar@<.5ex>[r]^-{F(\kappa_A)\symf}
\ar@<-.5ex>[r]_-{F(\kappa_A)\symf \gamma'}\ar[d]_{\symf}&FX
\ar[r]\ar@{=}[d]&\sh{FX}\ar@{.>}[d]\\
F(X\otimes
A)\ar@<.5ex>[r]^-{F(\kappa_A)}\ar@<-.5ex>[r]_-{F(\kappa_A)\gamma}
&FX\ar[r]&F\sh{X}}\]
\end{proof}

\section{Bicategory of enriched categories, bimodules, and maps}\label{catex2}

In some ways the examples in this section and the following
section are modeled on the 2-category of categories, functors,
and natural transformations, but this is not the most helpful
motivation.  It is much more useful to think about the bicategory
of rings, bimodules, and homomorphisms and view these bicategories
as ``many object'' generalizations.

A category $\sA$ is  \emph{enriched}\idx{enriched category}
\idx{category!enriched} in $\sV$ if for each $a,b\in \ob(\sA)$,
$\sA(a,b)\in \ob(\sV)$ and for $a,b,c\in \ob(\sA)$ composition
\[\sA(b,c)\otimes \sA(a,b)\rightarrow \sA(a,c)\] is a map in $\sV$.
We also require that for  each $a\in \ob(\sA)$, there is a map $I
\rightarrow \sA(a,a)$ in $\sV$ and these maps and the composition maps
satisfy unit and associativity conditions. For more details see
\cite[p. 23]{Kelly}.

For two enriched categories $\sA$ and $\sB$, an \emph{enriched
$\sA$-$\sB$-bimodule}
\idx{enriched bimodule}\idx{bimodule!enriched}
is an enriched functor $\sX\colon \sA\otimes \sB^{op}\rightarrow \sV$.  It
consists of an object $\sX(a,b)$ in
$\sV$ for each $a\in \ob(\sA)$, $b\in\ob(\sB)$ and maps
\[\kappa\colon \sA(a,a')\otimes \sB(b,b')\rightarrow \sV(\sX(a,b'),\sX(a',b))\] in $\sV$.
These maps must be compatible with composition and the unit
objects in $\sA$ and $\sB$. Functors of this form are
sometimes called distributors.\idx{distributors}

As discussed in the beginning of this section, these objects
should be thought of as many object generalizations of monoids and
bimodules.  This is reflected in the use of the term bimodule here and
is compatible with the previous use in the sense that   a category
with one object enriched in $\sV$ is a monoid in $\sV$ and a
bimodule (enriched functor) from a pair of categories each with one
object is a bimodule (1-cell in $\sNV$).

Let $\sX,\sY$ be $\sA$-$\sB$-bimodules.  An \emph{enriched natural
transformation}\idx{enriched natural transformation}\idx{natural
transformation!enriched} is a collection of morphisms in $\sV$
\[\{\delta_{a,b}\colon \sX(a,b)\rightarrow \sY(a,b)\in {\mathrm{Mor}} \,\sV\}_{a
\in \ob(\sA), b\in\ob(\sB)}\]
such that the diagram below commutes for all $a,a'\in \ob(\sA)$
and $b,b'\in\ob(\sB)$.
\[\xymatrix{{\sA(a,a')\otimes \sB(b,b')}\ar[r]^-\kappa
\ar[d]_\kappa&{\sV(\sX(a,b'),\sX(a',b))}
\ar[d]^-{\sV(\id,
\delta_{a',b})}\\
{\sV(\sY(a,b'),\sY(a',b))}\ar[r]_-{\sV(\delta_{a,b'},\id)}&{\sV(\sX(a,b'),
\sY(a',b))}}\]

\begin{definition}\mylabel{functortensor}
Let  $\sX$ be an $\sA$-$\sB$ bimodule and $\sY$  a $\sB$-$\sC$
bimodule.  Then $\sX\odot \sY$\nidx{$\odot$}
is the $\sA$-$\sC$ bimodule defined
by the coequalizer.

\[\xymatrix@C=70pt{{\displaystyle\coprod_{b,b'\in \ob(\sB)}\sX(a,b')\otimes
\sB(b,b')
\otimes \sY(b,c)}\ar@<.5ex>[r]^-{
\coprod \kappa_\sB
\otimes \id}
\ar@<-.5ex>[r]_-{\coprod \id\otimes \kappa_\sB}& {\displaystyle
\coprod_{b\in \ob(\sB)}
\sX(a,b)\otimes \sY(b,c)}.}\]

The $\sA$-$\sC$-bimodule structure is induced by the map
\[\xymatrix@R=5pt{{\sA(a,a')\otimes \sC(c,c')}\ar[r]^-\kappa
&{\sV(\sX(a,b)\otimes \sY(b,c'),
\sX(a',b)\otimes \sY(b,c))}\\
\,\ar[r]&{ \sV(\sX(a,b)\otimes
\sY(b,c'), \sX\odot \sY(a,c'))}}\]

\end{definition}

This is the usual tensor
product of functors\idx{tensor product of functors}
except the first coproduct is indexed over pairs of objects
rather than morphisms
since $\sA(a,a')$ is an object in $\sV$ rather than a set.

We define  a bicategory $\sEV$ with 0-cells categories enriched in
$\sV$,\nidx{ev@$\protect\sEV$}
1-cells enriched bimodules, and 2-cells enriched natural
transformations.  The $\odot$ product for $\sEV$ is given in
\myref{functortensor}. For any category $\sA$ enriched
over $\sV$  define an $\sA$-$\sA$-bimodule by
$U_{\sA}(a,a')=\sA(a',a)\in \sV$.  The associativity of
composition gives a single map
\[\sA(a,a')\otimes \sA(b',a)\otimes \sA(b,b')\rightarrow
\sA(b,a').\]  Using symmetry and the $\sV(-,-)-\otimes$
adjunction, we get a map \[\sA(a,a')\otimes \sA(b,b')\rightarrow
\sV(\sA(b',a), \sA(b,a')).\]  The target of this map is
$\sV(U_{\sA}(a,b')U_{\sA}(a',b))$ and this gives
the required action.  This is the unit functor for $\sEV$.

The bicategory $\sEV$ is symmetric with involution $t$ which takes a category
$\sA$ to its opposite. For $\sX\colon \sA\otimes \sB^{op}\rightarrow\sV$,
$t\sX\colon \sB^{op}\otimes (\sA^{op})^{op}\rightarrow \sV$ is defined by $t\sX(b,a)
=\sX(a,b)$. The map
\[\sB^{op}(b,b')\otimes \sA^{op}(a,a')\rightarrow \sV(t\sX(b,a'),t\sX(b',a))\]
is the symmetry map $\gamma$  in $\sV$ composed with the action of $\sA$
and $\sB$ on $\sV$.  

This bicategory  also has shadows.
\begin{definition}\cite[2.2.5]{coufal}
Let $\sZ\colon \sA\otimes \sA^{op}\rightarrow \sV$ be an
$\sA$-$\sA$-bimodule. Define the \emph{shadow}\idx{shadow} of
$\sZ$, $\sh{\sZ}$,\nidx{shad@$\protect\sh{-}$} to be the coequalizer
of the pair of maps 
\[\xymatrix@C=70pt{{\displaystyle\coprod_{a,a'\in \sA} \sA(a,a')\otimes
\sZ(a,a')}
\ar@<.5ex>[r]^-{\coprod
\kappa_\sA}
\ar@<-.5ex>[r]_-{\coprod(\kappa_\sA\circ \gamma)}
&{\displaystyle\coprod_{a\in \sA}\sZ(a,a)}
.}\]
\end{definition}

The target of the shadow is the category $\sV$.  This is the category of 
modules over the category with one object and with
morphisms object  $I\in \sV$.  A composition of maps similar to those used in
$\sNV$ induces the maps $\theta_{\sA,\sB}$ in $\sE_\sV$.

The bicategory $\Mod$ of rings, bimodules, and homomorphisms is the
bicategory $\sN_{\Ab}$.   The corresponding bicategory $\sE_{\Ab}$
is less familiar.  We can think of the 0-cells in $\sE_{\Ab}$ as
rings with many objects and the 1-cells as modules with many
objects.  For an enriched category the abelian group structure on
the hom sets gives the ``addition'' and the category composition
gives the ``multiplication''. Similarly, we think of an enriched
functor with target $\Ab$ as a module with many objects.  The
addition comes from the hom sets and the action of the category
corresponds to the action of the ring.

Let $\sV, \sU$ be  symmetric monoidal categories and
$F\colon \sV\rightarrow \sU$ be a lax symmetric monoidal functor. For a
category  $\sA$ enriched in $\sV$ define a category $F(\sA)$
enriched in $\sU$ with the same objects as $\sA$ and with
morphisms defined by \[F(\sA)(a,a')=F(\sA(a,a')).\] Composition in
$F(\sA)$ is defined by \[\xymatrix{F(\sA)(a',a'')\otimes
F(\sA)(a,a')\ar@{=}[d]
& F(\sA(a,a''))\\
F(\sA(a',a''))\otimes F
(\sA(a,a'))\ar[r]^-{\symf}&F(\sA(a',a'')\otimes
\sA(a,a'))\ar[u].}\]

Let $\sX$ be an $\sA$-$\sB$-bimodule.  Then $F(\sX)$ is the
$F(\sA)$-$F(\sB)$-bimodule defined by $F(\sX)(a,b)=F(\sX(a,b))$.
The action of $F(\sA)$ and $F(\sB)$ is given by
\[\xymatrix{F(\sA)(a,a')\otimes F(\sB)(b,b')\ar[d]^\symf& { \sV(F(\sX(a,b')),
F(\sX(a',b)))}\\
 F(\sA(a,a')\otimes \sB(b,b'))
\ar[r]^-{F(\kappa)}& F(\sV(\sX(a,b'),\sX(a',b)))\ar[u]} \]

\begin{definition} Given a symmetric monoidal functor $F\colon \sV\rightarrow
\sU$
define a lax functor of bicategories $\sEV\rightarrow \sEU$ as follows.
\begin{enumerate}\item The function $\ob \sEV\rightarrow \ob\sEU$ is
given by $\sA\mapsto F(\sA)$.
\item The functor $\sEV(\sA,\sB)\rightarrow \sEU(F(\sA),
F(\sB))$ is given by composition with $F$.
\item The natural transformation $\phi_{X,Y}$ is induced by $\symf$
via the composites \[F (\sX(a,b))\otimes F
(\sY(b,c))\stackrel{\symf}{\rightarrow} F(\sX(a,b)\otimes
\sY(b,c))
\rightarrow 
F(\sX\odot \sY)(a,c).\]
\item The natural transformation $\phi_A$ is the identity
 $\id\colon t{F(\sA)}
\rightarrow F(t{\sA})$.
\end{enumerate}
\end{definition}

\begin{lemma}A lax functor $\sEV\rightarrow \sEU$ induced
 by a symmetric monoidal functor $F\colon \sV\rightarrow \sU$ is
 compatible with shadows.\end{lemma}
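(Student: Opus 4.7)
The plan is to mimic the construction given earlier for the lax functor $\sNV\to\sNU$ and then verify the hexagonal coherence diagram by a routine diagram chase. First I would construct the natural transformation $\psi_\sA\colon \sh{F(-)}\to F\sh{-}$. For an $\sA$-$\sA$-bimodule $\sZ$, the component on each object $a$ is $F$ applied to the structure map $\sZ(a,a)\to \sh{\sZ}$, which gives a morphism $F(\sZ(a,a))=F(\sZ)(a,a)\to F\sh{\sZ}$ in $\sU$. The universal property of the coproduct packages these into a morphism
\[
\coprod_{a\in\sA} F(\sZ)(a,a)\ \longrightarrow\ F\sh{\sZ}.
\]
To see this descends to the coequalizer defining $\sh{F\sZ}$, I would check that precomposition with each of the two parallel maps
\[
\coprod_{a,a'} F(\sA)(a,a')\otimes F(\sZ)(a,a')\ \rightrightarrows\ \coprod_{a} F(\sZ)(a,a)
\]
produces equal morphisms into $F\sh{\sZ}$. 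The two maps are, unwinding definitions, $F(\kappa_\sA)\circ\phi$ and $F(\kappa_\sA\circ\gamma)\circ\phi$; using the symmetry compatibility of $\phi$ in the definition of a lax symmetric monoidal functor, both composites equal $F$ applied to the corresponding maps in the original coequalizer diagram for $\sh{\sZ}$, which are identified there.

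Naturality of $\psi_\sA$ in $\sZ$ is then immediate from the naturality of the coequalizer construction and of $\phi$. The construction specializes, in the one-object case, to exactly the map appearing in the proof of the analogous lemma for $\sNV\to\sNU$.

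Second, I would verify the coherence square
\[
\xymatrix@R=12pt{\sh{FX\odot FY}\ar[r]^-\theta\ar[d]_{\phi_{X,Y}}&\sh{FY\odot FX}\ar[d]^{\phi_{Y,X}}\\
\sh{F(X\odot Y)}\ar[d]_{\psi_\sA}&\sh{F(Y\odot X)}\ar[d]^{\psi_\sB}\\
F\sh{X\odot Y}\ar[r]_\theta&F\sh{Y\odot X}}
\]
for $X\in\sEV(\sB,\sA)$ and $Y\in\sEV(\sA,\sB)$. Since all four shadows are defined as coequalizers, and $\psi$, $\phi_{X,Y}$, and the bicategory's $\theta$ are all induced via universal properties from maps on the underlying coproducts, the diagram reduces to checking commutativity on each summand $FX(a,b)\otimes FY(b,a)$. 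There the top route first swaps the two tensor factors by the symmetry $\gamma'$ of $\sU$ (that is the content of $\theta$ in $\sEU$) and then applies $\phi_{Y,X}$, while the bottom route first applies $\phi_{X,Y}$ and then swaps inside $F$ by $F(\gamma)$. These agree by the symmetry axiom for the lax symmetric monoidal functor $F$, namely $\phi\circ\gamma'=F(\gamma)\circ\phi$.

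The only subtlety, and the main thing to be careful about, is that the shadow coequalizers are over coproducts indexed by objects, so one must track the indexing correctly when applying the universal property twice (once to define $\psi$, once to identify the summand of $\sh{FX\odot FY}$ indexed by $a\in\sA$ with that of $\sh{FY\odot FX}$ indexed by $b\in\sB$). Once this bookkeeping is set up, the verification is a diagram chase purely driven by the lax symmetric monoidal coherence of $F$, and no new ideas beyond those used for $\sNV\to\sNU$ are needed.
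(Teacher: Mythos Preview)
Your proposal is correct and takes essentially the same approach as the paper. The paper's proof is extremely terse: it only records the definition of $\psi_\sA$ via the composites $F\sX(a,a)\rightarrow F(\coprod \sX(a',a'))\rightarrow F\sh{\sX}$ and leaves the descent to the coequalizer and the coherence verification implicit, whereas you have spelled out exactly those checks.
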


\begin{proof}The natural transformations $\psi_\sA\colon \sh{F(\sX)}\rightarrow
F\sh{\sX}$ are induced by the composites $F \sX(a,a)\rightarrow F(\coprod
\sX(a',a'))\rightarrow F\sh{\sX}$.
\end{proof}

Lax functors of bicategories give one way of comparing dual pairs,
but as in \myref{dualcomposites1}, composites of dual pairs
give another way. In this bicategory there is one example of
a composite of dual pairs that is particularly relevant.

Let $\Phi\colon \sC \rightarrow \sA$ be an enriched functor. If $\sX$ is
a $\sB$-$\sA$-bimodule let $\sX_\Phi$\nidx{xphi@$\protect\sX_\Phi$}
be the $\sB$-$\sC$-bimodule
defined by $\sX_\Phi (b,c)=\sX(b,\Phi(c))$.  If $\sY$ is an
$\sA$-$\sB$-bimodule let $_\Phi\sY$\nidx{yphi@$_\Phi\protect\sY$}
be the $\sC$-$\sB$-bimodule defined
by $_\Phi\sY(c,b)=\sY(\Phi(c),b)$. In particular, $\Phi$ can be
used to define an $\sA$-$\sC$-bimodule ${U_{\sA}}_\Phi$ and a
$\sC$-$\sA$-bimodule $_\Phi U_{\sA}$.
These  bimodules form a dual pair with coevaluation
\[\eta\colon U_{\sC}\rightarrow \,_\Phi U_{\sA}\odot
{U_{\sA}}_\Phi\] induced by the functor $\Phi$
and evaluation \[\epsilon\colon {U_{\sA}}_\Phi\odot \,_\Phi U_{\sA}
\rightarrow U_{\sA}\]  which is composition in $\sA$.

\begin{lemma}\mylabel{enrichedadjunction} Let $\sV$ be a
symmetric monoidal category and $\sA$, $\sB$, and $\sC$ be
categories enriched in $\sV$. Suppose $\Phi\colon \sC\rightarrow \sA$ is
an enriched functor and $\sX$ is a $\sB$-$\sC$-bimodule with dual
$\sY$ and coevaluation and evaluation maps
\[\xymatrix{{U_{\sB}}\ar[r]^-\chi &{\sX\odot \sY}
&{\mathrm{and}}&{\sY\odot  \sX}\ar[r]^-\theta &{U_{\sC}}.}\]

\begin{enumerate}

\item If $\Phi$ has right adjoint $\Psi\colon \sA\rightarrow \sC$
then $\sX_\Phi$ is dualizable with dual ${U_{\sA}}_\Phi\odot \sY$.
\item If $\Psi$ is also a left adjoint for $\Phi$, then $\sX_\Phi$ is dualizable
with dual $_\Psi\sY$.

\item Suppose $\Psi$ is a both a left and right adjoint for $\Phi$.
Given a 2-cell \[f\colon \sY\odot \sQ\rightarrow \sP\odot \sY\] let $f'$ be the 2-cell
\[\xymatrix{{_\Psi\sY\odot \sQ}\ar[r]^-{f(\Psi-,-)}&{_\Psi\sP\odot \sY}
\ar[d]^\cong & {_\Psi\sP_\Psi\odot \,_\Psi\sY}
\\
&{_\Psi\sP\odot
U_{\sC}\odot \sY}\ar[r]^-{\id\odot \eta \odot \id} &{_\Psi\sP\odot \,_\Phi U_{
\sA}\odot {U_{\sA}}_\Phi\odot \sY}.\ar[u]^\cong
}\]
Then
\[\xymatrix{{\sh{\sQ}}\ar[d]^-{tr(f)}&&&{\sh{_\Psi\sP_\Psi}}\\
{\sh{\sP}}\ar[r]^-\cong
&{\sh{\sP\odot U_{\sC}}}\ar[r]^-{\sh{\id\odot \eta}}&
{\sh{\sP\odot \,_\Phi U_{\sA}\odot
{U_{\sA}}_\Phi}}\ar[r]^-\cong &{\sh{{U_{\sA}}_\Phi\odot \sP\odot
\,_\Phi U_{\sA}}}\ar[u]^-\cong }\] is the trace
of $f'$.
\end{enumerate}
\end{lemma}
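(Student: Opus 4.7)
The plan is to deduce all three parts of the lemma from two results of Chapter \ref{whybicat}: the composite-of-dual-pairs theorem \myref{dualcomposites1} and the trace-compatibility lemma \myref{tracegpd}. The essential input is the canonical dual pair $(\,_\Phi U_\sA, (U_\sA)_\Phi)$ associated with any enriched functor $\Phi$, which is exhibited in the paragraph preceding the lemma. Composing this dual pair with $(\sX, \sY)$ via \myref{dualcomposites1} and then performing a Yoneda-style collapse of the coequalizer defining $\odot$---using that $\,_\Phi U_\sA$ is representable by $\Phi$---identifies the resulting composite 1-cell with $\sX_\Phi$ and produces a first description of its dual in terms of $\sY$ and $(U_\sA)_\Phi$.

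For part (1), this composite-of-dual-pairs argument immediately shows $\sX_\Phi$ is dualizable, and the hypothesis $\Phi \dashv \Psi$ is used only to bring the dual into the displayed form. The adjunction isomorphism $\sA(\Phi c, a) \cong \sC(c, \Psi a)$ promotes to an isomorphism of $\sA$-$\sC$-bimodules $(U_\sA)_\Phi \cong \,_\Psi U_\sC$; substituting it puts the composite dual into the form $(U_\sA)_\Phi \odot \sY$ as stated. For part (2), the second adjunction $\Psi \dashv \Phi$ similarly produces a bimodule isomorphism $\,_\Phi U_\sA \cong (U_\sC)_\Psi$. Applying both adjunction substitutions and a further Yoneda collapse (using that $U_\sC$ is the unit under $\odot$) simplifies the dual to $\,_\Psi \sY$.

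For part (3), the strategy is to expand the trace of $f'$ using the explicit coevaluation and evaluation maps for the composite dual pair $(\sX_\Phi, \,_\Psi \sY)$ coming from part (2), and to recognise the result as the displayed long composite. These maps factor through the canonical $\Phi$-coevaluation $\eta \colon U_\sC \to \,_\Phi U_\sA \odot (U_\sA)_\Phi$ and the evaluation of $(\sX, \sY)$; unravelling the definition of the trace of $f'$ via the cyclic symmetry $\theta$ of the shadow and the adjunction-induced bimodule isomorphisms of parts (1) and (2) collapses it exactly into the stated composite. Conceptually this is a direct specialisation of \myref{tracegpd}(2): the ambidextrous adjunction data plays the role that the coevaluation-isomorphism hypothesis plays in that lemma, supplying the comparisons between the two dual-pair descriptions that are needed to compare the two traces.

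The main obstacle is bookkeeping. The four related bimodules $(U_\sA)_\Phi$, $\,_\Phi U_\sA$, $(U_\sC)_\Psi$, and $\,_\Psi U_\sC$ must be interchanged using both adjunction isomorphisms, and one must verify these isomorphisms are compatible with the shadow cyclicity $\theta$, with the unitality and associativity isomorphisms of the bicategory, and with the naturality squares linking them to $\sP$, $\sQ$, and $\sY$. The coherence axioms for shadows from Section \ref{shadowssection}, together with the triangle identities for the two adjunctions, make each verification routine once the identifications are fixed.
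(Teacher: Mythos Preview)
Your proposal is essentially correct and matches the approach the paper takes: the paper does not give a separate proof of this lemma but points to the special case \myref{covtransdual}, which is proved exactly as you describe, by composing the canonical dual pair $(\,_\Phi U_{\sA},(U_{\sA})_\Phi)$ with $(\sX,\sY)$ via \myref{dualcomposites1} and then invoking \myref{tracegpd} for the trace comparison.

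One point of bookkeeping in your write-up is slightly off. In part (1) the composite dual pair produced by \myref{dualcomposites1} is already $(\sX\odot{}_\Phi U_{\sA},\,(U_{\sA})_\Phi\odot\sY)$, so no adjunction is needed to obtain the dual in the stated form $(U_{\sA})_\Phi\odot\sY$; the adjunction isomorphism $(U_{\sA})_\Phi\cong{}_\Psi U_{\sC}$ you write down would simplify this \emph{further} to $_\Psi\sY$, which is the content of part (2), not part (1). The two adjunctions act on opposite sides: $\Phi\dashv\Psi$ gives $(U_{\sA})_\Phi\cong{}_\Psi U_{\sC}$ (simplifying the dual), while $\Psi\dashv\Phi$ gives ${}_\Phi U_{\sA}\cong(U_{\sC})_\Psi$ (simplifying the dualizable object). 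Sorting out which hypothesis is used where is exactly the bookkeeping you flag, but your description of part (1) has it reversed.

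For part (3), your appeal to \myref{tracegpd}(2) is the right idea, though it is not literally a specialisation: that lemma needs the coevaluation $\chi$ to be an isomorphism in order to manufacture a dual for $X$ from one for $X\odot Z$, whereas here both dual pairs are already available from parts (1) and (2). What carries over is the diagram chase in the proof of \myref{tracegpd}, not its hypotheses; once both dual pairs are in hand, unfolding the composite coevaluation and evaluation and using the cyclicity axioms for shadows gives the stated comparison without needing $\eta$ to be invertible.
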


In \myref{covtransdual} we proved a special case of this result.  The
following corollary of \myref{enrichedadjunction} generalizes \myref{covtransdual}.

\begin{corollary}\mylabel{ectonc} Let $\Pi$ be a connected groupoid enriched
in $\sV$ and $\sX\colon \Pi\rightarrow \sV$ a right $\Pi$-module. If
$\sX(x)$ is dualizable as a right  $\Pi(x,x)$-module in $\sNV$ for
some $x\in \Pi$ then $\sX$ is dualizable in $\sEV $.

If $\sP$ is a $\Pi$-$\Pi$-bimodule, $f\colon \sX\rightarrow \sX\odot\sP$
is a map of right modules, and $f_x\colon \sX(x)\rightarrow (\sX\odot
\sP)(x)$ the map $f$ restricted to $x\in \ob\Pi$, then
$\tr(f)=\tr(f_x)$.\end{corollary}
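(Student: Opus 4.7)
The plan is to obtain this as a specialization of Lemma \myref{enrichedadjunction} to the full inclusion $\Phi\colon \Pi(x,x) \hookrightarrow \Pi$ of the one-object subcategory on $x$. The key observation is that because $\Pi$ is a connected enriched groupoid, $\Phi$ is an equivalence of enriched categories (fully faithful by construction, essentially surjective by connectedness), so it admits a pseudo-inverse $\Psi\colon \Pi \to \Pi(x,x)$ that is simultaneously a left and a right adjoint to $\Phi$. Structurally, this is encoded in the assertion that the canonical dual pair $({}_\Phi U_\Pi, {U_\Pi}_\Phi)$ constructed just before Lemma \myref{enrichedadjunction} has both its coevaluation \emph{and} its evaluation invertible. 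Note also that since $\Pi(x,x)$ is a one-object enriched category, the category of $\Pi(x,x)$-modules in $\sNV$ coincides with the category of $\Pi(x,x)$-modules in $\sEV$, so the transfer to $\sEV$ is unambiguous.

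For the first claim, I would apply Lemma \myref{enrichedadjunction}(2) with $\Psi$ playing the role of the ``$\Phi$'' in the statement. Its hypothesis is that $\Phi$ is both left and right adjoint to $\Psi$, which holds. The conclusion is that $(\sX(x))_\Psi$ is dualizable as a $\Pi$-module whenever $\sX(x)$ is dualizable as a $\Pi(x,x)$-module, and since $\Phi\circ\Psi\cong \id_\Pi$ one has $(\sX(x))_\Psi = \sX\circ\Phi\circ\Psi \cong \sX$. Hence $\sX$ is dualizable.

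For the trace equality, the same substitution into Lemma \myref{enrichedadjunction}(3) yields the result: the map $f\colon \sX \to \sX\odot\sP$ corresponds under the equivalence $\Phi$ to the restriction $f_x\colon \sX(x) \to \sX(x)\odot\sP(x,x)\cong (\sX\odot\sP)(x)$, and because all the natural transformations (units and counits) appearing in (3) are isomorphisms, the formula there collapses to $\tr(f) = \tr(f_x)$. Alternatively one can see this by applying Lemma \myref{tracegpd}(1) directly to the composite dual pair built from the dual pair for $\sX(x)$ and the dual pair $({}_\Phi U_\Pi, {U_\Pi}_\Phi)$, using that the latter's evaluation is an isomorphism; the resulting expression reduces to $\tr(f_x)$ with respect to the original dual of $\sX(x)$.

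The main obstacle is making the enriched equivalence ``$\Phi\simeq\Psi^{-1}$'' precise: essential surjectivity requires choices of isomorphisms $a \cong x$, which in a general enriched base $\sV$ cannot be phrased pointwise. The cleanest substitute, which I would work with, is the purely structural statement that the evaluation ${U_\Pi}_\Phi \odot {}_\Phi U_\Pi \to U_\Pi$ is an isomorphism; pointwise this is the composition map $\Pi(x,a)\otimes_{\Pi(x,x)} \Pi(a',x) \to \Pi(a',a)$, and this is exactly what ``$\Pi$ is a connected enriched groupoid'' should mean in this context. Once this structural statement is in hand, Theorems \myref{dualcomposites1} and \myref{dualcomposites2} together with Lemma \myref{tracegpd} deliver both parts of the corollary with no further appeal to pointwise arguments.
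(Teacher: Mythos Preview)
Your proposal is correct and follows the paper's intended route: the paper presents this result explicitly as a corollary of \myref{enrichedadjunction}, and your specialization via the inclusion $\Phi\colon \Pi(x,x)\hookrightarrow\Pi$ (an enriched equivalence since $\Pi$ is a connected groupoid) is exactly the intended argument. Your alternative formulation through Theorems \ref{dualcomposites1}, \ref{dualcomposites2} and \myref{tracegpd}, exploiting that the evaluation ${U_\Pi}_\Phi\odot{}_\Phi U_\Pi\to U_\Pi$ is an isomorphism, is precisely how the paper handles the concrete instance \myref{covtransdual}, so the two descriptions you give are really the same proof unpacked to different depths.
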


\section{Internal and topological bicategories}\label{catex3}

The bicategory $\sEV$ is particularly well adapted to examples
where the symmetric monoidal category is abelian groups, modules
over a commutative ring, or chain complexes of modules over a
commutative ring.  It works less well with the category of
topological spaces. In particular, we would rather have a space of
objects than just a set of objects.

One way to
resolve this is to replace enriched categories by internal
categories.  Unfortunately this change does not give the duality
theory we used in Chapters \ref{classfpsec} and \ref{fibfpsec}.
The problem is that the base points are missing.  In the next
section we will define a bicategory that gives the duality of
Chapters \ref{classfpsec} and \ref{fibfpsec}.  The bicategory
described in this section can be thought of as a transition
between the bicategory of enriched categories, bimodules, and
homomorphisms, and the bicategory of monoids in Section
\ref{catex4}.

We now require that $\sV$ be a Cartesian monoidal category with
unit object $\ast$\nidx{ast@$\ast$} and product
$\times$\nidx{$\times$}. This assumption is important since we
will need to use pullbacks. If $B\rightarrow {A}$ and $
C\rightarrow {A}$ are morphisms in $\sV$, the object $B \times_{A}
C$ in $\sV$ is the pullback
\[\xymatrix{B\times_{A} C\ar[r]\ar[d]&B\ar[d]\\ C\ar[r]&{A}.}\]

\begin{definition} An \emph{internal category}\idx{internal category}
\idx{category!internal} in $\sV$ is a pair of objects, $A$ and
$\sA$, in $\sV$ and maps \nidx{iota@$\iota$}
\[\xymatrix{{\sA}\ar@<.5ex>[r]^\sou\ar@<-.5ex>[r]_\tar
&A\ar@/_1pc/[l]_\iota}\] such that $\sou\circ
\iota=\tar\circ \iota=\id$.  We also require  a map
$\mu\colon \sA\times_A\sA \rightarrow \sA$\nidx{mu@$\mu$} over $A\times A$
such that
\[\xymatrix{{\sA=A\times_{A}\sA} \ar[r]^-{\iota\times
\id}&{\sA\times_{A}\sA}\ar[r]^-\mu&{\sA}}\] and
\[\xymatrix{{\sA=\sA\times_{A}{A}}
\ar[r]^-{\id\times \iota}&{\sA\times_{ A}\sA}\ar[r]^-\mu&{\sA}}\]
are the identity map of $\sA$ and
\[\xymatrix{{\sA\times_{ A}\sA\times_{ A}\sA}\ar[r]^-{\mu\times
\id}\ar[d]_{\id\times \mu}&{\sA\times_{ A}\sA}\ar[d]^\mu\\
{\sA\times_{ A}\sA}\ar[r]_-\mu&{\sA}}\] commutes.
\end{definition}

In this definition we use the convention that \[\sA\times_A\sA\coloneqq
\{(a,a')\in \sA\times \sA|\sou(a)=\tar(a')\}\]  This is the convention
used in Chapters \ref{classfpsec} and \ref{fibfpsec}.

An internal category in $\sV$ will be abbreviated $\sA$.  Think of
$A$ as the objects of the category and $\sA$ as the morphisms.
Then $\sou$ and $\tar$ are the source and target maps, $\iota$ is
the identity and $\mu$ is composition. If $\sV$ is the category of
topological spaces, and  $\sA$ is an internal category with a
discrete object space $A$, the internal category $\sA$ is a
category enriched in topological spaces.

\begin{definition} If $\sA$ and $\sB$ are two internal categories,
an \emph{internal $\sA$-$\sB$-bimodule}\idx{internal bimodule}
\idx{bimodule!internal} is an object $ \sX$ of $\sV$, with maps
$\pro_A\colon  \sX\rightarrow A$, $\pro_B\colon \sX\rightarrow B$  in $\sV$, a
map $\kappa_A\colon \sA\times_A \sX \rightarrow   \sX$\nidx{kappa@$\kappa$} in
$\sV$ over $ A$, and a map $\kappa_B\colon \sX\times_B\sB\rightarrow
\sX$ over $B$ which are unital and associative with respect to the
maps $\mu$ and $\iota$ of $\sA$ and $\sB$ and such that
\[\xymatrix{{\sA}\times_A \sX\times_B\sB\ar[r]^-{\kappa_A\times
\id}\ar[d]_{\id\times\kappa_B}&
{\sX}\times_B\sB\ar[d]^{\kappa_B}\\
{\sA}\times_A\sX\ar[r]_{\kappa_A}&{\sX}}\]
commutes.
\end{definition}

A 1-cell in $\sEV$, a bimodule, is a collection of objects in
$\sV$ and ways of relating them.  An internal bimodule is similar
except all of the objects have been collected into a single object
of $\sV$.

\begin{definition}Let $ \sX$ be an $\sA$-$\sB$-bimodule and $ \sY$
be a $\sB$-$\sC$-bimodule. Then $ \sX\odot  \sY$\nidx{$\odot$} is
the $\sA$-$\sC$-bimodule defined by the coequalizer
\[\xymatrix{ {\sX\times_{B} \sB\times_{B} \sY}\ar@<.5ex>[r]^-{\kappa\times
\id}\ar@<-.5ex>[r]_-{\id\times \kappa} & {\sX\times_{B}
\sY}\ar[r]&{\sX \odot \sY}.}\]  \end{definition}

This is the $\odot$ product in a bicategory we will call
$\sIV$.\nidx{iv@$\protect\sIV$} The 0-cells of $\sIV$ are internal
categories and the 1-cells are internal bimodules.  The 2-cells
are called internal natural transformations and they are similar
to enriched natural transformations.

\begin{definition} An \emph{internal natural
transformation}\idx{internal natural transformation}\idx{natural
transformation!internal} from an $\sA$-$\sB$-bimodule $\sX$ to an
$\sA$-$\sB$-bimodule $\sY$ is a morphism $f\colon  \sX\rightarrow  \sY$
in $\sV$ over $ A\times B$, such that
\[\xymatrix{{\sA\times_{A} \sX}\ar[r]^-\kappa\ar[d]_{\id\times f}&
{ \sX}\ar[d]^f\\{\sA\times_{ A} \sY}\ar[r]_-{\kappa'}& {\sY}}\]
and the corresponding diagram for the action by $\sB$ commute.\end{definition}

The unit in this bicategory associates to an internal category $
\sA$ the bimodule $(\tar\times \sou)\triangle\colon  \sA\rightarrow {
A}\times { A}$. The required diagrams commute since corresponding
diagrams are part of the definition of a category.  This gives a
simple example of an $\sA$-$\sA$-bimodule.

\begin{definition}Let $ \pro\times \pro'\colon \sZ{\rightarrow}
{ A}\times { A}$ be an $\sA$-$\sA$-bimodule. Define the
\emph{shadow}\idx{shadow} of $\sZ$,  $\sh{ \sZ}$,
\nidx{shad@$\protect\sh{-}$} by the coequalizer diagram
\[\xymatrix{{\sA\times_{ A\times { A}} \sZ}
\ar@<.5ex>[r]^-{\kappa}\ar@<-.5ex>[r]_-{\kappa\circ\gamma}&
{ \sZ_{{ A},{ A}}}\ar[r]&{\sh{ \sZ}}}\]
where $ \sZ_{{ A},{ A}}$ and $\sA
\times_{{ A}\times { A}} \sZ$
are the equalizers
\[\xymatrix{ {\sZ_{ A, A}}\ar[r]& {\sZ}
\ar@<.5ex>[r]^\pro\ar@<-.5ex>[r]_{\pro'}&{ A}}\] and
\[\xymatrix{{\sA\times_{{ A}\times { A}} \sZ}\ar[r]&{\sA\times  \sZ}
\ar@<.5ex>[r]^{\tar\times \pro} \ar@<-.5ex>[r]_{\sou\times \pro'}&{ A}\times
{ A}}\]
\end{definition}

The target of the shadow functor is the category $\sV$.  This
category is the category of $I$-$I$-bimodules. Let $\sX$ be an
$\sB$-$\sA$-bimodule and $\sY$ be an $\sA$-$\sB$-bimodule.  The
morphisms $\theta$ are  induced by 
\[( \sX\times_{ A} \sY)_{ B, B}
\rightarrow ( \sY\times_{ B} \sX)_{ A, A}\rightarrow ( \sY\odot
 \sX)_{ A, A}\rightarrow \sh{ \sY\odot \sX}.\]

\section{Bicategory of bicategorical monoids}\label{catex4}
In this section we generalize both the bicategory from Section
\ref{catex1} and the bicategory of monoids, bimodules, and maps in
$\Ex$ and $\Ex_B$ from Chapters \ref{classfpsec} and \ref{fibfpsec}
to any bicategory.

\begin{rmk} There is one important difference between this section and
Chapters \ref{classfpsec}, \ref{classfpsec2}, \ref{fibfpsec}, and
\ref{fibfpsec2}.  In Chapters \ref{classfpsec}, \ref{classfpsec2},
\ref{fibfpsec}, and \ref{fibfpsec2} we used homotopy colimits.  In
this section we will use colimits.  Despite this difference, many
of the results in this section have analogues in Chapters
\ref{classfpsec}, \ref{classfpsec2}, \ref{fibfpsec}, and
\ref{fibfpsec2}.
\end{rmk}

This bicategory includes many of the best features of the three
previous examples and additional elements that are necessary for
topological applications.  As we saw in Chapters \ref{classfpsec}
and \ref{fibfpsec} this bicategory eliminates the need to choose
base points but retains the structure necessary to define
topological duality.

In this section $\sW$\nidx{w@$\protect\sW$} is a bicategory with bicategory
composition $\boxtimes$.  The hom categories of $\sW$ must have
all coequalizers.

\begin{definition} Let $A$ be a 0-cell in $\sW$. A \emph{monoid}
\idx{monoid} in $\sW$ is a 1-cell $\sA\in\sW(A,A)$ with 2-cells
\[\xymatrix{U_A\ar[r]^-\iota &{\sA}&{\mathrm{ and}}
&{\sA\boxtimes \sA}\ar[r]_-\mu& {\sA}}\]\nidx{iota@$\iota$}
\nidx{mu@$\mu$} which are unital and associative.  That is,
\[\xymatrix{{\sA\cong U_A\boxtimes \sA} \ar[r]^-{\iota\boxtimes \id}
&{\sA\boxtimes \sA}\ar[r]^-\mu&{\sA}}\] and
\[\xymatrix{{\sA\cong \sA\boxtimes U_A}\ar[r]^-{\id
\boxtimes \iota}&{\sA\boxtimes \sA}\ar[r]^-\mu&{\sA}}\] are the
identity map of $\sA$ and
\[\xymatrix{{\sA\boxtimes \sA\boxtimes \sA}\ar[r]^-{\mu\boxtimes
\id}\ar[d]^{\id\boxtimes \mu}&{\sA\boxtimes \sA}\ar[d]^\mu\\
{\sA\boxtimes \sA}\ar[r]^-\mu&{\sA}}\] commutes.  We call $\iota$ the \emph{unit} and
$\mu$ the \emph{composition}.
\end{definition}

The simplest example of a monoid is the 1-cell $U_A$ for a 0-cell
$A$ in $\sW$.  The unit map is the identity and the
composition is the unit isomorphism.

\begin{definition} Let $\sA$ and $\sB$ be monoids in $\sW$. An
\emph{$\sA$-$\sB$-bimodule}\idx{bimodule}
in $\sW$ is a 1-cell $\sX\in \sW(B,A)$
and two 2-cells \nidx{kappa@$\kappa$} 
\[\kappa_A\colon \sA\boxtimes \sX\rightarrow \sX\] and
\[\kappa_B\colon \sX\boxtimes \sB\rightarrow \sX\]
that are unital and associative with respect to the monoid
structure of $\sA$ and $\sB$. We also require that the actions
$\kappa_A$ and $\kappa_B$ commute.
\end{definition}

Any 1-cell $X$ in $\sW(B,A)$ is a $U_{A}$-$U_{B}$-bimodule.  The
monoids $U_A$ and $U_{B}$ act by the unit isomorphisms. Any
monoid $\sA$ is an $\sA$-$\sA$-bimodule with the left and right
actions given by $\mu$.

By neglect of structure an $\sA$-$\sB$-bimodule $\sX$ is also a
left $\sA$-module or a right $\sB$-module.  Let $L\sX$ be
the monoid $\sX$ regarded as an $\sA$-$U_B$-bimodule with left action given
by $\kappa$ and right action the unit isomorphism.  Let $R\sX$ be the
monoid $\sX$ regarded as a $U_A$-$\sB$-bimodule with left action the unit
isomorphism and right action $\kappa$.

\begin{definition} Let $\sA$ and $\sB$ be monoids and $\sX$ and $\sY$ be
$\sA$-$\sB$-bimodules. A \emph{map of bimodules}\idx{map of
bimodules} is a 2-cell $f\colon \sX\rightarrow \sY$ such that
\[\xymatrix{{\sA\boxtimes \sX}\ar[r]^-\kappa\ar[d]^{\id\boxtimes
f}&{\sX} \ar[d]^f\\{\sA\boxtimes \sY}\ar[r]_-{\kappa'} &{\sY}}\]
and the corresponding diagram for $\sB$ commute in $\sW(B,A)$.
\end{definition}

\begin{definition} Let $\sA$, $\sB$, and $\sC$ be monoids, $\sX$ be an
$\sA$-$\sB$-bimodule, and $\sY$ be a $\sB$-$\sC$-bimodule.  Then
$\sX\odot \sY$\nidx{$\odot$} is the $\sA$-$\sC$-bimodule defined
by the coequalizer
\[\xymatrix{{\sX\boxtimes \sB\boxtimes \sY}\ar@<.5ex>[r]\ar@<-.5ex>[r]&
{\sX\boxtimes \sY}\ar[r]&{\sX\odot \sY}}\] in $\sW(C,A)$.  The
left $\sA$ action is induced by the left action of $\sA$ on $\sX$.
The right $\sC$ action is induced by the right action of $\sC$ on
$\sY$.\end{definition}

If $\sB$ is the monoid $U_{B}$ then $\sX\odot \sY=\sX\boxtimes \sY$.

This defines a bicategory $\sM_\sW$ \nidx{mw@$\protect\sM_{\protect\sW}$} with
0-cells monoids in $\sW$, 1-cells bimodules in $\sW$, and 2-cells
maps of bimodules.  The unit for a monoid $\sA$ is given by
regarding that monoid as a bimodule over itself.  The bicategory
composition is $\odot$.

In the bicategory $\sM_\sW$ we have some very simple examples of
dualizable objects given by the monoids of $\sW$.

\begin{prop}\mylabel{monoiddual} For a monoid $\sA$, $(R(U_\sA),L(U_\sA))$
is a dual pair.
\end{prop}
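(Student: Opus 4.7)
The plan is to replicate, in the strict bicategorical setting, the proofs of the analogues \ref{topmonodidual} and the first part of \ref{bunivdual3}: the coevaluation will be the unit $\iota$ of $\sA$ and the evaluation will be the multiplication $\mu$. Because we work with ordinary coequalizers rather than homotopy coequalizers, the identification $R(U_\sA)\odot L(U_\sA)\cong U_\sA$ will be a strict isomorphism; this streamlines the verification of the triangle identities.

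First I would compute the two relevant $\odot$-products. Since $R(U_\sA)\in \sM_\sW(\sA,U_A)$ has its right $\sA$-action given by $\mu$ and $L(U_\sA)\in \sM_\sW(U_A,\sA)$ has its left $\sA$-action given by $\mu$, the $1$-cell $R(U_\sA)\odot L(U_\sA)\in \sM_\sW(U_A,U_A)$ is by definition the coequalizer in $\sW(A,A)$ of the parallel pair
\[
\mu\boxtimes \id,\ \id\boxtimes\mu \ \colon\ \sA\boxtimes\sA\boxtimes\sA \rightrightarrows \sA\boxtimes\sA.
\]
Associativity of $\mu$ shows that $\mu\colon \sA\boxtimes\sA\to \sA$ coequalizes this pair; and the unit axioms give a common section $\id\boxtimes\iota\boxtimes\id$ of the pair (so that it is a reflexive pair), together with $\mu\circ(\id\boxtimes\iota)=\id$, which forces $\mu$ to be the coequalizer by the standard bar-resolution argument. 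Hence $R(U_\sA)\odot L(U_\sA)\cong \sA$ as a $U_A$-$U_A$-bimodule. On the other side, because the $U_A$-actions on $L(U_\sA)$ and $R(U_\sA)$ are given by unit isomorphisms, $L(U_\sA)\odot R(U_\sA)$ is simply $\sA\boxtimes\sA$ with left and right $\sA$-actions $\mu\boxtimes\id$ and $\id\boxtimes\mu$ respectively.

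With these identifications in hand I define $\eta\colon U_{U_A}\cong U_A \to R(U_\sA)\odot L(U_\sA)\cong \sA$ to be $\iota$, which is automatically a map of $U_A$-$U_A$-bimodules, and $\epsilon\colon L(U_\sA)\odot R(U_\sA)\cong \sA\boxtimes\sA \to U_\sA\cong \sA$ to be $\mu$, which is a map of $\sA$-$\sA$-bimodules precisely because $\mu$ is associative. The two triangle identities then reduce, after the canonical identifications, to
\[
\mu\circ(\iota\boxtimes\id_\sA)=\id_\sA \qquad\text{and}\qquad \mu\circ(\id_\sA\boxtimes\iota)=\id_\sA,
\]
which are the two unit axioms for the monoid $\sA$. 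The content of the argument is essentially bookkeeping; the only place that requires genuine care is the identification of the triple composites $R(U_\sA)\odot L(U_\sA)\odot R(U_\sA)\cong \sA\boxtimes R(U_\sA)$ and $L(U_\sA)\odot R(U_\sA)\odot L(U_\sA)\cong L(U_\sA)\boxtimes \sA$ appearing in the two triangles, which uses that $\boxtimes$ preserves the coequalizers defining $\odot$ (a standard assumption already implicit in the construction of $\sM_\sW$).
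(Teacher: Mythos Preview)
Your proof is correct and follows essentially the same approach as the paper: the coevaluation is the unit $\iota$, the evaluation is the multiplication $\mu$, and the triangle identities reduce to the unit axioms of the monoid. You are slightly more explicit than the paper in verifying the isomorphism $R(U_\sA)\odot L(U_\sA)\cong \sA$ via the reflexive coequalizer argument, whereas the paper simply asserts this identification; otherwise the arguments coincide.
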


\begin{proof} 
Let $N(U_\sA)$ be $\sA$
regarded as a $U_A$-$U_A$-bimodule.  Then the unit map
\[\iota\colon  U_A\rightarrow N(U_\sA)\] is a map of $U_A$-$U_A$-bimodules.  The
associativity diagram for the composition $\mu$ implies that
\[\mu\colon L(U_\sA)\odot R(U_\sA)=L(U_\sA)\boxtimes R(U_\sA)\rightarrow U_\sA\]
is a map of $\sA$-$\sA$-bimodules. The unit conditions imply 
the composites \[\xymatrix{{R(U_\sA) \cong U_A\odot
R(U_\sA)}\ar[r]^-{\iota\odot\id} & {N(U_\sA) \odot
R(U_\sA)}\ar[r]^-\mu&{R(U_\sA)}}\]
\[\xymatrix{{L(U_\sA)\cong L(U_\sA)\odot U_A}\ar[r]^-{\id\odot \iota}&
{L(U_\sA)\odot N(U_\sA)}\ar[r]^-\mu&{L(U_\sA)}}\] are identity maps.

The coevaluation map \[\eta\colon U_A\rightarrow N(U_\sA)\cong
R(U_\sA)\odot L(U_\sA)\] is the unit.  The evaluation map
\[\epsilon\colon L(U_\sA)\odot R(U_\sA)\cong L(U_\sA)\boxtimes R(U_\sA)\rightarrow
U_\sA\]
is the composition.
The diagrams demonstrating that this is a dual pair commute by the
unit and associativity conditions of the monoid.
\[\xymatrix{{R(U_\sA)}\ar[d]_{\mathrm{id}}& {U_A\odot R(U_\sA)}\ar[r]^-{\iota
\odot \id}\ar[l]_-\cong&{N(U_\sA)\odot R(U_\sA)}\ar[d]^{\cong\odot \id}
\ar[dll]_-\mu\\
{R(U_\sA)}&{R(U_\sA)\odot U_\sA}\ar[l]^-\mu&
{R(U_\sA)\odot L(U_\sA)\odot R(U_\sA)}\ar[l]^-{\id\odot \mu}}\]

\[\xymatrix{{L(U_\sA)}\ar[d]_{\mathrm{id}}& {L(U_\sA)\odot U_A}\ar[r]^-{\id\odot
\iota}\ar[l]_-\cong&{L(U_\sA)\odot N(U_\sA)}\ar[d]^{\id\odot \cong}
\ar[dll]_-\mu\\
{L(U_\sA)}&{U_\sA\odot U_L(\sA)}\ar[l]^-\mu&
{L(U_\sA)\odot R(U_\sA)\odot L(U_\sA)}\ar[l]^-{\mu\odot \id}}\]
\end{proof}

\begin{lemma}\mylabel{bicatshlift} Suppose $\sW$ is a 
bicategory with shadows $[[-]]$ and  chosen
0-cell $I$.  If $\sW$ is a bicategory with
shadows and $\sW(I,I)$ has all coequalizers
then  $\sM_\sW$ is a bicategory with shadows.\end{lemma}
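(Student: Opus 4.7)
The plan is to take the target 0-cell for shadows on $\sM_\sW$ to be the unit monoid $U_I$ at $I$, noting that $\sM_\sW(U_I, U_I)$ is canonically equivalent to $\sW(I,I)$ because the two $U_I$-actions on a $U_I$-$U_I$-bimodule are forced to be unit isomorphisms, so $\sM_\sW(U_I, U_I)$ has all coequalizers by hypothesis. For a monoid $\sA$ and an $\sA$-$\sA$-bimodule $\sZ$ with actions $\kappa\colon \sA \boxtimes \sZ \to \sZ$ and $\kappa'\colon \sZ \boxtimes \sA \to \sZ$, I define $\sh{\sZ}$ to be the coequalizer in $\sW(I,I)$ of the parallel pair $[[\kappa]]$ and $[[\kappa']]\circ\theta_\sW$ from $[[\sA \boxtimes \sZ]]$ to $[[\sZ]]$, where $\theta_\sW\colon [[\sA \boxtimes \sZ]] \cong [[\sZ \boxtimes \sA]]$ is the shadow isomorphism of $\sW$. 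Functoriality in $\sZ$ is immediate from functoriality of $[[-]]$ and the universal property, since any map of bimodules intertwines both actions.

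Next, for 1-cells $X \in \sM_\sW(\sB, \sA)$ and $Y \in \sM_\sW(\sA, \sB)$, I construct the natural isomorphism $\theta_{\sM_\sW}\colon \sh{X \odot Y} \cong \sh{Y \odot X}$. The shadow isomorphism $\theta_\sW\colon [[X \boxtimes Y]] \cong [[Y \boxtimes X]]$ in $\sW$ is natural in both variables, and its coherence with the $\sW$-unit and associativity ensures that it exchanges the right $\sB$-action on $X$ with the left $\sB$-action on $Y$, and similarly exchanges the two $\sA$-actions. Composing with the quotient maps $[[Y \boxtimes X]] \to [[Y \odot X]] \to \sh{Y \odot X}$ yields a morphism $[[X \boxtimes Y]] \to \sh{Y \odot X}$ that coequalizes both the $\sB$-action defining $X \odot Y$ and the $\sA$-action used to form $\sh{-}$; by the universal property of the iterated coequalizer it factors uniquely through $\sh{X \odot Y}$, giving $\theta_{\sM_\sW}$. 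A symmetric construction yields its inverse, and invertibility of $\theta_\sW$ makes the two composites identities.

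The remaining verification is of the three shadow coherence axioms for $\sM_\sW$: the hexagon relating $\theta_{\sM_\sW}$ to the associator of $\odot$, the triangle $\sh{\sZ \odot U_\sA} \cong \sh{\sZ}$ compatible with the unit isomorphism of $\odot$, and the normalization $\sh{X} \cong X$ for $X \in \sM_\sW(U_I, U_I)$. Each is checked by precomposing with the defining coequalizer quotients and reducing to the corresponding coherence diagram for $\theta_\sW$ in $\sW$, together with the unit and associativity axioms of the monoid $\sA$. The main obstacle is the construction of $\theta_{\sM_\sW}$ itself: because $\sh{X \odot Y}$ is built as a double coequalizer (first by the $\sB$-action to form $\odot$, then by the $\sA$-action to form $\sh{-}$), showing that $\theta_\sW$ descends and that the two iterated constructions match requires a Fubini-style argument identifying the iterated coequalizer with the joint coequalizer of all four actions on $[[X \boxtimes Y]]$, together with the observation that $\theta_\sW$ intertwines the pairs of actions on the two sides. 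Once this interchange of quotients is in place, the hexagon and triangle axioms follow formally from their analogues in $\sW$.
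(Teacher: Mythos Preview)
Your proposal is correct and follows essentially the same approach as the paper: you choose $U_I$ as the target 0-cell, define $\sh{\sZ}$ as the coequalizer of the two actions $[[\kappa]]$ and $[[\kappa']]\circ\theta_\sW$ on $[[\sZ]]$, and construct $\theta_{\sM_\sW}$ by descending $\theta_\sW$ through the quotients via the composite $[[X\boxtimes Y]]\to[[Y\boxtimes X]]\to[[Y\odot X]]\to\sh{Y\odot X}$. The paper's proof is considerably terser than yours, omitting the coherence verifications and the Fubini-style interchange of coequalizers that you correctly identify as the main technical point.
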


\begin{proof}
Then the chosen 0-cell of $\sM_\sW$ is the monoid
$U_I$. The shadows of $\sM_\sW$, $\sh{-}$, are defined by the
coequalizers \[\xymatrix{ [[\sA\boxtimes
\sZ]]\ar@<.5ex>[r]\ar@<-.5ex>[r]&[[\sZ]]\ar[r]&\sh{\sZ}}\] The
composite
\[ [[\sX\boxtimes \sY]]\rightarrow [[\sY\boxtimes \sX]]
\rightarrow [[\sY\odot\sX]]\rightarrow \sh{\sY\odot\sX}\] induces
a map \[[[\sX\odot\sY]]\rightarrow \sh{\sY\odot\sX}.\] This map
induces the isomorphisms $\theta_{\sB,\sA}$.
\end{proof}

Note that the shadows constructed in Section \ref{catex1} are of
this form.   A symmetric monoidal category $\sV$ is a bicategory
with a single 0-cell and the identity functor defines shadows for
this bicategory.  The shadows constructed in Chapter
\ref{classfpsec} for the bicategory $\sM_{\Ex}$ are also of this
form.  The chosen 0-cell in $\Ex$ is the 1-point space and the
functors $[[-]]$ are $r_!\triangle^*$.

\begin{lemma}\mylabel{bicatfunctor}
Let $\sW$ and $\sU$ be bicategories.
A lax functor $F\colon \sW\rightarrow \sU$ of bicategories defines a lax
functor $\sM_F\colon \sM_{\sW}\rightarrow \sM_{\sU}$.

If the shadows of $\sM_{\sW}$ are defined as in \myref{bicatshlift}
and $F$ is compatible with shadows then $\sM_F$ is compatible with
shadows.

\end{lemma}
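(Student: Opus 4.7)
The plan is to construct $\sM_F$ explicitly on cells and coherence $2$-cells, then verify the diagrammatic axioms by naturality and the corresponding axioms for $F$; the shadow compatibility will then follow by lifting $F$'s shadow natural transformation through the defining coequalizers.

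First I would define $\sM_F$ on the underlying data. Given a monoid $\sA\in \sW(A,A)$ with multiplication $\mu$ and unit $\iota$, set $\sM_F(\sA)\coloneqq F\sA$, equipped with multiplication
\[\xymatrix{F\sA\boxtimes F\sA\ar[r]^-{\phi_{\sA,\sA}}&F(\sA\boxtimes\sA)\ar[r]^-{F(\mu)}&F\sA}\]
and unit
\[\xymatrix{U'_{FA}\ar[r]^-{\phi_A}&F(U_A)\ar[r]^-{F(\iota)}&F\sA.}\]
The coherence axioms for $F$ (associativity and unit pentagons/triangles) together with $F$ applied to the monoid axioms for $\sA$ imply that $F\sA$ is a monoid in $\sU$. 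An $\sA$-$\sB$-bimodule $\sX$ is sent to $F\sX$ with actions $F(\kappa_A)\phi_{\sA,\sX}$ and $F(\kappa_B)\phi_{\sX,\sB}$; the compatibility of left and right actions follows from naturality of $\phi$ and functoriality of $F$. A map of bimodules $f\colon\sX\to\sY$ is sent to $F(f)$, which is a map of bimodules by naturality of the $\phi_{-,-}$.

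Next I would construct the coherence data of $\sM_F$. The unit coherence $\phi^F_\sA$ is easy: $\sM_F(U_\sA)$ and $U_{F\sA}$ are both $F\sA$ as a bimodule over itself, so $\phi^F_\sA$ is the identity (up to the canonical identification). For the composition coherence, let $\sX$ be an $\sA$-$\sB$-bimodule and $\sY$ a $\sB$-$\sC$-bimodule. Since $\sX\odot\sY$ is the coequalizer of the two maps $\sX\boxtimes\sB\boxtimes\sY\rightrightarrows\sX\boxtimes\sY$ in $\sW(C,A)$, applying $F$ gives a cone under $F(\sX\boxtimes\sB\boxtimes\sY)\rightrightarrows F(\sX\boxtimes\sY)$ with vertex $F(\sX\odot\sY)$. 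Precomposing the coequalizer $F(\sX\boxtimes\sY)\to F(\sX\odot\sY)$ with $\phi_{\sX,\sY}\colon F\sX\boxtimes F\sY\to F(\sX\boxtimes\sY)$ and using naturality plus the axioms for $\phi$ to check that the two composites $F\sX\boxtimes F\sB\boxtimes F\sY\to F(\sX\odot\sY)$ agree (this amounts to the pentagon relating $\phi_{\sX,\sB\boxtimes\sY}$, $\phi_{\sX\boxtimes\sB,\sY}$, $\phi_{\sX,\sB}$, $\phi_{\sB,\sY}$ and $F$ applied to the two action $2$-cells), one obtains the induced map
\[\phi^F_{\sX,\sY}\colon F\sX\odot F\sY\longrightarrow F(\sX\odot\sY).\]
The hexagon and unit coherence axioms for $\sM_F$ then reduce to the corresponding axioms for $F$ by the universal property of the coequalizers.

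For the second statement, assume $\sM_\sW$'s shadows are constructed as in \myref{bicatshlift}, so that $\sh{\sZ}$ is the coequalizer of $[[\sA\boxtimes\sZ]]\rightrightarrows[[\sZ]]$ in $\sW(I,I)$, and similarly in $\sM_\sU$. Compatibility of $F$ with shadows gives natural $2$-cells $\psi'_A\colon[[F(-)]]'\to F[[-]]$ fitting into the coherence hexagon relating $\psi'$, $\phi$, and the cyclicity isomorphisms $\theta$. I would define $\psi^F_{\sM_\sA}\colon \sh{F\sZ}\to F\sh{\sZ}$ by first producing
\[\xymatrix{[[F\sZ]]'\ar[r]^-{\psi'_A}&F[[\sZ]]\ar[r]&F\sh{\sZ},}\]
then showing this composite coequalizes the two maps out of $[[F\sA\boxtimes F\sZ]]'$. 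That coequalization uses naturality of $\psi'$, the coherence hexagon for $\psi'$ with respect to $\theta$, and the fact that the right-hand map in $\sM_\sU$'s coequalizer is obtained from the action of $F\sA$ on $F\sZ$, which factors through $\phi_{\sA,\sZ}$. The universal property then produces the required $\psi^F$, and the square relating $\psi^F$, $\phi^F$, and $\theta$ in $\sM_\sW,\sM_\sU$ becomes a diagram in $\sU(I,I)$ that can be checked against the corresponding square for $F$ precomposed with the coequalizer map, hence commutes. The main obstacle is bookkeeping: every verification in the paragraph above is a diagram chase at the level of coequalizer cones, and ensuring that all instances of $\phi$, $\psi'$, and $\theta$ are threaded in a coherent order requires care, but each individual square commutes by naturality or by a single axiom of $F$.
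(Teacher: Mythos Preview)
Your proposal is correct and follows essentially the same approach as the paper: define $\sM_F$ on monoids and bimodules via $F$'s coherence maps $\phi_{-,-}$ and $\phi_A$, construct the composition coherence $\phi^F_{\sX,\sY}$ by inducing on the coequalizer defining $\odot$, and obtain shadow compatibility by inducing $\psi^F$ from $[[F(-)]]\to F[[-]]$ through the coequalizer defining $\sh{-}$. The paper's proof records exactly these constructions but omits the diagram chases you spell out; your added detail is sound and simply fills in what the paper leaves implicit.
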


\begin{proof}The functor  $\sM_F$ takes a monoid
$\sA$ to the monoid $F\sA$ with unit
\[U_{FA}\rightarrow F(U_A)\rightarrow F\sA\] and composition
\[F\sA\boxtimes F\sA\rightarrow F(\sA\boxtimes \sA)\rightarrow F\sA.\]
An $\sA$-$\sB$-bimodule $\sX$ is taken to the $F\sA$-$F\sB$-bimodule $F\sX$
with actions
\[F\sA\boxtimes F\sX\rightarrow F(\sA\boxtimes \sX)\rightarrow F\sX\]
\[F\sX\boxtimes F\sB\rightarrow F(\sX\boxtimes \sB)\rightarrow F\sX\]
The map $F\sX\boxtimes F\sY\rightarrow F(\sX\boxtimes \sY)$ induces a
map \[F\sX\odot F\sY\rightarrow F(\sX\odot\sY).\]

If shadows in $\sM_\sW$ are defined from the shadows of $\sW$ and
$F$ is compatible with shadows, $\sM_F$ is compatible with
shadows.  The maps $\sh{F\sX}\rightarrow F\sh{\sX}$ are induced by
the corresponding maps \[[[F\sX]]\rightarrow F[[\sX]].\]
\end{proof}

\backmatter

\Printindex{idx}{Index}
\Printindex{nidx}{Index of Notation}

\end{document}